\definecolor{dblue}{rgb}{0.09,0.32,0.44} 
\DeclareMathOperator{\essup}{essup}
\newtheorem{defi}{Definition}[section]
\newtheorem{lemm}[defi]{Lemma}
\newtheorem{thm}[defi]{Theorem}
\newtheorem{remark}[defi]{Remark}
\newtheorem{cor}[defi]{Corollary}
\newtheorem{assum}[defi]{Assumption}
\newtheorem{prop}[defi]{Proposition}
\numberwithin{equation}{section}
\numberwithin{figure}{section}
\renewcommand{\rho}{\varrho}
\renewcommand{\phi}{\varphi}
\newcommand{\bmo}{\operatorname{BMO}}
\newcommand{\be}{\begin{equation}}
	\def\({\left(}
	\def\){\right)}
\begin{document}
			
			\title[Time discretization of QFBSDEs with singular drift]{Time discretization of Quadratic Forward-Backward SDEs with singular drift}
			
			\author{Rhoss Likibi Pellat}
			\address{African Institute for Mathematical Sciences, Ghana}
			\email{rhoss@aims.edu.gh}
			\author{Emmanuel Che Fonka}
			\address{Department of Mathematics and Computer Science, University of Bamenda}
			\email{emmafonka@gmail.com}
			\author{Olivier Menoukeu pamen}
			\address{Institute for Financial and Actuarial Mathematics, Department of Mathematical Sciences, University of Liverpool, L69 7ZL, United Kingdom}
			\address{African Institute for Mathematical Sciences, Ghana}
			\email{menoukeu@liverpool.ac.uk}
			
			\thanks{The project on which this publication is based has been carried out with funding provided by the Alexander von Humboldt Foundation, under the programme financed by the German Federal Ministry of Education and Research entitled German Research Chair No 01DG15010. \\Rhoss Likibi Pellat also acknowledges the financial support for the LMS Grant 52321.}

			\date{}
			
			\keywords{quadratic forward-backward SDEs; singular drift; Zvonkin's transformation; quasi-diffeomorphism flow; time-discretization; Euler-Maruyama scheme.}
			
			\subjclass[2020]{Primary: 65C30,65C99,60H10, 60H07,60H50, Secondary: 35K10,35B65} 
			
			\maketitle
			
			\begin{abstract}
				We investigate the convergence rate for the time discretization of a class of quadratic backward SDEs --potentially involving path-dependent terminal values-- when coupled with non-standard Lipschitz-type forward SDEs. In our review of the explicit time-discretization schemes in the spirit of Pag\`es \& Sagna (see \cite{PaSa18}), we achieve an error control close to $\frac{1}{2}$, even under the modest assumptions considered in this work (see \cite{ChaRichou16}, for comparison).
				
				A central element of our approach is a thorough re-examination of Zhang's $L^2\text{-time regularity}$ of the martingale integrand $Z$ which follows from an extension of the first-order variational regularity for this class of singular forward-backward SDEs with non-uniform Cauchy-Lipschitz drivers. This is complemented by the recently introduced caracterisation of stochastic processes of {\it bounded mean oscillation} (abbreviated as $\bmo$) by K. L\^e (see \cite{Le22}) which we used to derive an $L^p\text{-version}$ of the strong approximation of SDEs with singular drifts from Dareiotis \& Gerencs\'er (see \cite{DaGe20}).
				
				As such, this study addresses a crucial gap in the numerical analysis of forward-backward SDEs (FBSDEs). To our knowledge, for the first time, the impact of regularization by noise on Euler-Maruyama numerical schemes for singular forward SDEs has been successfully transferred to enhance the convergence rate of the discrete time approximations for solutions to backward SDEs.
			\end{abstract} 
			\tableofcontents
			\section{Introduction}
			\subsection{General overview}
			This paper continues the work begun in previous articles \cite{ImkRhOliv24,RhOliv24}, focusing on path regularity and numerical approximation of solutions to quadratic backward SDEs coupled with forward SDEs driven by additive Brownian motion and non Lipschitz-continuous drift coefficients (see also \cite{Rhoss23}).
			
			
			Given a fixed real number  $T> 0$, let $(\Omega,\mathfrak{F},\mathbb{P})$ be a complete probability space on which a $d\text{-dimensional}$ Brownian motion $(W_t)_{t\in [0,T]}$ is defined and let $\{\mathfrak{F}_t\}_{t\in [0,T]}$ be its natural filtration augmented by all $\mathbb{P}\text{-null}$ sets of the sigma-algebra $\mathfrak{F}$. The system of interest in this paper is given by the following (decoupled) forward-backward SDEs 
			\begin{align}
				X_t &= x+  W_t + \int_0^t b(s, X_s)\mathrm{d}s, \label{2.1} \\
				Y_t &= \xi + \int_t^T g(s,X_s,Y_s,Z_s)\mathrm{d}s - \int_t^T Z_s\mathrm{d}W_s \label{2.2}.
			\end{align}
			The existence and uniqueness of solutions to \eqref{2.2} were initiated by Pardoux and Peng (see \cite{PP90}) for nonlinear Lipschitz-continuous generators
			$g$ and square-integrable terminal values $\xi \in L^2(\mathfrak{F}_T)$.
			The solution to such equations is interpreted as an $\{\mathfrak{F}_t\}_{t\geq 0}\text{-adapted}$ pair of processes $(Y_t,Z_t)_{0\leq t\leq T}$ where the value at the maturity time is given by  $Y_T =\xi$. This unique feature of BSDEs has proven to be an essential mathematical tool for understanding a wide range of complex systems where present dynamics are influenced by both past and future information. Consequently, the theory of BSDEs has garnered significant interest over the last three decades and remains an active area of research due to its various applications in optimal control, finance, insurance, economics, engineering and social sciences (see, for instance, \cite{el97, CIKHN04,Tian23}, and references therein).
			
			
			Of particular interest is the regularity of solutions to equation \eqref{2.2}. More specifically, the focus is on investigating whether the processes $(Y_t)_{t\in[0,T]}$ and $(Z_t)_{t\in[0,T]}$ possess smoothness properties, such as regularity of sample paths and classical or Malliavin differentiability, for all $t \in [0,T]$. For instance, when approximating the solutions to BSDE \eqref{2.2} numerically, these properties are critical and heavily rely on the smoothness of the coefficients $(g,\xi)$ involved in \eqref{2.2}. However, a deeper analysis reveals that the noise $(W_t)_{t\geq 0}$ and the regularity of the coefficients of the forward equation \eqref{2.1} cannot be overlooked and rather play an essential role in this investigation. 
			
			To our knowledge, the first result addressing the regularity of BSDEs was obtained in \cite{ParPe92} under quite restrictive (strong) assumptions on the coefficients. Additionally, in the context of fully coupled FBSDEs, similar regularity results were achieved in several relevant papers \cite{MPY94, Delarue, Delarue2,DelarueMenozzi06,RhOlivOuk} via the so-called {\it four-step scheme}, which provides a more general Feynman-Kac representation of the control process $Z_t$ as the "derivative" in some sense of the continuous solution to the associated quasi-linear parabolic PDE. Numerous other papers have emerged in recent years on the smoothness of the control process $Z$ under much weaker assumptions on the coefficients.  
			We refer for instance to the groundbreaking works \cite{MaZhang01,MaZhang02,Zhang041}, in which the authors respectively introduced the notion of $L^2\text{-time}$ regularity and a representation theorem for the control $Z$ under Lipschitz-continuous conditions.
			
			
			As mentioned earlier, our primary focus is on the numerical aspects of forward-backward stochastic differential equations (FBSDEs) with quadratic drivers and singular drifts. This particular class of FBSDEs holds significant interest due to its diverse applications in the field of financial mathematics. For instance, this class of FBSDEs occurs in problems involving the valuation and risk management of contingent claims on non-tradable assets, when the dynamics of the underlying asset is governed by an SDE for which the drift coefficient does not satisfy the standard Lipschitz continuous condition(see \cite{RhOliv24}). Due to their complexity and nonlinearity, finding explicit, closed-form solutions to this class of FBSDEs is virtually impossible. Thus, developing effective numerical methods to efficiently approximate their solutions becomes essential. For a comprehensive review of the numerical methods for BSDEs in general, we encourage interested readers to consult the survey by Chessari et al. \cite{Chessarietal23}.
			
			Up to now, the literature has presented only a limited number of comprehensive results regarding the time discretization and numerical approximation of quadratic FBSDEs. Among them, we can refer to \cite{ImkDos}, which, as far as we know, is the first study to address the convergence rate of the error between solutions to quadratic FBSDEs (in the framework of \cite{Kobylansky}) and their approximations obtained through a truncation procedure. Building on the results concerning path regularity and the truncated approximation established in \cite{ImkDos}, the authors in \cite{ChaRichou16} extended the investigation of the BTZ\footnote{Named after the authors Bouchard-Touzi-Zhang, who initially proposed the scheme in the Lipschitz framework; see \cite{BT04}, \cite{Zhang041}} scheme to quadratic drivers. Their work demonstrated a convergence rate for the global error of order less than  ${1/2}$. Specifically, by adopting a uniform time-step over the interval $[0, T]$ with $h:=\sup_i|t_{i+1}-t_i|$, the authors derived the following error estimate, valid for all $\eta > 0$
			\begin{align}
				\mathbb{E}\Big[\sup_{0\leq i\leq N}|Y_{t_i}-\mathcal{Y}_{i}^{\pi}|^{2}\Big] + \mathbb{E}\Big[\sum_{i=1}^{N-1}\int_{t_i}^{t_{i+1}}|Z_{s}-\mathcal{Z}_{i}^{\pi}|^2\mathrm{d}s\Big]\leq Ch^{1-\eta}.
			\end{align}
			More recently, in his PhD thesis, the author of \cite{Zhou} enhanced the convergence rate to $1/2$ in the context of path-dependent terminal conditions and quadratic drivers. This improvement was achieved by introducing an inner modification of the explicit BTZ scheme originally developed by the authors in \cite{PaSa18} for Lipschitz drivers.

			However,  the aforementioned approaches depend strongly on the smoothness of the coefficients in the forward SDEs, creating a notable gap in the development of robust techniques that specifically address the challenges posed by BSDEs when coupled with non smooth-type forward SDEs. In fact, the path-regularity result of the pair solutions $(Y,Z)$ to the BSDE \eqref{2.2} --which is crucial to their analysis-- significantly depends on the existence of a $C^k\text{-diffeomorphic}$ solution to the associated forward equation \eqref{2.1} with $k\geq 1.$  
			These limitations highlight the need for novel methods capable of handling the intricate interactions between the nonlinearities introduced by the quadratic structure and the singularities of the drift term. Bridging this gap would significantly advance the field, enabling more accurate and reliable numerical approximations in complex scenarios where traditional assumptions no longer hold.
			

			
			On the other hand, the field of numerical analysis for stochastic differential equations (SDEs) has seen significant advancements since the pioneering works of \cite{MR1214374,MR71666,MR1335454}. The need to approximate solutions of SDEs with non-regular coefficients has become increasingly essential, driven by the complexity of real-world models in various domains. This development is not merely a mathematical pursuit but a necessity to address critical questions arising for instance in stochastic optimal control and social sciences(see for instance \cite{BMBPD17} and references therein). As many of these models involve irregularities or singularities in their coefficients, accurate and efficient approximation methods are crucial for understanding the behavior of such systems and making reliable predictions.
			
			Among the many contributions in this direction, we can notably highlight the following works: in \cite{PaTa17} the authors prove for the first time the strong convergence of the Euler-Maruyama scheme of an $\alpha\text{-stable}$ L\'evy process for which the drift coefficient is only H\"older continuous; in \cite{LeoSzo18}, the authors establish a strong convergence result of the Euler-Maruyama method for multidimensional SDE with discontinuous drift and degenerate diffusion coefficient; in \cite{DaGe20} the cases of bounded Dini-continuous and measurable drifts were successively circumvent; more recently, in \cite{JourMeno24}, the authors derived a rate of convergence for the case where the drift satisfies the Ladyzhenskaya-Serrin-Prodi condition with $\frac{d}{\rho}+\frac{2}{q}<1$. See also \cite{ButDaGe21} for the convergence rate of SDEs driven by fractional Brownian motions with non-regular drift and \cite{Babi23} for strong rate convergence with unbounded drift coefficient.
			

			Here is the novelty considered in this work: we assume the drift $b$ to be only bounded and belongs to the class of slowly varying function at zero (the drift is at least Lipschitz continuous in \cite{ImkDos,Richou,ChaRichou16,Zhou} and at most H\"older continuous in \cite{ImkRhOliv24}), the generator $g$ has a quadratic growth  in the control variable $Z$ and does not satisfy the common uniform Lipschitz continuous condition in its backward and forward components (the driver is of quadratic type but uniformly Lipschitz in $x$ and $y$ in \cite{ImkDos,Richou,ChaRichou16,Zhou}); as in \cite{Zhang041} and \cite{Zhou}, the terminal value here is of the form $\xi = \Phi(X)$, where $\Phi$ is a bounded functional of the forward process $X$ that satisfy the so-called $L^{\infty}$ or $L^1$ Lipschitz conditions (see Definition \ref{defi 2.8} herein).
			
			It is worth emphasizing that, although the diffusion coefficient $\sigma$ is represented by the identity matrix, one could not retrieve the established results regarding the uniform boundedness of the control process $Z$ nor the nonuniform bound of the type $|Z_t| \leq C(1+|X_t|),$ for all $t\in[0,T]$, where $(X_t)_{t\geq 0}$ stands for the solution to \eqref{2.1}. This arises because the drivers $g$ examined here are not uniformly Lipschitz in their forward and backward components, respectively. Most importantly, this fact prevents us from encountering cases previously underlined or addressed in the literature (\cite{Richou,ChaRichou16,ImkRhOliv24,Zhang041,Zhou}). However, due to the integrability property of BMO martingales (see \cite{Kazamaki}), we rather establish the crucial integrability of the supremum norm of the control $Z$;
			\begin{align*}
				\mathbb{E}[\sup_{0\leq t\leq T}|Z_t|^p] \leq C, \text{ for all } p>p_0',\quad  \frac{1}{p_0}+ \frac{1}{p_0'} =1,
			\end{align*}
			and $p_0$ stands for the integrability power of the  Dol\'eans-Dade exponential of the martingale $\int_0^{\cdot}Z_s\mathrm{d}W_s.$ 
			
			Consequently, by replacing the the drivers $g(\cdot,x.y.z)$ by their Lipschitz approximation $g_n(\cdot,x,\omega_n(y),\bar\omega_n(z))$ as defined for instance by the equation \eqref{truncdriver} below, we observe that the corresponding solution $(Y^n,Z^n)$ deviates from the original one $(Y,Z)$. However, the two remains sufficiently close to one another when measured under the appropriate topology (see Theorem \ref{thmconve1}, herein). This approximation preserves essential properties of the system while mitigating the complexities introduced by the original drivers, making it possible to further considering the machinery of the numerics for Lipschitz-type BSDEs, through the well established BTZ scheme in our general setting.
			
			Under the above assumptions, we first establish the regularity (in both the Malliavin and Sobolev sense) of the solution to the FBSDE system \eqref{2.1}-\eqref{2.2}. These regularity results allow us to revisit the celebrated representation theorem of Ma and Zhang (see \cite{MaZhang02, MaZhang01}) to our setting. This representation of the control $Z_t$ could prove pivotal when considering the application of the Malliavin weights dynamics programming method as an alternative way to efficiently approximate solution to quadratic FBSDEs, particularity in cases of rough drift coefficient. This consideration holds significant promise for extending the work of \cite{GobetTurk16}.
			
			Beyond the singularity of the drift coefficient, it is important to note that the representation theorem in question still depends on the regularity of the functional terminal value(see Theorem \ref{threpr1}). However, when dealing with singular type functional terminal condition, it becomes essential to explore more advanced  concepts such as the generalized fractional smoothness framework introduced in \cite{3G12} and further extended in several other papers as in \cite{GeissYlinen} and references therein. Applying this notion to our broader setting could provide valuable insights into the behavior of solutions under less restrictive conditions. Specifically, it would be highly worthwhile to investigate how singularities propagate over time in the context of BSDEs with quadratic drivers.
			
			We partially address this issue by providing a representation of adapted solutions to quadratic BSDEs as functionals of a diffusion process when the terminal value is a discontinuous discrete functional of the same diffusion process, the drift is Lipschitz continuous and the driver is Lipschitz in $Y$ (see Theorem \ref{prop48}, herein).

			
			We also generalise the path regularity of quadratic BSDEs with path-dependent terminal values. Specifically, we prove that the path regularity bound holds when the drift is bounded and Dini-continuous (see Definition \ref{Dini}) and the driver is quadratic in $z$ and stochastically Lipschitz in $x$ and $y$. Additionally, we establish an error bound for the BTZ scheme under this framework. Despite the poor regularity assumptions of the coefficients, the error bound remains consistent with the results in \cite{Zhou}, indicating robustness in the approximation method under these conditions (see Theorem \ref{Main63} herein). In particular, we obtain the following error bound
			\begin{align}
				\mathbb{E}\Big[\sup_{0\leq i\leq N}|Y_{t_i}-\mathcal{Y}_{i}^{\pi}|^{2}\Big] + \mathbb{E}\Big[\sum_{i=1}^{N-1}\int_{t_i}^{t_{i+1}}|Z_{s}-\mathcal{Z}_{i}^{\pi}|^2\mathrm{d}s\Big]\leq C h^{1-\gamma},
			\end{align}
			provided that
			\begin{align}\label{1.5}
				\mathbb{E}\Big[|\Phi(X)-\Phi(\mathcal{X}^{\pi})|^{8q^{*}}\Big]^{\frac{1}{4q^{*}}}+\mathbb{E}[|X_t- \mathcal{X}^{\pi}_i|^{16q^{*}}]^{\frac{1}{8q^{*}}} \leq Ch^{1-\gamma},
			\end{align}
			holds for any $\eta \in (0,1)$ and $\mathcal{X}^{\pi}_i$ stands for a discrete approximation of the forward process $X$ given by . 
			
			Assuming that the terminal value $\Phi$ is $L^{1}\text{-Lipschitz}$ condition (see Definition \ref{defi}, herein), we highlight the fact that the convergence rate given by \eqref{1.5} is not addressed in the work of Dareiotis and Gerencsér (\cite{DaGe20}), specifically in cases where the drift coefficient is bounded and Dini continuous.  In fact, the authors only provide the following convergence rate in the $L^2(\Omega)\text{-norm}$ (see \cite[Theorem 2.1]{DaGe20})
			\begin{align}\label{1.6}
				\sup_{t\in[0,T]}\mathbb{E}|\mathcal{X}^{\pi}_i-X_t|^2 \leq C h^{1-\gamma}.
			\end{align}
			The key limitation in their approach stems from the pivotal quadrature estimate provided in \cite[Lemma 2.1]{DaGe20}, which relies on explicit moment computations, thereby restricting the estimate to second order accuracy. Building on this observation, the author in \cite{Le22} identifies the error in the quadrature rule for approximating functionals of type $x\mapsto\int_0^t b(s,x+W_s)\mathrm{d}s$ as a {\it bounded mean oscillation} process (see Definition \ref{defi28}, herein), where $b:[0,T]\times\mathbb{R}^d\rightarrow \mathbb{R}$ is  any bounded and measurable function and $\{W_t\}_{t\geq 0}$ stands for a $d\text{-dimensional}$ Brownian motion (see \cite[Example 1.4]{Le22}). Leveraging this characterization, the author applies the John-Nirenberg inequality (see \cite[Theorem 2,3]{Le22}) to propose an enhanced version of the quadrature estimate that can achieve higher orders of accuracy. More precisely, the author obtains:
			\begin{align}\label{1.7}
				\essup_{\omega\in \Omega}\left(\mathbb{E}\sup_{s\leq t \leq 1}\Big|\int_s^t b(r,W_r)-b(r,W_{k_N(r)})\mathrm{d}r\Big|^{2p}\big/\mathfrak{F}_s\right) \leq C(p)(N^{-1}\log(N+1))^p	,
			\end{align}
			for all $p\geq 1$. 
			
			Assuming further that the terminal value $\Phi$ satisfies the $L^{\infty}\text{-Lipschitz}$ condition (see Definition \ref{defi}, herein), we note that a convergence rate of type \eqref{1.5} remains outside the scope of Dareiotis and Gerencsér (\cite{DaGe20}). This is because, in this scenario, a significantly stronger convergence rate-- incorporating the supremum within the expectation-- is essential. More precisely, one needs to establish the following bound
			\begin{align*}
				\mathbb{E}\Big[	\sup_{0\leq t\leq 1}|\mathcal{X}_i^{\pi}-X_t|^{2p}\Big] \leq C h^{(1-\gamma)p},
			\end{align*}
			for all $p\geq 1$. The latter is achieved in this paper as an application of the estimate \eqref{1.7}.
			
			\subsection{Standing Assumption and the BTZ scheme}
			In this section, we present the assumptions under which the main results of this paper are derived. Additionally, we recall the definition of the renowned BTZ scheme. The following set of assumptions will be used in proving the path regularity result. 
			
			\begin{assum}\leavevmode\label{assum2.1}
				\begin{itemize}
					\item[(A1)]
					The drift $b\in L^{\infty}([0,T];C_b(\mathbb{R}^d;\mathbb{R}^d))$. Suppose in addition, for $r_0 \in (0,1)$ there is a Dini function $h$, which is also a slowly varying function at zero, such that for every $x\in \mathbb{R}^d$
					\begin{align}
						|b(t,x)-b(t,y)| \leq h(|x-y|), \text{ for all } y\in B_{r_0}(x), t\in [0,T].
					\end{align}
					Moreover, for all $p\geq 1$ there is a small enough positive real number $\delta=\delta(p)<r_0$ such that the function $F_{\delta}$ defined below, is increasing and concave on $[0,\delta]$
					\[ F_{\delta}(r) = \int_{0<\tau\leq r}\frac{h(\tau)}{\tau}\mathrm{d}\tau +h(r)+2r +\int_{0<\tau\leq \delta}\frac{h(\tau)}{\tau^2}\mathrm{d}\tau,\,\,\, r\in[0,\delta]. \]
					\item[(A2)]  The function $g: [0,T]\times \mathbb{R}^d\times \mathbb{R}\times \mathbb{R}^d \rightarrow \mathbb{R}  $ is measurable and satisfy: $\|g(t,0,0,0)\|_{\infty} \leq \Lambda_0 $ and there exist non negative constants $\Lambda_x,\Lambda_y,\Lambda_z$ and a locally bounded function $\ell\in L_{loc}^{1}(\mathbb{R}_{+})$ such that for all $(t,x,y,z)\in [0,T]\times \mathbb{R}^d\times \mathbb{R}\times \mathbb{R}^d,$ $(t,x',y',z')\in [0,T]\times \mathbb{R}^d\times \mathbb{R}\times \mathbb{R}^d,$ $\alpha_0 \in (0,1)$
					\begin{align*}
						(i) |g(t,x,y,z)-g(t,x',y,z)| &\leq \Lambda_x (1 + |y|+\ell(|y|)|z|^{\alpha_0})|x-x'|, \\
						(ii) |g(t,x,y,z)-g(t,x,y',z')| &\leq \Lambda_y (1 +(|z|^{\alpha_0}+|z'|^{\alpha_0}))|y-y'| \\
						&\quad + \Lambda_z(1+ (\ell(|y|)+\ell(|y'|))(|z|+|z'|)) |z-z'|.
					\end{align*}
					\item[(A3)][Path-dependent functional] $\xi = \Phi(X_{\cdot})$ is either $ L^{\infty}\text{-Lipschitz}$ or $ L^1\text{-Lipschitz}$  i.e., $\Phi$ satisfies \eqref{Lips1} or \eqref{Lips2} and $X$ solves the SDE \eqref{2.1}. 
					
					We emphasise writing $X_{\cdot}$ rather than $X_T$ because the terminal value may depend on the entire path of the process $X$ up to time $T$.
					
				\end{itemize}
			\end{assum}
			To establish the nonlinear Feynman-Kac formula for discrete functional quadratic FBSDEs with non-Lipschitz terminal values, we introduce the following slightly stronger assumptions.
			\begin{assum}\leavevmode\label{assum 4.8}
				\begin{itemize}
					\item[(B1)]
					The drift $b$ is Lipschitz continuous with Lipschitz constant $\Lambda_b$.
					\item[(B2)]  The function $g: [0,T]\times \mathbb{R}^d\times \mathbb{R}\times \mathbb{R}^d \rightarrow \mathbb{R}  $ is measurable and satisfy: $\|g(t,0,0,0)\|_{\infty} \leq \Lambda_0 $ and there exist non negative constants $\Lambda_x,\Lambda_y,\Lambda_z$ and a locally bounded function $\ell\in L_{loc}^{1}(\mathbb{R}_{+})$ such that for all $(t,x,y,z)\in [0,T]\times \mathbb{R}^d\times \mathbb{R}\times \mathbb{R}^d,$ $(t,x',y',z')\in [0,T]\times \mathbb{R}^d\times \mathbb{R}\times \mathbb{R}^d,$ $\alpha_0 \in (0,1)$
					\begin{align*}
						(i) |g(t,x,y,z)-g(t,x',y,z)| &\leq \Lambda_x (1 + |y|+\ell(|y|)|z|^{\alpha_0})|x-x'|, \\
						(ii) |g(t,x,y,z)-g(t,x,y',z')| &\leq \Lambda_y |y-y'|
						+ \Lambda_z(1+ (\ell(|y|)+\ell(|y'|))(|z|+|z'|)) |z-z'|.
					\end{align*}
					\item[(B3)]There are $\mathcal{R}=\{r_0,\cdots,r_L\}$ with $0=r_0 < r_1<\cdots<r_L=T$ and a bounded and measurable function $\phi:(\mathbb{R}^d)^L\rightarrow \mathbb{R}$ such that  $$\xi = \phi(X_{r_1},\cdots,X_{r_L})$$ 
					and $X$ solves the SDE \eqref{2.1}. 
				\end{itemize}
			\end{assum}
			
			To study the numerical approximation of our system of interest, we assume that the driver of the BSDE is H\"older continuous in time. This is summarised below:
			\begin{assum}\leavevmode\label{assum5.1} The coefficients satisfy Assumption \ref{assum2.1} and in particular the generator $g$ satisfies the following growths:
					there exist non negative constants $\Lambda_t,\Lambda_x,\Lambda_y,\Lambda_z$ and a locally bounded and non-decreasing function $\ell$ such that for all $(t,x,y,z)\in [0,T]\times \mathbb{R}^d\times \mathbb{R}\times \mathbb{R}^d,$ $(t,x',y',z')\in [0,T]\times \mathbb{R}^d\times \mathbb{R}\times \mathbb{R}^d,$ $\alpha_0 \in (0,1)$
					\begin{align*}
						(i) |g(t,x,y,z)-g(t',x',y,z)|&\leq \Lambda_t |t-t'|^{\frac{1}{2}} +\Lambda_x(1+|y|+\ell(|y|)|z|^{\alpha_0})|x-x'|,\\
						(ii) |g(t,x,y,z)-g(t,x',y',z')|&\leq  \Lambda_y(1+|z-z'|^{\alpha_0})|y-y'|\\
						&\quad + \Lambda_z(1+ (\ell(|y-y'|))(|z|+|z'|)) |z-z'|,\\
						(iii)|g(t,x,y,z)| \leq &\Lambda_0 +\Lambda_y|y|+ \Lambda_z(|z|+2\ell(|y|)|z|^2).
					\end{align*}
					
			\end{assum}
			A more general type of quadratic drivers introduces an additional layer of complexity addressed in this paper. To efficiently approximate the solution of a quadratic BSDEs, a common approach is to truncate the generator and the analysis then focuses on quantifying the error between the solution obtained through this truncated approach and the solution from the numerical scheme that will be made precise below.	
			
			We consider an explicit time discretization scheme in the spirit of \cite{PaSa18} (see also \cite{Zhang041}), where the conditioning is performed  inside the driver $g$ and given recursively as follows
			\begin{align}\label{BTZ}
				\mathcal{Y}^{\pi}_i &= \mathbb{E}[\mathcal{Y}^{\pi}_{i+1}/ \mathfrak{F}_{t_i} ] + g_n\Big(t_i,\mathcal{X}_i^{\pi},\mathbb{E}[\mathcal{Y}^{\pi}_{i+1}/\mathfrak{F}_{t_i}],\mathcal{Z}^{\pi}_i\Big)\delta t_i ,\notag\\
				\mathcal{Z}^{\pi}_i &= \mathbb{E}[\mathcal{Y}^{\pi}_{i+1}H^{R}_i/\mathfrak{F}_{t_i}]\\
				\mathcal{Y}_N^{\pi} &= \Phi(\mathcal{X}^{\pi}), \quad \mathcal{Z}_N^{\pi} = 0,\notag
			\end{align}
			where the coefficients $(H_i^R)_{0\leq i < N}$ are $\mathbb{R}^{1\times d}$ independent random variables vectors defined by
			\begin{equation}\label{eq79}
				H^R_i = \Big(\frac{R}{\sqrt{\delta t_i}}\wedge \frac{\Delta W_{t_i}^1}{\delta t_i}\vee\frac{-R}{\sqrt{\delta t_i}},\cdots,\frac{R}{\sqrt{\delta t_i}}\wedge \frac{\Delta W_{t_i}^d}{\delta t_i}\vee\frac{-R}{\sqrt{\delta t_i}} \Big)
			\end{equation}
			for any given $R > 0$, the function $g_n: [0,T]\times\mathbb{R}^d\times\mathbb{R}\times\mathbb{R}^d \rightarrow \mathbb{R}$ is defined by 
			\begin{align}\label{truncdriver}
				g_n(t,x,y,z) := g(t,x,\tilde \varpi_n(y),\varpi_n(z)),
			\end{align}
			where $\varpi_n : \mathbb{R}^d\rightarrow \mathbb{R}^d$ is given by $ z\mapsto \varpi_n(z) = (\tilde \varpi_n(z_1),\cdots,\tilde \varpi_n(z_d))$, for $n \in \mathbb{N}$ and $(\tilde \varpi_n)_{n\in \mathbb{N}}$ is a sequence of smooth real valued functions that truncate the identity on the real line.  By construction, it is standard to check that the family of functions $(g_n)_{n\in \mathbb{N}}$ is Lipschitz continuous in $y$ and $z$ with Lipschitz constants that may depend on the level of the truncation $n \in \mathbb{N}$. Moreover, assuming $g$ satisfies Assumption \ref{assum5.1} (H3) then for all $t,x,y,y',z,z'$ we have 
			\begin{align*}
				|g_n(t,x,y,z)-g_n(t,x,y',z')|
				&\leq \Lambda_y(1+|z-z'|^{\alpha_0})|y-y'| + \Lambda_z(n)\ell(|y-y'|))|z-z'|,\\
				|g_n(t,x,y,z)|
				&\leq \Lambda_0 +\Lambda_y|y|+ \Lambda_z(|z|+2\ell(|y|)|z|^2)
			\end{align*}
			%
			Hence, the resulting BSDE given below
			\begin{align}\label{TruncY}
				Y^n_t = \Phi(X) +\int_t^T g_n(s,X_s,Y_s^n,Z_s^n)\mathrm{d}s -\int_t^T Z_s^n\mathrm{d}W_s
			\end{align}
			admits a unique solution $(Y^n,Z^n)\in \mathcal{S}^2\times\mathcal{H}^2$ (see \cite[Lemma 4.3]{ImkRhOliv24}) such that
			\begin{align}\label{77}
				\sup_{n\in\mathbb{N}}\left(\|Y^n\|_{\infty} + \|Z^n*W\|_{\bmo}\right) < \infty.
			\end{align}
			
			

			
			The discrete-time process $\mathcal{X}_i^{\pi}$ may stand for "sampling" of the forward diffusion process $X$ or any  numerical scheme approximating $(X_t)_{t\geq 0}$ with some partition $\pi$. In this paper, we consider a discretization in the spirit of the Euler-Maruyama method, which for instance is provided by
			\begin{align}\label{Euler}
				\mathcal{X}_{i+1}^{\pi} = \mathcal{X}_i^{\pi} + b(t_i,\mathcal{X}_i^{\pi})\delta t_i + \Delta W_{t_i}, \quad \mathcal{X}_0^{\pi} = x,
			\end{align}
			where $\delta t_i = t_{i+1}-t_i$ and $\Delta W_{t_i}= W_{t_{i+1}}- W_{t_i} $ for all $0\leq i\leq N-1$.
			We will define the path 
			\begin{equation}
				\mathcal{X}^{\pi}_t =\mathcal{X}_i^{\pi} \text{ if } t\in [t_i,t_{i+1}),\quad 1\leq i\leq N,
			\end{equation}
			with the convention $[t_N,t_N) = {T}$, so that $\mathcal{X}^{\pi}$ belongs to $\mathbb{D}$.
			

			\subsection{Main results}
			\subsubsection{On path-regularity of solution to  QFBSDEs with path dependent terminal condition} The first main result shows that under weaker conditions on the generator and the drift of the forward equation, the following path regularity result holds bound of the quadratic BSDE with path dependent terminal value.
			\begin{thm}[Path-regularity of QFBSDEs]\label{thm1.1}
				Consider FBSDE \eqref{2.1}--\eqref{2.2} such that $b$,$g$ and $\xi$ satisfy Assumptions \ref{assum2.1}. 
				Let $\pi_1: 0=t_0 <t_1<\cdots < t_N=T$ be a partition and $h= \sup_j|t_{j+1}-t_j|$. Then, for any $p >p_0'> 1$ there is a constant $C$ only depending on $p,d,T$, $\|b\|_{L^{\infty}([0,T];C_b(\mathbb{R}^d))}$ and $\Lambda_{\Phi}$ (independent of the partition) such that  
				\begin{align*}
					\mathbb{E} \big[\sup_{v\in [s,t]}|Y_v-Y_t|^{2p}\big] \leq C |t-s|^p,\\
					\sum_{j=0}^{N-1}\mathbb{E}\Big(\int_{t_j}^{t_{j+1}}|Z_t - Z_{t_j}|^2 \mathrm{d}t \Big)^p \leq C h^p.
				\end{align*}
				where $p_0'$ is the conjugate of $p_0$ and $p_0$ is such that $\mathcal{E}\left(\nabla_z g*W\right) \in L^{p_0}$.
			\end{thm}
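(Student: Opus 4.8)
The plan is to prove the two estimates separately, relying throughout on three a priori inputs that are available at this stage: the uniform bound $\|Y\|_{\infty}<\infty$, the $\bmo$-bound $\|Z*W\|_{\bmo}<\infty$ recorded in \eqref{77}, and—most crucially—the moment bound $\mathbb{E}\big[\sup_{t}|Z_t|^{q}\big]\le C$ valid for every $q>p_0'$. The last of these is exactly where the hypothesis $p>p_0'$ enters: it is obtained from the Ma--Zhang type representation of $Z$ together with the integrability $\mathcal{E}(\nabla_z g*W)\in L^{p_0}$, via a Girsanov change of measure that linearises the control process and Kazamaki's $\bmo$-integrability (John--Nirenberg). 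I would fix these bounds first, since every subsequent estimate of a quadratic-in-$z$ term will be closed by invoking them.

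For the first estimate I would fix $s\le t$ and use the backward equation to write, for $v\in[s,t]$,
\begin{align*}
Y_v-Y_t=-\int_v^t g(r,X_r,Y_r,Z_r)\,\mathrm{d}r+\int_v^t Z_r\,\mathrm{d}W_r.
\end{align*}
Since $Y$ is bounded and $\ell$ is locally bounded, the growth hypothesis on $g$ gives $|g(r,X_r,Y_r,Z_r)|\le C(1+|Z_r|+|Z_r|^2)$, so by Hölder's inequality and the moment bound at level $4p$ (legitimate because $4p>p_0'$) the drift contributes a term of order $|t-s|^{2p}\le C|t-s|^p$. The martingale term is handled by the Burkholder--Davis--Gundy inequality together with the elementary Hölder estimate
\begin{align*}
\mathbb{E}\sup_{v\in[s,t]}\Big|\int_v^t Z_r\,\mathrm{d}W_r\Big|^{2p}\le C\,\mathbb{E}\Big(\int_s^t|Z_r|^2\,\mathrm{d}r\Big)^{p}\le C\,|t-s|^{p-1}\!\int_s^t\mathbb{E}|Z_r|^{2p}\,\mathrm{d}r\le C|t-s|^{p},
\end{align*}
which is the binding contribution. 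Combining the two pieces yields $\mathbb{E}\sup_{v\in[s,t]}|Y_v-Y_t|^{2p}\le C|t-s|^p$.

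The second estimate is the $L^p$ upgrade of Zhang's $L^2$-time regularity of $Z$, and here I would exploit the first-order variational regularity together with the representation theorem. Writing $\nabla X$ for the Jacobian (first variation) flow of \eqref{2.1} and $\nabla Y$ for the first variation of $Y$—which solves the linear variational BSDE with control process $\nabla Z$—one has, since $\sigma=\Id$, the representation $Z_t=\nabla Y_t\,(\nabla X_t)^{-1}$. I would then decompose
\begin{align*}
Z_r-Z_{t_j}=(\nabla Y_r-\nabla Y_{t_j})(\nabla X_r)^{-1}+\nabla Y_{t_j}\big[(\nabla X_r)^{-1}-(\nabla X_{t_j})^{-1}\big],
\end{align*}
bound $\int_{t_j}^{t_{j+1}}|Z_r-Z_{t_j}|^2\,\mathrm{d}r$ by the two corresponding integrals, take the $p$-th power and expectation, and split each product by Hölder, absorbing $\sup_r|(\nabla X_r)^{-1}|$ and $\sup_r|\nabla Y_r|$ (whose high moments are controlled by the flow estimates and the variational $\bmo$-bound). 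For the first summand the dominant contribution comes from the martingale part $\int_{t_j}^{r}\nabla Z_s\,\mathrm{d}W_s$ of the increment $\nabla Y_r-\nabla Y_{t_j}$; after BDG and a Jensen step one is left, using the elementary inequality $\sum_j a_j^p\le(\sum_j a_j)^p$ for $a_j\ge 0,\ p\ge 1$, with
\begin{align*}
\sum_{j}\mathbb{E}\Big(\int_{t_j}^{t_{j+1}}|\nabla Z_s|^2\,\mathrm{d}s\Big)^{p}\le\mathbb{E}\Big(\int_0^T|\nabla Z_s|^2\,\mathrm{d}s\Big)^{p}<\infty,
\end{align*}
so that this summand is of order $h^p$ using \emph{only} first-order variational regularity (finiteness of $\mathbb{E}(\int_0^T|\nabla Z|^2)^p$, itself a consequence of $\nabla Z*W\in\bmo$). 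The second summand reduces, via the same Hölder splitting and telescoping sum, to the path regularity of the inverse Jacobian flow, $\sum_j\mathbb{E}\big(\int_{t_j}^{t_{j+1}}|(\nabla X_r)^{-1}-(\nabla X_{t_j})^{-1}|^2\,\mathrm{d}r\big)^{p'}\le Ch^{p'}$.

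The main obstacle is this last ingredient, together with keeping all the quadratic-in-$z$ coefficients of the variational equation integrable. Under the merely bounded and Dini-continuous drift (A1) the classical identity $\tfrac{d}{dt}\nabla X_t=\nabla b(t,X_t)\nabla X_t$ is ill-posed, since $\nabla b$ does not exist pointwise; I would therefore route the regularity of $(\nabla X)^{-1}$ through the Zvonkin transformation and the quasi-diffeomorphism flow estimates established earlier in the paper, expressing $\nabla X$ through the regularised map and the transformed flow to recover the required local $L^{p'}$-modulus of continuity. At the same time, the coefficients $\nabla_x g,\nabla_y g,\nabla_z g$ of the variational BSDE carry the non-uniform factors $\ell(|Y|)|Z|$ and $|Z|^{\alpha_0}$, so closing the moment and $\bmo$ estimates for $\nabla Z$ is possible only because Kazamaki's inequality promotes these terms to arbitrarily high integrability—once again forcing the restriction $p>p_0'$. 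Balancing the Hölder conjugates so that the flow moments, the variational $\bmo$-bound, and the $\sup$-moments of $Z$ all fall in the admissible integrability range is the delicate bookkeeping on which the whole $Z$-estimate rests.
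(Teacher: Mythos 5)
Your first estimate (the $Y$-increment bound) is correct and is essentially the paper's own argument: BDG, the quadratic growth of $g$, boundedness of $Y$, and the sup-moment bound $\mathbb{E}[\sup_t|Z_t|^{q}]\le C$ for $q>p_0'$. The problem is the second estimate, and the gap is not a technicality but the central difficulty of the theorem.

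Your $Z$-argument rests on a single, globally defined variational process $\nabla Y$ with $Z_t=\nabla Y_t(\nabla X_t)^{-1}$ on all of $[0,T]$. For a path-dependent terminal value $\xi=\Phi(X_\cdot)$ no such representation exists. The theorem is proved in the paper by first approximating $\Phi$ by discrete functionals $\phi(X_{s_0},\dots,X_{s_r})$ (Lemma \ref{5lem22}, Theorem \ref{thm1.2}) and then passing to the limit by stability of quadratic BSDEs. For a discrete functional the representation of $Z$ is only \emph{piecewise}: on $[s_k,s_{k+1})$ one has $Z_t=(\nabla^{k}Y_t)^{\bold T}(\nabla_x X_t)^{-1}$, where $\nabla^{k}Y$ carries only the terms $i\ge k$ of $\nabla\phi$ (Lemma \ref{lemm4.1}), and $Z$ has $O(1)$ jumps at the points $s_i$ (Theorem \ref{threpr1}(iii)). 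Consequently the increment $\nabla^{k}Y_t-\nabla^{l_j}Y_{t_j}$ contains the jump terms $\sum_{i\ge l_j}^{k-1}y_i(t_j)$, which are \emph{not} small individually; only their aggregate over all $j$, weighted by the Lipschitz constants $\Lambda_\phi^{(i)}$ with $\sum_i\Lambda_\phi^{(i)}=S(\phi)\le C\Lambda_\Phi$, can be controlled, and with a constant independent of \emph{both} partitions $\pi_0$ and $\pi_1$. This is exactly what the paper's terms $T_1$--$T_7$ (in particular $T_2$), the auxiliary finer partition $\pi_2$ with the map $\eta(j,k)$, and Zhang's lemma (Lemma \ref{lemmZhou}, bounding $\mathbb{E}\big[\sum_j(\int_{t_j}^{t_{j+1}}|\sum_{i\ge\eta(j)}z_i|^2)^p\big]$ by terminal values of martingales) accomplish. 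Your decomposition has no mechanism at all for the jump contributions; applied to even the simplest path-dependent example ($g=0$, $\xi=\phi(X_{s_0},\dots,X_{s_r})$ with $r$ large) it would either miss the jumps entirely or produce a constant growing with $r$, i.e.\ depending on the approximating partition, which destroys the limit argument to general $\Phi$.

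Two secondary points. First, you justify $\mathbb{E}\big(\int_0^T|\nabla Z_s|^2\mathrm{d}s\big)^p<\infty$ by asserting $\nabla Z* W\in\bmo$; this is not available (the terminal value of the variational BSDE, $\nabla\xi\,\nabla X_T$, is unbounded), and the paper only has $(\nabla Y,\nabla Z)\in\mathcal S^p\times\mathcal H^p$ for $p>p_0'$ via the measure change and reverse H\"older inequality — moreover with constants that must be traced through $S(\phi)$ to remain partition-independent. Second, even in the Markovian case your bookkeeping for the first summand is lossy: after splitting off $\sup_r|(\nabla X_r)^{-1}|$ by H\"older \emph{inside each} $j$-term, the resulting sum $\sum_j\big(\mathbb{E}[\,\cdot\,]\big)^{1/a}$ is not dominated by the corresponding global quantity (subadditivity of $x\mapsto x^{1/a}$ goes the wrong way), which is why the paper keeps $(\nabla X_{t_j})^{-1}$ as an $\mathfrak F_{t_j}$-measurable weight, conditions on $\mathfrak F_{t_j}$, and applies a single Cauchy--Schwarz to the whole sum.
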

			The notations used in the above theorem and the subsequent one are introduced in Section \ref{secpremres}, herein. The strategy to address the problem follows the approach developed in \cite{MaZhang01}. Specifically, we will first prove a version of Theorem \ref{thm1.1} where the terminal value is a functional depending on a discrete set of variables. Then, using density arguments (see Lemma \ref{5lem22}), we will derive the desired result. More precisely, we have
			
			\begin{thm}\label{thm1.2}
				Consider FBSDE \eqref{2.1}--\eqref{2.2} such that $b$ and $g$ satisfy Assumptions \ref{assum2.1} and the terminal value $\xi$ has the following structure
				\begin{itemize}
					\item Let $\pi_0: 0=s_0 <\cdots < s_{r-1}<s_r=T$ be a partition, we set $\xi= \phi(X_{s_0},\cdots,X_{s_r})$, where $\phi: \mathbb{R}^{m\times r} \rightarrow \mathbb{R}$ is bounded and satisfies \eqref{2.6}--\eqref{2.7}. 
					\item Let $\pi_1: 0=t_0 <t_1<\cdots < t_N=T$ be another partition and $h= \sup_j|t_{j+1}-t_j|$.
				\end{itemize}
				Then, for any $p >p_0'> 1$ there is a constant $C$ only depending on $p,d,T$, $\|b\|_{L^{\infty}([0,T];C_b(\mathbb{R}^d))}$ and $S(\phi)$ and not depending on the partitions $\pi_0$ and $\pi_1$ such that  
				\begin{align*}
					\mathbb{E} \big[\sup_{v\in [s,t]}|Y_v-Y_t|^{2p}\big] \leq C |t-s|^p\\
					\sum_{j=0}^{N-1}\mathbb{E}\Big(\int_{t_j}^{t_{j+1}}|Z_t - Z_{t_j}|^2 \mathrm{d}t \Big)^p \leq C h^p.
				\end{align*}
				where $p_0'$ satisfied $1/p_0 + 1/p_0' =1$ and $p_0$ is such that $\mathcal{E}\left(\nabla_z g*W\right) \in L^{p_0}$.
			\end{thm}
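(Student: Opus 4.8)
The plan is to run the variational/representation programme of Ma and Zhang \cite{MaZhang01,MaZhang02} in the present singular-drift, quadratic-generator regime. The two assertions are handled separately: the regularity of $Y$ is elementary once the standard a priori bounds are in hand, whereas the $L^2$-time regularity of $Z$ is the substantive part and rests on a first-order variational calculus for \eqref{2.1}--\eqref{2.2} together with the representation of Theorem \ref{threpr1}. Throughout I use the uniform bound $\|Y\|_\infty<\infty$ and the $\bmo$ bound on $Z*W$ recorded in \eqref{77}, together with the integrability $\mathbb{E}[\sup_{0\le t\le T}|Z_t|^{2p}]\le C$ for $p>p_0'$ coming from Kazamaki's reverse Hölder inequality \cite{Kazamaki}; the latter is exactly what fixes the admissible exponent range $p>p_0'$.

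For the first estimate, for $s\le v\le t$ one has
\[
Y_v-Y_t=\int_v^t g(r,X_r,Y_r,Z_r)\,\mathrm{d}r-\int_v^t Z_r\,\mathrm{d}W_r .
\]
Taking $\sup_{v\in[s,t]}$ and the $2p$-th moment, I would estimate the drift part from the growth of $g$ implied by Assumption \ref{assum2.1}(A2) and $\|g(\cdot,0,0,0)\|_\infty\le\Lambda_0$: since $\|Y\|_\infty<\infty$, $\ell$ is locally bounded and $X$ has moments of every order, this reduces to $\int_s^t\big(1+|X_r|+|Z_r|+|Z_r|^2\big)\,\mathrm{d}r$, whose $2p$-th moment is of order $|t-s|^{2p}\le T^{p}|t-s|^p$ after Hölder and the crude bound $\int_s^t|Z_r|^2\,\mathrm{d}r\le(t-s)\sup_r|Z_r|^2$. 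The martingale part is controlled by the Burkholder--Davis--Gundy inequality, $\mathbb{E}\big[\sup_v|\int_v^tZ_r\,\mathrm{d}W_r|^{2p}\big]\le C\,\mathbb{E}\big[(\int_s^t|Z_r|^2\,\mathrm{d}r)^p\big]\le C|t-s|^p\,\mathbb{E}[\sup_r|Z_r|^{2p}]$, which is finite and of the right order because $2p>p_0'$. Adding the two bounds gives the first assertion.

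For the second estimate I would invoke the representation $Z_t=\nabla Y_t\,(\nabla X_t)^{-1}$ of Theorem \ref{threpr1}, valid on each interval $(s_k,s_{k+1})$, where $\nabla X$ is the first variation of the singular forward flow and $(\nabla Y,\nabla Z)$ solves the linear variation BSDE with driver $\nabla_x g\,\nabla X+\nabla_y g\,\nabla Y+\nabla_z g\,\nabla Z$ and terminal datum $\sum_{j>k}\partial_{x_j}\phi(X_{s_0},\dots,X_{s_r})\,\nabla X_{s_j}$ (obtained by mollifying $\phi$ and passing to the limit), which is bounded via the $L^\infty$- or $L^1$-Lipschitz control $S(\phi)$ of \eqref{2.6}--\eqref{2.7}. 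The process $\nabla Y$ is continuous on each $(s_k,s_{k+1})$ and jumps across the breakpoints, so on the finitely many partition intervals $[t_j,t_{j+1}]$ straddling some $s_k$ I would use the crude bound $\int_{t_j}^{t_{j+1}}|Z_t-Z_{t_j}|^2\,\mathrm{d}t\le h\,(2\sup_t|Z_t|)^2$, contributing $O(h^p)$ in total. On the remaining intervals I would write
\[
Z_t-Z_{t_j}=(\nabla Y_t-\nabla Y_{t_j})(\nabla X_t)^{-1}+\nabla Y_{t_j}\big[(\nabla X_t)^{-1}-(\nabla X_{t_j})^{-1}\big],
\]
bound the second summand through the (at least $\tfrac12$-Hölder) time-regularity of $(\nabla X)^{-1}$ in every $L^q$ and the integrability of $\nabla Y$, and control the first summand through the martingale representation of the variation BSDE: as $\nabla Y_t-\nabla Y_{t_j}$ consists of a bounded-variation term of size $O(|t-t_j|)$ plus $\int_{t_j}^t\nabla Z_s\,\mathrm{d}W_s$, summing over $j$ and using $\sum_j a_j^p\le(\sum_j a_j)^p$ yields $C\,h^p\,\mathbb{E}\big(\int_0^T|\nabla Z_s|^2\,\mathrm{d}s\big)^p$. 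This produces the claimed $h^p$.

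The heart of the matter, and the main obstacle, is to make this variational calculus both legitimate and quantitative under the two simultaneous degeneracies. On the forward side $b$ is only bounded and Dini-continuous, so $\nabla X$ cannot be produced by naive differentiation; it must be constructed through the quasi-diffeomorphism flow furnished by the regularization-by-noise theory, and one needs uniform $L^q$ control of $(\nabla X)^{-1}$ and of its time-increments. On the backward side the coefficient $\nabla_z g$ in the variation BSDE is unbounded---it grows linearly in $Z$---so the linear equation for $(\nabla Y,\nabla Z)$ cannot be closed by contraction in the usual spaces; instead the term $\nabla_z g\,\nabla Z$ is removed by the Girsanov change of measure with density $\mathcal{E}(\nabla_z g*W)$, which is a genuine probability density precisely because $Z*W\in\bmo$. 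Its integrability power $p_0$, and hence the conjugate $p_0'$, governs the transfer of the $L^p$ estimates for $\nabla Z$ from the auxiliary measure back to $\mathbb{P}$, and this is what forces $p>p_0'$ and propagates to $\mathbb{E}\big(\int_0^T|\nabla Z|^2\,\mathrm{d}s\big)^p<\infty$. Reconciling the Sobolev bounds for $\nabla X$ with the $\bmo$-based moment bounds for $\nabla Z$ so that both hold with compatible exponents is the delicate point; once secured, the increment analysis of the previous paragraph closes the argument.
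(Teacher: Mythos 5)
Your treatment of the $Y$-increment and your overall skeleton for $Z$ (the representation $Z_t=\nabla Y_t(\nabla X_t)^{-1}$, the splitting into a flow-increment part $\nabla Y_{t_j}[(\nabla X_t)^{-1}-(\nabla X_{t_j})^{-1}]$ and a $\nabla Y$-increment part, and the Girsanov removal of the unbounded $\nabla_z g\,\nabla Z$ term) coincide with the paper's proof, which uses exactly this decomposition $Z_t-Z_{t_j}=I_t^{1,j}+I_t^{2,j}$ built on Lemma \ref{lemm4.1} (note: the representation you need is \eqref{rep4.5}, not Theorem \ref{threpr1}). However, there is a genuine gap at the single most delicate point of the theorem: the intervals of $\pi_1$ that straddle breakpoints of $\pi_0$. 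Your crude bound $\int_{t_j}^{t_{j+1}}|Z_t-Z_{t_j}|^2\,\mathrm{d}t\le h\,(2\sup_t|Z_t|)^2$ gives $Ch^p$ \emph{per straddling interval}, hence a total of order $(\text{number of straddling intervals})\times h^p$. That number can be as large as $\min(r,N)$, so the resulting constant depends on $\pi_0$ through $r$ — while the theorem explicitly demands a constant independent of both partitions — and when the breakpoints are interleaved with the grid (so that essentially every interval straddles one) the bound degrades to $\sim N h^p\approx T h^{p-1}$, which is not $O(h^p)$ at all for $p$ near $1$. This uniformity is not a cosmetic requirement: it is precisely what allows the paper to pass to the path-dependent case of Theorem \ref{thm1.1} by letting $r\to\infty$ along refining partitions. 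The jumps of $Z$ at the $s_k$ are of size $O(1)$, so no interval-wise estimate can work; only their suitably weighted aggregate is small. The paper achieves this through the decomposition \eqref{5.3}, whose jump contribution $\sum_{i\ge l_j}^{k-1}y_i(t_j)$ is summed over $j$ using the auxiliary finer partition $\pi_2$ with weights $\rho_k$ and, crucially, Zhang's martingale-summation lemma (Lemma \ref{lemmZhou}): the telescoping/martingale structure converts the sum of all jump terms into terminal values of martingales, bounded by $S(\phi)$ (the \emph{sum} of the Lipschitz constants) rather than by the number $r$ of breakpoints, yielding $T_2\le C(1+S(\phi)^{2p})h$ uniformly in $\pi_0,\pi_1$.

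A secondary, but also substantive, weakness is your summation step $\sum_j a_j^p\le(\sum_j a_j)^p$ applied to the martingale increments of $\nabla Y$. After the Girsanov change of measure those increments are stochastic integrals against $W^{\mathbb{Q}}$, whereas the final estimate is under $\mathbb{P}$; transferring back interval-by-interval via the reverse H\"older inequality places an exponent $1/q'<1$ \emph{outside} each expectation, after which the superadditivity trick no longer applies (one picks up a factor $N^{1-1/q'}$). The paper avoids this by keeping the whole sum $\sum_j(\int_{t_j}^{t_{j+1}}|\cdot|^2\,\mathrm{d}\tau)^{2p}$ intact, applying the reverse H\"older inequality once to the full expression, and only then invoking Lemma \ref{lemmZhou} under $\mathbb{Q}$ (see the estimates leading to \eqref{rep4.3} and \eqref{T_4}). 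Without these two devices — the martingale summation lemma for the jump terms and the global (rather than interval-wise) measure change — your argument cannot deliver the constant claimed in the statement.
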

			\begin{proof}
				See Subsection \ref{sectprofmain1}.
			\end{proof}	
			We will close this discussion with another important result. It provides an alternative way to control the difference between two $\mathcal{H}_{\bmo}$ processes under an equivalent probability measure, denoted $\mathbb{Q}^{N}$. This result also pertains to the path regularity of the difference between the derivatives (with respect to the starting point of the diffusion process $X$) of the backward component $Y$, and can be viewed as a mere extension of the path regularity result of the process $Z$ stated earlier.
			\begin{prop}\label{cor 1.3}
				Let the assumptions of Theorem \ref{thm1.1} be in force. Let $\{\chi_i\}_{i=0}^{N-1}$ be a sequence of random variables such that $\chi_i \in L^{\infty}(\Omega,\mathfrak{F}_{t+i},\mathbb{P})$, $\mathbb{E}[\chi_i/\mathfrak{F}_{t_i}] =1$ and $\chi_i \in [\epsilon,2-\epsilon]$ for all $0\leq i\leq N-1$ with some $\epsilon \in (0,1)$ independent of $N$. Define the process $\hat{\chi}(t)$ and the probability measure $\mathbb{Q}^N$, respectively by 
				\begin{align*}
					\hat{\chi}(t) =\prod_{t_i\leq t}\chi_i, \quad \frac{\mathrm{d}\mathbb{Q^N}}{\mathrm{d}\mathbb{P}} =  \hat{\chi}(t).
				\end{align*}
				Provided the equivalent probability measure $\mathbb{Q}^N$ satisfies the reverse H\"older inequality, then for any $p>q^{*} $ and $\eta >1$ the following holds:
				\begin{align*}
					\mathbb{E}\Big[\sup_{0\leq i\leq N-1}\mathbb{E}^{\mathbb{Q}^{N}}\Big\{\sum_{j=i}^{N-1}\Big(\int_{t_j}^{t_{j+1}}|Z_s-\hat{Z}_{t_j}|^2\mathrm{d}s\Big)^{\eta}\big/\mathfrak{F}_{t_i}\Big\}^p\Big]\leq Ch^{p\eta},
				\end{align*}
				with \[\hat{Z}_{t_j}:=\frac{1}{\delta t_j} \mathbb{E}\Big[\int_{t_j}^{t_{j+1}} Z_s \mathrm{d}s/\mathfrak{F}_{t_j} \Big].\]
			\end{prop}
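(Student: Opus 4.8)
The plan is to peel off, in order, the averaged integrand $\hat Z_{t_j}$, the running summation index, and finally the change of measure, thereby reducing the whole statement to a single $L^p(\mathbb{P})$-moment of the total $Z$-oscillation sum, which is then controlled by a \emph{localized}, high-moment version of the path regularity behind Theorem \ref{thm1.1}. Throughout write $\Delta_j:=\int_{t_j}^{t_{j+1}}|Z_s-Z_{t_j}|^2\,\mathrm{d}s$.

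First I would replace $\hat Z_{t_j}$ by $Z_{t_j}$. Since $\hat Z_{t_j}=\tfrac1{\delta t_j}\mathbb{E}[\int_{t_j}^{t_{j+1}}Z_s\,\mathrm{d}s\,|\,\mathfrak{F}_{t_j}]$ is the $\mathfrak{F}_{t_j}$-measurable $L^2$-projection of $Z$ on $[t_j,t_{j+1}]$, decomposing $Z_s-\hat Z_{t_j}=(Z_s-Z_{t_j})+(Z_{t_j}-\hat Z_{t_j})$ and bounding $|Z_{t_j}-\hat Z_{t_j}|^2\le \tfrac1{\delta t_j}\mathbb{E}[\Delta_j\,|\,\mathfrak{F}_{t_j}]$ by conditional Jensen gives
\begin{align*}
\int_{t_j}^{t_{j+1}}|Z_s-\hat Z_{t_j}|^2\,\mathrm{d}s\le 2\,\Delta_j+2\,\mathbb{E}[\Delta_j\,|\,\mathfrak{F}_{t_j}].
\end{align*}
Set $A:=\sum_{j=0}^{N-1}\big(\int_{t_j}^{t_{j+1}}|Z_s-\hat Z_{t_j}|^2\,\mathrm{d}s\big)^{\eta}$, a fixed, nonnegative, $\mathfrak{F}_T$-measurable random variable. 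Raising the display to the power $\eta$ and summing records, for later use, the bound $A\le C_\eta\sum_{j}\big(\Delta_j^\eta+(\mathbb{E}[\Delta_j\,|\,\mathfrak{F}_{t_j}])^\eta\big)$; moreover, since the summand is nonnegative, $\sum_{j\ge i}(\cdot)^\eta\le A$, so the inner bracket of the statement is dominated by $\mathbb{E}^{\mathbb{Q}^N}[A\,|\,\mathfrak{F}_{t_i}]$.

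Next comes the change of measure. Since $\hat\chi$ is a $\mathbb{P}$-martingale (each $\chi_i$ satisfies $\mathbb{E}[\chi_i\,|\,\mathfrak{F}_{t_i}]=1$), Bayes' formula gives $\mathbb{E}^{\mathbb{Q}^N}[A\,|\,\mathfrak{F}_{t_i}]=\mathbb{E}^{\mathbb{P}}[A\,\rho_i\,|\,\mathfrak{F}_{t_i}]$ with $\rho_i:=\hat\chi(T)/\hat\chi(t_i)$ and $\mathbb{E}^{\mathbb{P}}[\rho_i\,|\,\mathfrak{F}_{t_i}]=1$. Applying conditional Hölder with exponent $q^*$ on $A$ and its conjugate $(q^*)'$ on $\rho_i$, and invoking the reverse Hölder inequality for $\mathbb{Q}^N$ (which bounds $\mathbb{E}^{\mathbb{P}}[\rho_i^{(q^*)'}\,|\,\mathfrak{F}_{t_i}]\le C$ uniformly in $N$ and $i$), gives
\begin{align*}
\mathbb{E}^{\mathbb{Q}^N}[A\,|\,\mathfrak{F}_{t_i}]\le C\,\mathbb{E}^{\mathbb{P}}\big[A^{\,q^*}\,|\,\mathfrak{F}_{t_i}\big]^{1/q^*};
\end{align*}
the hypothesis $p>q^*$ is precisely what makes $p/q^*>1$. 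Taking the supremum over $i$, the right-hand side is the running maximum of the $\mathbb{P}$-martingale $N_i:=\mathbb{E}^{\mathbb{P}}[A^{q^*}\,|\,\mathfrak{F}_{t_i}]$, so Doob's $L^{p/q^*}$-maximal inequality under $\mathbb{P}$ yields
\begin{align*}
\mathbb{E}^{\mathbb{P}}\Big[\sup_{0\le i\le N-1}\mathbb{E}^{\mathbb{Q}^N}[A\,|\,\mathfrak{F}_{t_i}]^{p}\Big]\le C\,\mathbb{E}^{\mathbb{P}}\big[N_{N-1}^{\,p/q^*}\big]\le C\,\mathbb{E}^{\mathbb{P}}[A^{p}],
\end{align*}
which reduces the entire claim to proving $\mathbb{E}^{\mathbb{P}}[A^p]\le Ch^{p\eta}$.

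This last estimate is the crux. The naive route through the \emph{summed} conclusion of Theorem \ref{thm1.1} fails, since Jensen would cost a prohibitive factor $N^{p-1}\sim h^{-(p-1)}$ and destroy the rate. Instead I would use the per-interval, high-moment regularity $\mathbb{E}[\Delta_j^{\,m}]\le C(\delta t_j)^{2m}$ (equivalently $\mathbb{E}|Z_s-Z_{t_j}|^{2m}\le C|s-t_j|^{m}$ for every $m\ge1$), which follows from the first-order variational representation of $Z$ (of Ma--Zhang type, $Z_t=\nabla_x Y_t(\nabla_x X_t)^{-1}$) together with the $\bmo$ high-moment integrability \eqref{77} underlying Theorem \ref{thm1.1}. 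With this, Minkowski's inequality (the correction terms being handled identically via conditional Jensen $(\mathbb{E}[\Delta_j\,|\,\mathfrak{F}_{t_j}])^\eta\le\mathbb{E}[\Delta_j^\eta\,|\,\mathfrak{F}_{t_j}]$ and $\|\mathbb{E}[\cdot\,|\,\mathfrak{F}_{t_j}]\|_{L^p}\le\|\cdot\|_{L^p}$) gives
\begin{align*}
\|A\|_{L^p}\le C_\eta\sum_{j=0}^{N-1}\|\Delta_j\|_{L^{\eta p}}^{\eta}\le C\sum_{j=0}^{N-1}(\delta t_j)^{2\eta}\le C\,h^{2\eta-1},
\end{align*}
whence $\mathbb{E}^{\mathbb{P}}[A^p]\le Ch^{(2\eta-1)p}\le Ch^{p\eta}$ because $\eta>1$ and $h\le 1$. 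The genuine difficulty lies entirely in establishing this localized high-moment Hölder-$\tfrac12$ regularity of $Z$ uniformly in the mesh; the measure-change bookkeeping is routine, resting on the reverse Hölder inequality granted by hypothesis and on Doob's maximal inequality.
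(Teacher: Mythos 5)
Your opening reductions are correct and, in fact, tidier than what the paper does: the bound $\int_{t_j}^{t_{j+1}}|Z_s-\hat Z_{t_j}|^2\,\mathrm{d}s\le 2\Delta_j+2\,\mathbb{E}[\Delta_j\,|\,\mathfrak{F}_{t_j}]$ via conditional Jensen, the Bayes formula $\mathbb{E}^{\mathbb{Q}^N}[A\,|\,\mathfrak{F}_{t_i}]=\mathbb{E}^{\mathbb{P}}[A\rho_i\,|\,\mathfrak{F}_{t_i}]$ with $\rho_i=\hat\chi(T)/\hat\chi(t_i)$, the conditional H\"older step using the assumed reverse H\"older inequality, and Doob's $L^{p/q^*}$ maximal inequality (legitimate since $p>q^*$) together give a clean reduction of the whole statement to the unconditional estimate $\mathbb{E}^{\mathbb{P}}[A^p]\le Ch^{p\eta}$. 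The paper never performs this decoupling; it instead re-runs the entire proof of Theorem \ref{thm1.2} (the decomposition $Z_s-Z_{t_j}=I_s^{1,j}+I_s^{2,j}$, the linearization \eqref{eqsup1}, the auxiliary partition $\pi_2$, Lemma \ref{lemmZhou}) with the conditional expectations under $\mathbb{Q}^{N}$ and the reverse H\"older inequality interleaved at every stage, and then passes to general terminal values by density. So up to this point your route is a genuine and worthwhile simplification.

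The gap is in the crux you defer. The estimate you invoke --- $\mathbb{E}[\Delta_j^{m}]\le C(\delta t_j)^{2m}$, equivalently the pointwise H\"older-$\tfrac12$ bound $\mathbb{E}|Z_s-Z_{t_j}|^{2m}\le C|s-t_j|^{m}$ --- is \emph{false} under the hypotheses of Theorem \ref{thm1.1}. Assumption (A3) covers path-dependent terminal values and in particular discrete functionals $\xi=\phi(X_{s_0},\dots,X_{s_r})$ (these are $L^\infty$-Lipschitz functionals of the path), and for those the control process $Z$ has genuine jumps at the points $s_i$: by Theorem \ref{threpr1}(iii) the jump sizes satisfy $\mathbb{E}|\Delta Z_{s_i}|^p\le C_p$, i.e.\ they are bounded in $L^p$ but do \emph{not} shrink with the mesh. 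On any interval $[t_j,t_{j+1}]$ containing such a point, $\mathbb{E}|Z_s-Z_{t_j}|^{2m}$ stays bounded away from zero as $h\to 0$, so on those intervals $\|\Delta_j\|_{L^{\eta p}}^{\eta}$ is of order $(\delta t_j)^{\eta}$ rather than $(\delta t_j)^{2\eta}$, and your Minkowski chain $\|A\|_{L^p}\le C\sum_j(\delta t_j)^{2\eta}$ breaks down precisely there. The conclusion nevertheless survives, because a jump interval contributes roughly $(\text{jump size})^{2\eta}(\delta t_j)^{\eta}$ and the jump sizes carry a summable Lipschitz mass $S(\phi)\le C\Lambda_\Phi$, so the jump intervals in total still contribute $O(h^{\eta})$; but establishing this is exactly the summation/martingale bookkeeping that the paper performs through the linearized BSDEs $(y_i,z_i)$ of Lemma \ref{lemm4.1}, Lemma \ref{lemmZhou}, and the terms $T_1$--$T_7$ --- it is not a ``localized high-moment H\"older regularity of $Z$'' at all, and no per-interval estimate of the form you state can deliver it. In short: your reduction is sound, but the one estimate you declare to be the heart of the matter is the wrong estimate, and the correct replacement requires essentially the paper's full argument.
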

			\begin{proof}
				See Subsection \ref{sectprofmain1}.
			\end{proof}	
			\subsubsection{On nonlinear Feynman-Kac formula for discrete functional  QFBSDEs with non-Lipschitz terminal value}
			
			Here we assume that the solution $(Y,Z)$ to the BSDE \eqref{2.2} is realised such that, on $[s_{l-1},s_l),$ we have
			\[ Y_t = u_l(\bar{X}_{l-1};t,X_t) \text{ and } Z_t = \omega_l(\bar{X}_{l-1};t,X_t),  \]
			where $\bar{X}_{l-1}:=(X_{s_1},\cdots,X_{s_{l-1}})$. Let $F_l(\bar{x}_{l-1};\cdot,\cdot):[r_{l-1},r_l]\times\mathbb{R}^d\rightarrow\mathbb{R}$ be defined by
			\begin{align*}
				F_l(x_1,\cdots,x_{l-1};t,x) = F_l(\bar{x}_{l-1};t,x):= \mathbb{E}u_l(x_1,\cdots,x_{r_{l-1}};r_l,X_{r_l}^{t,x}).
			\end{align*}
			Then, the function $F_l$ is the solution to the PDE defined below on the interval $[r_{l-1},r_l)$ for fixed $x_1,\cdots,x_{l-1} \in \mathbb{R}^d.$
			\begin{align}
				\frac{\partial}{\partial t}F_l(\bar{x}_{l-1};t,x) +\frac{1}{2}\nabla_{xx}^2F_l(\bar{x}_{l-1};t,x) +\langle b,\nabla_xF_l(\bar{x}_{l-1};t,x) \rangle =0.
			\end{align}
			The main result of this part is the following, which gives the wellposedness of the functions $u_l$ and $\omega_l$ defined above.
			\begin{thm}[Nonlinear Feynman-Kac formula for discrete functional  QFBSDEs]\label{prop48}
				Let Assumption \ref{assum 4.8} be in force. Then for $l=1,\cdots, L$ there exist measurable $u_l:(\mathbb{R}^d)^{l-1}\times [s_{l-1},s_l)\times \mathbb{R}^d \rightarrow \mathbb{R}$ and $\omega_l:(\mathbb{R}^d)^{l-1}\times [s_{l-1},s_l)\times \mathbb{R}^d \rightarrow \mathbb{R}^{1\times d}$ and Borel sets $D_l \subseteq (\mathbb{R}^{d(l-1)}, l=2,\cdots,L$, such that $D_l^{c}$ is of Lebesgue measure zero, and such that
				\begin{itemize}
					\item[(i)] $u_l(\bar{x}_{l-1};\cdot,\cdot): [s_{l-1},s_l)\times \mathbb{R}^d \rightarrow \mathbb{R}$ is continuously differentiable w.r.t. the space variable with $\nabla_x u_l(\bar{x}_{l-1};t,x)= \omega_l(\bar{x}_{l-1};t,x)$, where $\bar{x}_{l-1} = (x_1,\cdots,x_{l-1})$
					\item[(ii)] there is a constant $C>0$ independent of $l$ such that
					\begin{align*}
						\sup_{t\in[s_{l-1},s_l)}|u_l(\bar{x}_{l-1};t,x)| + \sup_{t\in[s_{l-1},s_l)}\sqrt{s_l-t}|\omega_l(\bar{x}_{l-1};t,x)| \leq C.
					\end{align*}
					\item[(ii)] for all $l=1,\cdots,L,$ $x_1,\cdots,x_{l-1},x \in \mathbb{R}^d$ and $s_{l-1}\leq s< s_l$ the triplet 
					\[  \left( X_t^{s,x}, u_l(\bar{x}_{l-1};t,X_t^{s,x}), \omega_l(\bar{x}_{l-1};t,X_t^{s,x}) \right)_{t\in [s,s_l)}.\]
					solves the FBSDE with generator $g$ and terminal condition 
					\[ u_l(\bar{x}_{l-1};s_l,X_{s_l}^{s,x}),   \]
					where 
					\begin{equation*}
						u_l(\bar{x}_{l-1};s_l,x):=
						\begin{cases}
							u_{l+1}(\bar{x}_{l-1},x;s_l,x)\mathcal{X}_{D_l}(\bar{x}_{l-1}): 2\leq l < L,\\
							\varphi(\bar{x}_{l-1},x)\mathcal{X}_{D_l}(\bar{x}_{l-1}): l = L,
						\end{cases}
					\end{equation*}
					and $u_1(r_1,x):= u_2(x;r_1,x)$.
				\end{itemize} 
			\end{thm}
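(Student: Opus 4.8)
The plan is to argue by backward induction on $l$, constructing the pair $(u_l,\omega_l)$ on the slab $[s_{l-1},s_l)$ from the data produced on the slab $[s_l,s_{l+1})$. After freezing the parameters $\bar{x}_{l-1}$, the Markovian terminal datum is the bounded measurable map $x\mapsto\varphi(\bar{x}_{l-1},x)$ when $l=L$ and $x\mapsto u_{l+1}(\bar{x}_{l-1},x;s_l,x)$ when $2\leq l<L$, the latter being bounded by the inductive hypothesis (ii). In either case one is left with a decoupled Markovian quadratic FBSDE on $[s_{l-1},s_l)$ driven by the forward flow $X^{s,x}$, with bounded terminal value and a generator $g$ that is quadratic in $z$, Lipschitz in $y$ and stochastically Lipschitz in $x$ as in (B2). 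Wellposedness, with $Y$ bounded and $Z*W\in\bmo$ and with the a priori bound $\|Y\|_{\infty}+\|Z*W\|_{\bmo}\leq C$ depending only on $\|\varphi\|_{\infty}$, follows from the Kobylanski-type theory already used for \eqref{TruncY} (cf. \eqref{77}); this yields the first half of (ii). Setting $u_l(\bar{x}_{l-1};s,x):=Y_s^{s,x}$ and reading $\omega_l$ off from $Z$, the flow property of \eqref{2.1} together with uniqueness of the BSDE produces the Markov representation $Y_t=u_l(\bar{X}_{l-1};t,X_t)$, $Z_t=\omega_l(\bar{X}_{l-1};t,X_t)$, hence item (iii).

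The core of the argument is item (i) together with the singular gradient bound $\sqrt{s_l-t}\,|\omega_l|\leq C$. Since the terminal datum is only measurable, I would first mollify it into smooth $\varphi^{(n)}$, solve the resulting smooth quadratic FBSDE, and invoke the first-order variational regularity discussed in the introduction to conclude that $u_l^{(n)}(\bar{x}_{l-1};\cdot,\cdot)$ is $C^1$ in space with $\nabla_x u_l^{(n)}=\omega_l^{(n)}$ and $Z_r^{(n)}=\nabla_x u_l^{(n)}\,\nabla X_r^{s,x}$. The decisive uniform control comes from the Ma--Zhang representation theorem (Theorem \ref{threpr1}), which writes $Z_r^{(n)}$ as a conditional expectation of the terminal value and of the generator against Malliavin weights; because $\sigma=\mathrm{Id}$ and $b$ is Lipschitz by (B1), the Malliavin covariance of $X^{s,x}$ is nondegenerate and the weight attached to the terminal value has $L^2$-norm of order $(s_l-r)^{-1/2}$. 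Combining this with the uniform bounds on $\|Y^{(n)}\|_{\infty}$ and $\|Z^{(n)}*W\|_{\bmo}$ and the integrability of $\bmo$ martingales gives $|\omega_l^{(n)}(\bar{x}_{l-1};r,x)|\leq C(s_l-r)^{-1/2}$ uniformly in $n$, which is precisely the claimed bound.

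The passage $n\to\infty$ is where I expect the main difficulty to lie. Since the representation formula for $Z$ depends on the terminal value only through an integral against a fixed smooth Malliavin weight, dominated convergence --- justified by the uniform $\bmo$ bounds and the reverse H\"older inequality they entail --- upgrades the a.e. convergence $\varphi^{(n)}\to\varphi$ to convergence of $\omega_l^{(n)}\to\omega_l$ and $u_l^{(n)}\to u_l$ locally uniformly in $x$, thereby preserving both the identity $\nabla_x u_l=\omega_l$ and the gradient bound. The remaining subtlety is measure-theoretic: the inductive terminal datum $x\mapsto u_{l+1}(\bar{x}_{l-1},x;s_l,x)$ is specified only for $\bar{x}_{l-1}$ outside a null set, so one must exhibit a Borel set $D_l$ with $D_l^{c}$ of measure zero on which the diagonal evaluation is well-defined and $\bar{x}_{l-1}\mapsto(u_l,\omega_l)$ is jointly measurable; this is handled by a Fubini argument applied to the joint measurability produced at the previous step, discarding for each $l$ the null set of parameters whose diagonal section meets $D_{l+1}^{c}$ in positive measure. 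Setting $u_1(r_1,x):=u_2(x;r_1,x)$ then closes the induction.
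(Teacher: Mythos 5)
Your overall architecture coincides with the paper's: Theorem \ref{prop48} is reduced, by exactly the backward induction over the slabs $[s_{l-1},s_l)$ and the null-set bookkeeping you describe, to a single-slab, parameter-dependent statement (the paper's Lemma \ref{lemm 4.9}), and that lemma is proved along your lines --- mollify the bounded measurable terminal datum into smooth $H_n$, establish uniform $\|Y^n\|_{\infty}$ and $\bmo$ bounds (the latter via a Lyapunov-function/It\^o--Krylov argument), obtain the singular gradient bound through a Malliavin-weight representation, and pass to the limit through that same representation by dominated convergence. (A cosmetic difference: the paper invokes \cite[Theorem 6.2]{MaZhang02} after a Girsanov change of measure that removes the drift from the forward equation and moves a term $Z\,b$ into the driver, rather than Theorem \ref{threpr1} directly; this does not affect the structure.)

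There is, however, a genuine gap at your ``decisive uniform control'' step. In the Malliavin-weight representation the generator contributes $\mathbb{E}\int_r^{R} g(s,X_s,Y^n_s,Z^n_s)\,N_s^{r,x}\,\mathrm{d}s$ with $|g|\leq C(1+|Z^n_s|^2)$ and $\mathbb{E}\big[|N_s^{r,x}|^p\big]^{1/p}\sim (s-r)^{-1/2}$, and you claim this is controlled by the uniform bounds on $\|Y^{(n)}\|_{\infty}$, $\|Z^{(n)}*W\|_{\bmo}$ and the integrability of $\bmo$ martingales. That implication is false: $\bmo$/energy estimates bound $\mathbb{E}\big[\big(\int_I |Z^n_s|^2\mathrm{d}s\big)^p\big]$ uniformly over subintervals $I$, but give no decay as $I$ shrinks toward $r$, so the quadratic term cannot be integrated against the singularity of $N^{r,x}$ at $s=r$ with this information alone. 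Indeed, the deterministic process $|Z_s|^2=\big((s-r)\log^2(1/(s-r))\big)^{-1}$ has finite quadratic variation (hence finite $\bmo$ norm) yet $\int_r^R |Z_s|^2 (s-r)^{-1/2}\mathrm{d}s=\infty$; the needed bound is therefore a genuine property of the solution, not a consequence of the a priori classes. This is precisely why the paper proves first the intermediate pointwise estimate \eqref{eq427}, $\sup_n|Z_s^{\bold{y},n;r,x}|\leq C(R-s)^{-1/2}$, by a separate argument: the auxiliary process $F^{\bold{y},n,r,x}$ of \eqref{428} is a martingale under an equivalent measure, so $(F)^2$ is a submartingale, and $\mathbb{E}^{\mathbb{Q}}\int_t^R|F_s|^2\mathrm{d}s$ is bounded --- independently of $\|\nabla_x G_n\|_{\infty}$ --- by writing $\nabla_x Y = Z\,\nabla_x X$ and using the $\bmo$ bound on the true control process (this step also uses that $b$ is Lipschitz, i.e.\ (B1)). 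Only with this crude bound in hand does the Malliavin-weight estimate close, by splitting $[r,R]$ at $(R+r)/2$: on the left half the pointwise bound tames $|Z^n_s|^2$ against the integrable singularity of $N$, while on the right half $\sup_s|N_s|$ has moments of order $(R-r)^{-1/2}$ and $\int|Z^n|^2$ is $\bmo$-controlled (the terms $I_1$ and $I_2$ in the paper). Without this extra a priori estimate, or a substitute for it, your gradient bound --- and hence items (i)--(ii) --- does not follow.
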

			
			As well explained in \cite{3G12}, we modify the functional for the backward process 
			$Y$ at each level $l=2,\cdots,L$ on a null set. Since  $\mathbb{P}(X_{r_1}\in D_2\cdots,(X_{r_1},\cdots,X_{r_{l-1}})\in D_l)=1$, this modification does not affect the bounded solution of the BSDE of interest, making Proposition \ref{prop48} well suited to our setting. The proof of Theorem \ref{prop48} follows from the next Lemma, which extends \cite[Lemma A.2]{3Garxiv}.

			\begin{lemm}[Representation of a QBSDE parameterized by a parameter $y\in \mathbb{R}^K$]\label{lemm 4.9}
				Suppose Assumption \ref{assum 4.8} holds and let $H :\mathbb{R}^K\times \mathbb{R}^d\rightarrow \mathbb{R}$ be a bounded and measurable function. Then there is a Borel set $F\subseteq \mathbb{R}^K$ such that $F^{c}$ is of Lebesgue measure zero and such that for 
				\[ G(\bold{y};x):=\mathcal{X}_F(\bold{y})H(\bold{y};x)  \]
				and 
				\begin{equation*}
					U(\bold{y};t,x)=
					\begin{cases}
						Y_t^{\bold{y}:t,x} \text{ a.s.}&:\,\, r\leq t<R\\
						G(\bold{y};x)&:t=R,
					\end{cases}
				\end{equation*}	
				where $(Y_s^{\bold{y}:t,x})_{s\in [t,R]}$ is the $Y\text{-component}$ of the BSDE with respect to the forward $(X_s^{t,x})_{s\in [t,R]}$, the terminal condition $G(\bold{y};X_R^{t,x})$ with terminal time $R\in (0,T]$  and the generator $g$, the following assertions are satisfied
				\begin{itemize}
					\item[(i)] For fixed $\bold{y}\in \mathbb{R}^K$ we have that $U(\bold{y};\cdot,\cdot)\in C^{0,1}([r,R)\times \mathbb{R}^d)$.
					\item[(ii)] The functions $U:\mathbb{R}^K\times[r,R]\times\mathbb{R}^d\rightarrow\mathbb{R}$ and $\nabla_xU:\mathbb{R}^K\times[r,R]\times\mathbb{R}^d\rightarrow\mathbb{R}^{1\times d}$ are measurable.
					\item[(iii)] There is a constant $C$ such that for all $(\bold{y},t,x)\in \mathbb{R}^K \times[r,R]\times\mathbb{R}^d$ the following holds
					\begin{align*}
						\sup_{(\bold{y},t,x)\in \mathbb{R}^K \times[r,R]\times\mathbb{R}^d} \left(	|U(\bold{y};t,x)| + (R-t)^{1/2}|\nabla_x U(\bold{y};t,x)| \right) \leq C.
					\end{align*}
					\item[(iv)] For any $y\in \mathbb{R}^K$, the solution of the BSDE with the terminal value $G(\bold{y};X_R^{t,x})$, generator $g$ and forward diffusion $(X_s^{t,x})_{s\in[r,R]}$ can be represented as 
					\begin{align*}
						Y_t^{\bold{y};t,x} = U(\bold{y};t,X_t^{r,x}),\quad Z_t^{\bold{y};r,x} = \nabla_x U(\bold{y};t,X_t^{r,x}).
					\end{align*}
				\end{itemize}
			\end{lemm}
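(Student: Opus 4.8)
The plan is to obtain all four assertions by a mollification–stability argument: I would replace the merely bounded and measurable datum $H$ by smooth terminal conditions, for which the nonlinear Feynman–Kac representation is already available, establish uniform estimates, and then pass to the limit while tracking the dependence on the parameter $\mathbf{y}$ to carve out the exceptional null set $F^{c}$.

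First I would fix a mollifier $\rho_m$ and set $H^m(\mathbf{y};\cdot):=H(\mathbf{y};\cdot)*\rho_m$, so that $\|H^m\|_\infty\le\|H\|_\infty$ and, for every $\mathbf{y}$, $H^m(\mathbf{y};\cdot)\to H(\mathbf{y};\cdot)$ in $L^1_{\mathrm{loc}}(\mathbb{R}^d)$ and Lebesgue-a.e. Under Assumption \ref{assum 4.8} the drift $b$ is Lipschitz and $\sigma=\mathrm{Id}$, so the forward equation generates a stochastic flow of diffeomorphisms with $\nabla X$ and $(\nabla X)^{-1}$ bounded, and for the smooth data $H^m$ the quadratic BSDE with terminal value $H^m(\mathbf{y};X_R^{t,x})$ admits a representation $Y^m_t=U^m(\mathbf{y};t,X_t^{r,x})$, $Z^m_t=\nabla_x U^m(\mathbf{y};t,X_t^{r,x})$ with $U^m(\mathbf{y};\cdot,\cdot)\in C^{0,1}$, by the representation theorem \ref{threpr1} together with the first-order variational regularity for this class. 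Joint measurability of $(\mathbf{y},t,x)\mapsto U^m(\mathbf{y};t,x)$ and of its gradient is inherited from the smoothness of $H^m$ and from the measurable dependence of the BSDE data on $\mathbf{y}$.

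The analytic heart is a pair of a priori bounds that are uniform \emph{in $m$ and in $\mathbf{y}$}. The sup-bound $\|U^m\|_\infty\le C$ follows from $\|H^m\|_\infty\le\|H\|_\infty$ and the $L^\infty$ estimate underlying \eqref{77}. The delicate one is $(R-t)^{1/2}|\nabla_x U^m(\mathbf{y};t,x)|\le C$: since $\nabla H^m$ blows up as $m\to\infty$, one cannot differentiate the terminal datum directly. Instead I would linearise the quadratic driver by a Girsanov change of measure generated by the $\bmo$ martingale $\nabla_z g*W$ (legitimate because $\mathcal{E}(\nabla_z g*W)\in L^{p_0}$ satisfies a reverse Hölder inequality), reducing $\nabla_x U^m$ to an expectation of the terminal term against a Bismut–Elworthy–Li / Malliavin weight of the form $(R-t)^{-1}\int_t^R\big((\nabla X_s)(\nabla X_t)^{-1}\big)^{\top}\mathrm{d}W_s$ plus an integrated driver contribution. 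Applying Cauchy–Schwarz together with the reverse Hölder integrability of the density and the boundedness of $H^m$ then yields the rate $(R-t)^{-1/2}$ uniformly. This is precisely where the $\bmo$/reverse-Hölder machinery of the paper is indispensable, and I expect it to be the main obstacle, because the estimate must hold uniformly across the parameter $\mathbf{y}$ and the approximation level $m$ simultaneously.

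Finally I would pass to the limit. Stability for the linearised (post-Girsanov) BSDE bounds $\sup_t|U^m(\mathbf{y};t,x)-U^{m'}(\mathbf{y};t,x)|$ and the $\mathcal{H}^2$-norm of $\nabla_x U^m-\nabla_x U^{m'}$ by a power of $\mathbb{E}|H^m(\mathbf{y};X_R^{t,x})-H^{m'}(\mathbf{y};X_R^{t,x})|$. Because $b$ is Lipschitz and the noise is additive and nondegenerate, $X_R^{t,x}$ has a Gaussian-type density, so for Lebesgue-a.e. $\mathbf{y}$ this right-hand side tends to $0$; a Fubini argument over $\mathbb{R}^K$ then produces a single full-measure Borel set $F$ and a single subsequence along which $U^m(\mathbf{y};\cdot,\cdot)$ converges locally uniformly on $[r,R)\times\mathbb{R}^d$ for every $\mathbf{y}\in F$. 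Setting $G(\mathbf{y};x)=\mathcal{X}_F(\mathbf{y})H(\mathbf{y};x)$ and letting $U$ be the limit, the local uniform gradient bound away from $R$ gives equicontinuity and hence (i); a.e. limits of jointly measurable functions give (ii); the uniform bounds pass to the limit giving (iii); and the representation (iv) follows by passing to the limit in $Y^m_t=U^m(\mathbf{y};t,X_t^{r,x})$ and $Z^m_t=\nabla_x U^m(\mathbf{y};t,X_t^{r,x})$, using convergence of the BSDE solutions to the one with datum $G(\mathbf{y};X_R^{t,x})$ together with the flow (Markov) property.
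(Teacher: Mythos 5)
Your overall route coincides with the paper's: approximate $H$ by smooth bounded functions $H_n$ with $\sup_n\|H_n\|_\infty\le\|H\|_\infty$, prove sup- and gradient-bounds that are uniform in the approximation level and in $\mathbf{y}$, and pass to the limit, using a.e.\ convergence of the mollified data and the nondegeneracy of the forward diffusion to carve out the exceptional null set. The limiting procedure you describe (stability of the BSDE, Fubini over $\mathbf{y}$, equicontinuity from the local gradient bound) is essentially what the paper does.

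The genuine gap is in the step you yourself identify as the analytic heart, the uniform bound $(R-t)^{1/2}|\nabla_x U^n(\mathbf{y};t,x)|\le C$. In the Ma--Zhang type representation the gradient equals the expectation of the terminal value against the weight $N_R^{r,x}$ \emph{plus} the driver contribution $\mathbb{E}\int_r^R g(s,X_s,Y_s,Z_s)\,N_s^{r,x}\,\mathrm{d}s$, where $|g|\lesssim 1+\ell(|Y|)|Z|^2$ and, by \eqref{429}, $\mathbb{E}|N_s^{r,x}|^p\lesssim (s-r)^{-p/2}$. Your proposed "Cauchy--Schwarz together with reverse H\"older" does not close this term: Cauchy--Schwarz in $(s,\omega)$ requires $\mathbb{E}\int_r^R|N_s^{r,x}|^2\mathrm{d}s$, which diverges because $\mathbb{E}|N_s^{r,x}|^2\sim (s-r)^{-1}$; Cauchy--Schwarz in $\omega$ alone requires pointwise-in-time moments such as $\sup_s\mathbb{E}|Z_s|^4$, which are exactly what one is trying to prove --- the only a priori information is the BMO bound, i.e.\ control of \emph{time-integrated} quantities $\mathbb{E}\big[\int|Z_s|^2\mathrm{d}s\,/\,\mathfrak{F}_\tau\big]$. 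The singularity of the weight at $s=r$ collides with the quadratic growth in $Z$, and no single application of H\"older-type inequalities resolves it.

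The missing idea is the paper's two-stage bootstrap. First, the drift is moved into the driver by Girsanov (the FBSDE \eqref{423}), so that the forward becomes a Brownian motion, $\nabla_x X\equiv \mathrm{Id}$ and hence $Z_s^{\mathbf{y},n;r,x}=\nabla_x Y_s^{\mathbf{y},n;r,x}$; one then forms the $\mathbb{Q}$-martingale $F_t^{\mathbf{y},n,r,x}$ of \eqref{428}, whose square is a submartingale, and combines $(R-r)|Z_r^{\mathbf{y},n;r,x}|^2\le \mathbb{E}^{\mathbb{Q}}\int_r^R|F_s|^2\mathrm{d}s$ with the fact that the right-hand side is controlled by BMO norms alone (not by $\|\nabla_x G_n\|_\infty$) to obtain the interior estimate \eqref{eq427}, $\sup_n|Z_r^{\mathbf{y},n;r,x}|\le C(R-r)^{-1/2}$. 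Note that this requires the BMO norm of $Z^{\mathbf{y},n;r,x}$ to be uniform in $n$, which the paper establishes via the Lyapunov pair of Lemma \ref{lyapunov} and the It\^o--Krylov formula --- a point your proposal also takes for granted when asserting uniform reverse H\"older constants. Only with \eqref{eq427} in hand does the weighted representation close: the paper splits the driver integral at $(R+r)/2$; on $[r,(R+r)/2]$ the interior estimate controls $(1+|Z_s|^{3/2})$ pointwise, leaving $(1+|Z_s|^{1/2})|N_s^{r,x}|$ to be handled by H\"older with exponents $4$ and $4/3$ (for which $\int_r^{(R+r)/2}(s-r)^{-2/3}\mathrm{d}s$ is finite), while on $[(R+r)/2,R]$ the sup-moment bound in \eqref{429} and the BMO/energy inequality suffice; this yields \eqref{bound2}. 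With that repair, the rest of your argument goes through as in the paper.
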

			\begin{proof}
				See Subsection \ref{profmaindisc}.
			\end{proof}	
			
			\subsubsection*{On the BTZ scheme}
			
			Considering the above BTZ scheme, the main result pertains with the following error bound. 
			\begin{thm}[BTZ scheme error bound]\label{Main63}
				Let $(X,Y,Z)$ be the solution to FBSDE \eqref{2.1}-\eqref{2.2} associated to the parameters $(b,\xi,g)$ satisfying Assumption \ref{assum5.1}. Suppose the partition $\pi : 0=t_0 <\cdots < t_N = T$ with $h:=\sup_{j}(t_{j+1}-t_j) \leq K N^{-1}$ for some $K> 0$. Then,
				\begin{align*}
					&\mathbb{E}\Big[\sup_{0\leq i\leq N}|Y_{t_i}-\mathcal{Y}_{i}^{\pi}|^{2}\Big] + \mathbb{E}\Big[\sum_{i=1}^{N-1}\int_{t_i}^{t_{i+1}}|Z_{s}-\mathcal{Z}_{i}^{\pi}|^2\mathrm{d}s\Big]\\
					&\leq C\Big(\mathbb{E}\Big[|\Phi(X)-\Phi(\mathcal{X}^{\pi})|^{8q^{*}}\Big]^{\frac{1}{4q^{*}}}+ h + \mathbb{E}[|X_t- \mathcal{X}^{\pi}_i|^{16q^{*}}]^{\frac{1}{8q^{*}}}\Big).
				\end{align*}
			\end{thm}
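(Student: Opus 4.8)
The plan is to run the explicit Pagès--Sagna/BTZ error analysis for the \emph{truncated} (Lipschitz-type) equation \eqref{TruncY}, but to carry the whole recursion out under an equivalent measure that linearises the quadratic part of the driver, drawing the $L^2$-time regularity of $Z$ from Theorem \ref{thm1.1} and Proposition \ref{cor 1.3}. First I would split
\[
Y_{t_i}-\mathcal{Y}_i^{\pi}
= \big(Y_{t_i}-Y_{t_i}^{n}\big) + \big(Y_{t_i}^{n}-\mathcal{Y}_i^{\pi}\big),
\]
with the truncation level $n$ coupled to $N$ so that, by the stability of the truncation together with \eqref{77}, the first difference $Y-Y^{n}$ is dominated by the stated right-hand side; the core is then the genuine discretization error $Y^{n}-\mathcal{Y}^{\pi}$ for the scheme \eqref{BTZ}, which is driven by $g_n$. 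Since $g_n$ still grows quadratically in $z$ through $\ell(|y|)|z|^2$, a direct $L^2$-estimate under $\mathbb{P}$ is hopeless; instead, using the uniform bound \eqref{77} and $\mathbb{E}[\sup_t|Z_t^n|^p]\le C$, I would build the Doléans--Dade exponential from $\nabla_z g_n$ evaluated along solution and scheme and pass to the equivalent measure $\mathbb{Q}^{N}$ of Proposition \ref{cor 1.3}. Under $\mathbb{Q}^{N}$ the quadratic contribution is absorbed into a drift and the driver is effectively Lipschitz, while the reverse Hölder inequality (Kazamaki) controls the moments of $\mathrm{d}\mathbb{Q}^{N}/\mathrm{d}\mathbb{P}$ and its inverse; this is exactly what fixes the exponents $8q^{*},16q^{*}$ on the right-hand side.

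Next I would perform the one-step analysis. Over $[t_i,t_{i+1}]$, comparing \eqref{TruncY} with the recursion \eqref{BTZ} yields
\[
Y_{t_i}^{n}-\mathcal{Y}_i^{\pi}
= \mathbb{E}\big[\,Y_{t_{i+1}}^{n}-\mathcal{Y}_{i+1}^{\pi}\,/\,\mathfrak{F}_{t_i}\big]
+ \Delta_i^{g}\,\delta t_i + \mathcal{R}_i,
\]
where $\Delta_i^{g}$ is the difference of drivers and $\mathcal{R}_i=\mathbb{E}[\int_{t_i}^{t_{i+1}}(g_n(s,X_s,Y_s^n,Z_s^n)-g_n(t_i,\mathcal{X}_i^{\pi},\cdot,\cdot))\,\mathrm{d}s/\mathfrak{F}_{t_i}]$ is the quadrature remainder. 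The driver difference $\Delta_i^{g}$ is handled by the post-change-of-measure Lipschitz bounds from Assumption \ref{assum5.1}: the $x$-Lipschitz term $\Lambda_x(1+|y|+\ell(|y|)|z|^{\alpha_0})$ produces the forward error $\mathbb{E}[|X_t-\mathcal{X}_i^{\pi}|^{16q^{*}}]^{1/(8q^{*})}$ after Hölder splitting against the BMO moments, and the backward recursion's datum $\mathcal{Y}_N^{\pi}=\Phi(\mathcal{X}^{\pi})$ feeds in the terminal error. The remainder $\mathcal{R}_i$ is controlled by the half-Hölder continuity of $g$ in time (Assumption \ref{assum5.1}(i)) together with the path regularity of $(Y^n,Z^n)$ from Theorem \ref{thm1.1}, giving the $h$-term.

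For the control I would introduce the $L^2$-projection $\hat{Z}_{t_j}=\frac{1}{\delta t_j}\mathbb{E}[\int_{t_j}^{t_{j+1}}Z_s^n\,\mathrm{d}s/\mathfrak{F}_{t_j}]$ and write $Z_s-\mathcal{Z}_j^{\pi}=(Z_s-\hat{Z}_{t_j})+(\hat{Z}_{t_j}-\mathcal{Z}_j^{\pi})$. The first piece is precisely the $L^2$-time regularity, bounded by $Ch$ through Theorem \ref{thm1.1} and, under $\mathbb{Q}^{N}$, through Proposition \ref{cor 1.3}; the second is tied to the $Y$-errors by testing the second line of \eqref{BTZ} against the weights $H_i^{R}$ of \eqref{eq79} and exploiting the near-orthogonality of the martingale increments, the truncation level $R$ contributing only a negligible bias. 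Summing the one-step estimates over $i$ and invoking a discrete Gronwall inequality under $\mathbb{Q}^{N}$ then closes the recursion: the terminal datum supplies $\mathbb{E}[|\Phi(X)-\Phi(\mathcal{X}^{\pi})|^{8q^{*}}]^{1/(4q^{*})}$, while the accumulated remainders supply $h$ and the forward term, and a final Hölder step strips off the $\mathbb{Q}^{N}$-density to return to $\mathbb{P}$.

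The main obstacle is the simultaneous presence of the quadratic $z$-growth and the merely Dini-continuous, slowly varying drift. Because the usual pointwise bound $|Z_t|\le C(1+|X_t|)$ is unavailable in this regime, every estimate must be run under the equivalent measure $\mathbb{Q}^{N}$ with all constants kept independent of $N$, and each quadratic term has to be absorbed by repeated Hölder splittings against the BMO exponential moments of $Z$. Keeping this bookkeeping consistent---so that the high powers $8q^{*},16q^{*}$ close the Gronwall recursion without blowing up the constant---is the delicate technical heart of the argument.
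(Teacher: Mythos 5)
Your proposal follows essentially the same route as the paper's proof: splitting the error into the truncation part $Y-Y^{n}$ (handled by the stability result, Theorem \ref{thmconve1}) and the discretization part $Y^{n}-\mathcal{Y}^{\pi}$, viewing the truncated solution as a perturbed BTZ scheme, linearising the quadratic driver via a discrete Dol\'eans--Dade change of measure controlled by the reverse H\"older inequality (whence the exponents $q^{*}$), estimating the perturbation terms through the $L^2$-time regularity of $Z$ (Theorem \ref{thm1.1}, Proposition \ref{cor 1.3}), and tying the $Z$-error to the $Y$-error via the projection $\hat{Z}$ and the orthogonality properties of the weights $H_i^{R}$. The only cosmetic difference is that you invoke a discrete Gronwall argument where the paper closes the recursion through the explicit multiplicative representation of Lemma \ref{Lemm6.4} and Lemma \ref{lemm68}, which are equivalent in substance.
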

			
			The proof of the above theorem is quit lengthy and will be divided into several steps, condensed into two distinct propositions that can be found in Subsection \ref{sub4.3}. 
			\begin{remark}\leavevmode
				\begin{itemize}
					\item By retaining all the assumptions of Theorem \ref{Main63} and further assuming that the coefficients of the forward SDE satisfy the standard conditions\footnote{the drift and the diffusion are Lipschitz continuous and satisfy the linear growth bounds as in \cite{ChaRichou16,Zhou}}, a convergence rate of order $\frac{1}{2}$ is achieved in this setting.  This result can still be viewed as an extension of the findings in \cite{Zhou}, broadening their applicability to more general class of quadratic BSDEs with stochastic Lipschitz drivers in all their components.
					\item Moreover, given a sequence of partitions $\{\pi(N)\}_{N\geq 1}$ with maximum time-step $h(N)$ such that $h(N)\leq KN^{-1}$, with $K>0$. Denote by $Z^{\pi(N)}(t)$ the $Z\text{-component}$ obtained in the numerical approximation \eqref{BTZ} under the same framework. Suppose that there exist constants  $C,\eta, q^{*} >0$ such that 
					\begin{align*}
						\mathbb{E}\Big[|\Phi(X)-\Phi(\mathcal{X}^{\pi(N)})|^{8q^{*}}\Big]^{\frac{1}{4q^{*}}}+ \mathbb{E}[|X_t- \mathcal{X}^{\pi(N)}_i|^{16q^{*}}]^{\frac{1}{8q^{*}}} < CN^{-\gamma}.
					\end{align*}
					Then we obtain that
					\begin{align*}
						\lim_{N\rightarrow \infty}	\mathbb{E}|Z^{\pi(N)}(t) -Z_t|^2 =0 \text{ a.s. for } t\in [0,T].
					\end{align*}
				\end{itemize}
			\end{remark}
			The following corollary offers valuable insight into the result mentioned above. Specifically, it provides a precise error estimate for the numerical approximation of the BSDEs of interest within our framework, further clarifying the impact of the noise regularization on the Euler scheme for singular-type SDEs.
			\begin{cor}\label{cor111}
				Let the assumptions of Theorem \ref{Main63} be in force with in particular $T=1$. Then
				\begin{align*}
					\mathbb{E}\Big[\sup_{0\leq i\leq N}|Y_{t_i}-\mathcal{Y}_{i}^{\pi}|^{2}\Big] + \mathbb{E}\Big[\sum_{i=1}^{N-1}\int_{t_i}^{t_{i+1}}|Z_{s}-\mathcal{Z}_{i}^{\pi}|^2\mathrm{d}s\Big] \leq Ch^{(1-\gamma)}.
				\end{align*}
			\end{cor}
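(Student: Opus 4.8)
The plan is to obtain Corollary \ref{cor111} as a direct consequence of Theorem \ref{Main63}: under the normalization $T=1$ it suffices to show that each of the three terms on the right-hand side of that error bound is dominated by $Ch^{1-\gamma}$, i.e. to verify the hypothesis \eqref{1.5}. The middle term is immediate, since $\gamma\in(0,1)$ and $h\leq 1$ give $h=h^{1-\gamma}h^{\gamma}\leq h^{1-\gamma}$. Thus the whole content reduces to controlling the strong error between the forward diffusion $X$ solving \eqref{2.1} and its Euler--Maruyama approximation $\mathcal{X}^{\pi}$ from \eqref{Euler}, in the high moments dictated by the exponent $q^{*}$.

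The central ingredient I would invoke is the $L^{p}$-strong approximation rate for the singular forward SDE, namely that for every $p\geq 1$
\[
\mathbb{E}\Big[\sup_{0\leq t\leq 1}|\mathcal{X}^{\pi}_t - X_t|^{2p}\Big] \leq C h^{(1-\gamma)p}.
\]
This is the genuine upgrade of the Dareiotis--Gerencsér estimate \eqref{1.6} (which only yields $\sup_t \mathbb{E}|\cdot|^{2}$): one removes the singular drift by a Zvonkin-type transformation, reducing $\mathcal{X}^{\pi}_t - X_t$ to the quadrature error of $x\mapsto\int_0^t b(s,x+W_s)\,\mathrm{d}s$, and then feeds the conditional moment bound \eqref{1.7} of \cite{Le22}---obtained by identifying that error as a $\bmo$ process and applying the John--Nirenberg inequality---into a Gr\"onwall argument in $L^{2p}(\Omega)$.

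Granting this rate, the $X$-term follows by taking $2p=16q^{*}$:
\[
\mathbb{E}\big[|X_t-\mathcal{X}^{\pi}_i|^{16q^{*}}\big]^{\frac{1}{8q^{*}}} \leq \Big(\mathbb{E}\big[\sup_{0\leq t\leq 1}|X_t-\mathcal{X}^{\pi}_t|^{16q^{*}}\big]\Big)^{\frac{1}{8q^{*}}} \leq \big(Ch^{(1-\gamma)8q^{*}}\big)^{\frac{1}{8q^{*}}} = C' h^{1-\gamma}.
\]
For the functional terminal term I would use Assumption \ref{assum2.1}(A3). When $\Phi$ is $L^{\infty}$-Lipschitz one has $|\Phi(X)-\Phi(\mathcal{X}^{\pi})| \leq \Lambda_{\Phi}\sup_{0\leq t\leq 1}|X_t-\mathcal{X}^{\pi}_t|$, so taking $2p=8q^{*}$ gives $\mathbb{E}[|\Phi(X)-\Phi(\mathcal{X}^{\pi})|^{8q^{*}}]^{\frac{1}{4q^{*}}} \leq \Lambda_{\Phi}^{2}\,C'' h^{1-\gamma}$. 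When $\Phi$ is only $L^{1}$-Lipschitz, one first bounds $|\Phi(X)-\Phi(\mathcal{X}^{\pi})|$ by the time-averaged difference $\Lambda_{\Phi}\int_0^1|X_t-\mathcal{X}^{\pi}_t|\,\mathrm{d}t$ and then, since $T=1$, applies Jensen to pass to $\int_0^1\mathbb{E}|X_t-\mathcal{X}^{\pi}_t|^{8q^{*}}\,\mathrm{d}t$, which is again controlled by the same moment bound (the supremum dominates the pointwise-in-time moment). Combining these three estimates with Theorem \ref{Main63} yields the claim.

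The main obstacle lies not in the corollary's exponent bookkeeping but in the strong $L^{p}$-rate above: the explicit moment computations underlying \cite{DaGe20} cap the accuracy at second order and do not deliver the uniform, arbitrarily high moments with the supremum inside the expectation that the exponent $q^{*}$ demands here. It is precisely L\^e's $\bmo$/John--Nirenberg characterization that supplies these moments, and this is the step where the regularization-by-noise mechanism is transferred from the forward scheme to the BTZ error bound.
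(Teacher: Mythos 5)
Your proposal follows essentially the same route as the paper: Corollary \ref{cor111} is obtained by feeding the strong $L^p$ Euler--Maruyama rate of Theorem \ref{thm210} (itself proved via L\^e's $\bmo$/John--Nirenberg estimate) together with the $L^{\infty}$- or $L^1$-Lipschitz property of $\Phi$ into the error bound of Theorem \ref{Main63}, and your exponent bookkeeping ($2p=16q^{*}$, resp.\ $2p=8q^{*}$) is the right one. The one step you gloss over --- and it is the step that occupies most of the paper's own proof --- is that Theorem \ref{thm210} controls the \emph{continuous} Euler interpolation $\mathcal{X}^{\pi}(t)= x+\int_0^t b(s,\mathcal{X}^{\pi}_{\pi(s)})\mathrm{d}s+W_t$, whereas the quantities in \eqref{1.5} and in the BTZ scheme \eqref{BTZ} involve the piecewise-constant path $\mathcal{X}^{\pi}_t=\mathcal{X}^{\pi}(\pi(t))$. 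The discrepancy between the two has nothing to do with the drift or with regularization by noise: on each subinterval it is dominated by the Brownian increment, so one needs the separate modulus-of-continuity estimate
\begin{align*}
\mathbb{E}\Big[\max_{i}\sup_{t_i\leq t\leq t_{i+1}}|W_t-W_{t_i}|^{2p}\Big]\leq C(p)\Big(h\log\big(\tfrac{1}{h}\big)\Big)^{p},
\end{align*}
which, combined with the boundedness of $b$ and Theorem \ref{thm210}, gives
\begin{align*}
\mathbb{E}\Big[\sup_{0\leq t\leq 1}|X_t-\mathcal{X}^{\pi}_t|^{2p}\Big]\leq C(p)\Big[h^{(1-\gamma)p}+\Big(h\log\big(\tfrac{1}{h}\big)\Big)^{p}\Big].
\end{align*}
Since $h\log(1/h)\leq C_{\gamma}h^{1-\gamma}$ for every $\gamma\in(0,1)$, the logarithmic term is absorbed, so the bound you asserted for $\mathcal{X}^{\pi}_t$ is true and the rest of your argument goes through unchanged; but as stated, your appeal to the Zvonkin/Gr\"onwall mechanism does not by itself deliver that bound for the piecewise-constant process, only for $\mathcal{X}^{\pi}(t)$.
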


			\subsection{Layout of the paper}
			The rest of the paper is organised as follows: In Section \ref{section1}, we provide some important insight on smoothness and numerical aspect of SDEs with singular drift coefficients.
			In Section \ref{secpremres}, we first give some basic concepts and definition that will be used throughout the paper. We also present some preliminary results on smoothness of FBSDE that are key to derive our main results. Section \ref{secproofmain} is devoted to the proofs of the main results whereas Section \ref{secconcl} provides the main findings of the paper and potential future direction. The Appendix contains the proofs of some auxiliary results. 
			
			\section{Smoothness and numerical approximation for singular SDEs}\label{section1}
			Here, we provide some results on smoothness of solution to SDE with Dini-continuous drifts.
			Before proceeding further, we recall the definition of Dini continuity.
			\begin{defi}\label{Dini}
				An increasing continuous function $\omega_f: \mathbb{R}_{+}\rightarrow \mathbb{R}_{+}$ is called a Dini function if 
				\begin{equation}
					\int_{0+} \frac{\omega_f(s)}{s}\mathrm{d}s < +\infty.
				\end{equation}
				A measurable function $\omega_f: \mathbb{R}_{+}\rightarrow \mathbb{R}_{+}$ is called a slowly varying function at zero if for every $\lambda >0$
				\begin{equation*}
					\lim_{s\rightarrow 0} \frac{\omega_f(\lambda s)}{\omega_f(s)} = 1.
				\end{equation*}
				A measurable function $f: \mathbb{R}^d \rightarrow \mathbb{R}$ is said to be Dini continuous if there is a Dini function $h$ such that 
				\begin{align}
					|f(x)-f(y)| \leq \omega_f(|x-y|).
				\end{align}
			\end{defi}
			When $\omega_f(r) = r^{\alpha}$ for some $\alpha \in (0,1)$ then the above regularity coincides with H\"{o}lder continuity.
			Every Dini continuous function is continuous.
			
			Let us now assume that the initial condition for the equation \eqref{2.1} is given by $(s,x) \in [0,T]\times\mathbb{R}^d$, for any $0\leq s \leq t \leq T$ and recall the following definition from \cite{Kun90}.
			\begin{defi}\label{defi 2.8}
				A stochastic homeomorphisms flow of class $C^{\beta}$ with $\beta\in (0,1)$ on $(\Omega,\mathfrak{F},\mathbb{P},(\mathfrak{F}_t)_{0\leq t \leq T})$ associated to \eqref{2.1} is a map $(s,t,x,\omega)\mapsto X_t^{s,x}(\omega)$, defined for $0\leq s\leq t \leq T$, $x\in \mathbb{R}^d$, $\omega \in \Omega$ with values in $\mathbb{R}^d$, such that 
				\begin{itemize}
					\item[(i)] The process $\{ X_t^{\cdot,x}\} = \{ X_t^{s,x} \}_{s\leq t\leq T}$ is a continuous $(\mathfrak{F}_{s,t})_{s\leq t \leq T}\text{-adapted}$ solution to \eqref{2.1}, for every $s\in [0,T]$, $x\in \mathbb{R}^d$;
					\item[(ii)] The function $X_t^{s,x}$ and $(X_t^{s,x})^{-1}$ are continuous in $(s,t,x)$ and are of class $C^{\beta}$ in $x$ uniformly in $(s,t)$, $\mathbb{P}\text{-a.s.},$ for all $0\leq s\leq t \leq T$;
					\item[(iii)] $X_t^{s,x} = X_t^{s,X_r^{s,x}}$ for all $0\leq s \leq r \leq t \leq T$, $x\in \mathbb{R}^{d}$, $\mathbb{P}\text{-a.s.},$ and $X_s^{s,x}= x$. 
				\end{itemize}
				Moreover, if the map $(s,t,x,\omega)\mapsto X_t^{s,x}(\omega)$ satisfies ${\bf(iv)}$, it is called a stochastic flow of $C^k$ diffeomorphisms
				\begin{itemize}
					\item[(iv)] $(s,t,x,\omega)\mapsto X_t^{s,x}(\omega)$ is $k\text{-times}$ differentiable with respect to $x$ for all $s,t$ and the derivatives are continuous in $(s,t,x)$.
				\end{itemize}
			\end{defi}
			A stochastic flow $(s,t,x,\omega)\mapsto X_t^{s,x}(\omega)$ of homeomorphisms is said to be {\it Sobolev-differentiable} if for all $s,t\leq T$ the maps $X_t^{s,x}$ and $(X_t^{s,x})^{-1}$ are Sobolev-differentiable i.e. $X_t^{s,x}$ and $(X_t^{s,x})^{-1}$ belong to $L^2(\Omega,W^{1,p}(\mathcal{O}))),$ where $\mathcal{O}$ is an open and bounded subset of $\mathbb{R}^d$.
			
			Further, if there is some $\gamma_0\geq 1$ such that 
			the maps $\nabla X_s^{t,x} $ and $\nabla (X_t^{s,x})^{-1}$ are in $C_b(\mathbb{R}^d;C_s([0,T];L^{\gamma_0}(\Omega;C_t([s,T]))))$ and $C_b(\mathbb{R}^d;C_t([0,T];L^{\gamma_0}(\Omega;C_s([0,t]))))$ respectively, then the stochastic flow $(s,t,x,\omega)\mapsto X_t^{s,x}(\omega)$ is called a stochastic quasi-diffeomorphisms flow.



			
			\subsection{Stochastic flow for SDEs with bounded and Dini continuous drifts}
			
			In this section, we recall the result on the existence of a quasi-diffeomorphism flow to the solution of the SDE \eqref{2.1} and derive some bounds on its supremum norm. Since the drift is not smooth, we apply a drift transformation to obtain an SDE with Lipschitz continuous coefficients through the solution to a Kolmogorov equation. More specifically, let us consider the following Cauchy problem with bounded and Dini continuous coefficients.
			\begin{align}\label{PDE1}
				\begin{cases}
					\partial_t u(t,x)=  \frac{1}{2}\Delta u(t,x) + \tilde{f}(t,x)\cdot\nabla u(t,x) + f(t,x),\quad (t,x)\in (0,T)\times\mathbb{R}^d\\
					u(0,x) = 0, \quad x\in  \mathbb{R}^d.
				\end{cases}
			\end{align}
			A solution $u(t,x)$ to \eqref{PDE1} is called a strong solution if $u \in L^{\infty}([0,T];W^{2,\infty}(\mathbb{R}^d))\cap W^{1,\infty}([0,T];L^{\infty}(\mathbb{R}^d))$ such that for a.a.$(t,x)\in [0,T]\times\mathbb{R}^d$ the equation \eqref{PDE1} is satisfied. The following representation of the strong solution to \eqref{PDE1} holds for all $(t,x)\in [0,T]\times\mathbb{R}^d$ 
			\begin{align*}
				u(t,x) = \int_0^t \Gamma(t-s,\cdot)*(\tilde{f}(s,\cdot)\cdot\nabla u(t,\cdot))(x)\mathrm{d}s
				+ \int_0^t \Gamma(t-s,\cdot)*\tilde{f}(s,\cdot)(x)\mathrm{d}s,
			\end{align*}
			where $\Gamma(t,x)= (2\pi t)^{-\frac{d}{2}}e^{-\frac{|x|^2}{2t}},$ $ t>0, x\in \mathbb{R}^d$ is the heat kernel of the operator $\partial_t -\frac{1}{2}\Delta $ in the whole space $\mathbb{R}^d$.
			
			\begin{thm}{\cite[Theorem 2.1]{WeiLvWang}}\label{WeiLvWang}
				Suppose that $f\in L^{\infty}([0,T];C_b(\mathbb{R}^d))$ and $\tilde{f}\in L^{\infty}([0,T];C_b(\mathbb{R}^d;\mathbb{R}^d))$. Suppose that $r_0 \in (0,1)$ and there is a Dini function $h$ such that for all $x \in \mathbb{R}^d$
				\begin{align*}
					|f(t,x)-f(t,y)| + |\tilde{f}(t,x)-\tilde{f}(t,y)| \leq h(|x-y|), \text{ for all } y \in B_{r_0}(x), t \in [0,T].
				\end{align*}
				Then the cauchy problem \eqref{PDE1} has a unique strong solution $u(t,x)$ such that
				\begin{align}\label{bound2.13}
					\|u\|_{L^{\infty}([0,T];C_b^2(\mathbb{R}^d))} \leq C(d,T)\left( 1+ 	\|f\|_{L^{\infty}([0,T];C_b(\mathbb{R}^d))} + 	\|\tilde{f}\|_{L^{\infty}([0,T];C_b(\mathbb{R}^d;\mathbb{R}^d))}\right).
				\end{align}
			\end{thm}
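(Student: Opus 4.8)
The plan is to establish existence, uniqueness and the a priori bound \eqref{bound2.13} by a contraction argument built on the mild (Duhamel) formulation displayed just above the statement. Writing $\Gamma$ for the heat kernel, I would study the map
\[
(\mathcal{T}u)(t,x) = \int_0^t \Gamma(t-s,\cdot)*\big(\tilde{f}(s,\cdot)\cdot\nabla u(s,\cdot)\big)(x)\,\mathrm{d}s + \int_0^t \Gamma(t-s,\cdot)*f(s,\cdot)(x)\,\mathrm{d}s
\]
on the Banach space $E_{T_0}=L^{\infty}([0,T_0];C_b^2(\mathbb{R}^d))$, and show that a fixed point of $\mathcal{T}$ is precisely the strong solution sought. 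The analysis splits into estimates of order zero, one and two in the space variable, of increasingly singular nature in time.

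The zeroth- and first-order estimates are routine: since $\|\Gamma(\tau,\cdot)\|_{L^1}=1$ and $\|\nabla\Gamma(\tau,\cdot)\|_{L^1}\le C\tau^{-1/2}$, one controls $\|u(t,\cdot)\|_{\infty}$ and $\|\nabla u(t,\cdot)\|_{\infty}$ in terms of $\|f\|_\infty$, $\|\tilde f\|_\infty$ and the time-integrable factor $\tau^{-1/2}$. The decisive point is the second-order estimate, where $\|\nabla^2\Gamma(\tau,\cdot)\|_{L^1}\le C\tau^{-1}$ fails to be integrable at $\tau=0$. Here I would exploit the cancellation $\int_{\mathbb{R}^d}\nabla^2\Gamma(\tau,x-y)\,\mathrm{d}y=0$ to write, for a bounded Dini-continuous datum $g$ with modulus $h$,
\[
\nabla^2\big[\Gamma(\tau,\cdot)*g(s,\cdot)\big](x) = \int_{\mathbb{R}^d}\nabla^2\Gamma(\tau,x-y)\,\big(g(s,y)-g(s,x)\big)\,\mathrm{d}y,
\]
and then bound $|g(s,y)-g(s,x)|\le h(|x-y|)$. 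Rescaling $y=x+\sqrt{\tau}\,w$ turns the inner integral into $C\tau^{-1}\!\int|\nabla^2\Gamma(1,w)|\,h(\sqrt{\tau}\,|w|)\,\mathrm{d}w$, so that after integrating in $s$ the total contribution is dominated by $\int_0^{T_0}\tau^{-1}h(\sqrt{\tau})\,\mathrm{d}\tau = 2\int_0^{\sqrt{T_0}} r^{-1}h(r)\,\mathrm{d}r$, which is finite precisely by the Dini condition of Definition \ref{Dini}. This is the crux: the Dini integrability of $h$ is exactly what converts the non-integrable singularity of $\nabla^2\Gamma$ into a finite, and in fact small (for small $T_0$), bound.

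With these estimates in hand, $\mathcal{T}$ maps $E_{T_0}$ into itself, and on the difference $\mathcal{T}u-\mathcal{T}v$ only the term carrying $\tilde f\cdot\nabla(u-v)$ survives; its $C_b^2$ norm is controlled by $C(\|\tilde f\|_\infty)\,\kappa(T_0)\,\|u-v\|_{E_{T_0}}$ with $\kappa(T_0)\to 0$ as $T_0\to 0$ (the first-order factor $\tau^{-1/2}$ contributing $\sqrt{T_0}$ and the Dini integral contributing a vanishing tail). Choosing $T_0$ small makes $\mathcal{T}$ a contraction, yielding a unique local strong solution; since the problem is linear, one iterates over consecutive subintervals of length $T_0$ to reach $[0,T]$, the a priori bound propagating with a constant of the form $C(d,T)$. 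Collecting the three estimates gives exactly \eqref{bound2.13}, with the dependence only on $d,T,\|f\|_\infty,\|\tilde f\|_\infty$ (the Dini function being fixed), and uniqueness follows from the contraction, or equivalently from a Gronwall estimate applied to the difference of two solutions. The membership in $W^{1,\infty}([0,T];L^\infty(\mathbb{R}^d))$ is then read off from the equation itself, $\partial_t u = \tfrac12\Delta u+\tilde f\cdot\nabla u+f$, once $\Delta u\in L^\infty$ is known.

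The main obstacle is, as indicated, the second-order Schauder-type estimate: one must handle the genuinely non-integrable kernel $\nabla^2\Gamma$ using only a Dini (rather than H\"older) modulus of continuity, and do so uniformly with respect to the merely $L^\infty$-in-time coefficients $f$ and $\tilde f$. Keeping the full modulus $h$ throughout — instead of a convenient power law — and verifying that the Dini integral $\int_{0+}h(r)/r\,\mathrm{d}r$ delivers both finiteness and the small-time contraction factor is the delicate technical heart of the argument.
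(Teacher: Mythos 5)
This statement is imported verbatim from \cite[Theorem 2.1]{WeiLvWang}; the paper itself contains no proof of it, so there is no internal argument to compare yours against. Judged on its own merits, your proposal is the standard (and correct) route to such a result, and it is essentially the argument of the cited reference: a Banach fixed point for the Duhamel map on $L^{\infty}([0,T_0];C_b^2(\mathbb{R}^d))$, with the whole difficulty concentrated in the second-order estimate, where the cancellation $\int_{\mathbb{R}^d}\nabla^2\Gamma(\tau,x-y)\,\mathrm{d}y=0$ converts the non-integrable singularity $\tau^{-1}$ into $\tau^{-1}h(\sqrt{\tau})$, which is integrable precisely because $h$ is a Dini function. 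One technical point you gloss over: the hypothesis gives the modulus bound $|g(t,x)-g(t,y)|\le h(|x-y|)$ only for $y\in B_{r_0}(x)$, not globally. In the convolution estimate you must therefore split the integral into the region $\{|x-y|<r_0\}$, where the Dini bound applies, and $\{|x-y|\ge r_0\}$, where one uses only boundedness of the datum together with the Gaussian decay of $\nabla^2\Gamma$, yielding a contribution of order $\tau^{-1}e^{-c r_0^2/\tau}$ that is integrable near $\tau=0$; the same remark applies when you estimate the Dini modulus of the composite datum $\tilde f\cdot\nabla u$, whose modulus mixes $h$ (from $\tilde f$) with the Lipschitz modulus of $\nabla u$ (available since $u\in C_b^2$). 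With that routine correction your sketch is complete: the contraction factor indeed vanishes as $T_0\to 0$ (the first-order term contributing $\sqrt{T_0}$ and the Dini tail $\int_0^{\sqrt{T_0}}r^{-1}h(r)\,\mathrm{d}r$ vanishing), linearity allows iteration of the local solution up to time $T$, and the $W^{1,\infty}$-in-time regularity is read off from the equation once $\Delta u\in L^\infty$ is known.
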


			The following result is taken from \cite[Theorem 3.1]{WeiLvWang}.
			%
			\begin{thm}\label{th 2.5}
				Assume that the drift $b$ satisfies the assumption (A1). Then, for every $s\geq 0$, $x \in \mathbb{R}^d$ the SDE \eqref{2.1} has a unique continuous adapted solution $X^{s,x}= (X_t^{s,x}(\omega), t\geq s, \omega \in \Omega)$ which forms a stochastic quasi-diffeomorphisms flow. For any $p \geq 1$, there is a constant $C$ only depending on $p,d$ and $T$ such that 
				\begin{align*}
					\sup_{x\in \mathbb{R}^d}\sup_{0\leq s \leq T }\mathbb{E} \big[\sup_{ s\leq t\leq T} (|X_t^{s,x}|^p + |\nabla_x X_t^{s,x}|^p)\big] \leq C (1+\|b\|_{L^{\infty}([0,T];C_b(\mathbb{R}^d))})
				\end{align*}
				Moreover, for any $p \geq 1$ and for any $x,y \in \mathbb{R}^d$
				\begin{align*}
					\sup_{0\leq s\leq T}\mathbb{E} \big[\sup_{s\leq t\leq T} |X_t^{s,x}- X_t^{s,y}|^p \big] \leq C(1+\|b\|_{L^{\infty}([0,T];C_b(\mathbb{R}^d))}) |x-y|^p,
				\end{align*}
				and 
				\begin{align*}
					&\sup_{0\leq s\leq T}\mathbb{E} \big[\sup_{s\leq t\leq T} \|\nabla_{x}X_t^{s,x}- \nabla_{x}X_t^{s,y}\|^p \big]\\
					& \leq C \Big[ \int_{0<r\leq |x-y|} \frac{h(r)}{r}\mathrm{d}r + h(|x-y|) +|x-y| \int_{|x-y|<r\leq r_0} \frac{h(r)}{r^2}\mathrm{d}r  \Big]^p 1_{\{|x-y|<r_0\}}+ C |x-y|^p.
				\end{align*}
			\end{thm}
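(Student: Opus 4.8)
The plan is to linearize the singular drift by Zvonkin's transformation and then read off every assertion from the classical flow theory for the transformed, Lipschitz SDE. First I would take $u$ to be the strong solution of \eqref{PDE1} with $\tilde f = f = b$; Theorem \ref{WeiLvWang} then gives $\|u\|_{L^{\infty}([0,T];C_b^2)} \leq C(1+\|b\|_{L^{\infty}([0,T];C_b)})$, so that $\nabla u$ is bounded and Lipschitz (its Lipschitz constant coming from the boundedness of $\nabla^2 u$) and $\nabla u(t,\cdot)\to 0$ as the length of the time interval shrinks. On a short interval where $\|\nabla u\|_{\infty}\leq 1/2$, the map $\Phi_t := \Id + u(t,\cdot)$ is a bi-Lipschitz $C^1$-diffeomorphism of $\mathbb{R}^d$ with $\|(\nabla\Phi_t)^{-1}\|_{\infty}\leq 2$. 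Applying It\^o's formula to $Y_t := \Phi_t(X_t^{s,x})$ and using \eqref{PDE1} (read in the flow's time direction) to cancel the $b$-drift, I obtain
\[ dY_t = \tilde b(t,Y_t)\,dt + \sigma(t,Y_t)\,dW_t, \qquad \sigma = (\Id+\nabla u)\circ\Phi_t^{-1}, \]
where $\tilde b,\sigma$ are bounded and Lipschitz and $\sigma$ is uniformly nondegenerate. Classical theory yields a unique continuous adapted $Y^{s,x}$, hence $X_t^{s,x}=\Phi_t^{-1}(Y_t^{s,x})$, and patching the short intervals gives existence and uniqueness on $[0,T]$.

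Second, for the flow structure I would invoke Kunita's theory: the transformed SDE has Lipschitz, spatially $C^1$ coefficients, so it generates a stochastic flow of homeomorphisms that is Sobolev-differentiable, and since $\Phi_t$ is itself a diffeomorphism, $X^{s,x}$ inherits properties (i)--(iii) of Definition \ref{defi 2.8} together with the quasi-diffeomorphism structure. The first moment bound and the Lipschitz-in-initial-data bound follow from Burkholder--Davis--Gundy and Gronwall estimates applied to $Y^{s,x}$ and to $Y^{s,x}-Y^{s,y}$, transported back through the bi-Lipschitz $\Phi_t$, with the constant entering through $\|u\|_{C_b^2}\leq C(1+\|b\|_{\infty})$. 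Differentiating in $x$, the Jacobian $J_t := \nabla_x X_t^{s,x}$ solves a linear variational equation whose coefficients are controlled by $\nabla^2 u$, and a further Gronwall argument gives the gradient moment bound.

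The main obstacle is the last estimate, the modulus-of-continuity control of $\nabla_x X_t^{s,x}-\nabla_x X_t^{s,y}$, where precisely the combination $\int_{0<r\leq|x-y|} h(r)/r\,dr + h(|x-y|) + |x-y|\int_{|x-y|<r\leq r_0} h(r)/r^2\,dr$ appears. The difference $J_t^{s,x}-J_t^{s,y}$ is driven by the difference of $\nabla^2 u$ (morally $\nabla b$, which exists only distributionally) evaluated along the two trajectories, so the entire estimate hinges on quantifying the modulus of continuity of $\nabla^2 u$ in terms of the Dini function $h$. I would obtain this through Schauder-type heat-kernel estimates for \eqref{PDE1}: writing $\nabla^2 u = \int_0^t \nabla^2\Gamma(t-r,\cdot)*(\cdots)\,dr$ and splitting the convolution at scale $|x-y|$ produces exactly the three terms above, the near-diagonal part giving $\int_0^{|x-y|} h(r)/r\,dr + h(|x-y|)$ and the far part giving $|x-y|\int_{|x-y|}^{r_0} h(r)/r^2\,dr$. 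Feeding this modulus into a Gronwall argument for $J_t^{s,x}-J_t^{s,y}$, and using the already-established bound $\mathbb{E}[\sup_t|X_t^{s,x}-X_t^{s,y}|^p]\leq C|x-y|^p$ to convert spatial increments of the coefficients into increments of $h$, yields the stated estimate. Since these delicate heat-kernel computations are exactly those of \cite[Theorem 3.1]{WeiLvWang}, I would cite them for the singular-integral bounds and reproduce in detail only the transformation and the Gronwall steps.
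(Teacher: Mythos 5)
The paper gives no proof of Theorem \ref{th 2.5}: it is imported verbatim from \cite[Theorem 3.1]{WeiLvWang}, and your outline --- Zvonkin transformation via \eqref{PDE1}, bi-Lipschitz $\Phi_t=\Id+u(t,\cdot)$ on short time intervals, classical theory for the transformed Lipschitz SDE, and heat-kernel/Dini splitting at scale $|x-y|$ to control the modulus of $\nabla^2 u$ and hence $\nabla_xX^{s,x}_t-\nabla_xX^{s,y}_t$ --- is precisely the method of that reference (the same It\^o--Tanaka trick the paper itself uses for Proposition \ref{cor 5.7}), with the hard singular-integral bounds explicitly deferred back to the same source. Your proposal is therefore correct and in substance coincides with the paper's treatment; the only loose point is the appeal to Kunita's theory for differentiability of the flow, since Lipschitz coefficients by themselves yield only a homeomorphism flow, and the (quasi-)diffeomorphism property genuinely rests on the Dini modulus of $\nabla^2 u$ that you invoke in the final step rather than on Kunita's $C^{1,\alpha}$ framework.
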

			\begin{remark}\leavevmode
				\begin{itemize}
					\item The above pointwise estimate of the supremum norm of the first variation process suggests that $x\mapsto \nabla_x X^{s,x}$ is uniformly continuous for every $0\leq s\leq t\leq T$, as a valued function of $L^p(\Omega)$.
					\item Note that not much was said about the inverse of the first variation process $x\mapsto (\nabla_{x}X_t^{s,x})^{-1}$ in \cite{WeiLvWang}. However, when the drift $b \in L^p([0,T];L^q(\mathbb{R}^d;\mathbb{R}^d))$ with $p,q\in [2,\infty]$ and $\frac{2}{p}+ \frac{d}{q}<1$, the author in \cite{Reza} demonstrates the existence of Sobolev-differentiable stochastic homeomorphism flows of class $C^{\beta} (0<\beta <1/2)$ for \eqref{2.1}. Moreover, for all $\gamma\geq 1$ the following holds
					\begin{align}\label{est28}
						\sup_{x\in \mathbb{R}^d}\sup_{0\leq t \leq T }\mathbb{E}\left[ \|(\nabla_x X_t^{0,x})^{-1}\|^{\gamma} \right]\leq C.
					\end{align}
					In particular, the estimate \eqref{est28} remains valid for $p=q=+\infty$ i.e., when the drift $b$ is merely bounded and measurable, and the initial state of the diffusion $X$ solution to \eqref{2.1}, is $x$ at time $t=0$.
				\end{itemize}
			\end{remark}
			
			Before concluding this section, we derive the following result on the integrability of the supremum norms of both the Malliavin derivative and the inverse of the first variation process of $(X_t)_{0\leq t\leq T}$, the solution to \eqref{2.1}. This result will play a crucial role in the regularity analysis of both processes $Y$ and $Z$.	Its proof relies on the so-called It\^{o}-Tanaka trick, as introduced in \cite{FlanGubiPrio10} and further developed in \cite{WeiLvWang}.  In summary, this method uses the regularity of solutions for a type of parabolic PDE (PDE \eqref{PDE1} in our case) to transform the SDE \eqref{2.1} into a new one with a drift coefficient possessing an improved regularity.
			
			\begin{prop}\label{cor 5.7}
				Under Assumption (A1), the solution $(X_t^x,0\leq t\leq T)$ to equation \eqref{2.1} is Malliavin differentiable and for any $p\geq 2$ there is a constant $C$ only depending on $p,d ,T$ such that
				\begin{align}\label{5.12}
					\sup_{0\leq s\leq T} \mathbb{E}\big[ \sup_{s\leq t \leq T}|D_sX_t^x|^p  \big] + \sup_{x \in \mathbb{R}^d} \sup_{0\leq s\leq T} \mathbb{E}\big[ \sup_{s\leq t \leq T}|(\nabla_xX_t^x)^{-1}|^p\big] \leq C(1+\|b\|_{L^{\infty}([0,T];C_b(\mathbb{R}^d))}).
				\end{align}
				Furthermore, we also have for all $p\geq 2$
				\begin{align}
					&\sup_{0\leq s\leq T} \mathbb{E}\big[ \sup_{s\leq t \leq T}|D_sX_t^x|^p /\mathfrak{F}_s \big] + \sup_{x \in \mathbb{R}^d} \sup_{0\leq s\leq T} \mathbb{E}\big[ \sup_{s\leq t \leq T}|(\nabla_xX_t^x)^{-1}|^p/\mathfrak{F}_s \big] \leq C.
				\end{align}
				
			\end{prop}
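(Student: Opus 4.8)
The plan is to remove the singularity of the drift by a Zvonkin-type (It\^o--Tanaka) transformation, to reduce \eqref{2.1} to an SDE with Lipschitz and bounded coefficients, to establish the Malliavin and first-variation estimates in that regular setting by classical means, and then to transfer them back to $X$. Concretely, I would first apply Theorem \ref{WeiLvWang} to the Kolmogorov equation \eqref{PDE1} with the choice $\tilde f = f = b$, obtaining a strong solution $u$ with $\|u\|_{L^\infty([0,T];C_b^2(\mathbb{R}^d))}\le C(1+\|b\|_{L^\infty([0,T];C_b(\mathbb{R}^d))})$ by \eqref{bound2.13}, and set $\Psi(t,x):=x+u(t,x)$. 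Since $u(0,\cdot)=0$ we have $\Psi(0,\cdot)=\mathrm{Id}$; after a standard localization of the time horizon (so that $\|\nabla u\|_\infty\le 1/2$ on each subinterval) and patching via the flow property, $\Psi(t,\cdot)$ is a $C^2$-diffeomorphism of $\mathbb{R}^d$ whose Jacobian $\nabla\Psi=\mathrm{Id}+\nabla u$ and its inverse $(\nabla\Psi)^{-1}$ are bounded by a constant depending only on $\|u\|_{C_b^2}$, hence on $\|b\|_\infty$.

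Next I would apply It\^o's formula to $Y_t:=\Psi(t,X_t^x)$. Because $u$ solves \eqref{PDE1}, the singular drift $b$ is absorbed and $Y$ solves an SDE $\mathrm{d}Y_t=\tilde b(t,Y_t)\mathrm{d}t+\tilde\sigma(t,Y_t)\mathrm{d}W_t$ whose coefficients, being composed from $b$, $\nabla u$ and the Lipschitz map $\Psi^{-1}$, are bounded and Lipschitz (this is exactly the reduction underlying Theorem \ref{th 2.5}). For such a regular SDE, classical Malliavin calculus and the theory of stochastic flows (cf. \cite{Kun90}) give that $Y_t$ is Malliavin differentiable, that the first variation process $J_t:=\nabla_y Y_t$ and its inverse $J_t^{-1}$ solve the linear SDEs obtained by formally differentiating the equation, and that $D_sY_t=J_tJ_s^{-1}\tilde\sigma(s,Y_s)$ for $t\ge s$. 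Since the coefficients of these linear equations are bounded, the Burkholder--Davis--Gundy and Gronwall inequalities yield, for every $p\ge 2$, the bound $\sup_s\mathbb{E}[\sup_{s\le t\le T}|D_sY_t|^p]+\sup_y\sup_s\mathbb{E}[\sup_{s\le t\le T}|J_t^{-1}|^p]\le C$.

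Finally I would transfer these bounds back to $X$. Using $X_t^x=\Psi^{-1}(t,Y_t)$ and the chain rule (valid since $\Psi^{-1}$ is $C^2$), Malliavin differentiability of $X$ follows and $D_sX_t^x=\nabla_y\Psi^{-1}(t,Y_t)\,D_sY_t$, so $\sup_{s\le t\le T}|D_sX_t^x|\le\|\nabla\Psi^{-1}\|_\infty\sup_{s\le t\le T}|D_sY_t|$ and the first estimate in \eqref{5.12} follows. For the inverse first variation, since $\Psi(0,\cdot)=\mathrm{Id}$ gives $Y_0=x$, one has $\nabla_xX_t^x=\nabla_y\Psi^{-1}(t,Y_t)J_t$, whence $(\nabla_xX_t^x)^{-1}=J_t^{-1}\,\nabla_x\Psi(t,X_t^x)$; bounding $\nabla\Psi$ by $\|u\|_{C_b^2}$ and using the moment bound on $J_t^{-1}$ gives the second estimate, with the overall dependence $C(1+\|b\|_\infty)$ inherited from \eqref{bound2.13}. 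The conditional bounds follow verbatim from the same representations: given $\mathfrak{F}_s$, the processes $(D_sY_t)_{t\ge s}$ and $(J_t^{-1})_{t\ge s}$ are still governed by linear SDEs with the same bounded coefficients, so the conditional forms of BDG and Gronwall produce a constant $C$ independent of the base point $x$ and of $s$.

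The main obstacle I anticipate is twofold. First, rigorously guaranteeing that $\Psi(t,\cdot)$ is a genuine diffeomorphism with a \emph{bounded} inverse Jacobian: the estimate \eqref{bound2.13} only bounds, rather than makes small, $\|\nabla u\|_\infty$, so invertibility of $\mathrm{Id}+\nabla u$ must be secured by the time-localization/patching argument. Second, converting the pointwise algebraic identity $(\nabla_xX_t^x)^{-1}=J_t^{-1}\nabla_x\Psi(t,X_t^x)$ into the uniform (in $x$ and $s$) conditional moment bound, which is precisely the new ingredient relative to Theorem \ref{th 2.5} and requires the conditional linear-SDE estimates for the regularized flow.
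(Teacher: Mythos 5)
Your overall strategy is exactly the paper's: both arguments remove the singular drift by an It\^o--Tanaka/Zvonkin transformation $\Psi(t,x)=x+u(t,x)$ built from Theorem \ref{WeiLvWang}, establish the Malliavin and inverse-first-variation moment bounds for the transformed SDE, and pull them back through the chain rule using the two-sided bounds on $\nabla\Psi$ and $\nabla\Psi^{-1}$. The one cosmetic difference is how invertibility of $\mathrm{Id}+\nabla u$ is secured: the paper works with the $\lambda$-damped backward PDE \eqref{2.8} and takes $\lambda$ large so that $\Psi_\lambda$ is a uniform $C^2$-diffeomorphism with $\tfrac12\le\|D\Psi_\lambda\|_\infty\le\tfrac32$, while you shrink the time horizon so that $\|\nabla u\|_\infty\le 1/2$ and patch via the flow property; both devices are standard and serve the same purpose, and you correctly flagged this as a point needing care.

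There is, however, one step where your argument as written would fail. You invoke Kunita's smooth-flow theory (\cite{Kun90}) to assert that the transformed process $Y$ has a differentiable flow whose Jacobian $J_t$ is a.s. invertible, with $J_t^{-1}$ solving the formally differentiated linear SDE and $D_sY_t=J_tJ_s^{-1}\tilde\sigma(s,Y_s)$. The transformed coefficients $\tilde b=\lambda u_\lambda(t,\Psi_\lambda^{-1})$ and $\tilde\sigma=I+Du_\lambda(t,\Psi_\lambda^{-1})$ are only globally Lipschitz -- their spatial derivatives involve $D^2u_\lambda$, which is merely bounded -- so the classical flow theorem requiring $C^{1,\alpha}$-type regularity does not apply; the paper makes exactly this point (``we cannot refer to \cite[Theorem 4.4]{Kunita82}'') and replaces it by the result of \cite{BouleauHirsch89}, which gives Sobolev differentiability of the flow and a.s. non-singularity of the Jacobian, after which $(\nabla_x\tilde X_t)^{-1}$ does satisfy a linear SDE with bounded coefficients and your BDG/Gronwall estimates go through. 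Relatedly, the paper obtains the Malliavin bound for $\tilde X$ directly from \cite[Theorem 2.2.1]{Nua06}, which only needs Lipschitz coefficients, and the Malliavin differentiability of $X$ itself from \cite{MMNPZ13}, thereby avoiding any use of the smooth-flow representation $D_sY_t=J_tJ_s^{-1}\tilde\sigma(s,Y_s)$ that your version relies on. With the substitution of Bouleau--Hirsch (and Nualart) for Kunita, your proof coincides with the paper's.
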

			
			\begin{proof}
				Fix $\lambda > 0$ and consider the following backward Kolmogorov PDE 
				\begin{align}\label{2.8}
					\begin{cases}
						\partial_t u_{\lambda}(t,x) + \frac{1}{2}\Delta u_{\lambda}(t,x) + (b\cdot Du_{\lambda})(t,x)- \lambda u_{\lambda}(t,x) = -b(t,x), \\
						u_{\lambda}(T,x) = 0, \quad (t,x) \in [0,T)\times \mathbb{R}^d.
					\end{cases}
				\end{align}
				From Theorem \ref{WeiLvWang} the equation \eqref{2.8} has a unique strong solution $u_{\lambda} \in L^{\infty}([0,T); C_b^{2}(\mathbb{R}^d;\mathbb{R}^d))\cap W^{1,\infty}([0,T); C_b(\mathbb{R}^d;\mathbb{R}^d))$. In addition for $\lambda$ large enough, the map defined by $\Psi_{\lambda}(t,x) := x + u_{\lambda}(t,x) $ forms a non singular diffeomorphism of class $C^2$ uniformly in $t \in [0,T]$ and its inverse denoted by $\Psi_{\lambda}^{-1}$ has bounded first and second spatial derivatives, uniformly in $t \in [0,T]$ i.e.
				\begin{align}
					\frac{1}{2} \leq \|D\Psi_{\lambda}\|_{\infty} \leq \frac{3}{2},\,\, \text{ and }\,\,
					\frac{2}{3} \leq \|D\Psi_{\lambda}^{-1}\|_{\infty} \leq {2}.
				\end{align}
				Let us consider the following SDE 
				\begin{align}\label{auxi}
					\tilde{X}_t = \tilde{x} + \int_{s}^{t} \tilde{b}(v, \tilde{X}_v)\mathrm{d}v + \int_{s}^{t} \tilde{\sigma}(v, \tilde{X}_v)\mathrm{d}W_v ,\quad t \in [0,T],
				\end{align}
				where $\tilde{b}(t,\tilde{x}) = \lambda u_{\lambda}(t,\Psi_{\lambda}^{-1}(t,\tilde{x}))$ and $\tilde{\sigma}(t,y) = I +D u_{\lambda} (t,\Psi_{\lambda}^{-1}(t,\tilde{x}))$. 
				It is then clear that $\tilde{b}$ and $\tilde{\sigma}$ are only globally Lipschitz continuous. Hence, we cannot refer to  \cite[Theorem 4.4]{Kunita82} to conclude that the Jacobian matrix $x\mapsto \nabla_x \tilde{X}_t$ is non singular for any $x$ a.s. Instead, we invoke the result from  \cite{BouleauHirsch89} to assert this. Moreover, the inverse $(\nabla_x\tilde X_t)^{-1}$ satisfies a linear SDE with bounded coefficients. In particular, for all $p\geq 1$ there exists a constant $C$ such that
				\begin{align*}
					\sup_{0\leq s\leq T} \mathbb{E}[ \sup_{s\leq t \leq T}|(\nabla_x\tilde X_t)^{-1}|^p ] \leq C.
				\end{align*}
				On the other hand, assuming (for simplicity) that the initial conditions of \eqref{auxi} is given by $(0,\tilde{x})$, for any $\tilde{x} \in \mathbb{R}^d$, then from \cite[Theorem 2.2.1]{Nua06}, the equation \eqref{auxi} has a unique strong Malliavin differentiable solution and for any $p\geq 2$ and for $s \leq t$, there is a constant $C$ only depending on $p,d,T$, $\|Du_{\lambda}\|_{\infty}$ and $\|D^2u_{\lambda}\|_{\infty}$ such that:
				\begin{align*}
					\sup_{0\leq s\leq T} \mathbb{E}[ \sup_{s\leq t \leq T}|D_s\tilde X_t|^p ] \leq C .
				\end{align*}
				Let us remark that, from \eqref{bound2.13}, the norms $\|Du_{\lambda}\|_{\infty}$ and $\|D^2u_{\lambda}\|_{\infty}$ are independent of $\lambda$. Thus, besides the dependence on $p,d$ and $T$, the constants $C$ above only depend on $\|b\|_{L^{\infty}([0,T];C_b(\mathbb{R}^d))}$.
				
				At last, $(\tilde{X}_t,0\leq t\leq T)$, the solution to the SDE \eqref{auxi}, is related to $({X}_t,0\leq t\leq T)$, the solution of equation \eqref{2.1}, by the relation $X_t = \Psi_{\lambda}^{-1}(t,\tilde X_t)$. Moreover, $X$ is Malliavin differentiable, since the drift $b$ is at most bounded and continuous (\cite{MMNPZ13}). Hence, by applying the chain rule for Malliavin calculus  and using the fact that $\Psi_{\lambda}^{-1}$ has bounded first derivative, we deduce that the following holds:
				\begin{align*}
					\mathbb{E}[ \sup_{s\leq t \leq T}|D_s X_t|^p ]\leq 2^p \mathbb{E}[ \sup_{s\leq t \leq T}|D_s\tilde X_t|^p ]< \infty. 
				\end{align*}
				Similarly, using the result from \cite{NilssenProske} stating that the stochastic flow $X_t^s$ belongs to $ L^2(\Omega;W^{1,p}_{loc}(\mathbb{R}^d,\mathbb{R}^d))$, we deduce that  
				\begin{align*}
					\mathbb{E}[ \sup_{s\leq t \leq T}|\nabla_x X_t|^{-p} ]\leq \left(\frac{3}{2}\right)^p \mathbb{E}[ \sup_{s\leq t \leq T}|\nabla_x\tilde X_t|^{-p} ]< \infty. 
				\end{align*}
				Thus, \eqref{5.12} follows. By following the same strategy as above the conditional expectation version bounds can be derived as well. This finishes the proof.
			\end{proof}
			\subsection{Euler-Maruyama scheme for SDEs with bounded and Dini continuous drifts} The main aim in this subsection is to provide an
			$L^p$ version of Dareiotis and Gerencs\'er's theorem on strong convergence of the Euler approximation of solutions to SDEs  \eqref{2.1} when drift is bounded and Dini continuous.
			\begin{thm}\label{thm210}
				Assume that the drift $b$ satisfies Assumption (H1). Let $\gamma \in (0,1)$. Then for all $p\geq 1$, $N\in \mathbb{N}$ we obtain that
				\begin{align*}
					\mathbb{E}\Big[	\sup_{0\leq t\leq 1}|X_t^N-X_t|^{2p}\Big] \leq C N^{(-1+\gamma)p},
				\end{align*}
			\end{thm}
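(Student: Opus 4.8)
\emph{Proof proposal.} The plan is to follow the regularisation-by-noise strategy of Dareiotis--Gerencs\'er \cite{DaGe20}, upgrading their second-order quadrature estimate to the full $L^p$-scale by substituting K.~L\^e's $\bmo$/John--Nirenberg bound \eqref{1.7} for their explicit moment computation. First I would pass to the continuous-time interpolation $X^N$ of the scheme \eqref{Euler} (agreeing with $\mathcal X^\pi$ at the grid points), namely
\[
X^N_t = x + W_t + \int_0^t b(k_N(s),X^N_{k_N(s)})\,\mathrm ds ,
\]
so that the error $e_t := X^N_t - X_t$ satisfies, after inserting $b(s,X^N_s)$,
\[
e_t = \int_0^t\big[b(k_N(s),X^N_{k_N(s)})-b(s,X_s)\big]\,\mathrm ds = C_t + S_t ,
\]
where $C_t := \int_0^t[b(k_N(s),X^N_{k_N(s)})-b(s,X^N_s)]\,\mathrm ds$ is the genuine one-step consistency error (time and space arguments frozen together) and $S_t := \int_0^t[b(s,X^N_s)-b(s,X_s)]\,\mathrm ds$ is the stability error, in which the time argument of $b$ already matches.

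For $C_t$ the idea is to recognise it as the quadrature error of $s\mapsto b(s,X^N_s)$ and to reduce it to the pure-Brownian functional treated in \cite{Le22}. Since the scheme drift $b(k_N(\cdot),X^N_{k_N(\cdot)})$ is bounded, Girsanov's theorem produces an equivalent measure $\mathbb Q$ under which $\tilde W := X^N - x$ is a Brownian motion, the Radon--Nikodym density and its inverse having all $\mathbb P$- and $\mathbb Q$-moments uniformly in $N$ by boundedness of $b$. Under $\mathbb Q$ the term becomes exactly
\[
C_t = -\int_0^t\big[b(s,x+\tilde W_s)-b(k_N(s),x+\tilde W_{k_N(s)})\big]\,\mathrm ds ,
\]
to which L\^e's characterisation of the quadrature error of $x\mapsto\int_0^t b(s,x+W_s)\,\mathrm ds$ as a $\bmo$ process (see \cite[Example 1.4]{Le22} and \eqref{1.7}) applies conditionally, giving $\mathbb E^{\mathbb Q}[\sup_{t\le 1}|C_t|^{2pq}]\le C(N^{-1}\log(N+1))^{pq}$ for every $q\ge 1$. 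A H\"older step against the density then transfers this to $\mathbb E^{\mathbb P}[\sup_{t\le 1}|C_t|^{2p}]\le C(N^{-1}\log(N+1))^{p}$. The advantage of the $\mathbb Q$-reduction is that it absorbs the $O(h)$ drift displacement hidden in $X^N_s-X^N_{k_N(s)}$ without passing through the Dini modulus.

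It remains to close the estimate through $S_t$. Using (A1), $|b(s,X^N_s)-b(s,X_s)|\le h(|e_s|)$ on $\{|e_s|<r_0\}$ (and the trivial bound $2\|b\|_\infty$ otherwise), so $m_t:=\sup_{u\le t}|e_u|$ satisfies $m_t \le \sup_{u\le 1}|C_u| + \int_0^t h(m_s)\,\mathrm ds$. I would combine this with the consistency bound in a stochastic Bihari/Gronwall argument built on the concave increasing comparison function $F_\delta$ from (A1): because $h$ is Dini and slowly varying at zero, the nonlinear Gronwall inequality degrades the input rate $N^{-1}\log(N+1)$ only by an arbitrarily small sub-polynomial factor, which together with the logarithm is absorbed into $N^{\gamma}$, yielding $\mathbb E[\sup_{t\le 1}|e_t|^{2p}]\le C N^{(-1+\gamma)p}$.

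The main obstacle, and the place where the slowly-varying and concavity hypotheses in (A1) are essential, is precisely this final closure: unlike the Lipschitz case the map from the consistency input to $\sup_t m_t$ is genuinely nonlinear, and one must verify that a Dini modulus does not spoil the near-optimal rate. The second delicate point is lifting \eqref{1.7} from Brownian motion to the scheme via the change of measure while keeping the supremum inside the expectation, which forces one to run L\^e's conditional John--Nirenberg estimate at all exponents $pq$ and to control the density moments uniformly in $N$. Everything else is routine once these two ingredients are in place.
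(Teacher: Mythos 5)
Your treatment of the consistency term $C_t$ is essentially the paper's own first step: the paper also bounds the quadrature error $\sup_{t\le 1}\big|\int_0^t (b(s,X^N_s)-b(s,X^N_{k_N(s)}))\,\mathrm{d}s\big|$ by a H\"older step against the Dol\'eans--Dade density, Girsanov (so that $X^N-x$ is a Brownian motion under the tilted measure), and L\^e's John--Nirenberg bound \eqref{bound213}, arriving at exactly the bound \eqref{bound214}. (One caveat: you freeze the time argument as well, writing $b(k_N(s),X^N_{k_N(s)})$, while (A1) gives $b$ no modulus of continuity in time; the scheme \eqref{euler} used in Theorem \ref{thm210} freezes only the spatial argument, which is what \eqref{bound213} covers.)

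The genuine gap is your stability closure. The pathwise inequality $m_t\le \sup_{u\le 1}|C_u|+\int_0^t h(m_s)\,\mathrm{d}s$ cannot be closed by a Bihari/nonlinear Gronwall argument for the moduli allowed by (A1), and your claim that the rate degrades only by a sub-polynomial factor is false. Bihari controls $m_t$ by $G^{-1}(G(\epsilon)+t)$ with $G(x)=\int^x \mathrm{d}u/h(u)$, and this preserves a polynomial rate essentially only for Lipschitz-type $h$. Here $h$ is Dini \emph{and slowly varying at zero}, so by Karamata's theorem $h(u)\gg u^{\alpha}$ as $u\to 0$ for every $\alpha>0$; hence $1/h(u)\le u^{-\alpha}$ near $0$, $\int_{0+}\mathrm{d}u/h(u)<\infty$, and Osgood's condition fails. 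Then $G(\epsilon)$ tends to a finite limit as $\epsilon\to 0$, so $G^{-1}(G(\epsilon)+t)$ stays bounded away from zero uniformly in $\epsilon$: the Bihari bound yields \emph{no decay at all}, let alone $N^{(-1+\gamma)p}$. (Even in the Osgood case $h(u)=u\log(1/u)$ the rate degrades from $N^{-1}$ to $N^{-e^{-t}}$, which already ruins the claim.) This is not a technicality: with merely Dini drift, even pathwise uniqueness is a regularization-by-noise phenomenon, so no deterministic Gronwall-type route can work; and your appeal to the concavity of $F_\delta$ in (A1) misreads that hypothesis, which serves the flow estimates of Theorem \ref{th 2.5}, not any Gronwall closure. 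What the paper does instead, and what your proof is missing, is the It\^o--Tanaka/Zvonkin step: on a short horizon $T_0$ one solves the backward Kolmogorov PDE $\partial_t u^i+\frac{1}{2}\Delta u^i+b\cdot\nabla u^i=-b^i$, $u^i(T,\cdot)=0$, whose strong solution satisfies $u\in L^{\infty}([0,T];C_b^2)$ with $\|\nabla u^i\|\le C\sqrt{T_0}$ (Theorem \ref{WeiLvWang} and \eqref{eqboderu1}); applying It\^o's formula to $u^i(t,X_t)$ and $u^i(t,X^N_t)$ rewrites $\int_0^t(b(s,X_s)-b(s,X^N_s))\,\mathrm{d}s$ as differences of $u^i$ (Lipschitz in the error with small constant $C\sqrt{T_0}$), stochastic integrals of $\nabla u^i(s,X_s)-\nabla u^i(s,X^N_s)$ handled by BDG and the boundedness of $D^2u$, and quadrature remainders absorbed by your step one. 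After choosing $T_0$ with $C\sqrt{T_0}<\frac{1}{2}$, a \emph{linear} Gronwall inequality closes the estimate, and the Dini modulus never enters the stability step directly. Replacing your Bihari argument with this transformation is what the proof requires.
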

			where the process $X^N$ stands for the Euler-Maruyama scheme given by
			\begin{align}\label{euler}
				\mathrm{d}X_t^N = b(t,X^N_{k_N(t)})\mathrm{d}t +\mathrm{d}W_t, \quad X_0^N = x,
			\end{align}
			where, $k_N(t) = \frac{\lfloor N t\rfloor}{N},$ and the symbol $\lfloor \cdot \rfloor$ denotes the integer part.
			
			Since the pivotal concept for establishing the above result was only recently introduced in the literature (see \cite{Le22}), we will briefly recall some key ideas here for the sake of clarity and better understanding.
			\begin{defi}\label{defi28}
				A real valued  adapted right continuous process with left limits $(V_t)_{t\in [0,\tau]}$ is called a bounded mean oscillation $(\bmo)$ if 
				\begin{align*}
					[V]_{\bmo}:= \sup_{0\leq s\leq S\leq S'\leq \tau}\|\mathbb{E}[|V_{S'}-V_{S-}|/\mathfrak{F}_{ S}]\|_{\infty} <\infty,
				\end{align*}
				where the supremum is taken over all the stopping times $S,S'$; $V_{S-} =\lim_{r\uparrow S}V_r$ and we set $V_{0-} =V_0$, by convention.
			\end{defi}
			The \text{\it modulus of mean oscillation} $\rho(V):\{(s,t)\in [0,\tau]^2: s\leq t\} \rightarrow [0,\infty)$ of each $\bmo\text{-process } V$ is defined as
			\begin{align*}
				\rho_{s,t}(V) = \sup_{0\leq s\leq S\leq S'\leq t}\|\mathbb{E}[|V_{S'}-V_{S-}|/\mathfrak{F}_{S}]\|_{\infty},\quad 0\leq s\leq t\leq \tau,
			\end{align*}
			where the supremum is taken over all stopping times $S,S'$ satisfying $s\leq S\leq S'\leq t.$
			
			The following can be found in \cite[Theorem 2.3]{Le22}
			\begin{thm}[John-Nirenberg inequality] Given $V$ a $\bmo\text{-process}$ and let $r$ be a fixed number in $[0,\tau]$. Then, for every integer $p\geq 1$
				\begin{align*}
					\|\mathbb{E}[\sup_{r\leq t\leq \tau}|V_t-V_r|^p/\mathfrak{F}_{r}]\|_{\infty} \leq p!(11\rho_{r,\tau}(V))^p.
				\end{align*}
			\end{thm}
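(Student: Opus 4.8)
The plan is to prove a \emph{conditional}, exponential tail bound for $M:=\sup_{r\le t\le\tau}|V_t-V_r|$ and then recover the $p$-th moment estimate by integrating the tail. Write $\rho:=\rho_{r,\tau}(V)$. The whole argument rests on a single one-step estimate together with an iteration over level-crossing stopping times; the exponential decay rate, and ultimately the numerical constant, will emerge from optimizing the crossing level.

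First I would record a free consequence of the $\bmo$ seminorm: taking $S'=S$ in the definition of $\rho_{r,\tau}(V)$ shows that the jumps are uniformly bounded, namely $\||V_S-V_{S-}|\|_\infty\le\rho$ for every stopping time $S\in[r,\tau]$. Next I would establish the one-step estimate: for a stopping time $S\in[r,\tau]$ and a level $a>0$, set $T:=\inf\{t\ge S:\ |V_t-V_S|\ge a\}\wedge\tau$. On $\{T<\tau\}$ right-continuity gives $|V_T-V_S|\ge a$, so conditional Chebyshev yields
\begin{align*}
\mathbb{P}\big(T<\tau\,\big/\,\mathfrak{F}_S\big)\le \tfrac1a\,\mathbb{E}\big[|V_T-V_S|\,\big/\,\mathfrak{F}_S\big]\le \tfrac1a\Big(\mathbb{E}\big[|V_T-V_{S-}|\,\big/\,\mathfrak{F}_S\big]+|V_S-V_{S-}|\Big)\le \tfrac{2\rho}{a},
\end{align*}
where the middle term is bounded because $(S,T)$ is an admissible pair in the definition of $\rho$ and the last term by the jump bound. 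This is the crucial place where the $V_{S-}$ convention in the $\bmo$ definition is used, and keeping track of the jump at $S$ is the main technical nuisance.

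Then I would iterate. Define $\sigma_0:=r$ and $\sigma_{k+1}:=\inf\{t\ge\sigma_k:\ |V_t-V_{\sigma_k}|\ge a\}\wedge\tau$. Applying the one-step estimate at $S=\sigma_k$ together with the nesting $\{\sigma_{k+1}<\tau\}\subseteq\{\sigma_k<\tau\}$ and the tower property, induction over $k$ gives $\mathbb{P}(\sigma_n<\tau\,/\,\mathfrak{F}_r)\le(2\rho/a)^n$. On the complementary event $\{\sigma_n=\tau\}$ the oscillation is controlled block by block: on $[\sigma_k,\sigma_{k+1})$ one has $|V_t-V_{\sigma_k}|<a$, while each increment $|V_{\sigma_{k+1}}-V_{\sigma_k}|$ is at most $a$ plus one jump $\le\rho\le a$, so summing over the at most $n$ blocks bounds $M$ by a fixed multiple of $na$, say $M\le 2na$, on $\{\sigma_n=\tau\}$. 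Hence
\begin{align*}
\mathbb{P}\big(M> 2na\,\big/\,\mathfrak{F}_r\big)\le \mathbb{P}\big(\sigma_n<\tau\,\big/\,\mathfrak{F}_r\big)\le\Big(\tfrac{2\rho}{a}\Big)^n.
\end{align*}

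Finally, choosing $a$ to be a fixed multiple of $\rho$ turns the right-hand side into a geometric sequence, giving a conditional exponential tail $\mathbb{P}(M>\lambda/\mathfrak{F}_r)\le C e^{-c\lambda/\rho}$ in $L^\infty$. The moment bound then follows from the layer-cake identity
\begin{align*}
\mathbb{E}\big[M^p\,\big/\,\mathfrak{F}_r\big]=\int_0^\infty p\lambda^{p-1}\,\mathbb{P}\big(M>\lambda\,\big/\,\mathfrak{F}_r\big)\,\mathrm{d}\lambda\le p\,\Gamma(p)\,(\rho/c)^p=p!\,(\rho/c)^p,
\end{align*}
and a careful accounting of the overshoot at the crossings together with an optimization of the free level $a$ (equivalently of the rate $c$) over its admissible range produces the stated numerical constant $11$. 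The main obstacle is not any single estimate but the consistent bookkeeping of left limits and jump overshoots throughout the iteration, carried out uniformly in $\omega$ so that every bound holds in the conditional $L^\infty$ sense demanded by the statement.
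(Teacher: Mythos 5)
Your skeleton is sound, and it is in fact the standard route to this inequality: the jump bound $\||V_S-V_{S-}|\|_\infty\le\rho$ from taking $S'=S$, the one-step conditional Chebyshev estimate $\mathbb{P}(T<\tau/\mathfrak{F}_S)\le 2\rho/a$ (correctly using the $V_{S-}$ convention to absorb the jump at $S$), and the level-crossing iteration giving a geometric conditional tail are all correct as written. Note also that the paper itself contains no proof of this statement -- it is imported verbatim from \cite[Theorem 2.3]{Le22} -- so the comparison is with the argument in that reference, which is of exactly this stopping-time flavour.

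The genuine gap is the endgame, where you wave at ``optimization produces the stated numerical constant $11$.'' As written, it does not. First, your crossing bookkeeping $M\le 2na$ is too crude: each completed crossing increment satisfies $|V_{\sigma_{k+1}}-V_{\sigma_k}|\le a+\rho$ (left limit within the block plus one jump), so the correct event inclusion is $\{M>n(a+\rho)\}\subseteq\{\sigma_n<\tau\}$; with your $2na$ version, already at $p=1$ the optimal bound is $\mathbb{E}[M/\mathfrak{F}_r]\le 2a/(1-2\rho/a)$, minimized at $a=4\rho$ to give $16\rho>11\rho$, so the claimed constant is unreachable from your inequalities. Second, your layer-cake display silently sets the tail prefactor to one: from $\mathbb{P}(M>\lambda/\mathfrak{F}_r)\le Ce^{-c\lambda/\rho}$ with $C>1$ one only gets $C\,p!(\rho/c)^p$, and the factor $C>1$ cannot simply be absorbed into the base for small $p$. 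With the sharper bookkeeping one has
\begin{align*}
\mathbb{E}\big[M^p/\mathfrak{F}_r\big]\le (a+\rho)^p\sum_{n\ge 0}(n+1)^p q^n \le \frac{p!}{1-q}\Big(\frac{a+\rho}{1-q}\Big)^p,\qquad q=\tfrac{2\rho}{a},
\end{align*}
using $m^p\le p!\binom{m+p-1}{p}$, and a delicate $p$-by-$p$ choice of $a$ can be checked to stay below $p!(11\rho)^p$; but this verification is precisely the missing content, and you did not perform it. The cleaner way to the prefactor-free constant is different in structure: use your iteration only at $p=1$, uniformly over stopping times, to get $\mathbb{E}\big[\sup_{S\le t\le\tau}|V_t-V_{S-}|\,/\,\mathfrak{F}_S\big]\le \rho+\tfrac{\theta(\theta+1)}{\theta-2}\rho\le 11\rho$ at $\theta=2+\sqrt6$, and then apply Garsia's maximal lemma to the increasing process $A_t:=\sup_{r\le s\le t}|V_s-V_r|$, which satisfies $A_\tau-A_{S-}\le\sup_{S\le t\le\tau}|V_t-V_{S-}|$: if $\mathbb{E}[A_\tau-A_{S-}/\mathfrak{F}_S]\le c$ for all stopping times, then $\mathbb{E}[A_\tau^p/\mathfrak{F}_r]\le p!\,c^p$. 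It is this iteration lemma, not tail integration, that produces the clean $p!(11\rho)^p$; without it (or the per-$p$ optimization above), your argument proves the theorem only with $(11\rho)^p$ replaced by $C_0(C\rho)^p$ for some $C>11$ and $C_0>1$.
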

			Together with the author in \cite{Le22}, we observe that for any bounded and measurable function $b$ and for each integer $N \in \mathbb{N}$, the process $V_t^N:= \int_0^t b(s,W_s)-b(s,W_{k_N(s)})\mathrm{d}s$ defines a $\bmo\text{-process}$. In fact, from \cite[Lemma 2.1]{DaGe20} the following bound holds for all $s\leq t$
			\begin{align}
				\mathbb{E}[|V_{t}^N-V_{s}^N|/\mathfrak{F}_{s}] \leq \mathbb{E}[|V_{t}^N-V_{s}^N|^2/\mathfrak{F}_{s}]^{\frac{1}{2}} \leq C(N^{-1}\log(N+1))^{\frac{1}{2}}.
			\end{align}
			Therefore, the John-Nirenberg inequality guarantees the following strong estimate 
			\begin{align}\label{bound213}
				\essup_{\omega\in \Omega}\left(\mathbb{E}\sup_{s\leq t \leq 1}\Big|\int_s^t b(r,W_r)-b(r,W_{k_N(r)})\mathrm{d}r\Big|^{2p}\big/\mathfrak{F}_s\right) \leq p! \big(121CN^{-1}\log(N+1)\big)^p.
			\end{align}
			\begin{proof}[Proof of Theorem \ref{thm210}] We will start by providing an $L^p$ bound of the following bounded process
				\begin{align*}
					\mathit{H}(X^N) = \sup_{0\leq t\leq 1}\Big|\int_{0}^{t}(f(s,X^N_s)-f(s,X^N_{k_N(s)}))\mathrm{d}s\Big|,
				\end{align*}
				where we recall that the bound \eqref{bound213} provides a similar bound for the process $\mathit{H}(W_t^x)$, and $W_t^x$ denotes the Brownian motion starting at $x \in \mathbb{R}^d.$ Applying  H\"older's inequality, we obtain that 
				\begin{align*}
					\mathbb{E}\mathit{H}(X^N)^{2p} &\leq C \mathbb{E}\Big[(\mathit{H}(X^N))^{2p-\gamma}\mathcal{E}_N^{\frac{2p-\gamma}{2p}}\mathcal{E}_N^{\frac{\gamma -2p}{2p}}\Big]
					\leq C \mathbb{E}\Big[\mathit{H}(X^N)^{2p}\mathcal{E}_N\Big]^{\frac{2p-\gamma}{2p}}\mathbb{E}\Big[\mathcal{E}_N^{\frac{\gamma-2p}{\gamma}}\Big]^{\frac{\gamma}{2p}}=\mathbb{E}^N\Big[\mathit{H}(X^N)^{2p}\Big]^{\frac{2p-\gamma}{2p}}\mathbb{E}\Big[\mathcal{E}_N^{\frac{\gamma-2p}{\gamma}}\Big]^{\frac{\gamma}{2p}}
				\end{align*}
				where $\mathcal{E}_N:=\mathcal{E}_N(\int_0^t b(X_s^N)\mathrm{d}W_s)$ stands for the Dol\'eans-Dade exponential under which the measure $\mathrm{d}\mathbb{P}^N:= \mathcal{E}_N\mathrm{d}\mathbb{P}$ defines an equivalent measure of the underlying probability measure $\mathbb{P}$. $\mathbb{E}^N$ stands for the expectation with respect to $\mathbb{P}^N$. Moreover, thanks to Girsanov theorem the process $(X_t^N-x)$ defines a Brownian motion under $\mathbb{P}^N$. Therefore, from \eqref{bound213}, using  $[N^{-1}\log(N+1)]^{\frac{2p-\gamma}{2}} \leq C N^{(-1+\gamma)p}$ and the uniform bound of the drift $b$ we deduce that
				\begin{align}\label{bound214}
					\mathbb{E}\mathit{H}(X^N)^{2p} \leq C \Big[p!\big(121CN^{-1}\log(N+1)\big)^p\Big]^{\frac{2p-\gamma}{2p}}\leq C(p) N^{(-1+\gamma)p}.
				\end{align}
				Let us now turn to the core of the proof. We start by choosing $T_0<1$ sufficiently small. For $i=1,\ldots, d$, applying a time reversal argument, we know from Theorem \ref{WeiLvWang} that there is a unique strong solution $u$ to the following PDE
				\begin{align*}
					\partial_t u^i(t,x) + \frac{1}{2}\Delta u^i(t,x) + (b\cdot\nabla u^i)(t,x) = -b^i(t,x), \quad
					u^i(T,x) = 0, \quad \forall x\in  \mathbb{R}^d,
				\end{align*}
				such that 
				\begin{align}\label{bound220}
					\|u^i\|_{L^{\infty}([0,T];C_b^2(\mathbb{R}^d))} \leq C(d,T)\left( 1+ 	\|b\|_{L^{\infty}([0,T];C_b(\mathbb{R}^d))} \right).
				\end{align}
				Let us observe that since $b$ is bounded and Dini, one can show as in \cite{DaGe20} (see also \cite{PaTa17}) that 
				\begin{align}\label{eqboderu1}
					\|\nabla u^i\| \leq C(h,d,\|b\|_\infty)\sqrt{T}.
				\end{align}
				In the following $C(p)$ is a generic constant that might change from one line to the other. Applying It\^o's formula to $u^i(t,X_t)$ and $u^i(t,X_t^n)$, respectively and using the PDE above, we obtain that 
				\begin{align}
					\int_0^t b^i(s,X_s)\mathrm{d}s= &u^i(0,x) -u^i(t,X_t) +\sum_{j=1}^{d} \int_0^t  u^i_{x_j}(s,X_s)\mathrm{d}W^j_s  \label{eqintb1}\\
					\int_0^t b^i(s,X_s^N)\mathrm{d}s=	 &u^i(0,x) -u^i(t,X_t^N) + \sum_{j=1}^{d} \int_0^t  u^i_{x_j}(s,X_s^N)\mathrm{d}W^j_s \notag\\
					&+ \sum_{j=1}^{d}\int_0^t  u^i_{x_j}(s,X_s^N)(b^j(s,X^N_{k_n(s)}) - b^j(s,X^N_s))\mathrm{d}s. \label{eqintb2}
				\end{align}
				We have by its definition
				\begin{align}
					&|X_t^N-X_t|^{2p}\notag\\
					&\leq 2^{2p-1}\Big( \Big|\int_0^t b(s,X_s)\mathrm{d}s - \int_0^t b(s,X^N_s)\mathrm{d}s \Big|^{2p} + \Big| \int_0^t b(s,X^N_s)\mathrm{d}s - \int_0^t b(s,X^N_{k_N(s)})\mathrm{d}s \Big|^{2p}\Big).
				\end{align}
				By taking the supremum and then the expectation we obtain that
				\begin{align}\label{bound217}
					&\mathbb{E}\sup_{0\leq t\leq T_0}|X_t^N-X_t|^{2p}\notag\\
					&\leq C(p) \mathbb{E}\sup_{0\leq t\leq T_0}\Big|\int_0^t b(s,X_s)\mathrm{d}s - \int_0^t b(s,X^N_s)\mathrm{d}s \Big|^{2p} + C(p) \mathbb{E}\sup_{0\leq t\leq T_0}\Big| \int_0^t b(s,X^N_s)\mathrm{d}s - \int_0^t b(s,X^N_{k_N(s)})\mathrm{d}s \Big|^{2p} \notag\\
					&\leq C(p) (\mathbb{E}\sup_{0\leq t\leq T_0}J_t^N +  N^{(-1+\gamma)p}),
				\end{align}
				where we have used the bound \eqref{bound214} to obtain the last inequality. 
				Using \eqref{eqboderu1}, \eqref{eqintb1} and \eqref{eqintb2} and the boundedness of $b$ give
				\begin{align}\label{bound218}
					J_t^N &\leq C(p) \sum_{i=1}^{d}|u^i(t,X_t)-u^i(t,X_t^N)|^{2p} + C(p)\sum_{i,j=1}^{d}\Big|\int_0^t u^i_{x_j}(s,X_s)- u^i_{x_j}(s,X_s^N)\mathrm{d}W^j_s\Big|^{2p}\notag\\
					&\quad+ C(p)\sum_{i,j=1}^{d} \Big|\int_0^t (b^j u^i_{x_j})(s,X_s^N)- (b^j u^i_{x_j})(s,X_{k_N(s)}^N)\mathrm{d}s\Big|^{2p}\notag\\
					&\quad + C(p)\sum_{i,j=1}^{d}\int_0^t |b^j(X_{k_N(s)}^N)|^{2p} | u^i_{x_j}(s,X_s^N)- u^i_{x_j}(s,X_{k_N(s)}^N)|^{2p}\mathrm{d}s\notag\\
					&	\leq C(p,d)\sqrt{T_0} |X_t-X_t^N|^{2p} + C(p)\sum_{i,j=1}^{d}\Big|\int_0^t u^i_{x_j}(s,X_s)- u^i_{x_j}(s,X_s^N)\mathrm{d}W^j_s\Big|^{2p}\notag\\
					&\quad+ C(p)\sum_{i,j=1}^{d} \Big|\int_0^t (b^j u^i_{x_j})(s,X_s^N)- (b^j u^i_{x_j})(s,X_{k_N(s)}^N)\mathrm{d}s\Big|^{2p}\notag\\
					&\quad + C(p,d)\sum_{i,j=1}^{d}\int_0^t | X_s^N- X_{k_N(s)}^N|^{2p}\mathrm{d}s.
				\end{align}
				Taking the supremum and the expectation on both sides, and applying the bound \eqref{bound214}, the Burkholder-Davis-Gundy inequality and the boundedness of the drift $b$ we deduce that
				\begin{align*}
					\mathbb{E} \sup_{0\leq t\leq T_0}J_t^N 
					&\leq C(p,d)\sqrt{T_0} \mathbb{E}\sup_{0\leq t\leq {T_0} }|X_t-X_t^N|^{2p} + C(p,d)\Big(\int_0^{T_0} \mathbb{E}|X_s- X_s^N|^{2}\mathrm{d}s\Big)^p \\
					&\quad +C(p) N^{(-1+\gamma)p} + C(p,d)\int_0^{T_0} \mathbb{E}|X_s^N- X_{k_N(s)}^N|^{2p}\mathrm{d}s\\
					&\leq C(p,d)\sqrt{T_0} \mathbb{E}\sup_{0\leq t\leq {T_0} }|X_t-X_t^N|^{2p} + C(p,d)\int_0^{T_0} \mathbb{E}\sup_{0\leq r\leq s} |X_r- X_r^N|^{2p}\mathrm{d}s \\
					&\quad +C(p) N^{(-1+\gamma)p} + C(p,d)N^{-p}
				\end{align*}
				Combining this with \eqref{bound217} and choosing $T_0$ small enough such that $C(p,d)\sqrt{T_0}<\frac{1}{2}$. 
				\begin{align}
					\mathbb{E}\sup_{0\leq t\leq T_0}|X_t^N-X_t|^{2p}
					\leq C(p,d) \Big(\int_0^{T_0} \mathbb{E}\sup_{0\leq r\leq s} |X_r- X_r^N|^{2p}\mathrm{d}s+  N^{(-1+\gamma)p}\Big).
				\end{align}
				
				The desired bound is obtained by applying the Gronwall's lemma to the function $t\mapsto 	\mathbb{E}\sup_{0\leq r\leq t}|X_r^N-X_r|^{2p}$. This concludes the proof for $T_0$ sufficiently small. The case of $T_0$ not sufficiently small can be handle as in \cite{DaGe20, PaTa17}. 
			\end{proof}	
			The following result is also interested on its own, since it provides a stability result for the Euler-Maruyama scheme for SDEs with non-smooth drifts
			\begin{cor}\label{Cor210}
				Let the assumptions of Theorem be in force. Assume further that for all $i=1,2$ $X_t^{i,N}$ is the Euler approximation of the solution to the SDE \eqref{2.1} given by \eqref{euler}. Then the following holds
				\begin{align*}
					\mathbb{E}\Big[	\sup_{0\leq t\leq 1}|X_t^{1,N}-X_t^{2,N}|^{2p}\Big] \leq C N^{(-1+\gamma)p}.
				\end{align*}
			\end{cor}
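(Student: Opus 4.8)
The plan is to deduce this stability estimate directly from the strong convergence rate of Theorem \ref{thm210} by inserting the unique strong solution $X$ of \eqref{2.1} between the two Euler approximations and invoking the triangle inequality. Since $b$ satisfies Assumption (A1), Theorem \ref{th 2.5} guarantees that \eqref{2.1} admits a unique continuous adapted solution $X$, and both $X^{1,N}$ and $X^{2,N}$ are Euler--Maruyama approximations (at the same level $N$) of that same process; hence each of them converges to $X$ at the rate quantified in Theorem \ref{thm210}, and the two approximations are forced to be mutually close.

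First I would write, for every $t\in[0,1]$,
\begin{align*}
    |X_t^{1,N}-X_t^{2,N}|^{2p}\leq 2^{2p-1}\left(|X_t^{1,N}-X_t|^{2p}+|X_t-X_t^{2,N}|^{2p}\right),
\end{align*}
using the elementary convexity inequality $|a+b|^{2p}\leq 2^{2p-1}(|a|^{2p}+|b|^{2p})$, valid since $2p\geq 1$. Taking the supremum over $t\in[0,1]$ and then the expectation, and using the monotonicity and linearity of $\mathbb{E}$, reduces the claim to controlling the two one-sided errors
\begin{align*}
    \mathbb{E}\Big[\sup_{0\leq t\leq 1}|X_t^{i,N}-X_t|^{2p}\Big],\qquad i=1,2.
\end{align*}
Each of these is bounded by $C N^{(-1+\gamma)p}$ by a direct application of Theorem \ref{thm210}, so summing the two contributions and absorbing $2^{2p-1}$ into the constant yields the desired bound with a (possibly enlarged) constant $C$ depending only on $p,d$ and the Dini modulus of $b$.

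Since this is a corollary of Theorem \ref{thm210}, no genuinely new difficulty arises: the only point requiring care is the interpretation of the two approximations. As stated, both schemes discretize the same equation \eqref{2.1} (same drift $b$, same driving noise $W$ and same initial datum $x$), so they share the common limit $X$ and the triangle inequality closes the argument. The genuinely hard part --- already resolved upstream in the proof of Theorem \ref{thm210} through the It\^o--Tanaka trick and the BMO/John--Nirenberg quadrature estimate \eqref{bound213} --- is the rate $N^{(-1+\gamma)p}$ itself; here it is simply inherited. Should one instead wish to compare approximations started at distinct points $x_1\neq x_2$, the same insertion argument applies with $X$ replaced by the two true flows $X^{x_1},X^{x_2}$, and one would additionally invoke the Lipschitz-in-initial-data flow estimate of Theorem \ref{th 2.5} to control $\mathbb{E}\big[\sup_{0\leq t\leq 1}|X_t^{x_1}-X_t^{x_2}|^{2p}\big]\leq C|x_1-x_2|^{2p}$, producing an extra $|x_1-x_2|^{2p}$ term on the right-hand side.
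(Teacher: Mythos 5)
Your argument is correct under the literal reading of the statement, but it follows a genuinely different route from the paper's. You use Theorem \ref{thm210} as a black box: insert the exact solution $X$ of \eqref{2.1}, apply $|a+b|^{2p}\leq 2^{2p-1}\left(|a|^{2p}+|b|^{2p}\right)$, and conclude by two applications of the theorem. The paper instead re-runs the proof of Theorem \ref{thm210} directly on the difference $X^{1,N}-X^{2,N}$: it bounds
\begin{align*}
\mathbb{E}\Big[\sup_{0\leq t\leq T_0}|X_t^{1,N}-X_t^{2,N}|^{2p}\Big]
\leq C(p)N^{(-1+\gamma)p} + \mathbb{E}\Big|\int_0^{T_0}b(s,X_s^{1,N})\mathrm{d}s - \int_0^{T_0}b(s,X_s^{2,N})\mathrm{d}s\Big|^{2p},
\end{align*}
absorbing the quadrature errors via \eqref{bound214}, then converts the drift difference through the It\^o--Tanaka/Zvonkin transformation into a term of size $C(p,d)\sqrt{T_0}$ times the quantity being estimated plus a time integral, and closes with Gronwall on a short horizon (iterating for general $T_0$). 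The trade-off is this: your route is shorter and more modular, but it relies on the two schemes sharing the common limit $X$, i.e.\ the same drift, the same Brownian motion and the same initial datum --- in which case the two Euler recursions actually coincide pathwise, so the statement carries no real content in that regime. The paper's argument never passes through the limit, which is what makes it a genuine \emph{stability} estimate: it is the version that adapts when the two schemes differ (distinct starting points, perturbed drifts, shifted grids), which is exactly the use the authors have in mind later (see the discussion of perturbed Euler schemes in Section \ref{secconcl}). Your closing remark --- handling distinct initial points by inserting the two flows $X^{x_1},X^{x_2}$ and invoking the flow estimate of Theorem \ref{th 2.5} --- recovers that robustness within your framework, so there is no gap; it is simply a different decomposition, trading the paper's self-contained Gronwall argument for black-box reuse of the convergence theorem.
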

			\begin{proof}
				Following the same arguments as in the proof of the previous theorem, we will simply assume that $T_0 < 1$ sufficiently small such that for all $p\geq 1$
				\begin{align*}
					&\mathbb{E}\Big[	\sup_{0\leq t\leq T_0}|X_t^{1,N}-X_t^{2,N}|^{2p}\Big]\\ &\leq C(p)N^{(-1+\gamma)p} + \mathbb{E}\Big|\int_0^{T_0}b(s,X_s^{1,N})\mathrm{d}s - \int_0^{T_0}b(s,X_s^{2,N})\mathrm{d}s\Big|^{2p}\\
					&\leq  C(p)N^{(-1+\gamma)p} + C(p,d)\sqrt{T_0}\mathbb{E}\Big[	\sup_{0\leq t\leq T_0}|X_t^{1,N}-X_t^{2,N}|^{2p}\Big] + C(p,d)\int_0^{T_0} \mathbb{E}\sup_{0\leq r\leq s} |X_r^{1,N}- X_r^{2,N}|^{2p}\mathrm{d}s.
				\end{align*}
				Hence, applying once more the Gronwall's lemma we obtain the result. This concludes the proof.
			\end{proof}
			\section{Notations and Preliminary results}\label{secpremres}
			In this section, we start this by recalling some notations, and definitions of some basic concepts. We also revisit several results on the differentiability of Forward-Backward Stochastic Differential Equations (FBSDEs) with rough drift and quadratic drivers, as well as the differentiability of quadratic BSDEs with discrete functional terminal values.
			
			\subsection{Some notations and definitions}
			
			For fixed $T> 0,d \in \mathbb{N},$ $p\in [2,\infty)$, we denote by:
			\begin{itemize}
				\item $\mathbb{D}$ stands for the space of all c\`adl\`ag functions defined on $[0,T]$;
				\item $L^p(\mathbb{R}^d)$ the space of $\mathfrak{F}_T\text{-adapted}$ random variables  $X$ such that $\Vert X \Vert^p_{\tiny L^p }:= \mathbb{E}|X|^p < \infty$; 
				\item $L^{\infty}(\mathbb{R}^d)$ the space of bounded random variables with norm $\Vert X \Vert_{\tiny L^{\infty} }:= \essup_{\omega\in \Omega}|X(\omega)|$; 
				\item $\mathcal{S}^p(\mathbb{R}^d)$ the space of all adapted continuous $\mathbb{R}^d$-valued processes $X$ such that $\Vert X \Vert^p_{\tiny \mathcal{S}^p(\mathbb{R}^d) }:= \mathbb{E}\sup_{t\in [0,T]} |X_t|^p < \infty$; 
				\item $\mathcal{H}^p(\mathbb{R}^d)$ the space of all predictable $\mathbb{R}^d$-valued processes $Z$  such that $\Vert Z \Vert^p_{\tiny \mathcal{H}^p(\mathbb{R}^d) }:= \mathbb{E}(\int_0^T |Z_s|^2\mathrm{d}s)^{p/2}  < \infty$; 
				\item $\mathcal{S}^{\infty}(\mathbb{R}^d)$ the space of continuous $\{ \mathfrak{F}_s \}_{0\leq t\leq T} $-adapted processes $Y:\Omega\times[0,T]\rightarrow \mathbb{R}^d$ such that $\Vert Y \Vert_{\tiny \infty }:= \essup_{\omega\in \Omega} \sup_{t\in [0,T]} |Y_t(\omega)| < \infty;$
				\item BMO($\mathbb{P}$) the space of square integrable martingales $M$ with $M_0 = 0$ such that $ \Vert M\Vert_{\text{BMO}(\mathbb{P})} = \sup_{\tau \in [0,T]} \Vert \mathbb{E} [\langle M \rangle_T  - \langle M \rangle_{\tau}]/\mathfrak{F}_{\tau} \Vert_{\infty}^{1/2} < \infty,$ the supremum is taken over all stopping times $\tau \in [0,T];$
				\item $\mathcal{H}_{\text{BMO}}$ the space of $\mathbb{R}^d\text{- valued}$ $\mathcal{H}^p\text{-integrable} $ processes $(Z_t)_{t\in [0,T]}$ for all $p\geq 2$ such that $Z*B = \int_0 Z_s\mathrm{d}B_s \in \text{BMO}(\mathbb{P}).$ We define $\Vert Z \Vert_{\mathcal{H}_{\text{BMO}}}:= \Vert \int Z\mathrm{d}B \Vert_{\text{BMO}(\mathbb{P})}$;
				\item $L^{\infty}([0,T];C_b^{\beta}(\mathbb{R}^d;\mathbb{R}^d))$ the space of all vector fields $b:[0,T]\times \mathbb{R}^d\rightarrow \mathbb{R}^d$ having all components in $L^{\infty}([0,T];C_b^{\beta}(\mathbb{R}^d))$ and $L^{\infty}([0,T];C_b^{\beta}(\mathbb{R}^d))$ stands for the set of all bounded Borel functions $b:[0,T]\times \mathbb{R}^d\rightarrow \mathbb{R}$ such that 
				\[ [b]_{\beta,T} = \sup_{t \in [0,T]} \sup_{x\neq y\in \mathbb{R}^d} \frac{|b(t,x) -b(t,y)|}{|x-y|^\beta} < \infty . \]
			\end{itemize}
			
			\begin{defi}\label{defi}
				A functional $\Phi: \mathbb{D}^d \rightarrow \mathbb{R}$ is called $L^{\infty}\text{-Lipschitz}$, if there exists a constant $\Lambda_{\Phi}> 0$ such that 
				\begin{align}\label{Lips1}
					|\Phi(\bold{x}_1)-\Phi(\bold{x}_2)| \leq \Lambda_{\Phi} \sup_{ 0\leq t\leq T}|\bold{x}_1(t) -\bold{x}_2(t)|, \quad  \forall \bold{x}_1,\bold{x}_2 \in \mathbb{D}^d; 
				\end{align}
				and $\Phi: \mathbb{D}^d \rightarrow \mathbb{R}$ is called $L^{1}\text{-Lipschitz}$, if it satisfies
				\begin{align}\label{Lips2}
					|\Phi(\bold{x}_1)-\Phi(\bold{x}_2)| \leq \Lambda_{\Phi} \int_{ 0}^{T}|\bold{x}_1(t) -\bold{x}_2(t)|\mathrm{d}t, \quad  \forall \bold{x}_1,\bold{x}_2 \in \mathbb{D}^d.
				\end{align}
			\end{defi}
			As pointed in \cite{Zhang041}, two typical examples of $L^{\infty}\text{-Lipschitz}$ and $L^{1}\text{-Lipschitz}$ continuous functionals are given by $\Phi(\bold{x})= \max_{0\leq t \leq T}|\bold{x}(t)|$ and $\Phi(\bold{x})= \int_0^T\bold{x}(t)\mathrm{d}t$ which for example could represent the payoff of a lookback options and Asian options, respectively.
			Let us recall the following approximation result from \cite{MaZhang01} and \cite{Zhang041}
			\begin{lemm}\label{5lem22}
				Suppose that $\Phi$ is an $L^{\infty}\text{-Lipschitz}$ functional satisfying condition \eqref{Lips1}\footnote{A similar approximation procedure in the case where $\Phi$ is an $L^{1}\text{-Lipschitz}$ can be found in \cite[Lemma 5.2]{MaZhang01}}. Let $\Pi = \{\pi\}$ be a family of partitions of $[0,T]$. Then there is a family of discrete functionals $\{ \phi^{\pi}: \pi \in \Pi \}$ such that 
				\begin{itemize}
					\item[(i)] for each $\pi \in \Pi$, assuming $\pi:0=t_0<\cdots < t_n=T$ we have that $\phi^{\pi} \in C_b^{\infty}(\mathbb{R}^{d(n+1)})$ and satisfies 
					\begin{equation}
						\sum_{i=0}^n |\nabla_{x_i}\phi^{\pi}(x)| \leq S({\phi}), \quad \forall x=(x_0,\cdots,x_n) \in \mathbb{R}^{d(n+1)},
					\end{equation}
					where the constant $S(\phi)$ is such that $S(\phi) < C \Lambda_{\Phi}$ and $\Lambda_{\Phi}$ is given in \eqref{Lips1}.
					\item[(ii)] for any $\bold{x} \in \mathbb{D}^d$, it holds that 
					\begin{equation}
						\lim_{|\pi|\rightarrow 0} \left|\phi^{\pi}(\bold{x}(t_0),\cdots,\bold{x}(t_n)) - \Phi(\bold{x})   \right| = 0.
					\end{equation}
				\end{itemize}
			\end{lemm}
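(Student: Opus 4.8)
First I would observe that the content of the lemma is essentially a smoothing-and-interpolation construction, and that the whole force of part (i) comes from the fact that the $L^\infty$-Lipschitz condition \eqref{Lips1} is measured in the \emph{supremum} norm, whose dual is an $\ell^1$ norm. It is exactly this duality that converts a single Lipschitz constant $\Lambda_\Phi$ into a bound on the \emph{sum} $\sum_i|\nabla_{x_i}\phi^\pi|$ that is uniform in the number $n$ of partition points; the uniformity in $n$ is the whole point of the lemma, so I would keep track of it carefully.

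I would fix a partition $\pi:0=t_0<\cdots<t_n=T$ and introduce the piecewise-linear interpolation map $\iota^\pi:\mathbb{R}^{d(n+1)}\to\mathbb{D}^d$ sending $x=(x_0,\ldots,x_n)$ to the continuous path $\iota^\pi(x)$ that equals $x_i$ at $t_i$ and is affine on each $[t_i,t_{i+1}]$. Setting $\hat\phi^\pi(x):=\Phi(\iota^\pi(x))$, I would note that on each subinterval $t\mapsto|\iota^\pi(x)(t)-\iota^\pi(y)(t)|$ is the modulus of an affine function, hence convex, so its maximum is attained at a node; this gives the exact identity
\[
\sup_{0\le t\le T}|\iota^\pi(x)(t)-\iota^\pi(y)(t)|=\max_{0\le i\le n}|x_i-y_i|.
\]
Combined with \eqref{Lips1} this yields $|\hat\phi^\pi(x)-\hat\phi^\pi(y)|\le\Lambda_\Phi\max_i|x_i-y_i|$, and $\hat\phi^\pi$ is bounded because $\Phi$ is.

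Next I would mollify. With $\rho_\varepsilon$ a standard mollifier on $\mathbb{R}^{d(n+1)}$, set $\phi^\pi:=\hat\phi^\pi*\rho_{\varepsilon(\pi)}$ with $\varepsilon(\pi)\to0$ as $|\pi|\to0$; then $\phi^\pi\in C_b^\infty(\mathbb{R}^{d(n+1)})$, and convolution against a probability density preserves the max-norm Lipschitz constant. Applying the duality step to the smooth $\phi^\pi$, for fixed $x$ choose the unit vectors $v_i=\nabla_{x_i}\phi^\pi(x)/|\nabla_{x_i}\phi^\pi(x)|$, so that
\[
\sum_{i=0}^n|\nabla_{x_i}\phi^\pi(x)|=D\phi^\pi(x)\cdot(v_0,\ldots,v_n)\le\Lambda_\Phi\max_i|v_i|=\Lambda_\Phi,
\]
which proves (i) with $S(\phi)=\Lambda_\Phi<C\Lambda_\Phi$. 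For (ii), writing $\hat{\mathbf{x}}:=(\mathbf{x}(t_0),\ldots,\mathbf{x}(t_n))$, I would split
\[
|\phi^\pi(\hat{\mathbf{x}})-\Phi(\mathbf{x})|\le|\phi^\pi(\hat{\mathbf{x}})-\hat\phi^\pi(\hat{\mathbf{x}})|+|\Phi(\iota^\pi(\hat{\mathbf{x}}))-\Phi(\mathbf{x})|,
\]
where the first term is the mollification error bounded by $\Lambda_\Phi\varepsilon(\pi)\int|z|\rho(z)\,\mathrm{d}z\to0$, and the second is bounded via \eqref{Lips1} by $\Lambda_\Phi\sup_{0\le t\le T}|\iota^\pi(\hat{\mathbf{x}})(t)-\mathbf{x}(t)|$.

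The main obstacle is controlling this last supremum. For a continuous path $\mathbf{x}$ — precisely the regime of the forward diffusion $X$ solving \eqref{2.1} to which the lemma is applied — uniform continuity on $[0,T]$ forces $\sup_t|\iota^\pi(\hat{\mathbf{x}})(t)-\mathbf{x}(t)|\to0$ as $|\pi|\to0$, giving (ii). For a general c\`adl\`ag $\mathbf{x}\in\mathbb{D}^d$ the supremum need not vanish across a jump, so the convergence is read at continuity points (equivalently along meshes that do not pin jumps into the interior of subintervals in the limit), the at-most-countable jump set causing no difficulty; this is the single delicate point and is handled exactly as in \cite{MaZhang01,Zhang041}. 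Finally, the $L^1$-Lipschitz case \eqref{Lips2} is entirely analogous: one replaces the supremum norm by the $L^1$-norm throughout, whose dual is $L^\infty$, so the same computation returns an $\ell^1$-type gradient bound, as recorded in \cite[Lemma 5.2]{MaZhang01}.
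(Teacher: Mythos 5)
The paper itself offers no proof of this lemma --- it is recalled from Ma--Zhang and Zhang (see the sentence preceding the statement and the footnote) --- so there is no internal argument to compare against; what follows judges your construction on its own merits.

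Your proof of (i) is correct, and it is the natural construction consistent with how the paper later uses the lemma (cf.\ Remark \ref{remark24}, where the discrete functional is truncated and mollified). The identity $\sup_{t}|\iota^\pi(x)(t)-\iota^\pi(y)(t)|=\max_i|x_i-y_i|$ for piecewise-affine interpolants, the preservation of the max-norm Lipschitz constant under convolution with a probability density, and the duality step with the test direction $v_i=\nabla_{x_i}\phi^\pi(x)/|\nabla_{x_i}\phi^\pi(x)|$ (set $v_i=0$ where the block gradient vanishes) are all sound and give $S(\phi)=\Lambda_\Phi$, uniformly in $n$. The only caveat is that $\phi^\pi\in C_b^\infty$ requires $\Phi$ bounded, which you assume implicitly; this matches the paper's usage.

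The genuine gap is in (ii), and your closing remark does not repair it: for your construction, statement (ii) on all of $\mathbb{D}^d$ is not merely delicate, it is \emph{false}. Take $d=1$, fix $s\in(0,T)$, let $\mathbf{z}=\mathbf{1}_{[s,T]}$, and consider the bounded, $L^\infty$-Lipschitz (with $\Lambda_\Phi=1$) functional $\Phi(\mathbf{x})=\min\bigl(1,\sup_{0\le t\le T}|\mathbf{x}(t)-\mathbf{z}(t)|\bigr)$, evaluated at the path $\mathbf{x}=\mathbf{z}$, so that $\Phi(\mathbf{x})=0$. For any partition with $s\notin\pi$, say $t_{j-1}<s<t_j$, the interpolant $\iota^\pi(\hat{\mathbf{x}})$ ramps affinely from $0$ to $1$ on $[t_{j-1},t_j]$; writing $a=(s-t_{j-1})/(t_j-t_{j-1})$, the sup-distance to $\mathbf{z}$ is at least $\max(a,1-a)\ge 1/2$, hence $\hat\phi^\pi(\hat{\mathbf{x}})\ge 1/2$ and, after mollification, $\phi^\pi(\hat{\mathbf{x}})\ge 1/2-O(\varepsilon(\pi))$, which does not converge to $\Phi(\mathbf{x})=0$. (Piecewise-constant interpolation fares no better: there the error equals $1$.) Since (ii) quantifies over \emph{all} partitions with $|\pi|\to 0$ and the jump time $s$ is generically never a node, no selection of meshes rescues this, and "reading the convergence at continuity points" has no meaning for a functional of the whole path. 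What your argument actually establishes is (ii) for continuous $\mathbf{x}$ --- which is exactly the regime in which the paper applies the lemma, since the forward process \eqref{2.1} has continuous paths and Remark \ref{remark24} states the convergence \eqref{appro2.9} only for continuous processes --- together with the $L^1$-Lipschitz analogue \eqref{Lips2} on all of $\mathbb{D}^d$, where the interpolation error is measured in $L^1([0,T])$ and the at-most-countably-many jumps are Lebesgue-negligible. So your proof suffices for everything the paper does with the lemma, but as a proof of (ii) as literally stated on $\mathbb{D}^d$ it is incomplete, and for the interpolation-plus-mollification construction the literal statement cannot be proved because it fails.
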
 
			\begin{remark}\label{remark24} For bounded terminal value $\Phi$, we will then consider the bounded approximation of the $L^{\infty}\text{-Lipschitz}$ or $L^{1}\text{-Lipschitz}$  functional $\Phi$ as proposed in \cite[Remark 3.3]{Zhou}:
				\begin{equation}\label{appro2.8}
					\hat{\phi}^{\pi} = \left( \|\xi\|_{L^{\infty}} \wedge \phi^{\pi} \vee (-\|\xi\|_{L^{\infty}})   \right)*\rho_r,
				\end{equation}
				where $\{\rho_r\}_r$ stands for a sequence of mollifiers $\rho_r: \mathbb{R}^{d\times r} \rightarrow \mathbb{R}$ with $\int \rho_r \mathrm{d}x_1\cdots \mathrm{d}x_r$. Therefore, for any continuous process $X$ defined on a filtered probability space $(\Omega,\mathfrak{F},\mathbb{P},\{\mathfrak{F}_t\}_{t})$ it holds 
				\begin{equation}\label{appro2.9}
					\lim_{|\pi|\rightarrow 0}\left|\hat{\phi}^{\pi}\left(X_{t_0},\cdots,X_{t_r}\right) - \Phi(X)\right|= 0.
				\end{equation}
			\end{remark}
			In the sequel, the function $\phi: \mathbb{R}^{d\times r} \rightarrow \mathbb{R}$ stands for any Lipschitz continuous function such that
			\begin{align}\label{2.6}
				|\phi(x_1,x_2,\cdots,x_r) - \phi(y_1,y_2,\cdots,y_r) | \leq \sum_{i=1}^{r} \Lambda_{\phi}^{(i)}|x_i-y_i| ,
			\end{align}
			for any $x_1,x_2,\cdots,x_r \in \mathbb{R}^d$ and $y_1,y_2,\cdots,y_r \in \mathbb{R}^d$, and the sum of Lipschitz coefficients will be denoted by 
			\begin{align}\label{2.7}
				S(\phi) = \sum_{i=1}^{r}\Lambda_{\phi}^{(i)}.
			\end{align}
			\subsection{Differentiability of quadratic FBSDEs with Dini-continuous drift}
			Here, we extend the general results derived in \cite{ImkRhossOliv} and \cite{Zhou}, by assuming that the drift of the forward process is bounded and Dini continuous, and the terminal value is a functional of multiple final valued processes $(X_T^{(i)})$ for $1\leq i\leq r$.

			We start by studying the differentiability with respect to the starting point of the forward process $X$.
			
			\begin{prop}\label{ClassYM}
				Suppose Assumption \ref{assum2.1} holds. Fix $r >0$ and Let $\bold{x} = (x^{(1)},x^{(2)},\cdots,x^{(r)})$ and $p > 1$. Assume in addition the terminal condition $\xi$ is of the following form $\xi = \Phi(X_T^{(1)},X_T^{(2)},\cdots,X_T^{(r)})$, where $\Phi\in C^1(\mathbb{R}^{d\times r};\mathbb{R})$ is bounded and $(X_T^{(i)})_{1\leq i\leq r}$ solves \eqref{2.1} for each drift $b^{i}$, $1\leq i\leq r$ satisfying Assumption (A1). Assume furthermore that $g$ is continuously differentiable in $x$, $y$ and $z$. Then the couple $(Y,Z)$ solution to \eqref{2.2} is differentiable and for all $1\leq i\leq r$, the derivative process $(\nabla_{{x^{(i)}}} Y,\nabla_{{x^{(i)}}} Z) \in \mathcal{S}^p\times\mathcal{H}^p$ solves the following BSDE for any $p> p_0'$
				\begin{align}\label{eq31}
					\nabla_{{x^{(i)}}} Y_t =& \nabla_{{x^{(i)}}} \Phi(X_T^{(1)},X_T^{(2)},\cdots,X_T^{(r)})\nabla_{{x^{(i)}}} X_T^{(i)} - \int_{t}^{T} \nabla_{{x^{(i)}}} Z_s \mathrm{d}W_s\notag\\
					&+ \int_{t}^{T} \langle \nabla g(s,{X}^{(i)}_s,Y_s,Z_s),\nabla_{{x^{(i)}}} \bold{X}_s^{(i)} \rangle \mathrm{d}s, 
				\end{align}
				where $\nabla_{{x^{(i)}}}\bold{X}^{(i)} = (\nabla_{{x^{(i)}}}X^{(i)}, \nabla_{{x^{(i)}}}Y,\nabla_{{x^{(i)}}}Z)$ and $\nabla g = (\nabla_{x^{(i)}} g, \nabla_y g,\nabla_z g)$ .
			\end{prop}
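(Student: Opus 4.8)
The plan is to realize $(\nabla_{x^{(i)}}Y,\nabla_{x^{(i)}}Z)$ as the $L^p$-limit of difference quotients of $(Y,Z)$ in the starting point and to identify the limiting dynamics with the linear BSDE \eqref{eq31}. Since the forward flow $x\mapsto X_t^{s,x}$ is already known to be a quasi-diffeomorphism flow enjoying the moment bounds of Theorem \ref{th 2.5} and Proposition \ref{cor 5.7}, the derivative $\nabla_{x^{(i)}}X^{(i)}$ exists and lies in every $L^p$; the whole difficulty is therefore concentrated on the backward component, where the quadratic growth of $g$ in $z$ rules out the classical Lipschitz-BSDE differentiation machinery.

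First I would record that the candidate equation \eqref{eq31} is a \emph{linear} BSDE whose coefficient in front of the martingale increment is $\nabla_z g(s,X^{(i)}_s,Y_s,Z_s)$. Because $g$ is quadratic in $z$, this coefficient has linear growth in $Z_s$, and since $Z*W\in\bmo$ (the standard consequence of the quadratic structure and bounded terminal value underlying \eqref{77}) the process $\int_0^{\cdot}\nabla_z g\,\mathrm{d}W$ is itself a $\bmo$-martingale. By Kazamaki's reverse-Hölder theory its stochastic exponential $\mathcal{E}(\nabla_z g*W)$ belongs to $L^{p_0}$ for some $p_0>1$, which is precisely the $p_0$ appearing in the statement. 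Performing the Girsanov change of measure $\mathrm{d}\mathbb{Q}=\mathcal{E}(\nabla_z g*W)\,\mathrm{d}\mathbb{P}$ removes the $\nabla_z g\cdot\nabla_{x^{(i)}}Z$ term and turns \eqref{eq31} into a BSDE driven by a $\mathbb{Q}$-Brownian motion with source $\langle(\nabla_x g,\nabla_y g),\nabla_{x^{(i)}}(X^{(i)},Y)\rangle$; solving it under $\mathbb{Q}$ and transferring back to $\mathbb{P}$ by Hölder with conjugate exponents $(p_0,p_0')$ yields existence, uniqueness and the $\mathcal{S}^p\times\mathcal{H}^p$ bound for every $p>p_0'$.

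Next I would fix a direction $\mathbf e\in\mathbb{R}^d$ and $h\neq0$, denote by $(Y^h,Z^h)$ the solution of \eqref{2.2} obtained by replacing $x^{(i)}$ with $x^{(i)}+h\mathbf e$, and set $U^h_t=h^{-1}(Y^h_t-Y_t)$, $V^h_t=h^{-1}(Z^h_t-Z_t)$. Subtracting the two copies of \eqref{2.2} and linearizing by the fundamental theorem of calculus, the pair $(U^h,V^h)$ solves a linear BSDE of the same form as \eqref{eq31}, but with coefficients $\int_0^1\nabla g(s,\Theta^{h,\lambda}_s)\,\mathrm{d}\lambda$ evaluated along the segment $\Theta^{h,\lambda}$ joining $(X^{(i)},Y,Z)$ to $(X^{h},Y^h,Z^h)$, and with the difference quotient of $\nabla_{x^{(i)}}X^{(i)}$ as driver. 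The crucial point is that the linearized $\nabla_z$-coefficient is uniformly in $(h,\lambda)$ controlled by $1+|Z|+|Z^h|$, so the associated stochastic exponentials form a family of $\bmo$-martingales with uniformly bounded $\bmo$-norms; this uniformity is exactly what the a priori estimate $\mathbb{E}[\sup_t|Z_t|^p]\leq C$ together with the stability of the quadratic BSDE in the forward initial datum provide. Consequently the same Girsanov argument delivers $\|U^h\|_{\mathcal{S}^p}+\|V^h\|_{\mathcal{H}^p}\leq C$ uniformly in $h$.

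Finally I would pass to the limit $h\to0$. Continuity of the forward flow gives $X^{h}\to X^{(i)}$, and stability of the quadratic BSDE (again via the uniform $\bmo$ bounds) gives $Y^h\to Y$ in $\mathcal{S}^p$ and $Z^h\to Z$ in $\mathcal{H}^p$; since $g$ and $\Phi$ are $C^1$, the linearized coefficients $\int_0^1\nabla g(s,\Theta^{h,\lambda}_s)\,\mathrm{d}\lambda$ converge, along a subsequence and $\mathrm{d}\mathbb{P}\otimes\mathrm{d}t$-a.e., to $\nabla g(s,X^{(i)}_s,Y_s,Z_s)$. Estimating the difference between $(U^h,V^h)$ and the solution of \eqref{eq31} by the stable linear-BSDE estimate under $\mathbb{Q}$, and invoking dominated convergence with the uniform $L^p$ bounds as dominating control, shows this difference tends to $0$, which identifies the limit as $(\nabla_{x^{(i)}}Y,\nabla_{x^{(i)}}Z)$ and completes the proof. \textbf{The main obstacle} is to make the uniform-in-$h$ $\bmo$ control compatible with the quadratic growth throughout the passage to the limit: without the $\bmo$ characterization and the reverse-Hölder inequality the linearized coefficients would fail to be integrable enough to run a Gronwall-type argument, and this is exactly where Kazamaki's theory and the $L^p$-integrability of $\sup_t|Z_t|$ are used decisively, also explaining the restriction $p>p_0'$.
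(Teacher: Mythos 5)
Your proposal is correct in spirit but follows a genuinely different route from the paper. The paper's own proof is essentially a verification-of-hypotheses argument: it checks that the data of the present FBSDE satisfy conditions (C1) and (C2) of \cite[Assumption 4.9]{ImkRhOliv24} and then observes that the one condition that fails in this singular-drift setting, namely the pathwise continuity requirement (C3) on the terminal value derivative, can be relaxed to continuity as an $L^p(\Omega)$-valued map --- which is exactly the regularity that Theorem \ref{th 2.5} provides for $x\mapsto \nabla_x X_T^{s,x}$ under a Dini-continuous drift. All of the difference-quotient, linearization, uniform-$\bmo$, Girsanov and limit-passage machinery that you write out is delegated to the cited theorem. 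What your approach buys is self-containedness and transparency: it makes explicit where Kazamaki's reverse H\"older inequality and the restriction $p>p_0'$ enter. What the paper's approach buys is brevity and a precise isolation of the single new difficulty created by the rough drift (loss of pathwise continuity of the first variation process), which your write-up does not identify as the novel obstruction at all.

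Two points in your argument need repair. First, you justify the uniform-in-$h$ $\bmo$ bounds for the linearized coefficients by invoking the a priori estimate $\mathbb{E}[\sup_t|Z_t|^p]\leq C$; in this paper that estimate is Lemma \ref{boundZ}, whose proof \emph{uses} Proposition \ref{ClassYM}, so your argument as stated is circular. The non-circular route is standard: the terminal values $\Phi$ are uniformly bounded, hence $\|Y^h\|_\infty$ is bounded uniformly in $h$, and this alone yields the uniform $\bmo$ bounds on $Z^h*W$ (cf. \eqref{77}), from which Kazamaki gives reverse H\"older constants uniform in $h$. Second, under Assumption \ref{assum2.1} the driver is only \emph{stochastically} Lipschitz in $y$, so the linearized coefficient $\nabla_y g$ is unbounded, with growth $\Lambda_y(1+|Z|^{\alpha_0})$; your Girsanov step removes only the $\nabla_z g$ term, and the $\nabla_y g\cdot\nabla_{x^{(i)}}Y$ term is not a source but must be absorbed by the integrating factor $e_t=\exp(\int_0^t\nabla_y g\,\mathrm{d}s)$, whose $\mathcal{S}^p$-integrability for all $p$ requires the John--Nirenberg exponential moments of $\int_0^T|Z_s|^{\alpha_0}\mathrm{d}s$ available because $\alpha_0\in(0,1)$ and $Z*W\in\bmo$ (this is precisely Remark \ref{rem3.1}). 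With these two corrections your difference-quotient argument goes through.
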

			Before proving Proposition \ref{ClassYM}, we first highlight some important integrability properties of BMO martingales within our framework. For more details, we refer the interested reader to \cite{Kazamaki}. These properties are essential in the proofs of several results presented in this work.
			\begin{remark}\label{rem3.1}
				Let assumptions of Proposition \ref{ClassYM} be in force. 
				\begin{enumerate}
					\item Let $(e_t)_{t\geq 0}$ be the process defined by $e_t = \exp(\int_0^t \nabla_y g(s,\bold{X}^{(i)}_s)\mathrm{d}s)$.  From Assumption \ref{assum2.1} $g$ is not uniformly Lipschitz continuous in $y$ (as opposed to  \cite{Zhou,MaZhang01,Zhang041}), then the process $(e_t)_{t\geq 0}$ is not bounded. Hence for all $0\leq t\leq s \leq T$ we have, $e_se_t^{-1} \leq A_T:= \exp\Big(2\int_0^T \Lambda_y(1+ |Z_u|^{\alpha}\mathrm{d}u\Big)$, with $\alpha\in (0,2)$ and from the $\bmo$ property of $Z$, we deduce that $(e_t)_{t\geq 0} \in \mathcal{S}^p $ for all $p\geq 1$.
					\item The Dol\'{e}ans-Dade exponential $\mathcal{E}(\nabla_z g *W) \in L^{p_0}$ and $\mathcal{E}(\nabla_z g *W)^{-1}= \mathcal{E}(-\nabla_z g *W^{\mathbb{Q}})  \in L^{p_1}$ satisfy the reverse H\"{o}lder inequality and we denote by $p_0^{'}$ and $p_1^{'}$ the conjugates of $p_0$ and $p_1$, respectively. Moreover, the process $W^{\mathbb{Q}}_t = W_t - \int_0^t \nabla_z g(s,\bold{X}^{(i)}_s)\mathrm{d}s$ is a Brownian motion under the measure $\mathbb{Q}$ defined by $\mathrm{d}\mathbb{Q} := \mathcal{E}(\nabla_z g *W)^{-1}\mathrm{d}\mathbb{P}$. 
				\end{enumerate}
			\end{remark}
			\begin{proof}[Proof of Proposition \ref{ClassYM}]
				From \cite[Theorem 3]{NilssenProske}, the forward process $X^{(i)}$ is Sobolev differentiable for each $1\leq i \leq r,$ since  the drift $b^{i}$ is bounded and measurable for each $1\leq i \leq r, $  i.e. $X^{(i)} \in L^2(\Omega,W^{1,p}_{loc})$.
				Thanks to Theorem \ref{th 2.5}, for each $1\leq i \leq r$, the process $(\nabla_{{x^{(i)}}} X_T^{(i)})$ is continuous, only as an $L^p(\Omega)$ valued function thus fails to satisfy the condition (C3) of  \cite[Assumption 4.9]{ImkRhOliv24}. Nevertheless, a careful inspection of the proof in \cite{ImkRhOliv24} shows that the continuity requirement of the terminal value $\nabla \xi$ can be relaxed to the continuity as an $L^p$-valued function in order to guarantee the continuity in the Banach space  $\mathcal{S}^p\times\mathcal{H}^p$ of the partial derivatives. On the other hand, from the assumptions of the theorem, for all $p>1$ we deduce
				\[ \sup_{\bold{x}}\mathbb{E} \left[ \big| \nabla_{{x^{(i)}}} \Phi(X_T^{(1)},X_T^{(2)},\cdots,X_T^{(r)})\nabla_{{x^{(i)}}} X_T^{(i)}    \big|^p \right] \leq \Lambda_{\Phi}^p \sup\mathbb{E} \|\nabla_{{x^{(i)}}} X_T^{(i)}\|^p < \infty. \]
				Hence, the condition (C2) in \cite[Assumption 4.9.]{ImkRhOliv24} is satisfied by the terminal value $\nabla \xi$. 
				In addition, one can also check that the drivers of the BSDE \eqref{2.2} satisfy condition (C1) in \cite[Assumption 4.9.]{ImkRhOliv24}. Thus the solution $(X^{(i)},Y,Z)$ to the FBSDE \eqref{2.1}-\eqref{2.2} is differentiable with respect to $x^{i}= (x_1^{(i)},\cdots,x_d^{(i)})$ for all $1\leq i \leq r$ and the derivative process $(\nabla_{x^{(i)}}X^{(i)},\nabla_{x^{(i)}}Y,\nabla_{x^{(i)}}Z)$ is the unique solution to the linear equation \eqref{eq31}. This proof is completed.
			\end{proof}
			Let us turn now to the variational differentiability in the sense of Malliavin.
			\begin{prop}\label{Maldiff}
				Let Assumptions of Proposition \ref{ClassYM} be in force. Then the couple process $ (Y,Z)$ solution to \eqref{2.2} is Malliavin differentiable and the derivative process $(D_u Y_t,D_u Z_t)$ solves the affine BSDE for all $0\leq u \leq t \leq T.$
				\begin{align}\label{5.15}
					D_u Y_t =& \sum_{i=1}^{r} \nabla_{{x^{(i)}}} \phi(X_T^{(1)},X_T^{(2)},\cdots,X_T^{(r)})D_u X_T^{(i)} - \int_{t}^{T} D_u Z_s \mathrm{d}B_s\notag\\
					&+ \int_{t}^{T} \langle \nabla g(s,{X}^{(i)}_s,Y_s,Z_s),D_u \bold{X}_s^{(i)} \rangle \mathrm{d}s~, 
				\end{align}
				where, $D_u \bold{X}_s^{(i)}= (D_u{X}_s^{(i)}, D_uY_s,D_uZ_s)$, $\nabla g = (\nabla_{x^{(i)}} g, \nabla_y g,\nabla_z g)$.

				In addition, the following representations hold for any $u \in [0,t]$
				\begin{align}
					D_uY_t &= \sum_{i=1}^r ( \nabla_{{x^{(i)}}}Y_t)^{\bold{T}}( \nabla_{{x^{(i)}}}X_u^{(i)})^{-1}, \\
					Z_t=D_tY_t &= \sum_{i=1}^r ( \nabla_{{x^{(i)}}}Y_t)^{\bold{T}}( \nabla_{{x^{(i)}}}X_t^{(i)})^{-1},\, \text{a.s.},
				\end{align}
				where, $( \nabla_{{x^{(i)}}}X_t^{(i)})^{-1}$ stands for the inverse of the first variation process $ \nabla_{{x^{(i)}}}X_t^{(i)}$.
			\end{prop}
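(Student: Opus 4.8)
The plan is to mirror the argument used for the variational (Sobolev) differentiability in Proposition \ref{ClassYM}, replacing the first variation process $\nabla_{x^{(i)}}X^{(i)}$ by the Malliavin derivative $D_uX^{(i)}$, and then to deduce the two representation formulas from a flow identity relating these two objects. The Malliavin differentiability of the forward components, together with the $L^p$-integrability of $D_sX_t^{(i)}$ (uniformly in the base point, and conditionally on $\mathfrak{F}_s$), is already available from Proposition \ref{cor 5.7}, so the forward part poses no difficulty. The heart of the matter is to show that the solution of \eqref{2.2} is Malliavin differentiable and that its derivative solves the linear BSDE \eqref{5.15}.

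First I would establish \eqref{5.15}. Since $g$ has quadratic growth in $z$, the natural route is to work first with the Lipschitz-regularised driver $g_n$ of \eqref{truncdriver} and the associated BSDE \eqref{TruncY}: for each fixed $n$ the coefficients are globally Lipschitz in $(y,z)$ and smooth, so by the classical theory (see, e.g., \cite{ParPe92,Nua06}) the pair $(Y^n,Z^n)$ is Malliavin differentiable and $(D_uY^n,D_uZ^n)$ solves the linearisation of \eqref{TruncY}. The task is then to pass to the limit $n\to\infty$. Uniform bounds come from \eqref{77} together with the $\bmo$ framework of Remark \ref{rem3.1}: the linearisation $\nabla_z g$ is a $\bmo$-integrand, so under the measure $\mathbb{Q}$ of Remark \ref{rem3.1}(2) the stochastic integral can be removed by Girsanov, and the reverse H\"older inequality for $\mathcal{E}(\nabla_z g\ast W)$ transfers $L^p(\mathbb{Q})$-estimates back to $L^p(\mathbb{P})$. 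Combining this with the integrability of $\sup_t|Z_t|^p$ (valid for $p>p_0'$) and the bounds of Proposition \ref{cor 5.7} on $D_sX^{(i)}$ yields uniform-in-$n$ estimates in $\mathcal{S}^p\times\mathcal{H}^p$; a stability/Cauchy argument then identifies the limit with $(D_uY,D_uZ)$ and shows it solves \eqref{5.15}. This amounts to verifying, for the terminal datum $\sum_i\nabla_{x^{(i)}}\phi\,D_uX_T^{(i)}$ and the linear driver, the analogues of conditions (C1)--(C3) of \cite[Assumption 4.9]{ImkRhOliv24}, exactly as in the proof of Proposition \ref{ClassYM}.

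Next I would derive the representations. The key structural input is the flow identity for the forward diffusion: because the diffusion coefficient is the identity, for $u\le t$ one has $D_uX_t^{(i)}=\nabla_{x^{(i)}}X_t^{(i)}\,(\nabla_{x^{(i)}}X_u^{(i)})^{-1}$, since both sides solve, as processes in $t$ on $[u,T]$, the same linear SDE (that of the first variation process) with the same $\mathfrak{F}_u$-measurable initial value $\mathrm{Id}$ at time $u$; the inverse $(\nabla_{x^{(i)}}X_u^{(i)})^{-1}$ is well defined and $L^p$-integrable by Proposition \ref{cor 5.7}. Substituting this into the terminal condition and the driver of \eqref{5.15} shows that $t\mapsto\sum_i(\nabla_{x^{(i)}}Y_t)^{\mathbf{T}}(\nabla_{x^{(i)}}X_u^{(i)})^{-1}$ solves the same linear BSDE as $D_uY_t$ (the factor $(\nabla_{x^{(i)}}X_u^{(i)})^{-1}$ is $\mathfrak{F}_u$-measurable, hence constant in $t$ over $[u,T]$, and passes through both the linear generator of \eqref{eq31} and the terminal value). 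Uniqueness of this linear BSDE in the $\bmo$ setting gives $D_uY_t=\sum_i(\nabla_{x^{(i)}}Y_t)^{\mathbf{T}}(\nabla_{x^{(i)}}X_u^{(i)})^{-1}$, and the standard identification $Z_t=D_tY_t$ for $\mathrm{d}t\otimes\mathrm{d}\mathbb{P}$-a.e. $t$ (available once $Y$ is Malliavin differentiable with a suitable version of $D_uZ$, again via the $\bmo$ estimates) yields the second formula upon setting $u=t$.

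The main obstacle I anticipate is precisely the loss of the uniform Lipschitz property: the linearised coefficient $\nabla_z g$ is merely a $\bmo$-integrand rather than bounded, so neither the well-posedness of \eqref{5.15} nor the uniform $n$-estimates follow from the classical $L^2$-theory. Controlling them requires the combined use of the reverse H\"older inequality for $\mathcal{E}(\nabla_z g\ast W)$, the change of measure to $\mathbb{Q}$, and the $L^p$-integrability of $\sup_t|Z_t|$ and of both $D X$ and $(\nabla_x X)^{-1}$ from Proposition \ref{cor 5.7}; balancing the conjugate exponents $p_0,p_0'$ so that all products stay integrable is the delicate bookkeeping that forces the restriction $p>p_0'$.
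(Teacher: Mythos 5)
Your overall strategy coincides with the paper's: approximate the driver so that classical Malliavin calculus applies to the approximating BSDEs, derive uniform bounds through the $\bmo$/reverse H\"older machinery of Remark \ref{rem3.1} and the forward estimates of Proposition \ref{cor 5.7}, pass to the limit by closability of the Malliavin derivative (\cite[Lemmas 1.2.3 and 1.3.4]{Nua06} in the paper), and then read off the representations from the flow identity $D_uX_t^{(i)}=\nabla_{x^{(i)}}X_t^{(i)}(\nabla_{x^{(i)}}X_u^{(i)})^{-1}$, uniqueness of the linear BSDE \eqref{5.15}, and the diagonal identification $Z_t=D_tY_t$; this last part of your proposal is correct and matches the paper. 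The only structural difference is that the paper mollifies the driver (into $g^{\epsilon}$, keeping the quadratic structure), whereas you truncate it (into $g_n$, making it Lipschitz). That difference is exactly where your argument breaks down.

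The gap is circularity: you invoke two facts that, in this paper, are downstream of Proposition \ref{Maldiff}. (i) The integrability $\mathbb{E}[\sup_{0\le t\le T}|Z_t|^p]<\infty$ for $p>p_0'$, which you call essential for the uniform-in-$n$ estimates, is Lemma \ref{boundZ}/Remark \ref{remext1}; its proof hinges on the representation $Z_t=(\nabla^jY_t)^{\mathbf{T}}(\nabla_xX_t)^{-1}$ of Lemma \ref{lemm4.1}, and Lemma \ref{lemm4.1} is itself derived from Propositions \ref{ClassYM} \emph{and} \ref{Maldiff} --- i.e.\ from the very statement you are proving. (ii) The convergence $(Y^n,Z^n)\to(Y,Z)$ of the truncated solutions, which you need in order to identify the limit of $(D_uY^n,D_uZ^n)$ with $(D_uY,D_uZ)$, is Theorem \ref{thmconve1}, whose proof the paper states relies precisely on \eqref{boundZ0}, hence again on Proposition \ref{Maldiff}. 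The paper's mollification route avoids both problems: since $g^{\epsilon}$ retains the quadratic growth uniformly in $\epsilon$, the convergence $(Y^{\epsilon},Z^{\epsilon})\to(Y,Z)$ follows from stability of quadratic BSDEs alone, and the uniform estimates on $(D_uY^{\epsilon},D_uZ^{\epsilon})$ are closed using only $\|Y^{\epsilon}\|_{\infty}$, the uniform $\bmo$ norm of $Z^{\epsilon}$ together with the energy inequality (note $\alpha_0<1$, so $|Z^{\epsilon}|^{\alpha_0}\le 1+|Z^{\epsilon}|$ and $\mathbb{E}\big(\int_0^T|Z^{\epsilon}_t|\,\mathrm{d}t\big)^{4pq}$ is controlled by the $\bmo$ norm), and Proposition \ref{cor 5.7} --- never the sup-norm integrability of $Z$. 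Your proposal can be repaired either by switching to mollification, or by keeping the truncation but (a) replacing the $\sup_t|Z_t|$ bound by the energy inequality in the uniform estimates, and (b) proving $(Y^n,Z^n)\to(Y,Z)$ without Theorem \ref{thmconve1}, e.g.\ by dominated convergence using the exponential integrability of $\int_0^T|Z_s|^2\,\mathrm{d}s$ guaranteed by the John--Nirenberg inequality; as written, however, the proof assumes results that the paper can only establish after Proposition \ref{Maldiff}.
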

			\begin{proof}
				Let $(g^{\epsilon})_{\epsilon > 0}$ be a standard mollifier of the drivers $g$ such that for all $(t,x)\in [0,T]\times\mathbb{R}^d$, $(y,z)\in K$, $\lim_{\epsilon\rightarrow 0}\sup_{K}|g^{\epsilon}(t,x,y,z)- g(t,x,y,z)|=0 $, where $K$ is any compact subset of $\mathbb{R} \times \mathbb{R}^d$. Thus $(g^{\epsilon})_{\epsilon > 0}$ is infinitely differentiable with bounded derivatives of any order in $x,y$ and $z$. Then from classical result in the theory of BSDEs, the following equation with bounded terminal value $\xi= \phi(X_T^{(1)},\cdots,X_T^{(i)})$
				\begin{align}
					Y_t^{\epsilon} = \xi + \int_t^T g^{\epsilon}(s,X^{(i)}_s,Y_s^{\epsilon},Z_s^{\epsilon})\mathrm{d}s - \int_t^T Z_s^{\epsilon}\mathrm{d}W_s
				\end{align}
				has a unique Malliavin differentiable solution $(X^{(i)}_t,Y_t^{\epsilon},Z_t^{\epsilon} )\in \mathcal{S}^{2p}\times \mathcal{S}^{\infty}\times \mathcal{H}_{BMO}$ such that the BMO-norm of the martingale $Z^{\epsilon}*W:= \int_0^{\cdot}Z^{\epsilon}_s\mathrm{d}W_s$ does not dependent on $\epsilon > 0$. In addition, the derivative process $D_u \bold{X}_s^{(i,\epsilon)}=(D_uX^{(i)}_t, D_uY_t^{\epsilon},D_uZ_t^{\epsilon} )$ solves the linear equation
				\begin{align}\label{3.6}
					D_uY_t^{\epsilon} =& \sum_{i=1}^{r} \nabla_{{x^{(i)}}} \phi(X_T^{(1)},X_T^{(2)},\cdots,X_T^{(r)})D_u X_T^{(i)} - \int_{t}^{T} D_u Z_s^{\epsilon} \mathrm{d}W_s\notag\\
					&+ \int_{t}^{T} \langle \nabla g^{\epsilon}(s,{X}^{(i)}_s,Y_s^{\epsilon},Z_s^{\epsilon}),D_u \bold{X}_s^{(i,\epsilon)} \rangle \mathrm{d}s.
				\end{align}
				From \cite[Lemma 3.7]{ImkRhOliv24} for $p> 1$, with $p>p_0$ ($p_0$ is such that $\mathcal{E}(Z^{\epsilon}*W) \in L^{p_0}$) there is $q \in (1,\infty)$ only depending on $T,p$ and $\|Z^{\epsilon}*W\|_{\bmo}$ such that
				\begin{align*}
					&\mathbb{E}\Big[ \sup_{t  \in [0,T]} |D_uY^{\epsilon}_t|^{2p} + \Big( \int_0^T |D_uZ^{\epsilon}_t|^2\mathrm{d}t\Big)^p   \Big]^q  \notag\\
					\leq & C \mathbb{E}\Big[ \Big| \sum_{i=1}^{r} \nabla_{{x^{(i)}}} \phi(X_T^{(1)},X_T^{(2)},\cdots,X_T^{(r)})D_u X_T^{(i)}\Big|^{2pq} + \Big(\int_0^T \nabla_{x^{(i)}}g^{\epsilon}(t,\bold{X}_t^{(i,\epsilon)}) D_uX_t^{(i,\epsilon)} \mathrm{d}t \Big)^{2pq}      \Big]\\
					\leq & C \Big(\sum_{i=1}^{r}\Lambda_{\phi}^{(i)}\Big)^{2pq}\sup_{i}\mathbb{E}|D_u X_T^{(i)}|^{2pq} + C \mathbb{E}\Big( \sup_{ u\leq t\leq T}|D_u X_t^{(i)}| \int_0^T (1+ |Y_t^{\epsilon}| + \ell^{\epsilon}(|Y_t^{\epsilon}|) |Z_t^{\epsilon}|^{\alpha})\mathrm{d}t  \Big)^{2pq},
				\end{align*}
				where the last inequality comes from the properties of the drivers $g^{\epsilon},$ $\ell^{\epsilon}$ stands for the mollifier of the function $\ell$. Using Young's inequality and the uniform bound of $Y^{\epsilon}$ we deduce that 
				\begin{align}
					&\mathbb{E}\Big[ \sup_{t  \in [0,T]} |D_uY^{\epsilon}_t|^{2p} + \Big( \int_0^T |D_uZ^{\epsilon}_t|^2\mathrm{d}t\Big)^p   \Big]^q  \notag\\
					\leq 	&C \Big(\sum_{i=1}^{r}\Lambda_{\phi}^{(i)}\Big)^{2pq}\sup_{i}\mathbb{E}|D_u X_T^{(i)}|^{2pq} + C \mathbb{E} (\sup_{u\leq t \leq T}|D_u X_t^{(i)}|^{4pq}) + C\mathbb{E}\Big( \int_0^T (1+ |Y_t^{\epsilon}| + |Z_t^{\epsilon}|)\mathrm{d}t  \Big)^{4pq}\notag\\
					\leq	& C(p,q,T,\|Z^{\epsilon}*W\|_{\bmo})(1+ S(\phi)^{2pq}), 
				\end{align}
				which is finite uniformly in $\epsilon > 0$. We deduce that $\sup_{\epsilon > 0}\mathbb{E}\int_0^T|D_uY_t^{\epsilon}|\mathrm{d}t < \infty$, and by construction, for each $t \in [0,T]$, $Y_t^{\epsilon}$ converges to $Y_t$ in $L^2$ as $\epsilon$ goes to $0$. This implies that the backward component $Y$ of equation \eqref{2.2} is Malliavin differentiable i.e., $Y_t \in \mathbb{D}^{1,2}(\mathbb{R})$ for all $t \in [0,T]$ (see \cite[Lemma 1.2.3]{Nua06}). Next, we establish the Malliavin differentiability of the control process $Z$. It suffice to prove that the stochastic integral  $A = \int_0^T Z_t\mathrm{d}W_t$ is Malliavin differentiable (see \cite[Lemma 1.3.4]{Nua06}). Letting $A^{\epsilon} = \int_0^T Z^{\epsilon}_t\mathrm{d}t$, we clearly have  $A^{\epsilon}\rightarrow A$ in $L^2(\Omega,[0,T])$ as $\epsilon \rightarrow 0$ since $Z^{\epsilon} \rightarrow Z$ in $\mathcal{H}^2$. Moreover, for all $u\in [0,T]$ we have 
				\begin{align*}
					\sup_{\epsilon > 0}\mathbb{E}\Big[\int_0^T |D_uA^{\epsilon}|^2\mathrm{d}u\Big]=& \mathbb{E}\Big[ \int_0^T \Big| Z_u^{\epsilon} + \int_u^T D_u Z^{\epsilon}_t\mathrm{d}W_t \Big|^2 \mathrm{d}u \Big]\\
					\leq	&  2 \mathbb{E}\Big[ \int_0^T |Z_u^{\epsilon}|^2 \mathrm{d}u \Big] + 2 \mathbb{E}\Big[  \int_0^T  \int_u^T | D_u Z^{\epsilon}_t|^2 \mathrm{d}t\mathrm{d}u \Big].
				\end{align*}
				Using the fact $Z_t^{\epsilon} \in \mathbb{D}^{1,2}$ for $\text{a.e. } t \in [0,T]$ and the above estimate, we deduce from \cite[Lemma 1.3.4]{Nua06} that $A$ is Malliavin differentiable, and thus $Z_t \in \mathbb{D}^{1,2}$ for $\text{a.e.} t \in [0,T]$. By applying the dominated convergence theorem,  one can prove that each term in \eqref{3.6} converges to its corresponding counterpart in equation \eqref{5.15}.
				
				From \cite{BMBPD17} the following representation holds:
				$$
				D_uX_t^{(i)} = (\nabla_{x^{(i)}} X_t^{(i)})) (\nabla_{x^{(i)}} X_u^{(i)})^{-1}
				$$
				for all $0\leq u \leq t\leq T$  and for all $0\leq i \leq r$. Furthermore, from the uniqueness of solutions to equation \eqref{5.15} we also deduce that \[ D_uY_t = \sum_{i=1}^r (\nabla_{x^{(i)}} Y_t)^{\bold{T}}  (\nabla_{x^{(i)}} X_u^{(i)})^{-1}.\]
				Moreover, for each $0\leq v\leq t \leq T$, we have $Y_t - Y_v = -\int _v^t g(s,X_s^{(i)},Y_s,Z_s)\mathrm{d}s + \int_v^t Z_s\mathrm{d}W_s$ then differentiating both sides of the above equation, we get 
				$$
				D_uY_t = -\int _u^t D_ug(s,X_s^{(i)},Y_s,Z_s)\mathrm{d}s + Z_u + \int_u^t D_uZ_s \mathrm{d}W_s
				$$ 
				for all $0\leq v \leq u\leq t \leq T$. In particular, for $u=t$ we obtain that 
				\[ Z_t= \sum_{i=1}^r (\nabla_{x^{(i)}} Y_t)^{\bold{T}}  (\nabla_{x^{(i)}} X_t^{(i)})^{-1}.\]
				This conclude the proof.
			\end{proof}

			

			\subsection{Quadratic BSDEs with discrete functional terminal value}
			
			In this subsection, we present some essential properties of solutions to quadratic BSDEs with a discrete functional terminal value. We begin by  discussing their differentiability. The following lemma, derived from Propositions \ref{ClassYM} and \ref{Maldiff}, is straightforward, so we omit its proof here (for comparison, see \cite[Lemma 3.2.8]{Zhou}).
			
			\begin{lemm}\label{lemm4.1}
				Let us assume that the conditions of Proposition \ref{ClassYM} are in force except that the terminal value $\xi$ is of the form $\xi = \phi(X_{s_0},\cdots,X_{s_{r-1}},X_{s_{r}})$ with $0\leq s_0\leq s_1\leq \cdots\leq s_{r-1}\leq s_r = T$, satisfies \eqref{2.6}--\eqref{2.7} and $X^{(i)} = X^{(j)}$ for all $i,j \in \{1,\cdots,r\}$.  Then there exists a unique solution $(\nabla^{j}Y,\nabla^{j}Z)$ to the BSDE 
				\begin{align}\label{BSDE310}
					\nabla^{j}Y_t =& \sum_{i\geq j}^{r} \nabla_{{x^{(i)}}} \phi(X_{s_0},\cdots,X_{s_{r-1}},X_{s_{r}})\nabla_x X_{s_i} - \int_{t}^{T} \nabla^{j} Z_s \mathrm{d}B_s\notag\\
					&\quad + \int_{t}^{T} \langle \nabla g(s,\bold{X}_s),\nabla_{x} \bold{X}_s \rangle \mathrm{d}s.
				\end{align}
				In addition, the following representations hold 
				\begin{align*}
					D_uY_t &= \sum_{i=0}^r y_i(t)\left(\nabla_{x}X_u\right)^{-1} \bold{1}_{\{u\geq s_i\}}, \quad 0\leq u \leq t,\\
					Z_t &= D_tY_t = \sum_{i=0}^r y_i(t)\left(\nabla_{x}X_t\right)^{-1} \bold{1}_{\{t\geq s_i\}},
				\end{align*}
				where the operator $D$ denotes the Malliavin derivative.
				Moreover, given a fixed time $t$ such that $t \in [s_j,s_{j+1}),0\leq j \leq N$ with the convention $[s_r,s_{r+1})=\{T\}$, we have
				\begin{equation}\label{rep4.5}
					D_uY_t = \left(\nabla^{j}Y_t\right)^{\bold{T}}\left( \nabla_xX_u\right)^{-1},\quad Z_t = \left(\nabla^{j}Y_t\right)^{\bold{T}}\left( \nabla_xX_t\right)^{-1},
				\end{equation}
				and for all $1\leq i \leq r$,  $(y_0,z_0)$ and $(y_i,z_i)\in \mathbb{R}^m\times \mathbb{R}^{d\times m}$ are the adapted solutions to the following linear BSDEs 
				\begin{align}
					y_0(t) &= - \int_t^T z_0(s)\mathrm{d}W_s  + \int_t^T \left( \nabla_y g(s,\bold{X}_s)y_0(s) + \nabla_z g(s,\bold{X}_s)z_0(s) \right) \mathrm{d}s \notag\\
					&\quad + \int_t^T  \nabla_x g(s,\bold{X}_s)\nabla{X}_s\mathrm{d}s, \label{eq4.1}\\
					y_i(t) &= \xi_i - \int_t^T z_i(s)\mathrm{d}W_s + \int_t^T \left(  \nabla_y g(s,\bold{X}_s)y_i(s) + \nabla_z g(s,\bold{X}_s)z_i(s) \right)\mathrm{d}s, \label{eq4.2}
				\end{align}
				respectively, and $\xi_i = \left( (\nabla_{x}\phi)(X_{s_0},\cdots,X_{s_{r-1}},X_{s_{r}}) \right)^{\bold{T}} \nabla_{x}X_{s_i} \in \mathbb{R}^m$.
			\end{lemm}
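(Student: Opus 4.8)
The plan is to read off the statement as a specialization of Propositions \ref{ClassYM} and \ref{Maldiff} to the discrete functional $\xi=\phi(X_{s_0},\dots,X_{s_r})$, after identifying all the copies $X^{(i)}$ with the single solution $X$ of \eqref{2.1} and then splitting the resulting linear BSDE by linear superposition. First I would apply Proposition \ref{ClassYM}: since $\phi$ is Lipschitz in each variable by \eqref{2.6}, the gradients $\nabla_{x_i}\phi$ are bounded, so the terminal datum $\sum_{i\ge j}\nabla_{x_i}\phi\,\nabla_x X_{s_i}$ of \eqref{BSDE310} lies in every $L^p$ by the first-variation bounds of Theorem \ref{th 2.5}. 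Existence and uniqueness of $(\nabla^j Y,\nabla^j Z)\in\mathcal S^p\times\mathcal H^p$ for \eqref{BSDE310} then follow from the same linear-BSDE-with-BMO-driver machinery that already underlies Proposition \ref{ClassYM}.

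The representation formulas rest on the flow identity for the Malliavin derivative of the forward process. Because $X_{s_i}$ is $\mathfrak F_{s_i}$-measurable, adaptedness forces $D_u X_{s_i}=0$ for $u>s_i$, so that $D_u X_{s_i}=(\nabla_x X_{s_i})(\nabla_x X_u)^{-1}\mathbf 1_{\{u\le s_i\}}$, the inverse first-variation process being $L^p$-integrable by Proposition \ref{cor 5.7}; this is precisely the source of the indicators. Substituting this into the Malliavin BSDE \eqref{5.15} and factoring the common right factor $(\nabla_x X_u)^{-1}$ out of the terminal condition, the process $u\mapsto D_u Y_t$ solves a linear BSDE whose terminal value is built from the pieces $\xi_i=((\nabla_x\phi)(X_{s_0},\dots,X_{s_r}))^{\mathbf T}\nabla_x X_{s_i}$ selected by the time grid, together with the forcing term $\nabla_x g(s,\mathbf X_s)\,\nabla X_s$ produced by the $x$-sensitivity of the generator.

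Next I would decompose by linearity. Let $(y_0,z_0)$ solve the affine BSDE \eqref{eq4.1} carrying only the forcing term and zero terminal value, and let $(y_i,z_i)$ solve the homogeneous affine BSDE \eqref{eq4.2} with terminal value $\xi_i$ for $i\ge 1$; each is well posed in $\mathcal S^p\times\mathcal H^p$ because the linear coefficients $\nabla_y g$ and $\nabla_z g$ generate the multiplicative process $e$ of Remark \ref{rem3.1} and the Dol\'eans--Dade exponential $\mathcal E(\nabla_z g*W)$ satisfying the reverse H\"older inequality. By the uniqueness of \eqref{5.15} and the linearity of the map $\xi\mapsto D_\cdot Y$, matching the terminal data identifies $D_u Y_t$ with the superposition of the $y_i(t)(\nabla_x X_u)^{-1}$, the grid indicators deciding which summands are active; setting $u=t$ together with $Z_t=D_tY_t$ (Proposition \ref{Maldiff}) gives the formula for $Z_t$. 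Finally, for fixed $t\in[s_{j},s_{j+1})$ only the grid points ahead of $t$ survive the indicators, so summing the corresponding pieces and invoking uniqueness once more identifies this sum with $(\nabla^j Y_t)^{\mathbf T}$, establishing \eqref{rep4.5}; the exact index range is dictated by the half-open interval convention on $[s_j,s_{j+1})$.

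The hard part will not be this bookkeeping but the well-posedness of all the auxiliary linear BSDEs under the non-uniform (stochastic) Lipschitz structure of $g$: the coefficients $\nabla_y g$ and $\nabla_z g$ are unbounded, growing with $|Z|^{\alpha_0}$ and $|Z|$, so the classical a priori estimates fail. The remedy, already built into Propositions \ref{ClassYM} and \ref{Maldiff}, is to run the estimates under the equivalent measure $\mathbb Q$ of Remark \ref{rem3.1}, under which $W^{\mathbb Q}$ is a Brownian motion, and then transfer the $L^p$ bounds back to $\mathbb P$ via the reverse H\"older inequality. Since those propositions already supply the differentiability, the Malliavin BSDE \eqref{5.15}, and the relation $Z_t=D_tY_t$, the present lemma is obtained purely by specialization and linear superposition, the only genuine care being the tracking of the indicators $\mathbf 1_{\{u\le s_i\}}$ coming from the discrete time grid and the matching of the variational and Malliavin representations of $Z_t$.
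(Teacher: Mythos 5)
Your route is the one the paper itself declares: the paper omits the proof, calling the lemma a straightforward consequence of Propositions \ref{ClassYM} and \ref{Maldiff} (pointing to \cite[Lemma 3.2.8]{Zhou}), and your specialization-plus-superposition argument --- the flow identity $D_uX_{s_i}=\nabla_xX_{s_i}(\nabla_xX_u)^{-1}\mathbf{1}_{\{u\le s_i\}}$, factoring the $\mathfrak{F}_u$-measurable matrix $(\nabla_xX_u)^{-1}$ out of the linear Malliavin BSDE \eqref{5.15}, splitting into \eqref{eq4.1}--\eqref{eq4.2} by linearity, and matching terminal data via uniqueness under the measure $\mathbb{Q}$ of Remark \ref{rem3.1} --- is exactly that derivation, and its skeleton is correct.

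The gap is in the step you dismiss with ``the exact index range is dictated by the half-open interval convention.'' Done carefully, your own derivation gives $D_uY_t=\bigl[y_0(t)+\sum_{i=1}^{r}y_i(t)\mathbf{1}_{\{u\le s_i\}}\bigr]^{\mathbf{T}}(\nabla_xX_u)^{-1}$, with indicators $\mathbf{1}_{\{u\le s_i\}}$, whereas the statement prints $\mathbf{1}_{\{u\ge s_i\}}$; and for $t\in(s_j,s_{j+1})$ the surviving terms are $y_0+\sum_{i\ge j+1}y_i$, which by uniqueness equals $\nabla^{j+1}Y_t$ under the definition \eqref{BSDE310} (whose terminal sum starts at $i=j$), not $\nabla^{j}Y_t$ as \eqref{rep4.5} asserts. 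This is not a matter of convention: take $d=1$, $b\equiv0$, $g\equiv0$, the grid $0=s_0<s_1<s_2=T$, and $\xi=\phi(X_{s_1})$. Then $y_0\equiv0$, $y_2\equiv0$, $y_1(t)=\mathbb{E}[\phi'(X_{s_1})\,|\,\mathfrak{F}_t]$, and the true control is $Z_t=\mathbb{E}[\phi'(X_{s_1})\,|\,\mathfrak{F}_t]$ for $t<s_1$ and $Z_t=0$ for $t>s_1$; but on $(s_1,T)$ both printed formulas return $\phi'(X_{s_1})\neq0$, while your (corrected) formulas return $0$. So what your argument actually proves is the Zhang--Zhou form of the representations, not the formulas as printed; to close the proof you must carry out the index bookkeeping explicitly and flag the two misprints (the indicator direction for $i\ge1$, and the off-by-one between $\nabla^{j}$ and $\nabla^{j+1}$ in \eqref{rep4.5}), rather than assert that the convention absorbs them. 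Note also that the first identity in \eqref{rep4.5} can only hold for $u$ in the same grid cell as $t$ (say $s_j<u\le t$): for $u$ below an earlier grid point the active index set enlarges, so no single $\nabla^{k}Y_t$ works for all $u\le t$.
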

			\begin{remark}\label{rem4.2}
				From Girsanov's theorem and the reverse H\"{o}lder inequality we have for all $0\leq i \leq r$
				\begin{align*}
					\mathbb{E}^{\mathbb{Q}}\Big[ \int_0^T |e_se_t^{-1}|^2|z_i(s)|^2\mathrm{d}s\Big]
					\leq C(p_0) \mathbb{E}\Big[\sup_{ 0\leq s\leq T}[|e_se_t^{-1}|^{4p_0^{'}}]+ \Big(\int_0^T|z_i(s)|^2\mathrm{d}s\Big)^{2p_0^{'}} \Big]^{\frac{1}{p_0^{'}}} < \infty.
				\end{align*}
				Then, the stochastic integral $\int_0 e_se_t^{-1} z_i(s) \mathrm{d}W_s^{\mathbb{Q}}$ defines a true martingale under the probability measure $\mathbb{Q}$.
				From the classical linearisation technique, the equations \eqref{eq4.1} and \eqref{eq4.2} can be rewritten as
				\begin{align}
					y_0(t) &=- \int_{t}^{T} e_se_t^{-1} z_0(s) \mathrm{d}W_s^{\mathbb{Q}}
					+ \int_{t}^{T} e_se_t^{-1} \nabla_{{x}} g(s,\bold{X}_s)\nabla_x {X}_s  \mathrm{d}s, \label{eq4.4}\\
					y_i(t) &= e_Te_t^{-1} y_i(T)- \int_{t}^{T} e_se_t^{-1} z_i(s) \mathrm{d}W_s^{\mathbb{Q}}.\label{eq4.5}
				\end{align}
			\end{remark}
			The following lemma links the estimate of a sum of quadratic variation processes to the sum of the terminal values of martingales. A version of this result is provided in \cite[Lemma 3.3]{Zhang041} for Lipschitz continuous coefficients. For technical reasons, we will reproduce the version developed in \cite[Lemma 3.2.10]{Zhou}	\begin{lemm}\label{lemmZhou}
				Let $y_i(t) \in L^{2p}(\Omega,\mathbb{R}^m)$ be an $\mathbb{R}^m\text{-valued}$ adapted martingale for all $0\leq i \leq r$ and $p\geq 1$. Then by the martingale representation theorem, there is a square integrable process $z_i(t)$ such that, $y_i(t) = \beta_i + \int_0^t z_i(s)\mathrm{d}W_s$. For any increasing function $\eta:\{0,1,\cdots,N-1\}\rightarrow \{0,\cdots,r\}$, we define $\tilde{\eta}:\{0,\cdots,r\} \rightarrow \{-1,0,1,\cdots,N-1\}$ as $\tilde{\eta}=\sup\{j; \eta(j)\leq i\}$ with the convention $\sup\emptyset = -1.$ Then, there is a constant $C(p,m)$ independent of $r$ such that 
				\begin{align*}
					\mathbb{E}\Big[\Big( \sum_{j=0}^{N-1} \int_{t_j}^{t_{j+1}} \Big| \sum_{i=\eta(j)}^r z_i(s) \Big|^2\mathrm{d}s   \Big)^p\Big] \leq C(p,m)\mathbb{E}\Big[ \Big|\sum_{i=0}^r y_i(t_{\tilde{\eta}(i)+1}) - y_i(0)\Big|^{2p}  \Big] .
				\end{align*}
			\end{lemm}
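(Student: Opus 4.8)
The plan is to realise the entire left-hand side as the $p$-th moment of the (total) quadratic variation of a single $\mathbb{R}^m$-valued martingale, and then to identify that martingale's terminal value with the sum on the right-hand side. To this end I would first introduce the piecewise-defined integrand
\[
\zeta(s) := \sum_{i=\eta(j)}^r z_i(s), \qquad s \in [t_j,t_{j+1}),
\]
and set $\bar M_t := \int_0^t \zeta(s)\,\mathrm{d}W_s$. Since the blocks $[t_j,t_{j+1})$ partition $[0,T]$, the total quadratic variation at the terminal time is exactly
\[
\langle \bar M\rangle_T = \sum_{j=0}^{N-1}\int_{t_j}^{t_{j+1}}\Big|\sum_{i=\eta(j)}^r z_i(s)\Big|^2\,\mathrm{d}s,
\]
so the quantity to be estimated is precisely $\mathbb{E}\big[\langle\bar M\rangle_T^p\big]$.

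Next I would apply the two-sided martingale moment inequalities in a single direction. Using the Burkholder--Davis--Gundy inequality in its vector-valued form (applied componentwise and recombined, which is where the dependence of the constant on the dimension $m$ enters) followed by Doob's $L^{2p}$-maximal inequality, valid since $2p\geq 2>1$, gives
\[
\mathbb{E}\big[\langle\bar M\rangle_T^p\big] \leq C(p,m)\,\mathbb{E}\big[\sup_{0\leq t\leq T}|\bar M_t|^{2p}\big] \leq C(p,m)\,\mathbb{E}\big[|\bar M_T|^{2p}\big],
\]
with a constant independent of $r$ (and of the partition). It remains only to compute $\bar M_T$.

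The crux is therefore the identification of $\bar M_T$ with the target sum. Writing it as a double sum and exchanging the order of summation, I would observe that for each fixed $i$ the indices $j$ that contribute are exactly those with $\eta(j)\leq i$; because $\eta$ is nondecreasing, this set is the initial segment $\{0,\dots,\tilde\eta(i)\}$, where $\tilde\eta(i)=\sup\{j:\eta(j)\leq i\}$ and the convention $\sup\emptyset=-1$ accounts for an empty contribution. Summing the block integrals over an initial segment telescopes them into a single integral over $[0,t_{\tilde\eta(i)+1}]$, so that
\[
\bar M_T = \sum_{i=0}^r\sum_{j=0}^{\tilde\eta(i)}\int_{t_j}^{t_{j+1}} z_i(s)\,\mathrm{d}W_s = \sum_{i=0}^r\int_0^{t_{\tilde\eta(i)+1}} z_i(s)\,\mathrm{d}W_s = \sum_{i=0}^r\big(y_i(t_{\tilde\eta(i)+1})-y_i(0)\big),
\]
where I used $y_i(t)=\beta_i+\int_0^t z_i\,\mathrm{d}W$ and $y_i(0)=\beta_i$; this is consistent in the edge case $\tilde\eta(i)=-1$, where $t_0=0$ forces a zero contribution. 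Substituting this into the displayed bound yields the assertion.

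I expect the only genuinely delicate point to be the combinatorial bookkeeping in the exchange of summation, in particular verifying that the monotonicity of $\eta$ makes $\{j:\eta(j)\leq i\}$ an initial segment and that the convention $\sup\emptyset=-1$ is compatible with the telescoping identity. Everything else is a routine application of the vector-valued BDG and Doob inequalities.
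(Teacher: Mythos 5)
Your proposal is correct, and it is exactly the standard argument behind this lemma (which the paper itself does not reprove but imports from \cite[Lemma 3.2.10]{Zhou} and \cite[Lemma 3.3]{Zhang041}): build the auxiliary martingale $\bar M_t=\int_0^t\zeta(s)\,\mathrm{d}W_s$ with the piecewise integrand $\zeta(s)=\sum_{i=\eta(j)}^r z_i(s)$ on $[t_j,t_{j+1})$, bound $\mathbb{E}[\langle \bar M\rangle_T^p]$ by $\mathbb{E}[|\bar M_T|^{2p}]$ via componentwise BDG plus Doob (whence the constant $C(p,m)$ independent of $r$, $N$ and the partition), and use the monotonicity of $\eta$ to telescope $\{j:\eta(j)\le i\}=\{0,\dots,\tilde\eta(i)\}$ so that $\bar M_T=\sum_{i=0}^r\big(y_i(t_{\tilde\eta(i)+1})-y_i(0)\big)$, the convention $\sup\emptyset=-1$ being consistent since $t_0=0$. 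No gaps.
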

			The following result addresses the integrability of the supremum norm of the process $Z$, which is crucial for proving the main results of this paper. Its extension to the case of functional terminal values will be discussed in Remark \ref{remext1}.
			\begin{lemm} \label{boundZ} Let assumptions of Lemma \ref{lemm4.1} hold. Then for any $p >p_0'> 1$ there is a constant $C$ only depending on $p,d,T$ and $\|b\|_{L^{\infty}([0,T];C_b(\mathbb{R}^d))}$ such that 
				\begin{equation}\label{bound3.5}
					\mathbb{E}\big[\sup_{ 0\leq t\leq T}|Z_t|^p\big] \leq C(1 + S^p({\phi})),
				\end{equation}
				where $p_0'$ is the conjugate of $p_0$ and $p_0$ is such that $\mathcal{E}\left(\nabla_z g*W\right) \in L^{p_0}$(see Remark \ref{rem3.1}).
				In particular, if $g$ is Lipschitz continuous in $y$ then the control process $Z$ is uniformly bounded i.e. there is a constant $C>0$ such that
				\begin{align*}
					|Z_t|\leq Ce^{\|\nabla_y g\|_{\infty}}(1+S(\varphi)).
				\end{align*} 
			\end{lemm}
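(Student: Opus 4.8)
The plan is to exploit the explicit representation of the control process furnished by Lemma \ref{lemm4.1}. Fixing $t\in[s_j,s_{j+1})$, we have $Z_t=\sum_{i=0}^r y_i(t)(\nabla_x X_t)^{-1}\mathbf{1}_{\{t\ge s_i\}}$, so that $|Z_t|\le\big(\sum_{i=0}^r|y_i(t)|\big)\,|(\nabla_x X_t)^{-1}|$. Since the inverse of the first variation process has finite moments of every order (Proposition \ref{cor 5.7}), Hölder's inequality reduces the claim to controlling $\mathbb{E}\big[\sup_t(\sum_i|y_i(t)|)^{pq}\big]$ for a suitable conjugate pair, with a bound proportional to $1+S^p(\phi)$.

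To bound the $y_i$, I would pass to the linearized equations \eqref{eq4.4}--\eqref{eq4.5} under the measure $\mathbb{Q}$, for which $W^{\mathbb{Q}}$ is a Brownian motion (Remark \ref{rem3.1}). Taking $\mathbb{Q}$-conditional expectations annihilates the stochastic integrals and yields $y_i(t)=\mathbb{E}^{\mathbb{Q}}[e_Te_t^{-1}\xi_i\,|\,\mathfrak{F}_t]$ for $i\ge 1$ and $y_0(t)=\mathbb{E}^{\mathbb{Q}}[\int_t^T e_se_t^{-1}\nabla_x g(s,\mathbf{X}_s)\nabla_x X_s\,\mathrm{d}s\,|\,\mathfrak{F}_t]$. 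Using $e_se_t^{-1}\le A_T$ (Remark \ref{rem3.1}), the terminal bound $|\xi_i|\le\Lambda_{\phi}^{(i)}|\nabla_x X_{s_i}|$, the uniform bound on $Y$ together with local boundedness of $\ell$ to get $|\nabla_x g(s,\mathbf{X}_s)|\le C(1+|Z_s|^{\alpha_0})$, and $\sum_i\Lambda_{\phi}^{(i)}=S(\phi)$, one arrives at $\sum_i|y_i(t)|\le\mathbb{E}^{\mathbb{Q}}[\Xi\,|\,\mathfrak{F}_t]$ where $\Xi:=A_T\big(S(\phi)\sup_s|\nabla_x X_s|+C\int_0^T(1+|Z_s|^{\alpha_0})|\nabla_x X_s|\,\mathrm{d}s\big)$ is $t$-independent. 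Taking the supremum in $t$ turns the right-hand side into a $\mathbb{Q}$-martingale, so Doob's $L^q(\mathbb{Q})$-inequality bounds $\sup_t\sum_i|y_i(t)|$ in $L^q(\mathbb{Q})$ by a multiple of $\|\Xi\|_{L^q(\mathbb{Q})}$.

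It remains to translate these $\mathbb{Q}$-estimates back to $\mathbb{P}$ and to close the argument. Changing measure through $\mathrm{d}\mathbb{Q}/\mathrm{d}\mathbb{P}=\mathcal{E}(\nabla_z g*W)^{-1}$ and invoking the reverse Hölder inequality satisfied by this density (Remark \ref{rem3.1}) is precisely where the restriction $p>p_0'$ enters: it guarantees that the conjugate exponents line up so that the $\mathbb{Q}$-moments of $\Xi$ and the $\mathbb{P}$-moments of $\sup_s|\nabla_x X_s|$ and of $A_T$ (finite in every $L^p$ by the $\bmo$ property of $Z$) remain finite. The one genuinely delicate point, and the main obstacle, is the factor $|Z_s|^{\alpha_0}$ occurring inside $\Xi$ through $\nabla_x g$, which superficially makes the estimate circular. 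The plan is to resolve this by sub-linearity: bounding $\int_0^T|Z_s|^{\alpha_0}|\nabla_x X_s|\,\mathrm{d}s\le T\sup_s|Z_s|^{\alpha_0}\sup_s|\nabla_x X_s|$ and applying Young's inequality with exponent $1/\alpha_0$ (and conjugate $1/(1-\alpha_0)$), one produces a term $\epsilon\,\mathbb{E}[\sup_s|Z_s|^p]$ plus a constant times powers of $S(\phi)$ and moments of $\nabla_x X$. Since the representation already guarantees $\sup_s|Z_s|\in L^p$ a priori, this term is finite and may be absorbed into the left-hand side for $\epsilon$ small, producing the stated bound $\mathbb{E}[\sup_t|Z_t|^p]\le C(1+S^p(\phi))$.

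For the final assertion, if $g$ is Lipschitz in $y$ then $\nabla_y g$ is bounded, so $e_Te_t^{-1}\le e^{\|\nabla_y g\|_\infty T}$ holds deterministically and the conditional-expectation representations above become pointwise bounds. Combining this with the flow identity $\nabla_x X_{s_i}(\nabla_x X_t)^{-1}=D_tX_{s_i}$, which under Lipschitz $b$ satisfies $|D_tX_{s_i}|\le e^{\Lambda_b T}$ by Gronwall applied to the first-variation equation, the $i\ge1$ terms are controlled pointwise while the $y_0$ contribution is handled by the same absorption and feeds into the additive constant; this yields the uniform estimate $|Z_t|\le Ce^{\|\nabla_y g\|_\infty}(1+S(\varphi))$.
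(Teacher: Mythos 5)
Your overall skeleton --- the representation $Z_t=\sum_i y_i(t)(\nabla_x X_t)^{-1}\mathbf{1}_{\{t\ge s_i\}}$ from Lemma~\ref{lemm4.1}, linearization under $\mathbb{Q}$, killing the stochastic integrals by conditional expectation, then reverse H\"older and Doob --- is the same as the paper's. The genuine gap is your treatment of the $|Z_s|^{\alpha_0}$ factor coming from $\nabla_x g$. You replace $\int_0^T|Z_s|^{\alpha_0}|\nabla_x X_s|\,\mathrm{d}s$ by $T\sup_s|Z_s|^{\alpha_0}\sup_s|\nabla_x X_s|$ and plan to close by Young's inequality plus absorption of $\epsilon\,\mathbb{E}[\sup_s|Z_s|^p]$. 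This cannot close. The quantity $\sup_s|Z_s|$ sits inside $\Xi$, and $\Xi$ must still be pushed through Doob's inequality, the reverse H\"older inequality for the Girsanov density, and at least one Cauchy--Schwarz step to separate it from $A_T$ and from $(\nabla_x X_t)^{-1}$, neither of which is bounded. Every one of these steps strictly raises the integrability exponent, so what actually survives on the right-hand side is not $\epsilon\,\mathbb{E}[\sup_s|Z_s|^{p}]$ but a quantity of the form $\epsilon\,\mathbb{E}[\sup_s|Z_s|^{mp}]^{1/m}$ with $m>1$ (already $m\ge 2$ from the Cauchy--Schwarz splittings, before the exponent $p_0'$ enters). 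By Jensen's inequality, $\mathbb{E}[\sup_s|Z_s|^{mp}]^{1/m}\ge\mathbb{E}[\sup_s|Z_s|^{p}]$: the error term is a strictly larger norm than the left-hand side, so no choice of $\epsilon$ permits absorption, and the a priori finiteness you invoke does not help because the larger norm carries its own unknown dependence on $S(\phi)$; re-running the argument at exponent $mp$ merely moves the problem to $m^2p$. The paper avoids this entirely by never invoking $\sup_s|Z_s|$ at this stage: it keeps the time-integral structure, bounds $\int_0^T|Z_s|^{\alpha}\,\mathrm{d}s\le T^{1-\alpha/2}\big(\int_0^T|Z_s|^2\,\mathrm{d}s\big)^{\alpha/2}$, and controls all moments of $\int_0^T|Z_s|^2\,\mathrm{d}s$ by the energy inequality for $\bmo$ martingales, using the a priori bound $\|Z*W\|_{\bmo}<\infty$ from the existence theory (see \eqref{77}), whose norm involves neither $S(\phi)$ nor $\sup_s|Z_s|$. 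That is the missing idea, and your sup-norm bound discards exactly the integral structure it requires.

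The same defect is fatal in your proof of the uniform bound when $g$ is Lipschitz in $y$: there the claimed estimate is pointwise (almost sure), so ``absorbing'' the $y_0$-contribution would require an a priori bound $\sup_t|Z_t|\in L^{\infty}$, which is precisely the assertion being proved. Moreover, you invoke Lipschitz continuity of $b$ (Gronwall on the first-variation equation with constant $\Lambda_b$); this is not among the hypotheses --- the drift is only bounded and Dini continuous under (A1) --- and it would make the constant depend on a Lipschitz norm of $b$, a dependence the statement (and the paper, emphatically) excludes. The paper's route for this part is conditional Cauchy--Schwarz under $\mathbb{Q}$, the conditional energy inequality to bound $\mathbb{E}^{\mathbb{Q}}\big[\int_t^T(1+\ell(|Y_s|)|Z_s|^{\alpha})^2\,\mathrm{d}s\,\big/\,\mathfrak{F}_t\big]$ by a constant depending only on $T$ and $\|Z*W\|_{\bmo}$, and the conditional moment bounds of Proposition~\ref{cor 5.7} for $\nabla_x X_s(\nabla_x X_t)^{-1}$; no regularity of $b$ beyond (A1) and no absorption are needed.
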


			\begin{proof} 
				First, we assume that the coefficients $b,g$ and $\xi$ are continuously differentiable and the terminal value $\xi$ is of the form $\xi = \phi(X_{s_0},\cdots,X_{s_{r-1}},X_{s_{r}})$ with $0\leq s_0\leq s_1\leq \cdots\leq s_{r-1}\leq s_r = T$, satisfying \eqref{2.6}-\eqref{2.7}. From Proposition \ref{ClassYM}, the solution $(Y,Z)$ is differentiable, and the derivative process $(\nabla^j Y,\nabla^j Z)$ satisfies the linear BSDE \eqref{BSDE310} for all $1\leq j \leq  r$. To derive the desired estimate, it is enough to establish a bound for $|(\nabla^j Y_t)^{\bold{T}}  (\nabla X_t)^{-1}|$ for all $1\leq j \leq r$. Given the assumption on the driver $g$, we have, $|\nabla_z g(s,\bold{X}_s)| \leq \Lambda_z(1+ 4\ell(|Y_s|)|Z_s|) \in \mathcal{H}_{BMO}.$ Using a standard linearisation technique and applying It\^{o}'s formula, we deduce 
				\begin{align}\label{3.8}
					e_t\nabla^j Y_t &= e_T \sum_{i= j}^{r} \nabla_{{x^{(i)}}} \phi(X_{s_0},\cdots,X_{s_{r-1}},X_{s_{r}})\nabla_x X_{s_i} - \int_{t}^{T} e_s \nabla Z_s \mathrm{d}W_s^{\mathbb{Q}} \notag\\
					&\quad+ \int_{t}^{T} e_s \nabla_x g(s,{X}_s,Y_s,Z_s)\cdot\nabla {X}_s  \mathrm{d}s,
				\end{align}
				where the measure $\mathbb{Q}$ with its associated Radon-Nikodym derivative $\mathrm{d}\mathbb{Q} := \mathcal{E}(\nabla_z g *W)\mathrm{d}\mathbb{P}$ and $W^{\mathbb{Q}}$ are all defined in Remark \ref{rem3.1}. 
				By taking the conditional expectation in \eqref{3.8}
				and using the fact that the process $(\nabla {X}_t)^{-1}$ is $\mathfrak{F}_t\text{-measurable}$ we obtain that 
				\begin{align*}
					&\big|\nabla^j Y_t (\nabla {X}_t)^{-1}\big|\\
					\leq & |(\nabla {X}_t)^{-1}|\mathbb{E}^{\mathbb{Q}}\Big[ A_T \Big(\big|\sum_{i=j}^{r} \nabla_{{x^{(i)}}} \phi(X_{s_0},\cdots,X_{s_{r-1}},X_{s_{r}})\nabla_x X_{s_i}\big|\\
					&+ \Big|\int_{t}^{T} \nabla_x g(s,{X}_s,Y_s,Z_s) \cdot\nabla {X}_s \mathrm{d}s\Big|\Big) \Big/ \mathfrak{F}_t\Big]
				\end{align*}
				Using $p_0' \in (1,p)$ we obtain from the reverse H\"{o}lder inequality the existence of a constant $C$ depending on $p_0$ with $1/p_0 + 1/p_0^{'} =1$ such that 
				\begin{align*}
					&\big|\nabla^j Y_t (\nabla {X}_t)^{-1}\big| \notag\\
					&\leq C(p_{0}) |(\nabla {X}_t)^{-1}|\mathbb{E}\Big[ A_T^{p_0'} \Big(\Big|\sum_{i=j}^{r} \nabla_{{x^{(i)}}} \phi(X_{s_0},\cdots,X_{s_{r-1}},X_{s_{r}})\nabla_x X_{s_i}\Big| \\
					&\quad+ \Big|\int_{t}^{T} \nabla_x g(s,{X}_s,Y_s,Z_s) \cdot\nabla {X}_s \mathrm{d}s\Big|\Big)^{p_0'} \Big/ \mathfrak{F}_t\Big]^{\frac{1}{p_0'}}.
				\end{align*}
				Therefore, 
				\begin{align*}
					&\mathbb{E}\Big[ \sup_{ 0\leq t\leq T}\big|\nabla^j Y_t (\nabla {X}_t)^{-1}\big|^p\Big]\\
					&\leq C(p,p_0') \mathbb{E}[\sup_{ 0\leq t\leq T}|(\nabla {X}_t)^{-1}|^{2p}]
					+ C(p,p_0') \mathbb{E}\Bigg[\sup_{ 0\leq t\leq T}\mathbb{E}\Big[ A_T^{p_0'} \Big(\Big|\sum_{i=j}^{r} \nabla_{{x^{(i)}}} \phi(X_{s_0},\cdots,X_{s_{r-1}},X_{s_{r}})\nabla_x X_{s_i}\Big| \\
					&\quad+ \Big|\int_{t}^{T} \nabla_x g(s,{X}_s,Y_s,Z_s) \cdot\nabla {X}_s \mathrm{d}s\Big|\Big)^{p_0'} \Big/ \mathfrak{F}_t\Big]^{\frac{2}{p_0'}}\Bigg]^p
				\end{align*}
				Successive application of the Doob's maximal and H\"{o}lder's inequalities gives 
				\begin{align*}
					&\mathbb{E}\Big[ \sup_{ 0\leq t\leq T}\big|\nabla^j Y_t (\nabla {X}_t)^{-1}\big|^p\Big]\\
					&\leq C(p,p_0') \mathbb{E}[\sup_{ 0\leq t\leq T}|(\nabla {X}_t)^{-1}|^{2p}]
					+ C(p'_{0},p_0,p) \mathbb{E}\Big[  A_T^{p} \Big\{\Big|\sum_{i=j}^{r} \nabla_{{x^{(i)}}} \phi(X_{s_0},\cdots,X_{s_{r-1}},X_{s_{r}})\nabla_x X_{s_i}\Big| \\
					&\quad+ \Big|\int_{0}^{T} \nabla_x g(s,{X}_s,Y_s,Z_s) \cdot\nabla {X}_s \mathrm{d}s\Big|\Big\}^{2p}\Big]\notag\\
					&\leq C(p,p_0') \mathbb{E}[\sup_{ 0\leq t\leq T}|(\nabla {X}_t)^{-1}|^{2p}]
					+  C(p'_{0},p_0,p) \mathbb{E}\Big[ \Big\{\Big|\sum_{i=j}^{r} \nabla_{{x^{(i)}}} \phi(X_{s_0},\cdots,X_{s_{r-1}},X_{s_{r}})\nabla_x X_{s_i}\Big| \\
					&\quad+ \Big|\int_{0}^{T} \nabla_x g(s,{X}_s,Y_s,Z_s) \cdot\nabla {X}_s \mathrm{d}s\Big|\Big\}^{4p}\Big]^{\frac{1}{2}}.	
				\end{align*}
				By using the growth of $g$ we obtain that 
				\begin{align}
					&\mathbb{E}\Big[ \sup_{ 0\leq t\leq T}\big|\nabla^j Y_t (\nabla {X}_t)^{-1}\big|^p\Big]\notag\\
					&\leq C(p,p_0') \mathbb{E}[\sup_{ 0\leq t\leq T}|(\nabla {X}_t)^{-1}|^{2p}] + C(p'_{0},p_0,p) \mathbb{E}\Big[ \Big|\sum_{i=j}^{r} \nabla_{{x^{(i)}}} \phi(X_{s_0},\cdots,X_{s_{r-1}},X_{s_{r}})\nabla_x X_{s_i}\Big|^{4p} \notag\\
					&\quad+ \sup_{ 0\leq s\leq T} |\nabla {X}_s|^{8p} + \Big(\int_{0}^{T} \Lambda_x(1+ |Y_s| +\ell(|Y_s|)|Z_s|^{\alpha})  \mathrm{d}s\Big)^{8p} \Big]^{\frac{1}{2}}\notag\\
					&\leq C(p'_{0},p_0,p,T,\|Z*W\|_{\bmo},\|b\|_{L^{\infty}([0,T];C_b(\mathbb{R}^d))})(1 + S^p(\phi) ),\label{bound416}
				\end{align}
				where we used Theorem \ref{th 2.5}, equation \eqref{5.12}, the boundedness of $Y$ and the energy inequality to obtain the last inequality. By applying a standard approximation procedure, one can show that the bound remains valid even when the coefficients are not differentiable. 
				
				For the second point, if $g$ is Lipschitz continuous in $y$ then the process $A_T$ is uniformly bounded by $e^{\|\nabla_y g\|_{\infty}}$. There exists a constant $C>0$ such that
				\begin{align*}
					&\big|\nabla^j Y_t (\nabla {X}_t)^{-1}\big|\\
					&\leq Ce^{\|\nabla_y g\|_{\infty}}\Big(  S(\varphi)+\mathbb{E}^{\mathbb{Q}}\Big[\int_{t}^{T}(1+\ell(|Y_s|)|Z_s|^{\alpha})^2\mathrm{d}s \Big/ \mathfrak{F}_t\Big]^{\frac{1}{2}}\Big)\mathbb{E}^{\mathbb{Q}}\Big[ \sup_{t\leq s\leq T}|\nabla {X}_s(\nabla{X}_t)^{-1}|^2 \Big/ \mathfrak{F}_t\Big]^{\frac{1}{2}}.\\
					&\leq Ce^{\|\nabla_y g\|_{\infty}}(1+S(\varphi)),
				\end{align*}
				where we applied the fact that $Y$ is bounded, the energy's inequality and Proposition \ref{cor 5.7}  to obtain the last inequality, $C>0$ is a constant only depending on T and $\|Z*W\|_{\bmo}$ and $\|b\|_{L^{\infty}([0,T];C_b(\mathbb{R}^d))}$.  
				This concludes the proof.
			\end{proof}

			\begin{remark}\label{remext1}\leavevmode
				\begin{itemize}
					\item The above lemma remains valid for the general case when $\xi = \Phi(X.)$ by using standard stability arguments of BSDEs and the Fatou's Lemma. This is achieved by considering the bounded smooth approximation $\{\hat{\phi}^{\pi}\}_{\pi}$ of $\Phi(X)$ (as discussed in Remark \ref{remark24}), with the partition $\{\pi\}$ of $[0,T]$, satisfying $|\pi|\rightarrow 0$ and choosing $S(\varphi)< C\Lambda_{\Phi}$. In other words, for any $p >p_0'> 1$ there exists a constant $C$ depending only on $p,d,T$ and $\|b\|_{L^{\infty}([0,T];C_b(\mathbb{R}^d))}$ such that 
					\begin{equation}\label{boundZ0}
						\mathbb{E}\big[\sup_{ 0\leq t\leq T}|Z_t|^p\big] \leq C(1 + \Lambda^p_{\Phi}),
					\end{equation}
					where $p_0'$ is the conjugate of $p_0$ and $p_0$ is such that $\mathcal{E}\left(\nabla_z g*W\right) \in L^{p_0}$(see Remark \ref{rem3.1}).
					\item Similarly, for $\xi = \Phi(X.)$ and if $g$ is uniformly Lipschitz continuous in $y$, we obtain the following uniform bound of $Z_t$ for all $t\in [0,T]$ 
					\begin{equation}\label{boundZ1}
						|Z_t| \leq Ce^{\|\nabla_y g\|_{\infty}}(1 + \Lambda_{\Phi}).
					\end{equation}
					\item In the context of path-dependent FBSDE,	the result above was first derived in \cite[Lemma 3.2]{Zhang041} under the Cauchy-Lipschitz framework with an additional $C^{1/2,1}$ assumption on the coefficients. Recently, \cite[Lemma 3.2.7]{Zhou}, extended this to quadratic FBSDEs with $(t,y,z)\mapsto g(t,y,z) \in C^{1/2,1,1}$ and further assumed that the drift $b\in C^{1+\alpha}$ for some $\alpha >0$. Beyond the $L^{\infty}\text{-Lipschitz}$ (or $L^1\text{-Lipschitz}$) condition on the terminal value, the current result applies to quadratic FBSDEs with non-differentiable coefficients and non-smooth drift.
				\end{itemize}
			\end{remark}
			
			\subsection{Representation theorem revisited}
			
			In this subsection, we assume that the diffusion process $X$ in \eqref{2.1} starts at $x \in \mathbb{R}^d$ at a fixed time $s \in [0,T]$, and we consider the Brownian filtration $\mathfrak{F}_t^s = \sigma(W_u-W_s, s\leq u\leq t)$ with the usual $\mathbb{P}$ augmentation. We further assume that the coefficients $b$ and $g$ satisfy Assumption \ref{assum2.1}, and that the terminal value $\xi$ is of the form $\xi = \phi(X_{t_1},X_{t_2},\ldots,X_{t_n})$, where $\phi$ is any bounded and Lipschitz continuous function.
			
			\begin{thm} \label{threpr1}
				Suppose $\xi = \phi(X_{t_1},X_{t_2},\ldots,X_{t_n})$, satisfies \eqref{2.6}--\eqref{2.7}, where $t\leq t_1<t_2<\ldots<t_n\leq T$ is any partition of $[t,T]$. Suppose Assumption \ref{assum2.1} holds and let $(X,Y,Z)$ be the solution to the corresponding FBSDE \eqref{2.1}-\eqref{2.2}. Then, on each interval $(t_{i-1},t_i),  i=1,\ldots,n$ the following holds $\mathbb{P}\text{-almost surely}$
				\begin{align}\label{A5}
					Z_t = \mathbb{E}\Big[ \phi(X_{t_1},X_{t_2},\cdots,X_{t_n})N_{t_i}^t + \int_t^T g(v,X_v,Y_v,Z_v)N_{v\wedge t_i}^t\mathrm{d}v\Big/\mathfrak{F}_t^s \Big],
				\end{align}
				and \begin{equation}\label{A1}
					N^t_v = \frac{1}{v-t}(M_v^t)^{\bold{T}}(\nabla X_t)^{-1}, \quad 0\leq s \leq t < v \leq T ,
				\end{equation}
				and the two parameters process $M_v^s$ is given by 
				\begin{equation}\label{A2}
					M_v^t = \int_t^v (\nabla X_{\tau} )^{\bold{T}}\mathrm{d}W_{\tau}.
				\end{equation}
				$\nabla X = (\nabla^1 X,\cdots,\nabla ^d X)$ stands for the first variation of the diffusion $X$ as defined in Definition \ref{defi 2.8}. There exists a version of $Z$ that satisfy the following properties
				\begin{itemize}
					\item[(i)] the mapping $t\mapsto Z_t$ is a.s. continuous on each interval $(t_{i-1},t_i)$, $i=1,\cdots,n$;
					\item[(ii)] both limits $Z_{t_i-}:= \lim_{s \uparrow t_i}Z_s$ and $Z_{t_i+}:= \lim_{s \downarrow t_i}Z_s$ exist;
					\item[(iii)] for all $p>0$, there is a constant $C_p>0$ such that 
					\begin{align}\label{jump}
						\mathbb{E}|\Delta Z_{t_i}|^p \leq C_p.
					\end{align}
				\end{itemize}
				Consequently, the process $Z$ has both c\`adl\`ag and c\`agl\`ad versions, with discontinuities $t_0,\ldots,t_n$ and jump sizes satisfying \eqref{jump}.
			\end{thm}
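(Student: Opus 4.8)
The plan is to obtain \eqref{A5} from the Malliavin identity $Z_t=D_tY_t$ of Proposition \ref{Maldiff}, combined with a Bismut--Elworthy--Li type integration by parts that transfers the spatial derivative onto the driving Brownian motion and thereby manufactures the weight $N_v^t$. As a preliminary reduction I would first suppose that $b$, $g$ and $\phi$ are continuously differentiable with the bounds dictated by Assumption \ref{assum2.1}, and recover the general statement at the end by mollification; the approximating solutions are controlled uniformly through the BMO bound \eqref{77} together with Lemma \ref{boundZ}, so that stability of BSDEs and Fatou's lemma permit passage to the limit both in the representation and in the estimates. For the core identity, writing the BSDE \eqref{2.2} through the $\mathbb{P}$-martingale $\tilde Y_v:=Y_v+\int_s^v g(u,X_u,Y_u,Z_u)\,\mathrm{d}u$ with $\mathrm{d}\tilde Y_v=Z_v\,\mathrm{d}W_v$, the Clark--Ocone formula yields
\[
Z_t=\mathbb{E}\Big[D_t\phi(X_{t_1},\dots,X_{t_n})+\int_t^T D_t\,g(v,X_v,Y_v,Z_v)\,\mathrm{d}v\,\Big/\,\mathfrak{F}_t^s\Big].
\]

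Fix now $t\in(t_{i-1},t_i)$. By the chain rule, the representation $D_tX_v=\nabla X_v(\nabla X_t)^{-1}$ from Proposition \ref{Maldiff}, and the fact that $\phi$ depends on $X$ only at the points $t_1<\dots<t_n$, every Malliavin derivative on the right-hand side is of the form $\nabla(\cdot)\,\nabla X_{v}(\nabla X_t)^{-1}$, with only the indices $t_j\geq t_i$ surviving in $D_t\phi$. The Bismut--Elworthy--Li identity---available because the diffusion coefficient equals the identity---then rewrites an average of such derivatives over a time interval as an expectation against the weight $N_v^t=\frac1{v-t}(M_v^t)^{\bold{T}}(\nabla X_t)^{-1}$, with $M_v^t=\int_t^v(\nabla X_\tau)^{\bold{T}}\,\mathrm{d}W_\tau$ as in \eqref{A1}--\eqref{A2}. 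The cut-off $v\wedge t_i$ is precisely the flow localization: the terminal $\phi$ depends on the initial point only through $X_{t_i}$ (since $X_{t_j}=X_{t_j}^{t_i,X_{t_i}}$ for $j\geq i$), so the integration by parts is performed over $[t,t_i]$ and produces $N_{t_i}^t$; for the driver term $g(v,\cdots)$ one integrates by parts over $[t,v]$ when $v\leq t_i$, giving $N_v^t$, and over $[t,t_i]$ when $v>t_i$, giving $N_{t_i}^t$, so that the two cases combine into $N_{v\wedge t_i}^t$.

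I expect the genuine obstacle to be the rigorous justification of this integration by parts under the present weak hypotheses. Since $g$ is only stochastically Lipschitz with quadratic growth in $z$, uniform Lipschitz estimates are unavailable, and the computation must instead be carried out under the measure $\mathbb{Q}$ associated with $\mathcal{E}(\nabla_z g*W)$ as in Remark \ref{rem3.1}, after the standard linearisation that absorbs $\nabla_y g$ and $\nabla_z g$. The ensuing moments are then dominated by the reverse H\"older inequality together with the integrability of $\sup_{0\leq t\leq T}|Z_t|$ from Lemma \ref{boundZ} and of $\nabla X$, $(\nabla X)^{-1}$ and $D_sX$ supplied by Theorem \ref{th 2.5} and Proposition \ref{cor 5.7}; the singular drift enters only through these a priori bounds, which is exactly why the earlier sections are needed, and the restriction $p>p_0'$ is dictated by the integrability exponent of $\mathcal{E}(\nabla_z g*W)$.

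Finally, the regularity assertions (i)--(iii) are read off from \eqref{A5}. On each $(t_{i-1},t_i)$ the map $t\mapsto N_v^t$ is a.s.\ continuous and, by the integrability just quoted, admits an integrable dominating function, so dominated convergence gives the a.s.\ continuity of $t\mapsto Z_t$ claimed in (i). For (ii), as $t\uparrow t_i$ the representation retains the cap at $t_i$, while as $t\downarrow t_i$ the cap moves to $t_{i+1}$ and the surviving terminal indices change from $\{j\geq i\}$ to $\{j\geq i+1\}$; each one-sided representation is continuous up to its endpoint, so both limits $Z_{t_i-}$ and $Z_{t_i+}$ exist. The jump $\Delta Z_{t_i}=Z_{t_i+}-Z_{t_i-}$ is thus a difference of two conditional expectations of the same type, and \eqref{jump} follows from the identical moment estimates, with a constant $C_p$ uniform in the partition; selecting the right- and left-continuous modifications then furnishes the c\`adl\`ag and c\`agl\`ad versions.
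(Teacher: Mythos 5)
Your overall route is the right one, and it is the one the paper itself points to (the paper prints no proof, deferring to ``standard approximation techniques similar to those in \cite{ImkRhOliv24}''): identify $Z_t=D_tY_t$, use the flow relations $D_\tau X_v=\nabla X_v(\nabla X_\tau)^{-1}$ of Proposition \ref{Maldiff}, perform a Bismut--Elworthy--Li integration by parts localized to $[t,v\wedge t_i]$ --- which is exactly what manufactures the capped weight $N_{v\wedge t_i}^t$ of \eqref{A5}, since for $t\in(t_{i-1},t_i)$ the dependence of $\phi$ and of $g(v,X_v,Y_v,Z_v)$, $v\ge t_i$, on a perturbation at $t$ factors through $X_{t_i}$ while the indices $j<i$ are frozen --- and then mollify, using the change of measure of Remark \ref{rem3.1}, the reverse H\"older inequality, Lemma \ref{boundZ}, Theorem \ref{th 2.5} and Proposition \ref{cor 5.7} to justify the duality step and to pass to the limit. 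As a derivation of the representation \eqref{A5} this is sound and is essentially the Ma--Zhang argument the theorem is modelled on.

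The genuine gap is in your treatment of assertion (ii). You claim that ``each one-sided representation is continuous up to its endpoint'', but this fails for the left limit: as $t\uparrow t_i$ the weight $N_{t_i}^t=\frac{1}{t_i-t}(M_{t_i}^t)^{\bold{T}}(\nabla X_t)^{-1}$ does not converge in any sense --- its conditional second moment is of order $(t_i-t)^{-1}$, and pathwise it behaves like a Brownian difference quotient --- so the term $\mathbb{E}\big[\phi\, N_{t_i}^t\,/\,\mathfrak{F}_t\big]$ is a $0\cdot\infty$ expression whose convergence is precisely the nontrivial content of (ii); dominated convergence cannot produce it. (The right limit $t\downarrow t_i$ is unproblematic because there the cap $t_{i+1}-t$ stays bounded away from zero.) Two repairs are available. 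Either exploit $\mathbb{E}[N_{t_i}^t/\mathfrak{F}_t]=0$ to center the terminal functional and invoke the Lipschitz bound \eqref{2.6}, so that $|\phi(\cdots,X_{t_i},\cdots)-\phi(\cdots,X_t,\cdots)|\le C|X_{t_i}-X_t|$ compensates the blow-up of the weight and the limit can be extracted; or, more directly, prove (i)--(iii) not from \eqref{A5} but from the variational representation \eqref{rep4.5} of Lemma \ref{lemm4.1}: on each $(t_{i-1},t_i)$ one has $Z_t=(\nabla^{i-1}Y_t)^{\bold{T}}(\nabla X_t)^{-1}$ with $\nabla^{i-1}Y$ the (continuous) solution of a linear BSDE, so both one-sided limits exist trivially, the jump $\Delta Z_{t_i}$ is the contribution of the variational component that drops out at $t_i$, and the moment bound \eqref{jump} follows from the estimates established in the proof of Lemma \ref{boundZ}. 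Without one of these two arguments, assertions (ii)--(iii), and hence the claimed c\`adl\`ag/c\`agl\`ad versions, are not justified.
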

			The proof of this theorem can be carried out using standard approximation techniques similar to those in \cite{ImkRhOliv24}. We will not reproduce the details here.
			We conclude this section by recalling the following useful result. 
			\begin{lemm}
				Let $T> 0$ be fixed and suppose that $X$ and $Y$ are two processes such that $X$ is a square integrable martingale and $Y$ is also square integrable. Then the following holds
				\begin{equation}\label{elem}
					\mathbb{E}\left[X_t Y_t \right] = \mathbb{E}\left[X_T Y_t \right], \text{ for all } 0\leq t \leq T.
				\end{equation}
			\end{lemm}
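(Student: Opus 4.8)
The plan is to reduce the identity \eqref{elem} to the tower property of conditional expectation combined with the defining martingale property of $X$; the only point requiring attention is the integrability needed to justify manipulating the conditional expectations, and this follows immediately from the square-integrability hypotheses via the Cauchy--Schwarz inequality.

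First I would record that all expectations in \eqref{elem} are well defined. Since $X$ is square integrable and $Y_t\in L^2$, the Cauchy--Schwarz inequality yields
\[
\mathbb{E}|X_T Y_t| \leq \big(\mathbb{E}|X_T|^2\big)^{1/2}\big(\mathbb{E}|Y_t|^2\big)^{1/2} < \infty,
\]
and the same bound (with $X_T$ replaced by $X_t$) shows $X_t Y_t\in L^1$. Hence both sides of \eqref{elem} are finite.

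Next, conditioning on $\mathfrak{F}_t$ and using that $Y$ is adapted, so that $Y_t$ is $\mathfrak{F}_t$-measurable, I would apply the ``take out what is known'' rule and the martingale property $\mathbb{E}[X_T\mid\mathfrak{F}_t]=X_t$ to obtain
\[
\mathbb{E}[X_T Y_t] = \mathbb{E}\big[\mathbb{E}[X_T Y_t\mid\mathfrak{F}_t]\big]
= \mathbb{E}\big[Y_t\,\mathbb{E}[X_T\mid\mathfrak{F}_t]\big]
= \mathbb{E}[Y_t X_t],
\]
which is precisely \eqref{elem}.

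There is essentially no genuine obstacle here: the statement is a direct consequence of the martingale property of $X$ together with the basic properties of conditional expectation. The only things one must not overlook are the verification of integrability (so that the inner and outer expectations are meaningful) and the adaptedness of $Y$, which is what permits pulling $Y_t$ out of $\mathbb{E}[\,\cdot\mid\mathfrak{F}_t]$.
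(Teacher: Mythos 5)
Your proof is correct. Note that the paper itself states this lemma without proof (it is merely ``recalled'' at the end of Section 3), so there is no argument in the paper to compare against; your derivation via Cauchy--Schwarz for integrability, the tower property, and the martingale identity $\mathbb{E}[X_T\mid\mathfrak{F}_t]=X_t$ is the standard and essentially unique route. One small but worthwhile observation you make explicitly, which the paper's statement leaves implicit, is that $Y$ must be adapted to the filtration for $Y_t$ to be pulled out of the conditional expectation; without adaptedness the claim is false (take $Y_t \equiv X_T$), so your flagging of this hypothesis is not pedantry but a genuine requirement.
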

			\section{Proof of the main results}\label{secproofmain}
			This section is devoted to the proofs of the main results.
			\subsection{Proofs of Theorem \ref{thm1.1}}\label{sectprofmain1}
			This section is dedicated to the proofs of Theorems \ref{thm1.1}, \ref{thm1.2}, and Proposition \ref{cor 1.3}, below. As previously mentioned, the proof of Theorem \ref{thm1.1} follows from Theorem \ref{thm1.2} using approximation and stability results for quadratic BSDEs. One of the  challenges is to find an appropriate approximation for the path-dependent terminal value $\Phi$, as outlined for instance in Remark \ref{remark24}.
			\begin{proof}[Proof of Theorem \ref{thm1.2}]
				As in the proof of Lemma \ref{boundZ}, we will first assume that the coefficients $b,g,\xi$ are continuously differentiable.
				We recall that for all $ s\leq v \leq t $,
				\[ Y_s = \xi + \int_s^t g(v,X_v,Y_v,Z_v)\mathrm{d}v -\int_s^t Z_v \mathrm{d}B_v. \]
				Then from BDG inequality we deduce that 
				\begin{align*}
					\mathbb{E}\Big[ \sup_{s\leq r\leq t }|Y_r-Y_s|^{p}  \Big]\leq C(p) \Big\{ \mathbb{E}\Big( \int_s^t |g(v,X_v,Y_v,Z_v)|\mathrm{d}v  \Big)^{p}  + \mathbb{E}\Big( \int_{s}^{t} |Z_v|^2 \mathrm{d}v\Big)^{p/2}   \Big\}.
				\end{align*}
				By using the growth of $g$ and the fact that $Y_v$ is bounded we obtain that:
				\begin{align*}
					\mathbb{E}\Big[ \sup_{s\leq r\leq t }|Y_r-Y_s|^{p}  \Big] \leq C(p)\Big\{ |t-s|^{p} + \mathbb{E}\Big[ \Big( \int_{s}^{t} (1+ \ell(|Y_v|))|Z_v|^2\mathrm{d}v \Big)^{p} + \Big( \int_{s}^{t} |Z_v|^2 \mathrm{d}v\Big)^{p/2}  \Big]  \Big\}.
				\end{align*} 
				From to the local boundedness of $\ell$ and the bound \eqref{bound3.5}, we have
				\begin{align*}
					&\mathbb{E}\Big[ \sup_{s\leq r\leq t }|Y_r-Y_s|^{p}  \Big]\notag\\
					\leq& C(p)\Big\{ |t-s|^{p} + |t-s|^{p}\mathbb{E} [\sup_{s\leq v\leq t}|Z_v|^{2p}] +|t-s|^{p/2}\mathbb{E} [\sup_{s\leq v\leq t}|Z_v|^{2p}]  \Big\}\\
					\leq	& C(p)(1+S(\phi)^{2p})\Big\{ |t-s|^{p} + |t-s|^{p/2} \Big\}.
				\end{align*}
				Let us focus now on the regularity of the process $Z$. We recall that our main aim is to establish the following bound
				\[ 	\sum_{j=0}^{N-1}\mathbb{E}\Big(\int_{t_j}^{t_{j+1}}|Z_t - Z_{t_j}|^2 \mathrm{d}t \Big)^p \leq C h^p ,\]
				where   $\pi_0: 0=s_0 <\cdots < s_{r-1}<s_r=T$ is a partition of $[0,T]$, 
				and $\pi_1: 0=t_0 <t_1<\cdots < t_N=T$ is another partition of $[0,T]$  and $h= \sup_j|t_{j+1}-t_j|$.
				From Jensen's inequality,
				\begin{align*}
					\sum_{j=0}^{N-1}\mathbb{E}\Big(\int_{t_j}^{t_{j+1}}|Z_t - Z_{t_j}|^2 \mathrm{d}t \Big)^p \leq C h^{p-1} 	\sum_{j=0}^{N-1}\mathbb{E}\int_{t_j}^{t_{j+1}} |Z_t - Z_{t_j}|^{2p} \mathrm{d}t.
				\end{align*}
				Thus, it is sufficient to evaluate the quantity $\mathbb{E}|Z_t-Z_{t_j}|^{2p}$, for all $p>1$. We will establish the result for the case where $\pi_0$ is finer than $\pi_1$; the converse can be handled similarly without significant difficulty (see \cite{Rhoss23}). Assume $t_i= s_{l_i}$ for some increasing sequence $(l_i)_{i=1}^N$. For technical reasons, we will use of the expressions provided by equation \eqref{rep4.5}. Specifically, for any $t \in [s_k,s_{k+1}) \subset [s_{l_j},s_{l_{j+1}}) = [t_j, t_{j+1})$, we write
				\begin{align*}
					Z_t - Z_{t_j} &= \left(\nabla^{k}Y_t\right)^{\bold{T}}\left( \nabla_xX_t\right)^{-1} - \left(\nabla^{l_j}Y_{t_j}\right)^{\bold{T}}\left( \nabla_xX_{t_j}\right)^{-1}\\
					&= I_{t}^{1,j} + I_{t}^{2,j},
				\end{align*}
				where $I_{t}^{1,j}:= \left(\nabla^{l_j}Y_t\right)^{\bold{T}}\left( (\nabla_xX_t)^{-1} - (\nabla_xX_{t_j})^{-1} \right)$ and $I_{t}^{2,j}:=  \left(\nabla^{k}Y_t-\nabla^{l_j}Y_{t_j}\right)^{\bold{T}}\left( \nabla_xX_{t_j}\right)^{-1}$. Thus,
				\begin{align*}
					C_p h^{p-1}\sum_{j=0}^{N-1}\mathbb{E}\Big[ \int_{t_j}^{t_{j+1}}|Z_t - Z_{t_j}|^{2p} \mathrm{d}t \Big] \leq C_p h^{p-1} \sum_{j=0}^{N-1}\mathbb{E}\Big[ \int_{t_j}^{t_{j+1}}\left( |I_{t}^{1,j}|^{2p} + |I_{t}^{2,j}|^{2p} \right) \mathrm{d}t \Big].
				\end{align*} 
				The contribution of $\mathbb{E}[|I_{t}^{1,j}|^{2p}]$ can be derived similarly to the approach in \cite[Page 4786]{ImkRhOliv24}. For completeness, we briefly reproduce it here. Since $b$ is assumed to be continuously differentiable,
				the map $t\mapsto (\nabla_x X_t)^{-1}$ satisfies a linear equation. Therefore, for all $0 \leq t_j \leq t$ one can write
				\begin{align}\label{decomp}  
					&|(\nabla_x X_t)^{-1} - (\nabla_x X_{t_j})^{-1}| \notag\\
					& = \Big|\sum_{k=1}^{\infty} \int_{t_{j}<t_{j_1}<\ldots<t_{j_k}<t}  b'(t_{j_1},X_{t_{j_1}}^x):\cdots: b'(t_{j_k},X_{t_{j_k}}^x)\mathrm{d}t_{j_1}\cdots\mathrm{d}t_{j_k} \Big|,
				\end{align}
				where the symbol $``:"$ stands for the matrix multiplication.
				For any $p\geq 2$, H\"{o}lder's inequality and \eqref{rep4.5} yield
				\begin{align}
					&\mathbb{E}\left[ |I_{t}^{1,j}|^{2p}  \right]\leq  \mathbb{E}\Big[\sup_{ 0\leq t\leq T} |\nabla_x X_t|^{8p}\Big] ^{\frac{1}{4}} \mathbb{E}\Big[\sup_{ 0\leq t\leq T} |Z_t|^{8p}\Big] ^{\frac{1}{4}}  \mathbb{E}\Big[| (\nabla_x X_t)^{-1} -(\nabla_x X_{t_j})^{-1}|^{4p}\Big]^{\frac{1}{2}}.\notag
				\end{align}
				Substitute \eqref{decomp} into the above inequality and apply Theorem \ref{th 2.5}, the bound \eqref{bound3.5}, and  H\"{o}lder's inequality again to obtain with $h_j = |t_{j+1}-t_j|$
				\begin{align}\label{T_0}
					&\mathbb{E}\left[ |I_{t}^{1,j}|^{2p}  \right]\notag\\
					\leq & \mathbb{E}\Big[\sup_{ 0\leq t\leq T} |\nabla_x X_t|^{8p}\Big] ^{\frac{1}{4}} \mathbb{E}\Big[\sup_{ 0\leq t\leq T} |Z_t|^{8p}\Big] ^{\frac{1}{4}}\notag\\
					&\quad\times \mathbb{E}\Big[ \Big| \sum_{k=1}^{\infty} \int_{t_{j}<t_{j_1}<\ldots<t_{j_k}<t}  b'(t_{j_1},X_{t_{j_1}}):\cdots: b'(t_{j_k},X_{t_{j_k}})\mathrm{d}t_{j_1}\cdots\mathrm{d}t_{j_k}  \Big|^{4p} \Big]^{\frac{1}{2}}\notag\\
					\leq& C(1+\|b\|_{L^{\infty}([0,T];C_b(\mathbb{R}^d))})(1+S(\phi)^{2p}) \notag\\
					&\quad\times \mathbb{E}\Big[ \Big| \sum_{k=1}^{\infty} \int_{t_{j}<t_{j_1}<\ldots<t_{j_k}<t}  b'(t_{j_1},X_{t_{j_1}}):\cdots: b'(t_{j_k},X_{t_{j_k}})\mathrm{d}t_{j_1}\cdots\mathrm{d}t_{j_k}  \Big|^{4p} \Big]^{\frac{1}{2}}\notag\\
					\leq& C(1+\|b\|_{L^{\infty}([0,T];C_b(\mathbb{R}^d))})(1+S(\phi)^{2p}) h_j^{p},
				\end{align}
				where, we used  Girsanov's theorem and \cite[Proposition 3.7]{ MMNPZ13} to derive the last inequality. We emphasize that the constant above does not depend on the Lipschitz norm of the drift $b$, unlike in \cite{Zhang041} or \cite{Zhou}.
				Let us turn now to the control of $I^{2,j}_t$. 
				From Lemma \ref{lemm4.1}, we deduce that 
				\begin{align}\label{5.3}
					\nabla^{k}Y_t - \nabla^{l_j}Y_{t_j} = y_0(t)-y_0(t_j) + \sum_{i=k}^{r} \big(y_i(t)-y_i(t_j)\big) - \sum_{i\geq l_j}^{k-1} y_i(t_j).
				\end{align}
				Using the mean value theorem, Remark \ref{rem4.2}  and the definition of $W^{\mathbb{Q}}$, we observe that for $\tau \in [t_j,t]$
				\begin{align*}
					y_i(t)-y_i(t_j) &= e_{t_j}^{-1}\left[ \big(e_ty_i(t)-e_{t_j}y_i(t_j)\big) + (e_{t_j}- e_t) y_i(t) \right]\\
					&=- (t-t_j)\nabla_yg(\tau,\bold{X}_{\tau}) e_{\tau}e_{t_j}^{-1}y_i(t_j) + \int_{{t_j}}^t e_{\tau}e_{t_j}^{-1}z_i({\tau}) \mathrm{d}W_{\tau}\\&\quad - \int_{{t_j}}^t e_{\tau}e_{t_j}^{-1} \nabla_z g({\tau},\bold{X}_{\tau})z_i({\tau})\mathrm{d}{\tau} .
				\end{align*}	
				Similarly, for $\tau \in [t_j,t]$
				\begin{align*}
					y_0(t)-y_0(t_j) &=\int_{{t_j}}^t e_{\tau}e_{t_j}^{-1}z_0({\tau}) \mathrm{d}W_{\tau} - \int_{{t_j}}^t e_{\tau}e_{t_j}^{-1} \nabla_x g({\tau},\bold{X}_{\tau})\nabla X_{\tau}\mathrm{d}{\tau}\\
					&\quad - \int_{{t_j}}^t e_{\tau}e_{t_j}^{-1}z_0({\tau}) \nabla_z g({\tau},\bold{X}_{\tau})\mathrm{d}{\tau}.
				\end{align*}
				Thus, \eqref{5.3} becomes	
				\begin{align}
					\nabla^{k}Y_t - \nabla^{l_j}Y_{t_j}
					&=  -e_{\tau}e_{t_j}^{-1}\nabla_yg(\tau,\bold{X}_{\tau})(t-t_j)\sum_{i=k}^{r}  y_i(t_j)  - \sum_{i\geq l_j}^{k-1} y_i(t_j)  \notag\\
					&\quad - \sum_{i= k}^r \Big( \int_{{t_j}}^{t} e_{\tau} e_{t_j}^{-1} \nabla_{{z}} g({\tau},\bold{X}_{\tau})z_i({\tau})  \mathrm{d}{\tau}- \int_{{t_j}}^{t} e_{\tau}  e_{t_j}^{-1} z_i({\tau}) \mathrm{d}W_{\tau} \Big)\notag	\\
					&\quad - \int_{{t_j}}^{t} e_{\tau} e_{t_j}^{-1} \nabla_{{x}} g({\tau},\bold{X}_{\tau})\nabla_x {X}_{\tau}  \mathrm{d}{\tau}  -\int_{{t_j}}^{t} e_{\tau} e_{t_j}^{-1} \nabla_{{z}} g({\tau},\bold{X}_{\tau})z_0({\tau})  \mathrm{d}{\tau}\notag\\& \quad+ \int_{{t_j}}^{t} e_{\tau}  e_{t_j}^{-1} z_0({\tau}) \mathrm{d}W_{\tau}. 
				\end{align}
				Observe that the process $e_{t_j} = \exp(\int_0^{t_j}\Lambda_y(1+|Z_u|^{\alpha_0})\mathrm{d}u)$ is bounded from below by one. Then by taking the $p\text{-th}$ power, then the supremum, and applying the conditional BDG inequality, we deduce that
				\begin{align}\label{eqsup1}
					&\mathbb{E}\Big[\big|\nabla^{k}Y_t - \nabla^{l_j}Y_{t_j}\big|^{2p}/\mathfrak{F}_{t_j}\Big]\notag\\
					\leq & C_p  \mathbb{E}\Big[ \Big|(t_{j+1}-t_j)\nabla_yg(\tau,\bold{X}_{\tau})\sum_{i=k}^{r} e_{\tau}  y_i(t_j) \Big|^{2p} +  \Big|\sum_{i\geq l_j}^{k-1} e_{t_j}y_i(t_j)\Big|^{2p} \notag\\
					&+ \Big(\int_{{t_j}}^{t}\sum_{i\geq k}^r e_{\tau} \nabla_{{z}}g({\tau},\bold{X}_{\tau})z_i({\tau})\mathrm{d}{\tau} \Big)^{2p} + \Big( \int_{{t_j}}^{t_{j+1}} \Big|\sum_{i\geq k}^r e_{\tau} z_i({\tau})\Big|^2 \mathrm{d}{\tau} \Big)^p \notag\\
					& + \Big( \int_{{t_j}}^{t_{j+1}} \big| e_{\tau}  z_0({\tau})\big|^2 \mathrm{d}{\tau} \Big)^p +\Big( \int_{{t_j}}^{t_{j+1}} |e_{\tau}  \nabla_{{x}} g({\tau},\bold{X}_{\tau})\nabla_x {X}_{\tau}|  \mathrm{d}{\tau}\Big)^{2p}\notag\\
					&\quad + \Big( \int_{{t_j}}^{t_{j+1}} |e_{\tau} \nabla_{{z}} g({\tau},\bold{X}_{\tau})z_0({\tau})| \mathrm{d}{\tau}\Big)^{2p}
					\Big/\mathfrak{F}_{t_j}\Big].
				\end{align}
				Let us recall that the expression of $I_t^{2,j}$ is given by $I_{t}^{2,j}:=  \left(\nabla^{k}Y_t-\nabla^{l_j}Y_{t_j}\right)^{\bold{T}}\left( \nabla_xX_{t_j}\right)^{-1}$ and using the fact that the process $(\nabla_x {X}_{t_j})^{-1}$ is $\mathfrak{F}_{t_j}\text{-measurable}$, we deduce that
				\begin{align}\label{4.6}
					&\sum_{j=0}^{N-1}\mathbb{E}\Big[\int_{t_j}^{t_{j+1}}\big|I_t^{2,j}\big|^{2p}\mathrm{d}t\Big]\leq \sum_{j=0}^{N-1}\sum_{k=l_j}^{l_{j+1}-1}\mathbb{E}\Big[ \int_{s_k}^{s_{k+1}} \mathbb{E}\big[\big|\nabla^{k}Y_t - \nabla^{l_j}Y_{t_j}\big|^{2p}/\mathfrak{F}_{t_j}\big]|(\nabla X_{t_j})^{-1}|^{2p} \mathrm{d}t \Big],
				\end{align}
				which can be written as $\sum_{j=0}^{N-1}\mathbb{E}\Big[\int_{t_j}^{t_{j+1}}\big|I_t^{2,j}\big|^{2p}\mathrm{d}t\Big]\leq C_p \sum_{\nu=1}^{7} T_{\nu},$	and each term $T_{\nu}$ for $1\leq \nu \leq 7$ comes from \eqref{eqsup1}.

				We introduce another partition $\pi_2$ finer than $\pi_1$ defined by: $\pi_2: 0=\mu_0 < \mu_1 < \cdots < \mu_{\hat{\rho}_1}< \cdots < \mu_{\hat{\rho}_2}< \cdots \mu_{\hat{\rho}_N} = T$ and satisfying the following properties (see \cite{Zhou,Zhang041}).
				\begin{itemize}
					\item $\mu_{\hat{\rho}_i} = s_{l_i}= t_i$ with $\hat{\rho}_{i+1}-\hat{\rho}_i = \gamma$ for some $\gamma >0$ large enough, for all $0\leq i \leq r$;
					\item there exists $\rho_{k} = \frac{\mu_{\hat{\rho}_i+k+1}-\mu_{\hat{\rho}_i+k}}{t_{i+1}-t_i}$ independent of $i$ for all $0\leq k \leq \gamma -1$ such that $\sum_{k=0}^{\gamma-1}\rho_k =1$;
					\item $\pi_1$ belongs to $\pi_2$  and there exist a surjective function $\eta(i,k): \{0,\cdots,N-1 \}\times \{0,\cdots,\gamma\}\rightarrow \{0,1,\cdots,r\}$ such that $\mu_{\hat{\rho}_{i}+k} = s_{\eta(i,k)}$ for all $0\leq i\leq N-1$ and $0\leq k \leq \gamma-1$;
					\item $l_i \leq \eta(i,k) \leq l_{j+1}-1$ with $\eta(i,k+1) \geq \eta(i,k)$, $\eta(i+1,k)\geq \eta(i,\iota)$ for all $i,k,\iota$.
				\end{itemize}
				Hence, we observe for instance with the term $T_4$ that
				\begin{align*}
					T_4 =& \sum_{j=0}^{N-1}\sum_{k=l_j}^{l_{j+1}-1} (s_{k+1}-{s_k})  \mathbb{E}\Big[\Big( \int_{{t_j}}^{t_{j+1}} \Big|\sum_{i= k}^r e_{\tau} z_i({\tau})\Big|^2 \mathrm{d}{\tau} \Big)^p \big|(\nabla X_{t_j})^{-1}\big|^{2p} \Big]\\
					=& \sum_{j=0}^{N-1}\sum_{k=0}^{\gamma-1} \frac{(\mu_{k+1}-{\mu_k})}{t_{j+1}-t_j}  \mathbb{E}\Big[\Big( \int_{{t_j}}^{t_{j+1}} \Big|\sum_{i=\eta(j,k)}^r e_{\tau}  z_i({\tau})\Big|^2 \mathrm{d}{\tau} \Big)^p \big|(\nabla X_{t_j})^{-1}\big|^{2p} \Big]h_j.
				\end{align*}
				Thus  $T_4$ can be estimated as follows
				\begin{align}
					T_4\leq & \sum_{k=0}^{\gamma-1} \rho_k \mathbb{E}\Big[\sum_{j=0}^{N-1}\Big( \int_{{t_j}}^{t_{j+1}} \Big|\sum_{i=\eta(j,k)}^r e_{\tau} z_i({\tau})\Big|^2 \mathrm{d}{\tau} \Big)^p \big|(\nabla X_{t_j})^{-1}\big|^{2p} \Big]h \notag\\
					\leq&  \sum_{k=0}^{\gamma-1} \rho_k \mathbb{E}\Big[\sum_{j=0}^{N-1}\Big( \int_{{t_j}}^{t_{j+1}} \Big|\sum_{i=\eta(j,k)}^r e_{\tau}  z_i({\tau})\Big|^2 \mathrm{d}{\tau} \Big)^{2p}  \Big]^{\frac{1}{2}} \mathbb{E}\Big[ \sup_j\big|(\nabla X_{t_j})^{-1}\big|^{4p}  \Big]^{\frac{1}{2}}h.	 \label{rep4.2}
				\end{align}
				Note that $\mathbb{E}[ \sup_j|(\nabla X_{t_j})^{-1}|^{4p}  ]<C(1+\|b\|_{L^{\infty}([0,T];C_b(\mathbb{R}^d))}) $, thanks to Proposition \ref{cor 5.7}. 
				On the other hand, by first applying the reverse H\"{o}lder inequality and then using the Lemma \ref{lemmZhou}, we deduce that 
				\begin{align*}
					\mathbb{E}\Big[\sum_{j=0}^{N-1}\Big( \int_{{t_j}}^{t_{j+1}} \Big|\sum_{i=\eta(j,k)}^r e_{\tau} z_i({\tau})\Big|^2 \mathrm{d}{\tau} \Big)^{2p}  \Big] 
					\leq & C(p_1) \mathbb{E}^{\mathbb{Q}}\Big[\sum_{j=0}^{N-1}\Big( \int_{{t_j}}^{t_{j+1}} \Big|\sum_{i=\eta(j,k)}^r e_{\tau}  z_i({\tau})\Big|^2 \mathrm{d}{\tau} \Big)^{2pp_1^{'}}  \Big]^{\frac{1}{p_1^{'}}}\\
					\leq & C(p,p_1) \mathbb{E}^{\mathbb{Q}}\Big[\Big|\sum_{i=0}^r e_{t_{\tilde{\eta}(j,k)+1}}^{-1} y_i(t_{\tilde{\eta}(j,k)+1})- e_0^{-1}y_i(0)\Big|^{4pp_1^{'}}  \Big]^{\frac{1}{p_1^{'}}}.
				\end{align*}
				Using the expression of $y_i$ given by \eqref{eq4.5} in Remark \ref{rem4.2} under the probability measure $\mathbb{Q}$, we obtain that
				\begin{align*}
					& \mathbb{E}\Big[\sum_{j=0}^{N-1}\Big( \int_{{t_j}}^{t_{j+1}} \Big|\sum_{i=\eta(j,k)}^r e_{\tau} z_i({\tau})\Big|^2 \mathrm{d}{\tau} \Big)^{2p}  \Big] \\
					\leq & C(p,p_1) \mathbb{E}^{\mathbb{Q}}\Big[\Big|\sum_{i=0}^r \mathbb{E}^{\mathbb{Q}}\big[e_T y_i(T)/\mathfrak{F}_{(t_{\tilde{\eta}(j,k)+1})}\big]\Big|^{4pp_1^{'}} + \Big|\sum_{i=0}^r \mathbb{E}^{\mathbb{Q}} [e_Ty_i(T)]\Big|^{4pp_1^{'}} \Big]^{\frac{1}{p_1^{'}}}\\
					\leq & C(p,p_1) \mathbb{E}^{\mathbb{Q}}\Big[S(\phi)^{4pp_1^{'}}\Big(\sup_{ 0\leq s\leq T} \mathbb{E}^{\mathbb{Q}}\big[e_T\sup_{0\leq s_i\leq T}|\nabla_{x}X_{s_i}|/\mathfrak{F}_{s}\big]\Big)^{4pp_1^{'}}\\ &+S(\phi)^{4pp_1^{'}} \mathbb{E}^{\mathbb{Q}}\Big[|e_T\sup_{0\leq s_i\leq T}|\nabla_{x}X_{s_i}|\Big]^{4pp_1^{'}} \Big]^{\frac{1}{p_1^{'}}},
				\end{align*}
				where the last inequality follows from \eqref{eq4.5} in Remark \ref{rem4.2}, the definition of the terminal value and noting that $S(\varphi) =\sum_{i=1}^r\Lambda_{\varphi}^i$ denotes the sum of the Lipschitz constant of the functional $\varphi$.
				By applying the  Doob's maximal inequality and the reverse H\"{o}lder's inequality we obtain that
				\begin{align}
					\mathbb{E}\Big[\sum_{j=0}^{N-1}\Big( \int_{{t_j}}^{t_{j+1}} \Big|\sum_{i=\eta(j,k)}^r e_{\tau} z_i(\tau)\Big|^2 \mathrm{d}\tau \Big)^{2p}  \Big] 
					\leq & C(p,p_1,p_1') \Big( S(\phi)^{4p} \mathbb{E}^{\mathbb{Q}}\big[e_T^{4p}\sup_{0\leq s_i\leq T}|\nabla_{x}X_{s_i}|^{4p}\big] \Big)\notag\\
					\leq & C(p,p_1,p_1') C(p_0) \Big( S(\phi)^{4p} \mathbb{E}\big[e_T^{4pp_0'}\sup_{0\leq s_i\leq T}|\nabla_{x}X_{s_i}|^{4pp_0'}\big]^{\frac{1}{p_0'}} \Big)\notag\\
					\leq & C(1+\|b\|_{L^{\infty}([0,T];C_b(\mathbb{R}^d))}) S(\phi)^{4p},\label{rep4.3}
				\end{align}
				where, the constant $C$ depends only on $p,p_1,p_1',p_0'$ and the BMO norm of $Z$. Hence, by plugging \eqref{rep4.3} in \eqref{rep4.2} we obtain an bound for the term $T_4$ which is given by:
				\begin{align}\label{T_4}
					T_4 &\leq C \sum_{k=0}^{\gamma-1} \rho_k (1+\|b\|_{L^{\infty}([0,T];C_b(\mathbb{R}^d))}) S(\phi)^{2p}h \notag\\
					&\leq C (1+\|b\|_{L^{\infty}([0,T];C_b(\mathbb{R}^d))}) S(\phi)^{2p}h.
				\end{align}
				Similarly, a bound for the term $T_3$ can be derived as follows
				\begin{align*}
					T_3 
					\leq  \sum_{k=0}^{\gamma-1} \rho_k \mathbb{E}\Big[\sum_{j=0}^{N-1}\Big( \int_{{t_j}}^{t_{j+1}} \Big|\sum_{i=\eta(j,k)}^r e_{\tau}  z_i(\tau)\nabla_zg(\tau,\bold{X}_{\tau})\Big|^2 \mathrm{d}\tau \Big)^{2p}  \Big]^{\frac{1}{2}} \mathbb{E}\Big[ \sup_j\big|(\nabla X_{t_j})^{-1}\big|^{4p}  \Big]^{\frac{1}{2}}h	. 
				\end{align*}
				Let us notice that
				\begin{align*}
					&\mathbb{E}\Big[\sum_{j=0}^{N-1}\Big( \int_{{t_j}}^{t_{j+1}} \Big|\sum_{i=\eta(j,k)}^r e_{\tau}  z_i(\tau)\nabla_zg(\tau,\bold{X}_{\tau})\Big|^2 \mathrm{d}\tau \Big)^{2p} \Big]\\
					\leq & \mathbb{E}\Big[\sum_{j=0}^{N-1}\sup_{t_j\leq \tau \leq t_{j+1}}|\nabla_zg(\tau,\bold{X}_{\tau})|^{4p}\Big( \int_{{t_j}}^{t_{j+1}} \Big|\sum_{i=\eta(j,k)}^r e_{\tau}  z_i(\tau)\Big|^2 \mathrm{d}\tau \Big)^{2p} \Big]\\
					\leq & C_p\mathbb{E}\Big[1+\sup_{0\leq \tau \leq T}(\ell(|Y_{\tau}|)|{Z}_{\tau}|)^{8p}\Big]^{\frac{1}{2}}
					\times \mathbb{E}\Big[ \sum_{j=0}^{N-1}\Big(\int_{{t_j}}^{t_{j+1}} \Big|\sum_{i=\eta(j,k)}^r e_{\tau}  z_i(\tau)\Big|^2 \mathrm{d}\tau \Big)^{4p}  \Big]^{\frac{1}{2}}\\
					\leq  & C_p(1+\|b\|_{L^{\infty}([0,T];C_b(\mathbb{R}^d))}) S(\phi)^{4p},
				\end{align*}
				where we used the growth $|\nabla_zg(\tau,\bold{X}_{\tau})| \leq  \Lambda_z(1+\ell(|Y_{\tau}|)|Z_{\tau}|)$ and the inequality \eqref{rep4.3}, to derive the last two inequalities. Thus, there is a constant $C$ (independent of $h$)such that the term $T_3$ is controlled by 
				\begin{equation}\label{T_3}
					T_3 \leq C h.
				\end{equation}
				Let us turn now to establish the estimate of the terms $T_5, T_6$ and $T_7$. Analogously to \eqref{rep4.2}, we observe that 
				\begin{align}\label{T_5}
					T_5 &\leq  \sum_{k=0}^{\gamma-1} \rho_k \mathbb{E}\Big[\sum_{j=0}^{N-1}\Big( \int_{{t_j}}^{t_{j+1}} \big| e_{\tau} z_0(\tau)\big|^2 \mathrm{d}\tau \Big)^{2p}  \Big]^{\frac{1}{2}} \mathbb{E}\Big[ \sup_j|(\nabla X_{t_j})^{-1}|^{4p}  \Big]^{\frac{1}{2}}h\leq Ch.
				\end{align}
				Indeed, using the reverse H\"{o}lder inequality once again and applying Lemma \ref{lemmZhou} lead to
				\begin{align}
					\mathbb{E}\Big[\sum_{j=0}^{N-1}\Big( \int_{{t_j}}^{t_{j+1}} \big| e_{\tau} z_0(\tau)\big|^2 \mathrm{d}\tau \Big)^{2p}  \Big]
					\leq & C(p_1)\mathbb{E}^{\mathbb{Q}}\Big[\sum_{j=0}^{N-1}\Big( \int_{{t_j}}^{t_{j+1}} \big| e_{\tau} z_0(\tau)\big|^2 \mathrm{d}\tau \Big)^{2pp_1'}  \Big]^{\frac{1}{p_1'}}\notag\\
					\leq &  C(p_1) \mathbb{E}^{\mathbb{Q}}\Big[ \Big| e_{T}y_{0}(T) -y_0(0)  \Big|^{4pp_1'}  \Big]^{\frac{1}{p_1'}}\notag\\
					\leq & C(p_1)\mathbb{E}^{\mathbb{Q}}\Big[ \mathbb{E}^{\mathbb{Q}}\Big(\int_0^T |e_{\tau}\nabla_x g(\tau,\bold{X}_{\tau})\nabla X_{\tau}|\mathrm{d}\tau\Big)^{4p_1p_1'}   \Big]^{\frac{1}{p_1'}}\notag.
				\end{align}
				By using the growth of $\nabla_x g$ and the integrability of $\nabla X$, we deduce that
				\begin{align}
					\mathbb{E}\Big[\sum_{j=0}^{N-1}\Big( \int_{{t_j}}^{t_{j+1}} \big| e_{\tau} z_0(\tau)\big|^2 \mathrm{d}\tau \Big)^{2p}  \Big]\leq C(p_1,p_0,T,\|b\|_{L^{\infty}([0,T];C_b(\mathbb{R}^d))},\|Z*W\|_{\bmo}).
				\end{align}
				The bounds of $T_6$ and $T_7$ follow similarly i.e. there exist a constant $C$ independent of $h$ such that 
				\begin{align}\label{T_6}
					T_6+T_7 \leq C h.
				\end{align}
				Indeed, using similar estimates as above $T_7$ for instance can be controlled by
				\begin{align*}
					T_7 &\leq  \sum_{k=0}^{\gamma-1}\rho_k\mathbb{E}\Big[ \big|\sup_{0\leq \tau \leq T}\nabla_{{z}}g(\tau,\bold{X}_{\tau})\big|^{8p}+ \Big(\sum_{j=0}^{N-1}\int_{{t_j}}^{t_{t_{j+1}}} \Big|e_{\tau}  z_0(\tau)\Big|^2\mathrm{d}\tau \Big)^{4p} \Big]^\frac{1}{2}\mathbb{E}\left[ \big|(\nabla X_{t_j})^{-1}\big|^{4p}\right]^\frac{1}{2}h\\
					&\leq  C(p) \sum_{k=0}^{\gamma-1}\rho_k\mathbb{E}\Big[ \big|\sup_{0\leq \tau \leq T}\nabla_{{z}}g(\tau,\bold{X}_{\tau})\big|^{8p}+ \Big(\sum_{j=0}^{N-1}\int_{{t_j}}^{t_{t_{j+1}}}|e_{\tau} z_0(\tau)|^2\mathrm{d}\tau \Big)^{4p} \Big]^\frac{1}{2}h \\
					&\leq C h.
				\end{align*}
				It remains to estimate $T_1$ and $T_2$ in order to conclude the proof. Let us start with $T_2$. Using the same technique as before again, we deduce that 
				\begin{align*}
					T_2 &\leq  \sum_{k=0}^{\gamma-1} \rho_k \mathbb{E}\Big[\sum_{j=0}^{N-1} \Big|\sum_{i=l_j}^{\eta(j,k)-1} e_{t_j}y_i(t_j)\Big|^{4p}  \Big]^{\frac{1}{2}} \mathbb{E}\Big[ \sup_j\big|(\nabla X_{t_j})^{-1}\big|^{4p}  \Big]^{\frac{1}{2}}h\\
					&\leq  \sum_{k=0}^{\gamma-1} \rho_k \mathbb{E}\Big[\sum_{j=0}^{N-1}\Big|\mathbb{E}^{\mathbb{Q}}\Big[\sum_{i=l_j}^{\eta(j,k)-1} e_{T}y_i(T)\big/\mathfrak{F}_{t_j}\Big]\Big|^{4p}  \Big]^{\frac{1}{2}} \mathbb{E}\Big[ \sup_j\big|(\nabla X_{t_j})^{-1}\big|^{4p}  \Big]^{\frac{1}{2}}h.
				\end{align*}
				Recall that $\mathbb{E}\Big[ \sup_j\big|(\nabla X_{t_j})^{-1}\big|^{4p}  \Big] \leq C$, where the constant $C$ does not depend on $h$ (see \eqref{5.12}). Therefore by the definition of the terminal value
				\begin{align*}
					T_2 &\leq C(p_1,p)\sum_{k=0}^{\gamma-1} \rho_k \mathbb{E}^{\mathbb{Q}}\Big[\sum_{j=0}^{N-1}\Big|\mathbb{E}^{\mathbb{Q}}\Big[\sum_{i=l_j}^{\eta(j,k)-1} e_Ty_i(T)\big/\mathfrak{F}_{t_j}\Big]\Big|^{4pp_1'}  \Big]^{\frac{1}{2p_1'}}h\\
					&\leq C(p_1,p)\sum_{k=0}^{\gamma-1} \rho_k \mathbb{E}^{\mathbb{Q}}\Big[\Big(\sum_{j=0}^{N-1}\sum_{i=l_j}^{\eta(j,k)-1} S(\phi)\sup_{ 0\leq s\leq T} \mathbb{E}^{\mathbb{Q}}\big[A_T\sup_{0\leq s_i\leq T}|\nabla_{x}X_{s_i}|/\mathfrak{F}_{s}\big]\Big)^{4pp_1'}  \Big]^{\frac{1}{2p_1'}}h.
				\end{align*}
				Using the Doob's maximal inequality and the integrability of $\nabla_x X$ we deduce that 
				\begin{align}\label{T_2}
					T_2 &\leq C(p_1,p)(1+ S(\phi)^{2p} )h.
				\end{align}
				Finally, for  $\tau \in (t_j,t_{j+1})$ the term $T_1$ can be rewritten as follow
				\begin{align*}
					T_1 &=\sum_{j=0}^{N-1}\sum_{k=l_j}^{l_{j+1}-1}\mathbb{E} \int_{s_k}^{s_{k+1}} \mathbb{E} \Big|h_j\nabla_{{y}} g(\tau,\bold{X}_{\tau})\sum_{i=k}^{r} e_{\tau} y_i(t)\Big|^{2p} \big|(\nabla X_{t_j})^{-1}\big|^{2p} \mathrm{d}t\\
					&\leq C\sum_{j=0}^{N-1}\sum_{k=l_j}^{l_{j+1}-1}\mathbb{E} \int_{s_k}^{s_{k+1}} \mathbb{E} \Big|h_je_{\tau}\nabla_{{y}} g(\tau,\bold{X}_{\tau})\Big|^{4p} \big|(\nabla X_{t_j})^{-1}\big|^{2p} \mathrm{d}t\\&\quad + C\sum_{j=0}^{N-1}\sum_{k=l_j}^{l_{j+1}-1}\mathbb{E} \int_{s_k}^{s_{k+1}}\mathbb{E}\Big|\sum_{i=k}^{r}  y_i(t_j)\Big|^{4p}\big|(\nabla X_{t_j})^{-1}\big|^{2p} \mathrm{d}t,
				\end{align*}
				where we used the elementary inequality $2ab\leq a^2+b^2$ to obtain the inequality above. Therefore, by using once more the same technique we deduce that
				\begin{align*}
					T_1	&\leq C_p  \sum_{k=0}^{\gamma-1} \rho_k \mathbb{E}\Big[ \Big(\sum_{j=0}^{N-1}h_j\Big)^{8p} \big|e_{\tau} \nabla_{{y}} g(\tau,\bold{X}_{\tau})\big|^{8p} \Big]^{\frac{1}{2}}h\\&\quad + \sum_{j=0}^{N-1}\sum_{k=l_j}^{l_{j+1}-1}\mathbb{E} \int_{s_k}^{s_{k+1}}\mathbb{E}\Big|\sum_{i=k}^{r}  y_i(t_j)\Big|^{8p}\Big]^{\frac{1}{2}}\mathbb{E}\Big[\sup_j\big|(\nabla X_{t_j})^{-1}\big|^{4p}\Big]^{\frac{1}{2}} \mathrm{d}t\\
					&\leq C_p  \sum_{k=0}^{\gamma-1} \rho_k T^{4p} \mathbb{E}\Big[\big|e_{\tau} \nabla_{{y}} g(\tau,\bold{X}_{\tau})\big|^{8p} \Big]^{\frac{1}{2}}h 
					+ C(p_1) \sum_{k=0}^{\gamma-1} \rho_kS(\varphi)^{4p}\mathbb{E}^{\mathbb{Q}}\Big[\sup_i\big|\nabla X_{s_i}\big|^{8p}\Big]^{\frac{1}{2}} h.
				\end{align*}
				From the growth of $\nabla_y g$, the boundedness of $Y$, the integrability of $e_{\tau}$ and the control from $T_2$ we have that
				\begin{align}\label{T_1}
					T_1 &\leq C_p T^{4p}  \mathbb{E}\Big[(1+ \ell(|Y_{\tau}|)\sup_{ 0\leq \tau\leq T}|Z_{\tau}|^2)^{16p} \Big]^{\frac{1}{4}}h +C(p_1,p_0)(1+\|b\|_{L^{\infty}([0,T];C_b(\mathbb{R}^d))}) S(\phi)^{4p}h\notag\\
					&\leq C(p,T)(1+ S(\phi)^{8p} )h + C(p_1,p_0)(1+\|b\|_{L^{\infty}([0,T];C_b(\mathbb{R}^d))}) S(\phi)^{4p}h.
				\end{align}
				In summary, combining \eqref{T_0}, \eqref{T_4}, \eqref{T_3},  \eqref{T_5}, \eqref{T_6}, \eqref{T_2} and \eqref{T_1}, there is a constant $C$ not depending on $h$ and the Lipschitz constants of the coefficients such that
				\begin{align*}
					\sum_{j=0}^{N-1}\mathbb{E}\Big(\int_{t_j}^{t_{j+1}}|Z_t - Z_{t_j}|^2 \mathrm{d}t \Big)^p  &\leq C_p h^{p-1} \Big[ \sum_{j=0}^{N-1}\int_{t_j}^{t_{j+1}} h_j^{2p}\mathrm{d}t + \sum_{\nu =1}^{7}T_{\nu}  \Big]\\ &\leq C_p(1+ \|b\|_{L^{\infty}([0,T];C_b(\mathbb{R}^d))})h^p.
				\end{align*}
				By using standard smooth approximation techniques in combination with result on stability for quadratic BSDEs (see for instance \cite[Corollary 3.9]{ImkRhOliv24}), the above estimate remains valid for non continuously differentiable coefficients. This concludes the proof.
			\end{proof}
			Let us briefly provide below some arguments for the proof of Theorem \ref{thm1.1}.
			\begin{proof}[Proof of Theorem \ref{thm1.1}]
				In this context, clearly the path regularity of the process $Y$ follows similarly as it was done in the case of the discrete-type functional terminal value, since the control $Z$ satisfies the bound \eqref{boundZ0}. Thus
				\begin{align*}
					\mathbb{E}\Big[ \sup_{s\leq r\leq t }|Y_r-Y_s|^{p}  \Big]\leq  C(p)(1+\Lambda_{\Phi}^{p})\Big\{ |t-s|^{p} + |t-s|^{p/2} \Big\}.
				\end{align*}
				We focus on the control process part. Let us consider the following sequence of partitions $(\pi(r))_{r\geq 1}:0=s_0<s_1 \cdots < s_{r}=T$ of the interval $[0,T]$ such that $\lim_{r\rightarrow \infty} |\pi(r)|= 0$. We define accordingly the smooth bounded function $\hat\varphi^{\pi(r)}(X_{s_0},\cdots,X_{s_r})$ approximating the functional $\Phi$ (see Remark \ref{remark24}). In addition, we impose that the Lipschitz constant of $\hat \varphi^{\pi(r)}$ satisfies $S(\hat \varphi^{\pi(r)}) < C \Lambda_{\Phi}$. If we denote by $(Y^{\pi(r)},Z^{\pi(r)})$ the solution corresponding to the terminal value $\hat \varphi^{\pi(r)}(X_{s_0},\cdots,X_{s_r})$, then invoking once more a stability result for quadratic BSDEs (see \cite[Corollary 3.9]{ImkRhOliv24}), it is readily seen that
					$\lim_{|\pi(r)|\rightarrow 0}\mathbb{E}\int_0^T |Z^{\pi(r)}_t-Z_t|^2\mathrm{d}t =0.$
				Then, there is a subsequence still indexed by ${\pi(r)}$ such that $\lim_{|\pi(r)|\rightarrow 0}\mathbb{E}|Z^{\pi(r)}_t-Z_t|^2 =0$ $\mathrm{d}t\text{-a.e.}$ Moreover,
				one can extract another subsequence that we will still denote by $Z^{\pi(r)}$ such that $\mathbb{P}\text{-a.s.}$ $\text{Leb}\{t\in [0,T), Z_t^{\pi(r)} = Z_t \}$ goes to $T$ as $|\pi(r)|$ tends to $0$.
				In addition, for all $p>p_0'$ there is a constant $C>0$ not depending on the particular choice of the partition such that $\mathbb{E}|Z_t^{\pi(r)}|^{2p} \leq C$, for all $t\in [0,T]$. This leads to the fact   $G^{\pi(r)}(t):= \mathbb{E}|Z_t^{\pi(r)}-Z_t|^{2p}$ goes to $0$ as $|\pi(r)|\rightarrow 0$, $\mathrm{d}t\text{ a.e.}$ by the dominated convergence theorem. Thus $G^{\pi(r)}(t)$ is bounded $\mathrm{d}t\text{ a.e.}$ and uniformly in $\pi(r)$. Let us now establish the path regularity result for the limit process $Z_t$. We observe for all $t\in [t_j,t_{j+1}]$ that 
				\begin{align}
					&\sum_{j=0}^{N-1}\mathbb{E}\Big(\int_{t_j}^{t_{j+1}}|Z_t - Z_{t_j}|^2 \mathrm{d}t \Big)^p \notag\\
					&\leq C(p)\sum_{j=0}^{N-1}\int_{t_j}^{t_{j+1}}\mathbb{E}(|Z_t^{\pi(r)} -Z_t|^{2p}+ |Z_t^{\pi(r)} -Z_{t_j}^{\pi(r)}|^{2p} + |Z_{t_j}^{\pi(r)} -Z_{t_j}|^{2p})\mathrm{d}t
				\end{align}
				By using once more the dominated convergence theorem, we observe that the first and the last terms on the right hand side from the above inequality tends to 0 as $\pi(r)\rightarrow 0$. Therefore, there is a constant $C$ not depending on $\pi(r)$ 
				\begin{align}
					\sum_{j=0}^{N-1}\mathbb{E}\Big(\int_{t_j}^{t_{j+1}}|Z_t - Z_{t_j}|^2 \mathrm{d}t \Big)^p
					\leq C(p)\sum_{j=0}^{N-1}\int_{t_j}^{t_{j+1}}\mathbb{E} |Z_t^{\pi(r)}-Z_{t_j}^{\pi(r)}|^{2p}\mathrm{d}t \leq C h^{p}.
				\end{align} This finishes the proof.
			\end{proof}
			\begin{proof}[Proof of Proposition \ref{cor 1.3}] We mimic the proof of \cite[Proposition 3.2.12]{Zhou}. We will first assume that the terminal value is given as in the proof of Theorem \ref{thm1.2}. Furthermore, we assume that the coefficients involved are continuously differentiable with bounded derivatives. We underline the fact that, thanks to the assumptions in force, the trade between the underlying probability $\mathbb{P}$ and its equivalent measure $\mathbb{Q}^N$ is possible at a cost of the constant $\epsilon \in (0,1)$.
				
				We first observe that
				\begin{align*}
					|Z_s-\hat{Z}_{t_j}|^2 &\leq 2|Z_s-{Z}_{t_j}|^2 +2 \left|Z_{t_j}-\frac{1}{\delta t_j} \mathbb{E}\Big[\int_{t_j}^{t_{j+1}} Z_s \mathrm{d}s\big/\mathfrak{F}_{t_j} \Big]\right|^2\\
					&\leq 2|Z_s-{Z}_{t_j}|^2 + \frac{2}{\epsilon\delta t_j}\mathbb{E}^{\mathbb{Q}^{N}}\Big[\int_{t_j}^{t_{j+1}} |Z_{t_j}-Z_s|^2 \mathrm{d}s\big/\mathfrak{F}_{t_j} \Big]	
				\end{align*}
				Integrating from $t_j$ to $t_{j+1}$ and taking the $\eta^{th}\text{power}$, for any $\eta >1$ we have
				\begin{align*}
					\Big(\int_{t_j}^{t_{j+1}}|Z_s-\hat{Z}_{t_j}|^2\mathrm{d}s\Big)^{\eta} \leq \Big(\int_{t_j}^{t_{j+1}}|Z_s-{Z}_{t_j}|^2\mathrm{d}s\Big)^{\eta} + \Big(\frac{2}{\epsilon}\mathbb{E}^{\mathbb{Q}^{N}}\Big[\int_{t_j}^{t_{j+1}} |Z_{t_j}-Z_s|^2 \mathrm{d}s\big/\mathfrak{F}_{t_j} \Big]\Big)^{\eta}.
				\end{align*}
				For any $p\geq 2$, and using Jensen's inequality, there is a constant $C(\epsilon,p,\eta)>0$ such that
				\begin{align*}
					\mathbb{E}\Big[\sup_{0\leq i\leq N-1}\mathbb{E}^{\mathbb{Q}^{N}}\Big\{\sum_{j=i}^{N-1}\Big(\int_{t_j}^{t_{j+1}}|Z_s-\hat{Z}_{t_j}|^2\mathrm{d}s\Big)^{\eta}\big/\mathfrak{F}_{t_i}\Big\}^p\Big] \leq C(\epsilon,p,\eta) h^{p(\eta -1)} I,
				\end{align*}
				where 
				\begin{align*}
					I = \mathbb{E}\Big[\sup_{0\leq i\leq N-1}\mathbb{E}^{\mathbb{Q}^{N}}\Big[\sum_{j=i}^{N-1}\Big(\int_{t_j}^{t_{j+1}}|Z_s-{Z}_{t_j}|^{2\eta}\mathrm{d}s\Big)\big/\mathfrak{F}_{t_i}\Big]^p\Big].
				\end{align*}
				Let us assume without loss of generality that $\pi_1$ is finer than $\pi_0$ and that $t_i= s_{l_i}$ for some increasing sequence $(l_i)_{i=1}^N$. For any $t \in [s_k,s_{k+1}) \subset [s_{l_j},s_{l_{j+1}}) = [t_j, t_{j+1})$, we write
				\begin{align*}
					Z_s - Z_{t_j} &= \left(\nabla^{k}Y_s\right)^{\bold{T}}\left( \nabla_xX_s\right)^{-1} - \left(\nabla^{l_j}Y_{t_j}\right)^{\bold{T}}\left( \nabla_xX_{t_j}\right)^{-1}\\
					&= I_{s}^{1,j} + I_{s}^{2,j},
				\end{align*}
				where $I_{s}^{1,j}:= \left(\nabla^{l_j}Y_s\right)^{\bold{T}}\left( (\nabla_xX_s)^{-1} - (\nabla_xX_{t_j})^{-1} \right)$ and $I_{s}^{2,j}:=  \left(\nabla^{k}Y_s-\nabla^{l_j}Y_{t_j}\right)^{\bold{T}}\left( \nabla_xX_{t_j}\right)^{-1}.$ Therefore, from Girsanov's theorem and by applying the reverse H\"older inequality we deduce that
				\begin{align*}
					I &\leq C_{pq^{*}} \mathbb{E}\Big[ \Big(\sum_{j=0}^{N-1}\int_{t_j}^{t_{j+1}}|I_s^{1,j}|^{2\eta}\mathrm{d}s\Big)^{pq^{*}} \Big]^{\frac{1}{q^{*}}} +  \mathbb{E}\Big[\sup_{0\leq i\leq N-1}\mathbb{E}^{\mathbb{Q}^{N}}\Big[\sum_{j=i}^{N-1}\Big(\int_{t_j}^{t_{j+1}}|I_s^{2,j}|^{2\eta}\mathrm{d}s\Big)\big/\mathfrak{F}_{t_i}\Big]^p\Big]\\
					&\leq C_{pq^{*}} \Big(\int_{0}^{T}\sum_{j=0}^{N-1}\mathbb{E}(|I_s^{1,j}|^{2pq^{*}\eta}){\bf{1}_{[t_j,t_{j+1})}}\mathrm{d}s\Big)  +  \mathbb{E}\Big[\sup_{0\leq i\leq N-1}\mathbb{E}^{\mathbb{Q}^{N}}\Big[\sum_{j=i}^{N-1}\Big(\int_{t_j}^{t_{j+1}}|I_s^{2,j}|^{2\eta}\mathrm{d}s\Big)\big/\mathfrak{F}_{t_i}\Big]^p\Big]\\
					&\leq C_{pq^{*}} h^{pq^{*}\eta} +  \mathbb{E}\Big[\sup_{0\leq i\leq N-1}\mathbb{E}^{\mathbb{Q}^{N}}\Big[\sum_{j=i}^{N-1}\Big(\int_{t_j}^{t_{j+1}}|I_s^{2,j}|^{2\eta}\mathrm{d}s\Big)\big/\mathfrak{F}_{t_i}\Big]^p\Big],
				\end{align*}
				where one can use similar computation as in \eqref{T_0} to obtain the first bound of the inequality above and the constant $C_{pq^{*}}$ does not depend on the Lipschitz constant of $b$.
				
				On the other hand, we will follow the same strategy we used to estimate for instance \eqref{eqsup1}, by considering the same finer partition $\pi_2$. We first observe that
				\begin{align*}
					&\mathbb{E}^{\mathbb{Q}^{N}}\Big[\sum_{j=i}^{N-1}\Big(\int_{t_j}^{t_{j+1}}|I_s^{2,j}|^{2\eta}\mathrm{d}s\Big)\big/\mathfrak{F}_{t_i}\Big]\\
					&\leq (2-\epsilon)\mathbb{E}^{\mathbb{Q}^{N}}\Big[\sum_{j=i}^{N-1}\mathbb{E}\Big[\Big(\int_{t_j}^{t_{j+1}}|I_s^{2,j}|^{2\eta}\mathrm{d}s\Big)\big/\mathfrak{F}_{t_j}\Big]\big/\mathfrak{F}_{t_i}\Big]\\
					&\leq (2-\epsilon)\mathbb{E}^{\mathbb{Q}^{N}} \Big[\sum_{j=i}^{N-1} \mathbb{E}\int_{t_j}^{t_{j+1}}\big[\big|\nabla^{k}Y_s - \nabla^{l_j}Y_{t_j}\big|^{2\eta}/\mathfrak{F}_{t_j}\big]|(\nabla X_{t_j})^{-1}|^{2\eta}\mathrm{d}s \big/\mathfrak{F}_{t_i} \Big].
				\end{align*}  
				By using the expression given by \eqref{eqsup1} and applying once more the Girsanov theorem, we deduce that
				\begin{align*}
					&\mathbb{E}^{\mathbb{Q}^{N}}\Big[\sum_{j=i}^{N-1}\Big(\int_{t_j}^{t_{j+1}}|I_s^{2,j}|^{2\eta}\mathrm{d}s\Big)\big/\mathfrak{F}_{t_i}\Big]\\
					\leq & C_{\eta} (2-\epsilon)\mathbb{E}^{\mathbb{Q}^{N}}\Bigg[\sum_{k=0}^{\gamma-1}\rho_k\sum_{j=i}^{N-1}\frac{1}{\epsilon} \mathbb{E}^{\mathbb{Q}^{N}}\Big[ \Big|(t_{j+1}-t_j)\nabla_yg(\tau,\bold{X}_{\tau})\sum_{i=k}^{r} e_{\tau}  y_i(t_j) \Big|^{2\eta}\\
					& +  \Big|\sum_{i\geq l_j}^{k-1} e_{t_j}y_i(t_j)\Big|^{2\eta} + \Big(\int_{{t_j}}^{t}\sum_{i\geq k}^r e_{\tau} \nabla_{{z}}g({\tau},\bold{X}_{\tau})z_i({\tau})\mathrm{d}{\tau} \Big)^{2\eta} + \Big( \int_{{t_j}}^{t_{j+1}} \Big|\sum_{i\geq k}^r e_{\tau} z_i({\tau})\Big|^2 \mathrm{d}{\tau} \Big)^{\eta} \notag\\
					& + \Big( \int_{{t_j}}^{t_{j+1}} \big| e_{\tau}  z_0({\tau})\big|^2 \mathrm{d}{\tau} \Big)^{\eta} +\Big( \int_{{t_j}}^{t_{j+1}} |e_{\tau}  \nabla_{{x}} g({\tau},\bold{X}_{\tau})\nabla_x {X}_{\tau}|  \mathrm{d}{\tau}\Big)^{2\eta}\notag\\
					&\quad + \Big( \int_{{t_j}}^{t_{j+1}} |e_{\tau} \nabla_{{z}} g({\tau},\bold{X}_{\tau})z_0({\tau})| \mathrm{d}{\tau}\Big)^{2\eta}
					\Big/\mathfrak{F}_{t_j}|(\nabla X_{t_j})^{-1}|^{2\eta} \big/\mathfrak{F}_{t_i} \Big]\Bigg]h.
				\end{align*}
				By using the H\"older's inequality, we deduce the following for any $p\geq q^{*}$
				\begin{align*}
					&\mathbb{E}\Big[\sup_{0\leq i\leq N-1}\mathbb{E}^{\mathbb{Q}^{N}}\Big[\sum_{j=i}^{N-1}\Big(\int_{t_j}^{t_{j+1}}|I_s^{2,j}|^{2\eta}\mathrm{d}s\Big)\big/\mathfrak{F}_{t_i}\Big]^p\Big]\\
					\leq & C({\eta},p,\epsilon)\mathbb{E}\Bigg[\sup_{0\leq i\leq N-1} \mathbb{E}^{\mathbb{Q}^{N}}\Bigg[\sum_{k=0}^{\gamma-1}\rho_k\sum_{j=i}^{N-1}\frac{1}{\epsilon} \mathbb{E}^{\mathbb{Q}^{N}}\Bigg( \Big|(t_{j+1}-t_j)\nabla_yg(\tau,\bold{X}_{\tau})\sum_{i=k}^{r} e_{\tau}  y_i(t) \Big|^{2\eta} \\
					&+  \Big|\sum_{i\geq l_j}^{k-1} e_{t_j}y_i(t_j)\Big|^{2\eta} + \Big(\int_{{t_j}}^{t}\sum_{i\geq k}^r e_{\tau} \nabla_{{z}}g({\tau},\bold{X}_{\tau})z_i({\tau})\mathrm{d}{\tau} \Big)^{2\eta} + \Big( \int_{{t_j}}^{t_{j+1}} \Big|\sum_{i\geq k}^r e_{\tau} z_i({\tau})\Big|^2 \mathrm{d}{\tau} \Big)^{\eta} \notag\\
					& + \Big( \int_{{t_j}}^{t_{j+1}} \big| e_{\tau}  z_0({\tau})\big|^2 \mathrm{d}{\tau} \Big)^{\eta} +\Big( \int_{{t_j}}^{t_{j+1}} |e_{\tau}  \nabla_{{x}} g({\tau},\bold{X}_{\tau})\nabla_x {X}_{\tau}|  \mathrm{d}{\tau}\Big)^{2\eta}\notag\\
					&\quad + \Big( \int_{{t_j}}^{t_{j+1}} |e_{\tau} \nabla_{{z}} g({\tau},\bold{X}_{\tau})z_0({\tau})| \mathrm{d}{\tau}\Big)^{2\eta}
					\Big/\mathfrak{F}_{t_j}\Bigg)|(\nabla X_{t_j})^{-1}|^{2\eta} \Big/\mathfrak{F}_{t_i} \Bigg]^p\Bigg]h^p.
				\end{align*}
				Moreover, by using the tower property and the reverse H\"older inequality we deduce that 
				\begin{align*}
					&\mathbb{E}\Big[\sup_{0\leq i\leq N-1}\mathbb{E}^{\mathbb{Q}^{N}}\Big[\sum_{j=i}^{N-1}\Big(\int_{t_j}^{t_{j+1}}|I_s^{2,j}|^{2\eta}\mathrm{d}s\Big)\big/\mathfrak{F}_{t_i}\Big]^p\Big]\\
					\leq & C({\eta},p,\epsilon)\mathbb{E}\Bigg[\sup_{0\leq i\leq N-1} \mathbb{E}\Bigg[\Bigg(\sum_{k=0}^{\gamma-1}\rho_k\sum_{j=i}^{N-1}\Bigg\{ \Big|(t_{j+1}-t_j)\nabla_yg(\tau,\bold{X}_{\tau})\sum_{i=k}^{r} e_{\tau}  y_i(t) \Big|^{2\eta} \\
					&+  \Big|\sum_{i\geq l_j}^{k-1} e_{t_j}y_i(t_j)\Big|^{2\eta} + \Big(\int_{{t_j}}^{t}\sum_{i\geq k}^r e_{\tau} \nabla_{{z}}g({\tau},\bold{X}_{\tau})z_i({\tau})\mathrm{d}{\tau} \Big)^{2\eta} + \Big( \int_{{t_j}}^{t_{j+1}} \Big|\sum_{i\geq k}^r e_{\tau} z_i({\tau})\Big|^2 \mathrm{d}{\tau} \Big)^{\eta} \notag\\
					& + \Big( \int_{{t_j}}^{t_{j+1}} \big| e_{\tau}  z_0({\tau})\big|^2 \mathrm{d}{\tau} \Big)^{\eta} +\Big( \int_{{t_j}}^{t_{j+1}} |e_{\tau}  \nabla_{{x}} g({\tau},\bold{X}_{\tau})\nabla_x {X}_{\tau}|  \mathrm{d}{\tau}\Big)^{2\eta}\notag\\
					&\quad + \Big( \int_{{t_j}}^{t_{j+1}} |e_{\tau} \nabla_{{z}} g({\tau},\bold{X}_{\tau})z_0({\tau})| \mathrm{d}{\tau}\Big)^{2\eta}\Bigg\}
					\Bigg)^{q^{*}}\sup_{0\leq j\leq N-1}|(\nabla X_{t_j})^{-1}|^{2\eta q^{*}} \Big/\mathfrak{F}_{t_i} \Bigg]^{\frac{p}{q^{*}}}\Bigg]h^p.
				\end{align*}
				This leads to
				\begin{align*}
					&\mathbb{E}\Big[\sup_{0\leq i\leq N-1}\mathbb{E}^{\mathbb{Q}^{N}}\Big[\sum_{j=i}^{N-1}\Big(\int_{t_j}^{t_{j+1}}|I_s^{2,j}|^{2\eta}\mathrm{d}s\Big)\big/\mathfrak{F}_{t_i}\Big]^p\Big]\\
					\leq & C({\eta},p,\epsilon,q^{*})\mathbb{E}\Bigg[\Bigg(\sum_{k=0}^{\gamma-1}\rho_k\sum_{j=i}^{N-1}\Bigg\{ \Big|(t_{j+1}-t_j)\nabla_yg(\tau,\bold{X}_{\tau})\sum_{i=k}^{r} e_{\tau}  y_i(t) \Big|^{2\eta} \\
					&+  \Big|\sum_{i\geq l_j}^{k-1} e_{t_j}y_i(t_j)\Big|^{2\eta} + \Big(\int_{{t_j}}^{t}\sum_{i\geq k}^r e_{\tau} \nabla_{{z}}g({\tau},\bold{X}_{\tau})z_i({\tau})\mathrm{d}{\tau} \Big)^{2\eta} + \Big( \int_{{t_j}}^{t_{j+1}} \Big|\sum_{i\geq k}^r e_{\tau} z_i({\tau})\Big|^2 \mathrm{d}{\tau} \Big)^{\eta} \notag\\
					& + \Big( \int_{{t_j}}^{t_{j+1}} \big| e_{\tau}  z_0({\tau})\big|^2 \mathrm{d}{\tau} \Big)^{\eta} +\Big( \int_{{t_j}}^{t_{j+1}} |e_{\tau}  \nabla_{{x}} g({\tau},\bold{X}_{\tau})\nabla_x {X}_{\tau}|  \mathrm{d}{\tau}\Big)^{2\eta}\notag\\
					&\quad + \Big( \int_{{t_j}}^{t_{j+1}} |e_{\tau} \nabla_{{z}} g({\tau},\bold{X}_{\tau})z_0({\tau})| \mathrm{d}{\tau}\Big)^{2\eta}\Bigg\}
					\Bigg)^{p}\sup_{0\leq j\leq N-1}|(\nabla X_{t_j})^{-1}|^{2\eta p} \Bigg]h^p.
				\end{align*}
				Hence, using once more the H\"older inequality we obtain the existence of the constant $C({\eta},p,\epsilon,q^{*})$ not depending on the Lipschitz constants of the coefficients such that the following holds
				\begin{align*}
					&\mathbb{E}\Big[\sup_{0\leq i\leq N-1}\mathbb{E}^{\mathbb{Q}^{N}}\Big[\sum_{j=i}^{N-1}\Big(\int_{t_j}^{t_{j+1}}|I_s^{2,j}|^{2\eta}\mathrm{d}s\Big)\big/\mathfrak{F}_{t_i}\Big]^p\Big]\\
					\leq& C({\eta},p,\epsilon,q^{*})h^p \mathbb{E}\Bigg[\sum_{k=0}^{\gamma-1}\rho_k\Bigg(\sum_{j=i}^{N-1}\Bigg\{ \Big|(t_{j+1}-t_j)\nabla_yg(\tau,\bold{X}_{\tau})\sum_{i=k}^{r} e_{\tau}  y_i(t) \Big|^{2\eta} \\
					&+  \Big|\sum_{i\geq l_j}^{k-1} e_{t_j}y_i(t_j)\Big|^{2\eta} + \Big(\int_{{t_j}}^{t}\sum_{i\geq k}^r e_{\tau} \nabla_{{z}}g({\tau},\bold{X}_{\tau})z_i({\tau})\mathrm{d}{\tau} \Big)^{2\eta} + \Big( \int_{{t_j}}^{t_{j+1}} \Big|\sum_{i\geq k}^r e_{\tau} z_i({\tau})\Big|^2 \mathrm{d}{\tau} \Big)^{\eta} \notag\\
					& + \Big( \int_{{t_j}}^{t_{j+1}} \big| e_{\tau}  z_0({\tau})\big|^2 \mathrm{d}{\tau} \Big)^{\eta} +\Big( \int_{{t_j}}^{t_{j+1}} |e_{\tau}  \nabla_{{x}} g({\tau},\bold{X}_{\tau})\nabla_x {X}_{\tau}|  \mathrm{d}{\tau}\Big)^{2\eta}\notag\\
					&\quad + \Big( \int_{{t_j}}^{t_{j+1}} |e_{\tau} \nabla_{{z}} g({\tau},\bold{X}_{\tau})z_0({\tau})| \mathrm{d}{\tau}\Big)^{2\eta}
					\Bigg\}\Bigg)^{2p}\Bigg]^{\frac{1}{2}}.
				\end{align*}
				By rearranging the terms above, we observe that
				\begin{align*}
					&\mathbb{E}\Big[\sup_{0\leq i\leq N-1}\mathbb{E}^{\mathbb{Q}^{N}}\Big[\sum_{j=i}^{N-1}\Big(\int_{t_j}^{t_{j+1}}|I_s^{2,j}|^{2\eta}\mathrm{d}s\Big)\big/\mathfrak{F}_{t_i}\Big]^p\Big]\\
					\leq& C({\eta},p,\epsilon,q^{*})h^p \mathbb{E}\Bigg[\sum_{k=0}^{\gamma-1}\rho_k\Bigg(\sum_{j=i}^{N-1}\Bigg\{ \Big|(t_{j+1}-t_j)\nabla_yg(\tau,\bold{X}_{\tau})\sum_{i=k}^{r} e_{\tau}  y_i(t) \Big|^{2\eta} \\
					&+  \Big|\sum_{i\geq l_j}^{k-1} e_{t_j}y_i(t_j)\Big|^{2\eta} + \Big(\int_{{t_j}}^{t}\sum_{i\geq k}^r e_{\tau} \nabla_{{z}}g({\tau},\bold{X}_{\tau})z_i({\tau})\mathrm{d}{\tau} \Big)^{2\eta} + \Big( \int_{{t_j}}^{t_{j+1}} \Big|\sum_{i\geq k}^r e_{\tau} z_i({\tau})\Big|^2 \mathrm{d}{\tau} \Big)^{\eta}\Bigg\}\Bigg)^{2p}\Bigg]^{\frac{1}{2}} \notag\\
					& + C({\eta},p,\epsilon,q^{*})h^p \mathbb{E}\Bigg[ \Bigg( \int_{{t_i}}^{T} \big| e_{\tau}  z_0({\tau})\big|^2 \mathrm{d}{\tau} \Bigg)^{2p\eta} +\Bigg( \int_{{t_i}}^{T} |e_{\tau}  \nabla_{{x}} g({\tau},\bold{X}_{\tau})\nabla_x {X}_{\tau}|  \mathrm{d}{\tau}\Bigg)^{4p\eta}\notag\\
					&\quad + \Bigg( \int_{{t_i}}^{T} |e_{\tau} \nabla_{{z}} g({\tau},\bold{X}_{\tau})z_0({\tau})| \mathrm{d}{\tau}\Bigg)^{4p\eta}
					\Bigg]^{\frac{1}{2}}.
				\end{align*}
				By using the same scheme as in the proof of Theorem \ref{thm1.2}, all the terms from the right hand side of the above inequality can be approximated similarly. Hence
				\begin{align*}
					\mathbb{E}\Big[\sup_{0\leq i\leq N-1}\mathbb{E}^{\mathbb{Q}^{N}}\Big[\sum_{j=i}^{N-1}\Big(\int_{t_j}^{t_{j+1}}|I_s^{2,j}|^{2\eta}\mathrm{d}s\Big)\big/\mathfrak{F}_{t_i}\Big]^p\Big]
					\leq& C({\eta},p,\epsilon,q^{*})(h^{p\eta} + h^p)
				\end{align*}
				Therefore, there is a constant $C>0$ not depending on $h$ and on the Lipschitz constants of the coefficients such that for any $p >q^{*}$ and $\eta >1$ the following holds
				\begin{align*}
					\mathbb{E}\Big[\sup_{0\leq i\leq N-1}\mathbb{E}^{\mathbb{Q}^{N}}\Big\{\sum_{j=i}^{N-1}\Big(\int_{t_j}^{t_{j+1}}|Z_s-\hat{Z}_{t_j}|^2\mathrm{d}s\Big)^{\eta}\big/\mathfrak{F}_{t_i}\Big\}^p\Big]
					\leq C h^{p(\eta-1)}(h^{p\eta} + h^p) \leq C h^{p\eta}.
				\end{align*}
				Let us turn now to the general case. By following the same density argument as developed in the proof of Theorem \ref{thm1.1}, we recall that $\lim_{|\pi(r)|\rightarrow 0} Z_t^{\pi(r)} = Z_t$ $\mathrm{d}t \otimes \mathrm{d}\mathbb{P}\text{-a.s.}$ It is then enough to control the terms below to derive the sought estimate. Using the reverse H\"older inequality for all $p>q^{*}$, there is a constant $C>0$ not depending on $\pi(r)$ such that 
				\begin{align*}
					&\mathbb{E}\Big[\sup_{0\leq i\leq N-1}\mathbb{E}^{\mathbb{Q}^{N}}\Big[\sum_{j=i}^{N-1}\Big(\int_{t_j}^{t_{j+1}}|Z_t^{\pi(r)}-Z_t|^{2\eta} +|Z_{t_j}^{\pi(r)} -Z_{t_j}|^{2\eta}\mathrm{d}s\Big)\big/\mathfrak{F}_{t_i}\Big]^p\Big]\\
					&\leq C \mathbb{E}\Big[ \sup_{0\leq i\leq N-1} \mathbb{E}\Big[\Big(\sum_{j=i}^{N-1}\int_{t_j}^{t_{j+1}}|Z_t^{\pi(r)}-Z_t|^{2\eta} +|Z_{t_j}^{\pi(r)} -Z_{t_j}|^{2\eta}\mathrm{d}s\Big)^{q^{*}}\big/\mathfrak{F}_{t_i}\Big]^p  \Big]^{\frac{1}{q^{*}}}\\
					&\leq  C h^{pq^{*}-1}\left[ \int_{0}^{T}\sum_{j=0}^{N-1}\mathbb{E}[|Z_t^{\pi(r)}-Z_t|^{2\eta pq^{*}}  +|Z_{t_j}^{\pi(r)} -Z_{t_j}|^{2\eta pq^{*}}]{\bf{1}}_{[t_j,t_{j+1})} \mathrm{d}s \right]^{\frac{1}{q^{*}}}\\
					&\leq C h^{pq^{*}-1} \left[\int_{0}^{T}\sum_{j=0}^{N-1}\mathbb{E}(|Z_t^{\pi(r)} -Z_t|^{2\eta pq^{*}}+ |Z_{t_j}^{\pi(r)} -Z_{t_j}|^{2\eta pq^{*}}){\bf{1}}_{[t_j,t_{j+1})} \mathrm{d}s\right]^{\frac{1}{q^{*}}},
				\end{align*}
				the latter tends to $0$ when $\pi(r) \rightarrow 0$ by using the dominated convergence theorem. 
			\end{proof}
			\begin{proof}[Proof of Corollary \ref{cor111}]
				Let us first notice that the Euler scheme \eqref{Euler} can be rewritten as follows on the partition $\pi:0=s_0<s_1<\cdots<s_r =1$
				\begin{align*}
					\mathcal{X}^{\pi}(t) &= x +\int_0^t b(s,\mathcal{X}_{\pi(s)}^{\pi})\mathrm{d}s +W_t\\
					\mathcal{X}^{\pi}_t &:= \mathcal{X}^{\pi}({\pi(t)})\\
					\pi(s)&:= \sum_{i=0}^{r-1}t_i{\bf 1}_{[t_i,t_{i+1})}(s) + {\bf 1}_{T}(s)
				\end{align*}
				From Theorem, the following holds for all $p\geq 1$
				\begin{align}
					\mathbb{E}\big[\sup_{0\leq t\leq 1}|X_t -\mathcal{X}^{\pi}(t)|^{2p}\big] \leq C h^{(1-\gamma)p}
				\end{align}
				On the other hand, for every $p\geq 1$ we obtain that
				\begin{align*}
					\mathbb{E}[\sup_{0\leq t\leq 1}|X_t-\mathcal{X}_t^{\pi}|^{2p}] &\leq C(p)\mathbb{E}[\sup_{0\leq t\leq 1}|X_t-\mathcal{X}^{\pi}(t)|^{2p}] + C(p)\mathbb{E}[ \max_{0\leq i\leq r-1}\sup_{t_{i}\leq t\leq t_{i+1}}|\mathcal{X}_{t_i}^{\pi}-\mathcal{X}_t^{\pi}|^{2p}]\\
					&\leq C(p)\mathbb{E}[\sup_{0\leq t\leq 1}|X_t-\mathcal{X}^{\pi}(t)|^{2p}] +C(p) h^{2p} + C(p)\mathbb{E}[ \max_{0\leq i\leq r-1}\sup_{t_{i}\leq t\leq t_{i+1}}|W_t- W_{t_i}|^{4p}]^{\frac{1}{2}}\\
					&\leq C(p)\Big[h^{(1-\gamma)p} + \Big(h\log\big(\frac{1}{h}\big)\Big)^p\Big].
				\end{align*}
			\end{proof}

			\subsection{Proof of Theorem \ref{prop48}}\label{profmaindisc}
			As quoted earlier, Theorem \ref{prop48} is a consequence of Lemma \ref{lemm 4.9}. Therefore, in this subsection, we prove Lemma \ref{lemm 4.9}. 
			
			\begin{proof}[Proof of Lemma \ref{lemm 4.9}]
				We will denote by $\Lambda>0$ the bound of $H$. Let $(H_n) \in C_0^{\infty}(\mathbb{R}^K\times\mathbb{R}^d)$ be a sequence approximating $H$ such that $\sup_{n\in\mathbb{N}}|H_n| \leq \Lambda$. Set 
				\[ G_n(\bold{y};x):= \mathcal{X}_F(\bold{y})H_n(\bold{y};x),\quad G(\bold{y};x):= \mathcal{X}_F(\bold{y})H(\bold{y};x)   \]
				where $F \subseteq \mathbb{R}^d$ is a Borel set	such that $F^c$ is of Lebesgue measure null. We observe that $\lim_{n\rightarrow\infty}G_n(\bold{y};\cdot) = G(\bold{y};\cdot) ,\text{Leb}_d \text{-a.e.,}$ for all $\bold{y} \in \mathbb{R}^K$. Similarly, we also define $U^n$ with $G_n$ in lieu of $G$. Hence, the couple 
				\[ (Y_s^{\bold{y},n;r,x},Z_s^{\bold{y},n;r,x})= \left(U^n(\bold{y};s,X_s^{r,x}),\nabla_x U^n(\bold{y};s,X_s^{r,x})\right)_{s\in[t,R]} \]
				solves the following BSDE of interest on the interval $[t,R]$
				\begin{align}\label{419}
					Y_t^{\bold{y},n;r,x} = G_n(\bold{y};X_R^{r,x}) + \int_t^R g(s,X_s^{r,x},Y_s^{\bold{y},n;r,x},Z_s^{\bold{y},n;r,x})\mathrm{d}s -\int_t^R Z_s^{\bold{y},n;r,x}\mathrm{d}W_s.
				\end{align}
				Moreover, since $G_n$ is bounded and $g$ is continuous and of quadratic growth in the control variable, for each  $n\in \mathbb{N}$, the BSDE \eqref{419} has a unique solution  $\left(U^n(\bold{y};s,X_s^{r,x}),\nabla_x U^n(\bold{y};s,X_s^{r,x})\right)_{s\in[t,R]} \in \mathcal{S}^{\infty}\times \mathcal{H}_{BMO}$ (see for e.g., \cite{Bahlali19}). 
				
				\subsubsection*{\bf{Uniform bound of $U^n(y;t,x)$ :}}
				If $U^n_{0}(y;t,x)$ stands for the solution to \eqref{419} with null driver, then from \cite[Lemma 3.8]{ImkRhOliv24} we deduce that 
				\begin{align*}
					&|U^n(\bold{y};t,x)-U^n_0(\bold{y};t,x)|\\
					&\leq \mathbb{E}\Big[\Big(\int_t^R |g(s,X_s^{t,x},Y_{s,0}^{\bold{y},n;t,x},Z_{s,0}^{\bold{y},n;t,x})|\mathrm{d}s\Big)^{2p}\Big]^{\frac{1}{2p}}\\
					&\leq \mathbb{E}\Big[\Big(\int_t^R 1+ |\mathbb{E}(G_n(\bold{y};X_R^{t,x})/\mathfrak{F}_s)| +|Z_{s,0}^{\bold{y},n;t,x}|^2\mathrm{d}s\Big)^{2p}\Big]^{\frac{1}{2p}}\\
					&\leq (1+\Lambda)R +\mathbb{E}\Big[\Big(\int_t^R |Z_{s,0}^{\bold{y},n;t,x}|^2\mathrm{d}s\Big)^{2p}\Big]^{\frac{1}{2p}}.
				\end{align*}
				On the other hand, $Z_{s,0}^{\bold{y},n;t,x}$ represents the control variable of the solution to our BSDE of interest when the driver is null. Standard computations lead to the following estimate
				\begin{align*}
					\mathbb{E}\Big(\int_t^R |Z_{s,0}^{\bold{y},n;t,x}|^2\mathrm{d}s\Big)^{2p}	\leq C(p) \mathbb{E}(G_n(\bold{y};X_R^{t,x}))^{2p} \leq C(p,q)\Lambda^{2p}.
				\end{align*}
				Therefore, there exists a constant $C>0$ (independent of $n$) such that 
				\begin{align}\label{bound1}
					|U^n(\bold{y};t,x)| \leq C .
				\end{align}
				
				\subsubsection*{{\bf Uniform BMO bound of $Z^{\bold{y},n;r,x}$ :}} Let us first recall the following refinement concerning Lyapunov functions (see \cite{XingZitko18}).
				\begin{defi}[Lyapunov function]
					Let $g:[0,T]\times \mathbb{R}^d\times \mathbb{R}\times\mathbb{R}^d\rightarrow\mathbb{R}$ be a Borel function and $\bold{c}>0$ be a constant. A pair $(h,k)$ of non negative functions with $h\in W^{1,2}_{loc}(\mathbb{R})$ and $k$ a Borel function is said to be a $\bold{c}\text{-Lyapunov}$ pair for $g$ if $h(0)=0,$ $h'(0)=0$ and 
					\begin{align}
						\frac{1}{2}h''(y)|z|^2-h'(y)g(t,x,y,z) \geq |z|^2 -k,
					\end{align} 
					for all $(t,x,y,z)\in [0,T]\times \mathbb{R}^d\times \mathbb{R}\times\mathbb{R}^d\rightarrow\mathbb{R},$ with $|y|\leq \bold{c}.$ We will denote $(h,k)\in \text{\bf Ly}(g,\bold{c})$.
				\end{defi}
				In contrast to \cite[Definition 2.3]{XingZitko18}, the definition requires the function $h$ to be only differentiable in the weak sense. This approach suits to our context, as the function $\ell$ in Assumption \ref{assum2.1} is not assumed to be continuous.
				\begin{lemm}\label{lyapunov} Assume that $g$ satisfies the growth $(iii)$. Then, there is a Lyapunov pair $(h,k)$ depending on such that $(h,k)\in \text{\bf Ly}(g,\bold{c})$.
				\end{lemm}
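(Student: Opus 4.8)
The plan is to construct $h$ explicitly and then take $k$ to be a suitable constant depending on $\bold c$. The only dangerous term in growth $(iii)$ is the quadratic one $2\Lambda_z\ell(|y|)|z|^2$; every other contribution is at most linear in $z$ and can be absorbed. So I would look for a convex, even, smooth profile $h$ whose second derivative dominates $4\Lambda_z\ell(|y|)|h'(y)|$ with a margin, uniformly on the band $\{|y|\le\bold c\}$, and which moreover satisfies $h\ge0$, $h(0)=0$, $h'(0)=0$.

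First I would reduce the defining inequality. Using $-h'(y)g(t,x,y,z)\ge-|h'(y)|\,|g(t,x,y,z)|$ and inserting $(iii)$, the left-hand side of the Lyapunov inequality is bounded below by
\[
\Big(\tfrac12 h''(y)-2\Lambda_z\ell(|y|)|h'(y)|\Big)|z|^2-\Lambda_z|h'(y)|\,|z|-|h'(y)|\big(\Lambda_0+\Lambda_y|y|\big).
\]
Set $A(y):=\tfrac12 h''(y)-2\Lambda_z\ell(|y|)|h'(y)|$. Because $\ell$ is non-decreasing, $\ell(|y|)\le\ell(\bold c)$ for $|y|\le\bold c$, so it suffices to dominate the single constant $\beta:=2\Lambda_z\ell(\bold c)$ in place of $2\Lambda_z\ell(|y|)$. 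Next I would fix the profile: take $h(y)=\frac{1}{2\beta^2}\big(\cosh(4\beta y)-1\big)$ when $\beta>0$ (and $h(y)=2y^2$ when $\beta=0$). Then $h$ is smooth and nonnegative with $h(0)=h'(0)=0$, and since $h'(y)=\tfrac{2}{\beta}\sinh(4\beta y)$, $h''(y)=8\cosh(4\beta y)$, the elementary bound $\sinh\le\cosh$ yields $\tfrac12 h''(y)-\beta|h'(y)|=4\cosh(4\beta y)-2\sinh(4\beta|y|)\ge2\cosh(4\beta y)\ge2$ on all of $\mathbb{R}$; hence $A(y)\ge2$, i.e. $A(y)-1\ge1$ on the band.

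With the quadratic coefficient now at least $2$, I would complete the square. By Young's inequality, $\Lambda_z|h'(y)||z|\le(A(y)-1)|z|^2+\frac{\Lambda_z^2|h'(y)|^2}{4(A(y)-1)}$, so the displayed lower bound is at least $|z|^2-k(y)$ with
\[
k(y):=\frac{\Lambda_z^2|h'(y)|^2}{4(A(y)-1)}+|h'(y)|\big(\Lambda_0+\Lambda_y|y|\big).
\]
Finally I would set $k:=\sup_{|y|\le\bold c}k(y)$; since $h'$ is continuous and $A(y)-1\ge1$, this supremum is a finite nonnegative constant, and $(h,k)\in\text{\bf Ly}(g,\bold c)$ follows.

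The main obstacle is ensuring $h''$ beats the quadratic term uniformly despite $\ell$ being only locally bounded and possibly discontinuous; the monotonicity of $\ell$ is precisely what collapses this into the single scalar inequality $\tfrac12 h''-\beta|h'|\ge2$, which the $\cosh$ choice settles cleanly. No weak-differentiability subtlety actually intervenes here, because the constructed $h$ is smooth; the $W^{1,2}_{loc}$ formulation in the definition is only there to accommodate a non-smooth $\ell$ and is trivially satisfied in this explicit construction.
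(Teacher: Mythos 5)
Your proof is correct, and it reaches the conclusion by a genuinely different route than the paper. The paper constructs $h$ intrinsically from $\ell$ via an exponential (Zvonkin-type) transform, $h(y)=h_1(y)-y$ with $h_1(y)=\int_0^y \exp\big(\kappa\int_0^w \ell(u)\,\mathrm{d}u\big)\mathrm{d}w$, so that a.e. $h''(y)=\kappa\,\ell(y)\exp\big(\kappa\int_0^y\ell(u)\,\mathrm{d}u\big)$ carries the factor $\ell(y)$ pointwise and is matched against the quadratic term $2\Lambda_z\ell(|y|)|z|^2$ without ever bounding $\ell$; this pointwise matching is exactly why the definition allows $h$ to be only weakly differentiable, and the free constant $\kappa$ is then tuned ($\kappa=4\Lambda$). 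You instead collapse $\ell$ to the single constant $\beta=2\Lambda_z\ell(\bold{c})$ on the band, solve the resulting constant-coefficient problem with the smooth profile $h(y)=\tfrac{1}{2\beta^2}(\cosh(4\beta y)-1)$, and then absorb the linear term $\Lambda_z|h'(y)|\,|z|$ by Young's inequality, which produces exactly $|z|^2-k$. Your route is more elementary and in one respect tighter: the paper's displayed estimate drops the $\Lambda_z|z|$ contribution and ends with the domination $\Lambda\ell(y)|z|^2\ge |z|^2$, which requires a pointwise lower bound on $\ell$ that is not justified as written (it degenerates where $\ell$ vanishes), whereas your normalization $A(y)\ge 2$ followed by completing the square needs no lower bound on $\ell$ at all. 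Two caveats on your side. First, monotonicity of $\ell$ is indeed assumed where growth $(iii)$ is stated (Assumption \ref{assum5.1}), but the lemma is invoked inside the proof of Lemma \ref{lemm 4.9}, i.e. under Assumption \ref{assum 4.8}, where $\ell$ is only locally bounded and possibly non-monotone; your argument survives verbatim upon replacing $\ell(\bold{c})$ by $\sup_{[0,\bold{c}]}\ell$, which is finite by local boundedness, and it is safer to state it that way. Second, what the paper's intrinsic construction aims to buy is applicability when $\ell$ is merely locally integrable, hence possibly unbounded on the band --- the situation the $W^{1,2}_{loc}$ formulation of the definition anticipates; your sup-freezing genuinely needs $\ell$ bounded on $[0,\bold{c}]$, so the two arguments are not interchangeable beyond the paper's standing assumptions, although under those assumptions yours is complete and the cleaner of the two.
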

				\begin{proof}
					Let us choose $h(y) = h_1(y) -y$, where $$h_1(y)= \int_0^y \exp\Big(\kappa \int_0^w\ell(u)\mathrm{d}u\Big)\mathrm{d}w,$$
					where $\kappa>0$ is a free constant.
					We notice that a.e. $h'(y)= \exp\left(\kappa \int_0^y\ell(u)\mathrm{d}u\right) -1$ and
					$h''(y)= \kappa\ell(y)\exp\left(\kappa \int_0^y\ell(u)\mathrm{d}u\right)$ and clearly satisfy $h(0)=h'(0)=0.$ Then, from the growth of $g$ we deduce that
					\begin{align*}
						&\frac{1}{2}h''(y)|z|^2 -h'(y)g(t,x,y,z)\\
						&= \frac{\kappa}{2}\ell(y)|z|^2\exp\Big(\kappa \int_0^y\ell(u)\mathrm{d}u\Big) - \Big[\exp\Big(\kappa \int_0^y\ell(u)\mathrm{d}u\Big) -1\Big]g(t,x,y,z)\\
						&\geq \frac{\kappa}{2}\ell(y)|z|^2\exp\Big(\kappa \int_0^y\ell(u)\mathrm{d}u\Big) -\Lambda (1+|y|+\ell(y)|z|^2)\Big[\exp\Big(\kappa \int_0^y\ell(u)\mathrm{d}u\Big) -1\Big]\\
						&\geq \Lambda\ell(y)|z|^2  -\Lambda e^{\kappa\|\ell\|_{L^1[0,\bold{c}]}} (1+\bold{c}),
					\end{align*}
					the last inequality follows from the fact that the function $\ell$ is non negative and we choose $\kappa = 4\Lambda$. We conclude the proof by taking $\Lambda = \left(\|\ell\|_{L^1[0,\bold{c}]}\right)^{-1}$.
				\end{proof}
				Then, from the It\^o-Krylov formula for BSDEs (see \cite{Ouknine}), we deduce that
				\begin{align}\label{422}
					&\mathrm{d}h(Y_s^{\bold{y},n;r,x})\notag\\&= \Big(\frac{1}{2}h''(Y_s^{\bold{y},n;r,x})|Z_s^{\bold{y},n;r,x}|^2-h'(Y_s^{\bold{y},n;r,x})g(s,X_s^{r,x},Y_s^{\bold{y},n;r,x},Z_s^{\bold{y},n;r,x}) \Big)\mathrm{d}s +\mathrm{d}M_s^{\bold{y},n;r,x},
				\end{align}
				where $M_s^{\bold{y},n;r,x}$ is a martingale. For any stopping time $\tau \in [t,R]$ and applying the property of the Lyapunov pair $(h,k)$, we deduce that
				\begin{align*}
					\sup_{n\in\mathbb{N}} \mathbb{E}\Big[\int_{\tau}^R |Z_s^{\bold{y},n;t,x}|^2\mathrm{d}s/\mathfrak{F}_{\tau}\Big] &\leq \mathbb{E}\Big[h(G_n(\bold{y};X_R^{t,x}))- h(U^n(\bold{y};\tau,X_{\tau}^{t,x}))+ k(R-\tau)/\mathfrak{F}_{\tau}\Big]\\
					&\leq 2\|h \circ U^n\|_{L^{\infty}} + kR.
				\end{align*}
				
				\subsubsection*{{\bf Uniform bound of $\nabla_xU^n(\bold{y};t,x)$ :}} Let us now establish a bound for the gradient of the solution $U^n(\bold{y};r,x)$, uniformly in $n$. We assume $H\in C^1(\mathbb{R}^{K\times d},\mathbb{R})$ and that the driver $g$ is continuously differentiable, while the drift $b$ is uniformly Lipschitz continuous.
				We will consider the following FBSDE for all $0\leq r\leq t\leq R$
				\begin{align}\label{423}
					\begin{cases}
						\mathrm{d}X_t^{r,x}= \mathrm{d}\tilde{W}_t,\\
						\mathrm{d}Y_t^{\bold{y},n;r,x}= -(Z_t^{\bold{y},n;r,x}b(t,X_t^{r,x})+g(t,X_t^{r,x},Y_t^{\bold{y},n;r,x},Z_t^{\bold{y},n;r,x}))\mathrm{d}t + Z_t^{\bold{y},n;r,x}\mathrm{d}\tilde{W}_t,\\
						X_r^{r,x} = x, Y_R^{\bold{y},n;r,x} = G_n(\bold{y};X_R^{r,x}),
					\end{cases}
				\end{align}
				where $\tilde{W}_t = W_t +\int_0^t b(v,X_v^{r,x})\mathrm{d}v$ is a Brownian motion under the probability measure $\tilde{\mathbb{P}}$ given by $\mathrm{d}\tilde{\mathbb{P}}/\mathrm{d}\mathbb{P}:= \mathcal{E}(\int_0^tb(v,X_v^{r,x})\mathrm{d}W_v)$, and $(Y_{\cdot}^{\bold{y},n;r,x},Z_{\cdot}^{\bold{y},n;r,x}) = (U^n(\bold{y};\cdot,X_{\cdot}^{r,x}),\nabla_xU^n(\bold{y};\cdot,X_{\cdot}^{r,x}))$.
				
				Under the above setting, we know that the FBSDE of interest is differentiable with respect to the initial value $x\in\mathbb{R}^d$ under the probability measure $\tilde{\mathbb{P}}$. Moreover, the dynamics of the derivative process is given by
				\begin{align*}
					\nabla_x Y_t^{\bold{y},n;r,x} &= \nabla_x G_n(\bold{y};X_R^{r,x})\nabla_x X_R^{r,x} + \int_t^R Z_s^{\bold{y},n;r,x}\nabla_x b(s,X_s^{r,x})\nabla_xX_s^{r,x}\mathrm{d}s -\int_t^R \nabla_x Z_s^{\bold{y},n;r,x}\mathrm{d}\tilde{W}_s  \\&\quad+  \int_t^R b(s,X_r^{r,x}) \nabla_x Z_s^{\bold{y},n;r,x}\mathrm{d}s 
					+\int_t^R \langle\nabla g(s,\bold{X}_s^{\bold{y},n;r,x}),\nabla \bold{X}_s^{\bold{y},n;r,x}\rangle\mathrm{d}s,
				\end{align*}
				where $\bold{X}_s^{y,n;r,x}=({X}_s^{r,x},{Y}_s^{\bold{y},n;r,x},{Z}_s^{\bold{y},n;r,x})$. Or, equivalently under the measure $\mathrm{d}\mathbb{Q}/\mathrm{d}\tilde{\mathrm{P}} = \mathcal{E}(\int_0^{\cdot}b(s,X^{r,x}_s)+\nabla_zg(s,\bold{X}_s^{\bold{y},n;r,x})\mathrm{d}\tilde{W}_s)$, it writes
				\begin{align*}
					\nabla_x Y_t^{\bold{y},n;r,x} 
					&= e^{\int_t^R\nabla_yg(s,\bold{X}_s^{\bold{y},n;r,x})\mathrm{d}s}\nabla_x G_n(\bold{y};X_R^{r,x})\nabla_x X_R^{r,x}-\int_t^R e^{\int_t^s\nabla_yg(v,\bold{X}_v^{\bold{y},n;r,x})\mathrm{d}v}\nabla_x Z_s^{\bold{y},n;r,x}\mathrm{d}\bold{B}_s\\&\quad + \int_t^R e^{\int_t^s\nabla_yg(v,\bold{X}_v^{\bold{y},n;r,x})\mathrm{d}v} Z_s^{\bold{y},n;r,x}\nabla_x b(s,X_s^{r,x})\nabla_xX_s^{r,x}\mathrm{d}s \\
					&\quad+\int_t^R e^{\int_t^s\nabla_yg(v,\bold{X}_v^{\bold{y},n;r,x})\mathrm{d}v}\nabla_x g(s,X_s^{r,x},Y_s^{\bold{y},n;r,x},Z_s^{\bold{y},n;r,x})\nabla_x{X}_s^{r,x}\mathrm{d}s,
				\end{align*}
				with $\bold{B}_t:= \tilde{W}_t -\int_0^tb(s,X^{r,x}_s)+\nabla_zg(s,\bold{X}_s^{\bold{y},n;r,x})\mathrm{d}s$, which is a Brownian motion under $\mathbb{Q}$. It is important to note that the dynamics of the first variation process $\nabla_x X_t^{r,x}$ remains unchanged under the new probability measure $\mathbb{Q}$. Hence, by taking the conditional expectation, we obtain that
				\begin{align}
					\nabla_x Y_t^{\bold{y},n;r,x} 
					&=\mathbb{E}^{\mathbb{Q}}\Big[ e^{\int_t^R\nabla_yg(s,\bold{X}_s^{\bold{y},n;r,x})\mathrm{d}s}\nabla_x G_n(\bold{y};X_R^{r,x})
					+ \int_t^R e^{\int_t^s\nabla_yg(v,\bold{X}_v^{\bold{y},n;r,x})\mathrm{d}v} Z_s^{\bold{y},n;r,x}\nabla_x b(s,X_s^{r,x})\mathrm{d}s \notag \\
					&\quad\quad+\int_t^R e^{\int_t^s\nabla_yg(v,\bold{X}_v^{\bold{y},n;r,x})\mathrm{d}v}\nabla_x g(s,X_s^{r,x},Y_s^{\bold{y},n;r,x},Z_s^{\bold{y},n;r,x})\mathrm{d}s \Big/\mathfrak{F}_t\Big].
				\end{align}
				In the case where $H$ is Lipschitz continuous and the driver $g$ is also Lipschitz continuous in $x$ and $y$, we obtain the following uniform bound of $Z_s^{\bold{y},n;r,x}$
				\begin{align*}
					\sup_{n\in\mathbb{N}}|Z_s^{\bold{y},n;r,x}| \leq  C \quad \text{for all } \bold{y}\in \mathbb{R}^K,
				\end{align*}
				where $C$ is a constant depending on $\|\nabla_xb\|_{\infty},\|\nabla_xG_n\|_{\infty}$, $\|Z^{\bold{y},n,r,x}*W\|_{BMO}$ and the Lipschitz constants of $g$ in $x$ and $y$ respectively.
				
				Let us recall that $\nabla_x Y_t^{\bold{y}.n;r,x}(\nabla_x X_t^{r,x})^{-1}$ is a continuous version of the process $Z_t^{y,n;r,x}$ for all $\bold{y}\in \mathbb{R}^K$.
				Define 
				\begin{align}\label{428}
					F_t^{\bold{y},n,r,x}&= e^{\int_r^t\nabla_yg(s,\bold{X}_s^{\bold{y},n;r,x})\mathrm{d}s}\nabla_x Y_t^{\bold{y},n;r,x} + \int_r^t e^{\int_r^s\nabla_yg(v,\bold{X}_v^{\bold{y},n;r,x})\mathrm{d}v} Z_s^{\bold{y},n;r,x}\nabla_x b(s,X_s^{r,x})\mathrm{d}s \notag\\
					&\quad+ \int_r^t e^{\int_r^s\nabla_yg(v,\bold{X}_v^{\bold{y},n;r,x})\mathrm{d}v}\nabla_x g(s,X_s^{r,x},Y_s^{\bold{y},n;r,x},Z_s^{\bold{y},n;r,x})\mathrm{d}s.
				\end{align}
				We notice that for all $t\leq R,$
				$\mathbb{E}^{\mathbb{Q}}[F_R^{\bold{y},n,r,x}/\mathfrak{F}_t] =  F_t^{\bold{y},n,r,x} $ which implies that the process $F^{\bold{y},n,r,x}$ is a $\mathbb{Q}\text{-martingale}$. Therefore, $(F_t^{\bold{y},n,r,x})^2$ is a $\mathbb{Q}\text{-submartingale}$ and the following holds
				\begin{align}
					\mathbb{E}^{\mathbb{Q}}\int_t^R|F_s^{\bold{y},n,r,x}|^2\mathrm{d}s\geq (R-t)|\nabla_x Y_r^{\bold{y},n;r,x}|^2= (R-t)|Z_r^{\bold{y},n;r,x}|^2.
				\end{align}
				Hence, from \eqref{428} there is a constant depending on $\|\nabla b\|_{\infty},\|Z^{\bold{y},n;r,x}*W\|_{\bmo}$ and $e^{\|\nabla_y g\|_{\infty}}$ such that
				$\mathbb{E}^{\mathbb{Q}}\int_t^R|F_s^{\bold{y},n,r,x}|^2\mathrm{d}s \leq C$.
				Note also that the constant $C$ above does not depend on $\|\nabla_xG_n(\bold{y};\cdot)\|_{\infty}$ for all $\bold{y} \in \mathbb{R}^K$. Thus
				\begin{align}\label{eq427}
					\sup_{n\in\mathbb{N}}|Z_r^{\bold{y},n;r,x}| \leq \frac{C}{(R-r)^{1/2}}.
				\end{align}
				Hence from \cite[Theorem 6.2]{MaZhang02}\footnote{Note that the uniform Lipschitz condition on the forward component of the driver can be relaxed by means of standard approximations}, we observe that  the following representation holds for all $r\leq \rho \leq R$ and $\bold{y}\in\mathbb{R}^K$
				\begin{align*}
					&\nabla_x U^n(\bold{y};r,x)\\& = \mathbb{E}^{\tilde{\mathbb{P}}}\Big[U^n(\bold{y};R,X_{R}^{r,x})N_{R}^{r,x}+\int_{r}^{R}\Big( Z_{s}^{\bold{y},n;r,x}b({s},X_{s}^{r,x})-g({s},X_{s}^{r,x},Y_{s}^{\bold{y},n;r,x},Z_{s}^{\bold{y},n;r,x})\Big)N_{{s}}^{r,x}\mathrm{d}s\Big],
				\end{align*}
				where $N_{s}^{r,x}$ stands for the Malliavin weights associated to the forward equation in \eqref{423} under the measure $\tilde{\mathbb{P}}$, which is given by 
				\begin{align}
					N_t^{r,x} = \frac{1}{t-r} \int_r^t \nabla X_s\mathrm{d}\tilde{W}_s[\nabla X_r]^{-1}.
				\end{align}
				Moreover, one can show that the process $N_{s}^{r,x}$ satisfies the following for all $p\geq 1$ and $r \leq t$
				\begin{align}
					\begin{cases}\label{429}
						\mathbb{E}^{\bar{\mathbb{P}}}[|N_t^{r,x}|^p]\leq C(t-r)^{-p/2},\\
						\mathbb{E}^{\bar{\mathbb{P}}}\left[\sup_{s\in[\frac{R+r}{2},R]}|N_s^{r,x}|^p\right]\leq C(R-r)^{-p/2}.
					\end{cases}
				\end{align}
				Hence, using \eqref{bound1},\eqref{429}, the growth of $g$ and the fact that $b$ is bounded, we deduce that 
				\begin{align*}
					&|\nabla_x U^n(y;r,x)|\\& \leq \frac{C}{\sqrt{R-r}}+C\mathbb{E}^{\tilde{\mathbb{P}}}\int_{r}^{\frac{R+r}{2}}\left(1+\|b\|_{\infty}|Z_s^{\bold{y},n;r,x}|+|Y_s^{\bold{y},n;r,x}|+ \ell(|Y_s^{\bold{y},n;r,x}|)|Z_s^{\bold{y},n;r,x}|^2\right)|N_s^{r,x}|\mathrm{d}s\\&\quad + C\mathbb{E}^{\tilde{\mathbb{P}}}\int_{\frac{R+r}{2}}^{R}\left(1+|Y_s^{\bold{y},n;r,x}|+ (1+\ell(|Y_s^{\bold{y},n;r,x}|))|Z_s^{\bold{y},n;r,x}|^2\right)|N_s^{r,x}|\mathrm{d}s \\
					&= \frac{C}{\sqrt{R-r}} + I_1 +I_2.
				\end{align*}
				
				Using mainly the bound \eqref{bound1}and \eqref{429}, the local boundedness of the function $\ell$ and H\"older's inequality, we obtain the following estimate for $I_1$, for all $\bold{y}\in\mathbb{R}^K$
				\begin{align*}
					I_1 &\leq C \mathbb{E}^{\tilde{\mathbb{P}}}\int_{r}^{\frac{R+r}{2}}\left((1+ |Z_s^{\bold{y},n;r,x}|^{3/2})(1+ |Z_s^{\bold{y},n;r,x}|^{1/2})\right)|N_s^{r,x}|\mathrm{d}s\\
					&\leq C \sup_{s\in[r,\frac{R+r}{2}]}(1+ |Z_s^{\bold{y},n;r,x}|^{3/2})\mathbb{E}^{\tilde{\mathbb{P}}}\int_{r}^{\frac{R+r}{2}}\left(1+ |Z_s^{\bold{y},n;r,x}|^{1/2}\right)|N_s^{r,x}|\mathrm{d}s\\
					&\leq C \Big(1+ \frac{(R-r)^{-3/4}}{2}\Big)\mathbb{E}^{\tilde{\mathbb{P}}}\Big[\int_{r}^{\frac{R+r}{2}}(1+ |Z_s^{\bold{y},n;r,x}|^2)\mathrm{d}s\Big]^{1/4}\mathbb{E}^{\tilde{\mathbb{P}}}\Big[\int_{r}^{\frac{R+r}{2}}|N_s^{r,x}|^{4/3}\mathrm{d}s\Big]^{3/4}\\
					&\leq C (R-r)^{-3/4}(R-r)^{1/4},
				\end{align*}
				where the constant $C$ above does not depend on $n$ and not on the Lipschitz constant of the terminal value $G_n$.
				
				On the other hand, the bound for $I_2$ follows by
				\begin{align*}
					I_2 &\leq C \mathbb{E}^{\tilde{\mathbb{P}}}[\sup_{s\in[\frac{R+r}{2},R]}|N_s^{r,x}|]\int_{\frac{R+r}{2}}^{R}\left(1+|Y_s^{\bold{y},n;r,x}|+ (1+\ell(|Y_s^{\bold{y},n;r,x}|))|Z_s^{\bold{y},n;r,x}|^2\right)\mathrm{d}s\\
					&\leq C \mathbb{E}^{\tilde{\mathbb{P}}}\Big[\sup_{s\in[\frac{R+r}{2},R]}|N_s^{r,x}|^q\Big]^{1/q}\mathbb{E}^{\tilde{\mathbb{P}}}\Big[\Big(\int_{\frac{R+r}{2}}^{R}1+ |Z_s^{\bold{y},n;r,x}|^2\mathrm{d}s\Big)^p\Big]^{1/p}\\
					&\leq C (R-r)^{-1/2}.
				\end{align*} 
				Combining the bounds for $I_1$ and $I_2$, we deduce that there is a constant $C$ such that for all $\bold{y}\in  \mathbb{R}^K$
				\begin{align}\label{bound2}
					\sup_{n\in\mathbb{N}}(R-r)^{1/2}|\nabla_x U^n(\bold{y};r,x)| \leq C.
				\end{align}
				Consequently, we deduce that the function $U^n(\bold{y},r,x)$ is measurable on $\mathbb{R}^K\times [r,R]\times\mathbb{R}^d$.
				
				\subsubsection*{{\bf The limit $U(\bold{y};r,x)$ and its gradient $\nabla_x U(\bold{y};r,x)$:}}
				Let us now establish some properties satisfied by the limit function $U$ and its derivative. Applying \cite[Lemma 3.8]{ImkRhOliv24}, we have
				\begin{align*}
					&\|Y_t^{\bold{y};r,x}-U(\bold{y};t,x)\|_2\\
					&\leq \|Y_t^{\bold{y};r,x}-U^n(\bold{y};t,x)\|_2 + \|U^n(\bold{y};t,x)-U(\bold{y};t,x)\|_{2}\\
					&\leq C \|G_n(\bold{y},X_R^{r,x})-G(\bold{y},X_R^{r,x})\|^{2} +\|U^n(\bold{y};t,x)-U(\bold{y};t,x)\|_{2}.
				\end{align*}
				Thanks to \eqref{bound1}, we have that $|U(\bold{y};t,x)| \leq C$. Applying the dominated convergence theorem and considering the smoothness properties of the transition density of the diffusion process $X^{r,x}$, we conclude that both terms on the right side of the inequality tend to zero as $n$ goes to infinity. Consequently,
				\begin{align*}
					Y_t^{\bold{y};r,x} = U(\bold{y};t,X_t^{r,x}) \text{ a.s.}, \text{ for all } t\in[r,R].
				\end{align*}
				On the other hand, we notice that the following representation remains valid under the underlying probability measure $\mathbb{P}$:
				\begin{align*}
					& \nabla_x U^n(\bold{y};r,x)\\
					&=\mathbb{E}\Big[G_n(\bold{y};X_{R}^{r,x})\tilde{N}_{R}^{r,x}+\int_{r}^{R} g({s},X_{s}^{r,x},Y_{s}^{\bold{y},n;r,x},Z_{s}^{\bold{y},n;r,x})\tilde{N}_{{s}}^{r,x}\mathrm{d}s\Big],
				\end{align*}
				where the Malliavin weights $\tilde{N}^{r,x}$ is now given by 
				\begin{align}
					\tilde{N}_t^{r,x} = \frac{1}{t-r} \int_r^t \nabla X_s\mathrm{d}{W}_s[\nabla X_r]^{-1}.
				\end{align}
				and $\nabla X_s$ stands for the first variation process of the diffusion $X$ given by \eqref{2.1} and $[\nabla X_r]^{-1}$ is its inverse. Applying the dominated convergence theorem, we obtain that 
				\begin{align*}
					&\lim_{n\rightarrow \infty} \nabla_x U^n(\bold{y};r,x)\\
					&= \lim_{n\rightarrow \infty}\mathbb{E}\Big[G_n(\bold{y};X_{R}^{r,x})\tilde{N}_{R}^{r,x}+\int_{r}^{R}g({s},X_{s}^{r,x},Y_{s}^{\bold{y},n;r,x},Z_{s}^{\bold{y},n;r,x})\tilde{N}_{{s}}^{r,x}\mathrm{d}s\Big]\\
					&=\mathbb{E}\Big[G(\bold{y};X_{R}^{r,x})\tilde{N}_{R}^{r,x}+\int_{r}^{R}g({s},X_{s}^{r,x},Y_{s}^{\bold{y},n;r,x},Z_{s}^{\bold{y},n;r,x})\tilde{N}_{{s}}^{r,x}\mathrm{d}s\Big]:= V(\bold{y};r,x).
				\end{align*}
				It follows from \eqref{bound2} that for all $\bold{y}\in\mathbb{R}^K$
				\begin{align*}
					V(\bold{y};r,x) \leq C(R-t)^{1/2}.
				\end{align*}
				This implies that for all $\delta\in (0,R)$
				\begin{align*}
					\lim_{n\rightarrow \infty} \int_r^{R-\delta} \|\nabla_x U^n(\bold{y};s,X_s^{r,x}) - V(\bold{y};s,X_s^{r,x})\|_2\mathrm{d}s =0.
				\end{align*}
				Notice also that, from \cite{ImkRhOliv24}, we have that 
				\begin{align*}
					\lim_{n\rightarrow \infty} \int_r^{R} \|\nabla_x U^n(\bold{y};s,X_s^{r,x}) - Z_s^{\bold{y};r,x}\|_2\mathrm{d}s =0.
				\end{align*}
				Consequently, for almost every $s\in [r,R)$ we have 
				\begin{align*}
					Z_s^{\bold{y};r,x} = V(\bold{y};s,X_s^{r,x}),\, \text{ a.s.}
				\end{align*}
			\end{proof}
			\subsection{Proof of Theorem \ref{Main63}}\label{sub4.3}
			To prove Theorem \ref{Main63}, we require some auxiliary results on the centered (and explicit) BTZ schemes and their properties within our framework. For ease of reading, their proofs are deferred to the appendix.
			
			\subsubsection{Centered BTZ scheme and some properties}
			In this section, we revisit some properties of the well known BTZ scheme adapted to our general framework. Specifically, we account for the generator not being uniformly Lipschitz continuous in its backward component, and we address a broader class of nonninearities with quadratic growth. To conduct our analysis, we use the scheme from \cite{Zhou}, whic unlike \cite{ChaRichou16} proposes a modified version of the BTZ scheme, referred to as the centered BTZ scheme.
			
			{\bf Linearisation of the centered BTZ scheme:}
			Let $\pi : 0 = t_0 < t_1 < \cdots < t_N = T$ be a partition, where
			$\delta t_i = t_{i+1} -t_i$ and $h = \sup_{0\leq i\leq N-1} \delta t_i$ . Let $(H_i)_{0\leq i \leq N-1}$ be a sequence of random vectors such that 
			\begin{itemize}
				\item $H_i$ is a $\mathbb{R}^d\text{-valued}$ random vectors and is $\mathfrak{F}_{t_{i+1}}\text{-measurable},$ for all $0\leq i\leq N-1$;
				\item for all $0\leq i\leq N-1$, $H_i$ satisfies 
				\begin{align*}
					\mathbb{E}[H_i/\mathfrak{F}_{t_{i}}]=0,\,\, \mathbb{E}[H_i(H_i)^{\bold T}/\mathfrak{F}_{t_{i}}]= \mathbb{E}[H_i(H_i)^{\bold T}]= \frac{c_i}{\delta t_i}I_{d\times d}
				\end{align*}
				where $C_1 < c_i < C_2$ for some constants $C_1, C_2>0$ independent of $i$.
			\end{itemize}
			We will consider the following modified BTZ centered scheme 
			\begin{align}\label{linBTZ1}
				\begin{cases}
					Y_i^{\pi} = \mathbb{E}\left[ 	Y_{i+1}^{\pi} + G_i(\mathbb{E}[	Y_{i+1}^{\pi}/\mathfrak{F}_{t_{i}}],Z_{i}^{\pi})\delta t_i\big/ \mathfrak{F}_{t_{i}}  \right],\\
					Z_{i}^{\pi}= \mathbb{E}[Y_{i+1}^{\pi}H_i\big/\mathfrak{F}_{t_{i}}],\\
					Y_N^{\pi}= \xi, \,\,\, Z_N^{\pi}= 0.	
				\end{cases}
			\end{align}
			
			To ease the notation, we will denote by $\mathcal{E}[G_i,\xi, H_i]$ the solution of the BTZ scheme \eqref{linBTZ1}. Unless otherwise stated, we will omit the superscript $\pi$ in the sequel. 
			\begin{remark}
				The BTZ scheme \eqref{linBTZ1} can be rewritten without the conditional expectation as follows 
				\begin{align}\label{eq 6.2}
					Y_i^{\pi} = Y_{i+1}^{\pi} + \delta t_i G_i(\mathbb{E}[Y_{i+1}^{\pi}/\mathfrak{F}_{t_i}],Z_i^{\pi}) -\frac{\delta t_i}{c_i}(Z_i^{\pi})^{\bold T}\cdot H_i - \nu_i,
				\end{align}
				where $\nu_i$ is an $\mathfrak{F}_{t_{i+1}}\text{-measurable}$ random variable such that $\mathbb{E}[{\nu}_i/\mathfrak{F}_{t_i}]=\mathbb{E}[{\nu}_iH_i/\mathfrak{F}_{t_i}]=0$ and $\mathbb{E}[({\nu}_i)^2/\mathfrak{F}_{t_i}]<\infty.$
			\end{remark}

			\begin{assum}[Driver locally Lipschitz in $z$ and $y$]\label{assum56} The functions $G_i: \Omega \times \mathbb{R}\times \mathbb{R}^d \rightarrow \mathbb{R}$ are $\mathfrak{F}_{t_i}\otimes\mathcal{B}(\mathbb{R})\otimes\mathcal{B}(\mathbb{R}^d)\text{-measurable}$ and satisfy for some positive constants $\Lambda_y$ and $\Lambda_z(n)$ which do not depend on $i$ but $\Lambda_z(n)$ may depend on $n$,
				\begin{itemize}
					\item[(i)] For all $y,y',z,z'$ and $\alpha_0\in [0,1)$
					\[ |G_i(y,z)-G_i(y',z')|\leq \Lambda_y(1+|z-z'|^{\alpha_0})|y-y'| + \Lambda_z(n)\ell(|y-y'|)|z-z'|,  \]
					with $\ell$ being a non-negative and locally bounded function.
					\item[(ii)] Recall that $h=\sup_i \delta t_i$. There is a constant $\epsilon \in ]0,1[$ which does not depend on $n$ and $N$, such that 
					\[ h \Lambda_y + \Big(\sup_{0\leq i \leq N-1} \delta t_i| H_i|\Big)\ell(0)\Lambda_z(n) \leq 1-\epsilon.\]
				\end{itemize}
			\end{assum}
			\begin{assum}[Bounded terminal value]\label{assum52}
				Let the functions $G_i: \Omega \times \mathbb{R}\times \mathbb{R}^d \rightarrow \mathbb{R}$ being $\mathfrak{F}_{t_i}\otimes\mathcal{B}(\mathbb{R})\otimes\mathcal{B}(\mathbb{R}^d)\text{-measurable}$ and satisfy  Assumption \ref{assum56} for all $1\leq i\leq N$.
				\item[(i)] $\xi \in L^{\infty}(\mathfrak{F}_T)$ and $\sup_{1\leq i\leq N}|G_i(0,0)| \leq \Lambda_0$, where $\Lambda_0$ is a positive constant not depending of $N$.
				\item[(ii)] Moreover, $	\ell(0)\Lambda_z(n)< 1$ where $\Lambda_z(n)$ is the Lipschitz constant of $G_i$ with respect to $z$, for all $1\leq i\leq N$.
			\end{assum}
			
			\begin{assum}[Special quadratic structure]\label{assum53}
				Let the functions $G_i: \Omega \times \mathbb{R}\times \mathbb{R}^d \rightarrow \mathbb{R}$ being $\mathfrak{F}_{t_i}\otimes\mathcal{B}(\mathbb{R})\otimes\mathcal{B}(\mathbb{R}^d)\text{-measurable}$ and satisfy  Assumption \ref{assum52} for all $1\leq i\leq N-1$. Furthermore,
				for all $y,y',z,z'$ and $\alpha_0 \in (0,1)$
				\[ |G_i(y,z)-G_i(y',z')|\leq \Lambda_y(1+|z-z'|^{\alpha_0})|y-y'| + \Lambda_z\Big(1+\ell(|y-y'|)(|z|+|z'|)\Big)|z-z'|. \]
			\end{assum}
			Notice that, under Assumption \ref{assum53}, the driver $G_i$ has necessarily of the following growth 
			\begin{align}
				|G_i(y,z)| \leq \Lambda_0 + \Lambda_y |y| + \Lambda_z(|z|+2\ell(|y|)|z|^2),
			\end{align}
			for all $0\leq i\leq N$.
			
			Some important properties satisfied by the centered scheme \eqref{linBTZ1} need to be assessed under our general set of assumptions. One key property is the comparison theorem, presented in the lemma below. Additionally, we provide a representation result for the difference between two BTZ schemes when the driver satisfies Assumption \ref{assum56}.
			
			{In the sequel, we set $\delta Y_i =  Y_{i}^1- Y_{i}^2$, $\delta Z_i =  Z_{i}^1- Z_{i}^2$ and $\delta G_i = G_i^1(Y_{i}^2, Z_{i}^2)-G_i^2( Y_{i}^2, Z_{i}^2)$, where $( Y_{i}^1, Z_{i}^1)$ and $(Y_{i}^2, Z_{i}^2)$ denote respectively, the solutions to $\mathcal{E}[G_i^1,\xi^1, H_i]$ and $\mathcal{E}[G_i^2,\xi^2, H_i]$.}
			\begin{lemm}\label{Lemm6.4} Let $(Y_{i}^1, Z_{i}^1)$ and $(Y_{i}^2, Z_{i}^2)$ be two solutions to the scheme \eqref{linBTZ1}, respectively, such that $(G_i^1,\xi^1)$ and $(G_i^2,\xi^2)$ satisfy Assumption \ref{assum56}. For $0\leq i\leq n$, let
				\begin{align*}
					\Upsilon_i:= \prod_{j=i}^{N-1}(1+\delta t_j\Gamma_j +\delta t_j\Pi_j \cdot {H}_j),
				\end{align*}
				where 
				\begin{align*}
					\Gamma_j:&= \frac{G_j^1(\mathbb{E}[{Y}_{j+1}^1/\mathfrak{F}_{t_j}],{Z}_j^{1})-G_j^1(\mathbb{E}[{Y}_{j+1}^{2}/\mathfrak{F}_{t_j}],{Z}_j^{1})}{\mathbb{E}[\delta{Y}_{j+1}/\mathfrak{F}_{t_j}]}{\bf 1}_{\{\mathbb{E}[\delta{Y}_{j+1}/\mathfrak{F}_{t_j}]\neq 0\}},\\
					\Pi_j:&= \frac{G_j^1(\mathbb{E}[{Y}_{j+1}^{2}/\mathfrak{F}_{t_j}],{Z}_j^{1})-G_j^1(\mathbb{E}[{Y}_{j+1}^{2}/\mathfrak{F}_{t_j}],{Z}_j^{2})}{|\delta{Z}_{j}|^2}\delta{Z}_{j}{\bf 1}_{\{\delta{Z}_{j}\neq 0\}}.
				\end{align*}
				Then the following representation holds
				\begin{align}\label{eq6.3}
					\delta Y_i = \mathbb{E}\Big[\Upsilon_i\delta Y_{t_N} + \sum_{k=i}^{N-1}\delta t_k\delta G_k \frac{\Upsilon_i}{\Upsilon_k}\Big/\mathfrak{F}_{t_i}\Big].
				\end{align}
				Moreover, the following comparison principle holds true: $ Y_i^1\geq Y_i^2$,
				whenever $Y_N^1 \geq Y_N^2$ and $G_j^1(Y_i^2, Z_i^2)\geq G_j^2( Y_i^2, Z_i^2)$.
			\end{lemm}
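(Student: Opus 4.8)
The plan is to reduce the representation \eqref{eq6.3} to a single backward one-step recursion for $\delta Y_i$ and then iterate. Writing the scheme \eqref{linBTZ1} for each solution in the non-conditional form $Y_i^m = \mathbb{E}[Y_{i+1}^m/\mathfrak{F}_{t_i}] + \delta t_i\, G_i^m(\mathbb{E}[Y_{i+1}^m/\mathfrak{F}_{t_i}], Z_i^m)$ for $m=1,2$ and subtracting, I would first obtain
\[
\delta Y_i = \mathbb{E}[\delta Y_{i+1}/\mathfrak{F}_{t_i}] + \delta t_i\big(G_i^1(\mathbb{E}[Y_{i+1}^1/\mathfrak{F}_{t_i}], Z_i^1) - G_i^2(\mathbb{E}[Y_{i+1}^2/\mathfrak{F}_{t_i}], Z_i^2)\big).
\]
The next step is a discrete linearisation of the driver increment: inserting and subtracting the intermediate values $G_i^1(\mathbb{E}[Y_{i+1}^2/\mathfrak{F}_{t_i}], Z_i^1)$ and $G_i^1(\mathbb{E}[Y_{i+1}^2/\mathfrak{F}_{t_i}], Z_i^2)$ splits the increment into three pieces which, by the very definitions of $\Gamma_i$ and $\Pi_i$ (the indicators absorbing the degenerate cases), equal $\Gamma_i\,\mathbb{E}[\delta Y_{i+1}/\mathfrak{F}_{t_i}]$, $\Pi_i\cdot\delta Z_i$, and the frozen driver difference $\delta G_i$, respectively.

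The key observation, which I expect to carry the whole argument, is that the first two pieces recombine into a single weighted conditional expectation. Since $Z_i^m = \mathbb{E}[Y_{i+1}^m H_i/\mathfrak{F}_{t_i}]$ gives $\delta Z_i = \mathbb{E}[\delta Y_{i+1}H_i/\mathfrak{F}_{t_i}]$, and since $\Gamma_i$ and $\Pi_i$ are $\mathfrak{F}_{t_i}$-measurable, setting $\Theta_i := 1 + \delta t_i\Gamma_i + \delta t_i\Pi_i\cdot H_i$ yields the clean one-step relation
\[
\delta Y_i = \mathbb{E}[\Theta_i\,\delta Y_{i+1}/\mathfrak{F}_{t_i}] + \delta t_i\,\delta G_i.
\]
From here I would establish \eqref{eq6.3} by backward induction on $i$, the case $i=N$ being trivial since $\Upsilon_N$ is an empty product equal to $1$ and the sum is empty. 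For the inductive step I substitute the representation for $\delta Y_{i+1}$ into the one-step relation; because $\Theta_i$ is $\mathfrak{F}_{t_{i+1}}$-measurable it may be carried inside the inner conditional expectation, and the tower property together with the multiplicative identities $\Theta_i\Upsilon_{i+1} = \Upsilon_i$ and $\Theta_i\,\Upsilon_{i+1}/\Upsilon_k = \Upsilon_i/\Upsilon_k$ collapses the expression to the required form, the added term $\delta t_i\delta G_i$ supplying the missing $k=i$ summand (with $\Upsilon_i/\Upsilon_i = 1$).

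It remains to deduce the comparison principle, for which the positivity of the weights $\Theta_j$ is decisive. Applying Assumption \ref{assum56}(i) with a common $z$-argument gives $|\Gamma_j|\le \Lambda_y$, while applying it with a common $y$-argument gives $|\Pi_j|\le \ell(0)\Lambda_z(n)$. Consequently
\[
\delta t_j\Gamma_j + \delta t_j\Pi_j\cdot H_j \ge -h\Lambda_y - \big(\sup_i \delta t_i|H_i|\big)\ell(0)\Lambda_z(n) \ge -(1-\epsilon)
\]
by Assumption \ref{assum56}(ii), so that $\Theta_j \ge \epsilon > 0$ for every $j$; hence $\Upsilon_i > 0$ and each ratio $\Upsilon_i/\Upsilon_k = \prod_{j=i}^{k-1}\Theta_j > 0$. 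Feeding $\delta Y_N \ge 0$ and $\delta G_k \ge 0$ into the representation \eqref{eq6.3} then forces $\delta Y_i \ge 0$, i.e. $Y_i^1 \ge Y_i^2$. The only genuinely delicate points are the bookkeeping in the linearisation, namely matching the argument at which the frozen difference $\delta G_i$ is evaluated with the definition used in the statement, and checking that the indicators in $\Gamma_i$ and $\Pi_i$ correctly handle the events $\{\mathbb{E}[\delta Y_{i+1}/\mathfrak{F}_{t_i}] = 0\}$ and $\{\delta Z_i = 0\}$; the remaining manipulations of conditional expectations are routine.
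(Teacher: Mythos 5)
Your proof is correct and follows essentially the same route as the paper's own: the identical discrete linearisation combined with $\delta Z_i = \mathbb{E}[\delta Y_{i+1}H_i/\mathfrak{F}_{t_i}]$ and the $\mathfrak{F}_{t_i}$-measurability of $\Gamma_i,\Pi_i$ to obtain the one-step weighted recursion, backward induction with the tower property for the representation \eqref{eq6.3}, and the bounds $|\Gamma_j|\le\Lambda_y$, $|\Pi_j|\le\ell(0)\Lambda_z(n)$ together with Assumption \ref{assum56}(ii) to ensure the weights $1+\delta t_j\Gamma_j+\delta t_j\Pi_j\cdot H_j\ge\epsilon>0$ and hence the comparison principle. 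Your bound $|\Pi_j|\le\ell(0)\Lambda_z(n)$ is in fact tighter than the $2\Lambda_z(n)\ell(0)$ stated in the paper's proof, which makes the positivity step mesh exactly with Assumption \ref{assum56}(ii).
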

			\begin{proof}
				See Appendix \ref{secauxres1}
			\end{proof}
			{Below, we derive another important property satisfied by the centered BTZ scheme \eqref{linBTZ1} under our framework. This property involves the uniform boundedness of the component $Y_i$	of the scheme, which plays a key role in ensuring stability and error bounds in our setting.}
			\begin{lemm}\label{BTZbound}
				Let Assumption \ref{assum52} be in force. Then 
				\begin{align*}
					|Y_i| \leq (\|\xi\|_{\infty} + \Lambda_0 T)\exp\left(1+\Lambda_yT \right).
				\end{align*}
			\end{lemm}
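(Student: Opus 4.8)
Lemma \ref{BTZbound} asserts a uniform (in $i$ and in the partition) bound on the $Y$-component of the centered BTZ scheme \eqref{linBTZ1}, of the form $|Y_i| \leq (\|\xi\|_\infty + \Lambda_0 T)\exp(1+\Lambda_y T)$. Let me sketch how I would prove it.

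\medskip

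The plan is to run a backward induction on $i$, starting from $i=N$ where $Y_N=\xi$, and to control $|Y_i|$ in terms of $|Y_{i+1}|$ by exploiting the linearized representation of the scheme together with the growth bound on $G_i$ that follows from Assumption \ref{assum52}. First I would use the rewriting \eqref{eq 6.2} of the scheme, or more directly the defining relation $Y_i^\pi = \mathbb{E}[Y_{i+1}^\pi + G_i(\mathbb{E}[Y_{i+1}^\pi/\mathfrak{F}_{t_i}],Z_i^\pi)\delta t_i \mid \mathfrak{F}_{t_i}]$, to write
\begin{align*}
|Y_i| \leq \mathbb{E}[|Y_{i+1}| \mid \mathfrak{F}_{t_i}] + \delta t_i\, |G_i(\mathbb{E}[Y_{i+1}/\mathfrak{F}_{t_i}],Z_i)|.
\end{align*}
The key is then to handle the quadratic-in-$z$ term in the growth of $G_i$. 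Under Assumption \ref{assum52}, the condition $\ell(0)\Lambda_z(n)<1$ on the Lipschitz constant in $z$ is exactly what is needed to absorb the $z$-dependence. Concretely, since $Z_i = \mathbb{E}[Y_{i+1}H_i/\mathfrak{F}_{t_i}]$, I would bound $|Z_i|$ in terms of $\mathbb{E}[|Y_{i+1}|/\mathfrak{F}_{t_i}]$ and the moment of $H_i$, and feed this into the growth estimate $|G_i(y,z)|\leq \Lambda_0 + \Lambda_y|y| + \Lambda_z(|z| + 2\ell(|y|)|z|^2)$; the smallness condition on $\ell(0)\Lambda_z(n)$ guarantees the resulting contribution can be reabsorbed, leaving an estimate of the form $|Y_i| \leq (1+\Lambda_y\delta t_i)\,\|Y_{i+1}\|_\infty^{(i)} + \Lambda_0\,\delta t_i$, where $\|\cdot\|_\infty^{(i)}$ denotes an essential-sup type bound propagated along the induction.

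\medskip

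With such a one-step recursion $m_i \leq (1+\Lambda_y \delta t_i)\, m_{i+1} + \Lambda_0\,\delta t_i$ in hand, where $m_i := \operatorname{essup}|Y_i|$, I would solve it by iterating backwards from $m_N = \|\xi\|_\infty$. Telescoping gives
\begin{align*}
m_i \leq \|\xi\|_\infty \prod_{j=i}^{N-1}(1+\Lambda_y\delta t_j) + \Lambda_0\sum_{k=i}^{N-1}\delta t_k \prod_{j=i}^{k-1}(1+\Lambda_y\delta t_j).
\end{align*}
Using the elementary inequality $\prod_{j}(1+\Lambda_y\delta t_j) \leq \exp(\Lambda_y\sum_j \delta t_j) \leq \exp(\Lambda_y T)$ and bounding $\sum_k \delta t_k \leq T$, both terms collapse to the claimed product form, with the extra factor $\exp(1)$ absorbing the constant accumulated when controlling the quadratic term. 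This yields $m_i \leq (\|\xi\|_\infty + \Lambda_0 T)\exp(1+\Lambda_y T)$ uniformly in $i$, which is precisely the assertion.

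\medskip

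The main obstacle, and the step requiring genuine care, is the treatment of the quadratic term $2\Lambda_z\ell(|y|)|z|^2$ in the growth of $G_i$: naively this would produce a $|Z_i|^2$ contribution that grows quadratically and cannot be closed by a linear Gronwall-type recursion. The resolution is that $Z_i$ is itself \emph{linearly} controlled by the conditional $L^\infty$-norm of $Y_{i+1}$ (through the bounded increments $H_i$ and the condition $h\Lambda_y + (\sup_i \delta t_i|H_i|)\ell(0)\Lambda_z(n)\leq 1-\epsilon$ of Assumption \ref{assum56}(ii)), so the apparently quadratic term is in fact tamed into a linear one after the conditioning, with the $\delta t_i$ factor ensuring the coefficient stays below one. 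Verifying that the structural conditions of Assumptions \ref{assum56} and \ref{assum52} supply exactly the smallness needed to absorb this term — rather than letting it blow up as $h\to 0$ — is where the argument is delicate, and I would expect this to mirror the fixed-point/contraction estimate already used to establish well-posedness of the scheme in the references \cite{Zhou,PaSa18}.
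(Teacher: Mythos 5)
Your endgame (the one-step recursion $m_i \leq (1+\Lambda_y\delta t_i)m_{i+1} + \Lambda_0\delta t_i$ followed by telescoping and $\prod_j(1+\Lambda_y\delta t_j)\leq e^{\Lambda_y T}$) coincides with the paper's, but the route you propose to \emph{reach} that recursion has a genuine gap, and it sits exactly at the step you flag as delicate. You propose to control the $z$-contribution by bounding $|Z_i| = |\mathbb{E}[Y_{i+1}H_i/\mathfrak{F}_{t_i}]|$ through moments of $H_i$ and feeding this into the growth of $G_i$. The only bound available this way is $|Z_i| \leq \|Y_{i+1}\|_\infty\,\mathbb{E}[|H_i|/\mathfrak{F}_{t_i}] \leq \|Y_{i+1}\|_\infty (c_i d/\delta t_i)^{1/2}$, so the per-step cost $\delta t_i\Lambda_z(n)\ell(0)|Z_i|$ is of order $\sqrt{\delta t_i}\,\|Y_{i+1}\|_\infty$, not $\delta t_i\,\|Y_{i+1}\|_\infty$; iterating over $N\sim T/h$ steps produces a factor like $\exp\big(C\sum_i\sqrt{\delta t_i}\big)\sim\exp(CT/\sqrt{h})$, which is not uniform in the partition. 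If instead you invoke the pointwise bound $\delta t_i|H_i|\,\ell(0)\Lambda_z(n)\leq 1-\epsilon$ of Assumption \ref{assum56}(ii), as your last paragraph suggests, you get $\delta t_i\Lambda_z(n)\ell(0)|Z_i|\leq(1-\epsilon)\|Y_{i+1}\|_\infty$, hence the recursion $m_i\leq(2-\epsilon+\Lambda_y\delta t_i)m_{i+1}+\Lambda_0\delta t_i$, which blows up like $(2-\epsilon)^N$. Either way the "absorption" does not materialize: crude absolute-value bounds on $Z_i$ destroy the cancellation that makes the scheme stable. A secondary slip: under Assumption \ref{assum52} (the hypothesis of the lemma) the driver only has the local Lipschitz structure of Assumption \ref{assum56}, hence \emph{linear} growth $\Lambda_z(n)\ell(0)|z|$ in $z$; the quadratic growth $\Lambda_z(|z|+2\ell(|y|)|z|^2)$ you quote belongs to Assumption \ref{assum53}, which is not assumed here --- but, as shown above, even the linear term defeats your mechanism.

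What closes the argument --- and what the paper does --- is to never estimate $|Z_i|$ at all. Writing $G_i(\mathbb{E}[Y_{i+1}/\mathfrak{F}_{t_i}],Z_i) = G_i(\mathbb{E}[Y_{i+1}/\mathfrak{F}_{t_i}],0) + \Pi_i\cdot Z_i$ with $|\Pi_i|\leq \Lambda_z(n)\ell(0)$ and substituting $Z_i = \mathbb{E}[Y_{i+1}H_i/\mathfrak{F}_{t_i}]$ gives
\begin{align*}
Y_i = \mathbb{E}\big[Y_{i+1}(1+\delta t_i\,\Pi_i\cdot H_i)/\mathfrak{F}_{t_i}\big] + \delta t_i\, G_i(\mathbb{E}[Y_{i+1}/\mathfrak{F}_{t_i}],0),
\end{align*}
and the random weight $1+\delta t_i\Pi_i\cdot H_i$ is strictly positive (Assumption \ref{assum56}(ii)) with conditional expectation exactly $1$ (since $\Pi_i$ is $\mathfrak{F}_{t_i}$-measurable and $\mathbb{E}[H_i/\mathfrak{F}_{t_i}]=0$); hence the $z$-term costs nothing and $|Y_i|\leq(1+\Lambda_y\delta t_i)\|Y_{i+1}\|_\infty+\Lambda_0\delta t_i$ follows cleanly. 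The paper packages this cancellation in the comparison principle of Lemma \ref{Lemm6.4}: it compares $(Y_i,Z_i)$ with the auxiliary scheme driven by $F_i(y,z)=\Lambda_0+\Lambda_y|y|$ and terminal value $\|\xi\|_\infty$, whose solution $\hat Y_i$ is deterministic so that $\hat Z_i=\mathbb{E}[\hat Y_{i+1}H_i/\mathfrak{F}_{t_i}]=0$, and then solves the deterministic recursion --- the same telescoping you wrote. Your conclusion is fine; the missing idea is the martingale property of $H_i$ combined with positivity of the linearized weight (equivalently, comparison against the $z$-free majorant), without which the recursion cannot be established uniformly in the partition.
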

			\begin{proof}
				See Appendix \ref{secauxres1}
			\end{proof}
			\begin{remark}
				We emphasize that the bound obtained above does not depend on $\alpha \in (0,1)$, despite the driver being stochastic Lipschitz in $y$. Moreover, this bound is similar to that of the component solution $Y$ to \eqref{2.2} (see, for instance, \cite{ImkRhOliv24}).
			\end{remark}
			{We conclude this subsection with another important property: we prove that the BMO bound of the control process $Z_i$ in the BTZ scheme \eqref{linBTZ1} is preserved under our assumptions.}
			\begin{lemm}\label{BTZbmo}	Let Assumption \ref{assum53} be in force and let $( Y_i^{\pi}, Z_i^{\pi})$ be the solution associated to $\mathcal{E}[g_i,\xi, H_i]$. Set $ Z_i^{\pi}(s):= Z_i^{\pi}$ for all $s\in [t_i,t_{i+1})$, for all $0\leq i \leq N.$ Then, there exists a constant $C>0$ not depending on $\pi$ such that $$\mathbb{E}\Big[\sum_{j=i}^{N-1}{\delta t_j}| Z_{j}^{\pi}|^2\Big/\mathfrak{F}_{t_i}\Big] \leq C.$$
			\end{lemm}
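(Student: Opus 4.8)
The plan is to prove a discrete counterpart of the classical BMO a priori estimate for quadratic BSDEs, transposing to the centered scheme \eqref{linBTZ1} the convexity/Lyapunov mechanism already exploited in Lemma \ref{lyapunov}. Throughout I abbreviate $Y_i:=Y_i^{\pi}$, $Z_i:=Z_i^{\pi}$, $\bar Y_{i+1}:=\mathbb{E}[Y_{i+1}/\mathfrak{F}_{t_i}]$, and I set $C_0:=(\|\xi\|_{\infty}+\Lambda_0 T)\exp(1+\Lambda_y T)$, so that $|Y_j|\le C_0$ for every $j$ by Lemma \ref{BTZbound}; since $\ell$ is locally bounded, $L:=\sup_{[0,C_0]}\ell<\infty$. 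The first step is a one-step energy identity. Starting from the non-conditional form \eqref{eq 6.2}, write $Y_{i+1}=Y_i-\delta t_i\,G_i(\bar Y_{i+1},Z_i)+M_i$ with $M_i:=\frac{\delta t_i}{c_i}(Z_i)^{\mathbf{T}}\cdot H_i+\nu_i$. The prescribed moments $\mathbb{E}[H_i/\mathfrak{F}_{t_i}]=0$, $\mathbb{E}[H_i(H_i)^{\mathbf{T}}/\mathfrak{F}_{t_i}]=\frac{c_i}{\delta t_i}I_{d\times d}$ together with $\mathbb{E}[\nu_i/\mathfrak{F}_{t_i}]=\mathbb{E}[\nu_iH_i/\mathfrak{F}_{t_i}]=0$ give $\mathbb{E}[M_i/\mathfrak{F}_{t_i}]=0$ and $\mathbb{E}[M_i^2/\mathfrak{F}_{t_i}]=\frac{\delta t_i}{c_i}|Z_i|^2+\mathbb{E}[\nu_i^2/\mathfrak{F}_{t_i}]$. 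As $Y_i-\delta t_i G_i$ is $\mathfrak{F}_{t_i}$-measurable, squaring and conditioning yields, after discarding the nonnegative terms $\delta t_i^2G_i^2$ and $\mathbb{E}[\nu_i^2/\mathfrak{F}_{t_i}]$,
\begin{align*}
\frac{\delta t_i}{c_i}|Z_i|^2 \le \mathbb{E}\big[Y_{i+1}^2/\mathfrak{F}_{t_i}\big]-Y_i^2+2\,\delta t_i\,Y_i\,G_i(\bar Y_{i+1},Z_i).
\end{align*}

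The crux is the second step: absorbing the quadratic part of the drift. Summing this identity over $j\ge i$, conditioning on $\mathfrak{F}_{t_i}$ and telescoping the $Y^2$-terms via the tower property leaves the obstruction $\sum_j\delta t_j\,\mathbb{E}[Y_jG_j/\mathfrak{F}_{t_i}]$; by the growth of $G_j$ under Assumption \ref{assum53} and $|Y_j|\le C_0$ this carries the quadratic contribution $2\kappa\sum_j\delta t_j\,\mathbb{E}[|Z_j|^2/\mathfrak{F}_{t_i}]$ with $\kappa:=2C_0\Lambda_zL$, which the left-hand side cannot swallow directly. To remedy this I replace $y\mapsto y^2$ by the convex weight $\phi(y)=e^{\beta y}$ and exploit the multiplicative structure of the scheme, namely
\begin{align*}
\mathbb{E}\big[e^{\beta Y_{i+1}}/\mathfrak{F}_{t_i}\big] = e^{\beta Y_i}\,e^{-\beta\delta t_i G_i(\bar Y_{i+1},Z_i)}\,\mathbb{E}\big[e^{\beta M_i}/\mathfrak{F}_{t_i}\big].
\end{align*}
Both factors multiply the \emph{same} weight $e^{\beta Y_i}$, which is precisely what produces, in continuous time, the cancellation between the quadratic-variation term $\tfrac12\beta^2 e^{\beta Y}|Z|^2$ and the quadratic part $\beta e^{\beta Y}\kappa|Z|^2$ of the drift for any $\beta>2\kappa$, independently of $\|Y\|_{\infty}$. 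Using $e^{-\beta\delta t_iG_i}\ge 1-\beta\delta t_i G_i$ for the drift factor and the lower bound $e^{x}\ge 1+x+\tfrac12 e^{-K}x^2$ on $[-K,K]$ for the martingale factor (the confinement being guaranteed by $|Y_{i+1}|\le C_0$, which keeps the relevant mean-value points inside $[-C_0,C_0]$), one reproduces a discrete matched-weight inequality in which the second-order increment dominates the quadratic drift once $\beta$ is chosen large enough; the residual linear-in-$|Z_i|$ term is dispatched by Young's inequality.

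The third step is the conclusion. After absorption, summing over $j=i,\dots,N-1$ and telescoping the $\phi(Y_j)$-differences to $\mathbb{E}[\phi(\xi)/\mathfrak{F}_{t_i}]-\phi(Y_i)$ gives
\begin{align*}
\frac{1}{2C_2}\sum_{j=i}^{N-1}\delta t_j\,\mathbb{E}\big[|Z_j|^2/\mathfrak{F}_{t_i}\big] \le C,
\end{align*}
where, since $e^{-\beta C_0}\le \phi(Y_j)\le e^{\beta C_0}$ and $c_j\in(C_1,C_2)$, the constant $C$ depends only on $\|\xi\|_{\infty}$, $\Lambda_0,\Lambda_y,\Lambda_z$, $L$, $T$, $C_1$ and $C_2$, and never on the partition $\pi$. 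This is exactly the asserted estimate.

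I expect the genuine difficulty to lie entirely in the second step. Unlike the continuous computation, the discrete expansion does not automatically evaluate the drift and the martingale-increment contributions against an identical weight; reproducing the quadratic-absorption mechanism uniformly in the mesh size forces one to rely delicately on the a.s.\ bound $|Y_j|\le C_0$ of Lemma \ref{BTZbound} and on the orthogonality relations for $(H_i,\nu_i)$, and, on coarse partitions, on the contraction built into Assumption \ref{assum56}(ii); everything else is routine telescoping and Young-type bookkeeping.
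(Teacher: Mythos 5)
Your Step 1 is sound and coincides exactly with the paper's starting point (the one-step energy inequality \eqref{eq514}, obtained by squaring the scheme and using the orthogonality relations for $H_i$ and $\nu_i$). The gap is in Step 2, the exponential-weight absorption, and it is fatal as stated. The lower bound $e^{x}\ge 1+x+\tfrac12 e^{-K}x^{2}$ on $[-K,K]$ forces $K$ to be at least an almost-sure bound on $\beta|M_i|$ (or on $\beta|M_i-\delta t_i G_i|$ if you Taylor-expand the drift and martingale jointly, which you should, since otherwise the cross term $-\beta\delta t_i G_i\cdot\tfrac{\beta^2}{2}e^{-K}\mathbb{E}[M_i^2/\mathfrak{F}_{t_i}]$ is of the same order as your good term and of the wrong sign when $G_i>0$). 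The crucial point is that the one-step increment is \emph{not} small with the mesh: from $|Z_i|\le \|Y\|_\infty\,\mathbb{E}[|H_i|^2/\mathfrak{F}_{t_i}]^{1/2}= C_0\sqrt{c_i d/\delta t_i}$ and $|H_i^R|\le R\sqrt{d/\delta t_i}$ one gets $\tfrac{\delta t_i}{c_i}|Z_i||H_i|\le C_0Rd/\sqrt{c_i}=O(1)$, and $\delta t_i|G_i|$ contains $2\Lambda_z L\,\delta t_i|Z_i|^2\le 2\Lambda_z L C_0^2C_2d=O(1)$; hence $K=\beta K_0$ with $K_0$ of order one \emph{uniformly in the partition}. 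Your absorption condition then reads $\tfrac{\beta}{2C_2}e^{-\beta K_0}>2\Lambda_z L+\cdots$, and since $\sup_{\beta>0}\beta e^{-\beta K_0}=1/(eK_0)$, no choice of $\beta$ works unless the data ($C_0$, $\Lambda_z$, $L$, $R$, $C_2$) happen to be small. The continuous-time mechanism ``$\tfrac12\beta^2|Z|^2$ beats $\beta\kappa|Z|^2$ for $\beta$ large, independently of $\|Y\|_\infty$'' is precisely what is destroyed in discrete time by the deflation factor $e^{-\beta K_0}$; neither Lemma \ref{BTZbound} nor Assumption \ref{assum56}(ii) (which controls $h\Lambda_y+\sup_i\delta t_i|H_i|\ell(0)\Lambda_z$, not the quadratic term) repairs this.

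The paper closes this exact obstruction by a different device: smallness of $Y$ rather than largeness of $\beta$. It splits the scheme into $\kappa$ components $Y_i^{\pi}=\sum_{l=1}^{\kappa}Y_i^{(l),\pi}$, $Z_i^{\pi}=\sum_{l=1}^{\kappa}Z_i^{(l),\pi}$, where $(Y^{(l),\pi},Z^{(l),\pi})$ is the BTZ scheme with terminal value $\xi/\kappa$ and a generator $\hat G_i^{(l)}$ defined as an increment of $G_i$ around the sum of the previously constructed components. Each $\hat G_i^{(l)}$ still satisfies Assumption \ref{assum56}, so Lemma \ref{BTZbound} yields $\|Y^{(l),\pi}\|_\infty\le\mathcal{M}/\kappa$. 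Feeding this into the same energy identity you derived, the quadratic contribution now carries the coefficient $2\tfrac{\mathcal{M}}{\kappa}\Lambda_z\big(1+\tfrac{\alpha_0}{2}+\|\ell\circ|Y^{(l),\pi}|\|_\infty\big)$, which is made strictly smaller than $\tfrac{1}{2C_2}$ by choosing $\kappa$ large but independent of $N$; the interaction terms $\sum_{m<l}|Z^{(m),\pi}|^2$ created by the shifted generators are then controlled by induction on $l$. To salvage your proposal you would need to import this (or an equivalent) smallness mechanism; the exponential weight alone cannot close the estimate in the discrete setting.
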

			\begin{proof}
				See Appendix \ref{secauxres1}
			\end{proof}

			{ \bf Stability of the explicit BTZ scheme:} Here we study the difference between the scheme given by \eqref{linBTZ1} and its perturbed version given below.
			\begin{align}\label{linBTZ3}
				\begin{cases}
					\tilde Y_i = \mathbb{E}\left[ 	\tilde Y_{i+1} + G_i(\mathbb{E}[	\tilde Y_{i+1}/\mathfrak{F}_{t_{i}}],\tilde Z_{i})\delta t_i\big/ \mathfrak{F}_{t_{i}}  \right] +\zeta_i^{Y},\\
					\tilde Z_{i}= \mathbb{E}[\tilde Y_{i+1}H_i\big/\mathfrak{F}_{t_{i}}],\\
					\tilde Y_N= \xi, \,\,\, \tilde Z_N= 0,
				\end{cases}
			\end{align}
			where $(\zeta_i^{Y})_i$ is a family of $\mathfrak{F}_{t_i}\text{-measurable}$ and square integrable random variables. furthermore, we assume
			\begin{align}\label{611}
				\sup_{0\leq i<N}\mathbb{E}\Big[\sum_{j=i}^{N-1}|\tilde{Z}_j|^2\delta t_j\big/\mathfrak{F}_{t_i}\Big] < c.
			\end{align}
			In the sequel, we will adopt the following notations for all $0\leq i \leq N$
			\begin{align*}
				\tilde \delta Y_i^{\pi}:= \tilde Y_i -Y_i^{\pi},\quad \tilde \delta Z_i^{\pi}:= \tilde Z_i -Z_i^{\pi},
			\end{align*}
			and 
			\begin{align*}
				\tilde \delta Y_i^{\pi}:=\tilde{\delta} Y^{\pi}(t), \forall t\in [t_i,t_{i+1}]\quad \tilde \delta Z_i^{\pi}:=\tilde \delta Z^{\pi}(t) , \forall t\in [t_i,t_{i+1}].
			\end{align*}
			Additionally, we introduce the following processes for $0\leq i\leq n$,
			\begin{align*}
				\tilde E_i^{\pi}:= \prod_{j=i}^{N-1}(1 +\delta t_j\tilde \Pi_j^{\pi} \cdot {H}_j),
			\end{align*}
			and as in Lemma \ref{Lemm6.4}, we define		\begin{align*}
				\tilde \Pi_j^{\pi}:&= \frac{G_j(\mathbb{E}[\tilde{Y}_{j+1}/\mathfrak{F}_{t_j}],\tilde{Z}_j)-G_j(\mathbb{E}[\tilde{Y}_{j+1}/\mathfrak{F}_{t_j}],{Z}_j^{\pi})}{|\tilde\delta{Z}_{j}^{\pi}|^2}\tilde\delta{Z}_{j}^{\pi}{\bf 1}_{\{\tilde\delta{Z}_{j}^{\pi}\neq 0\}}.
			\end{align*}
			The processes $\tilde{\Gamma}_j^{\pi}$ and $\tilde{\Upsilon}_j^{\pi}$ are also defined in a similar way. Under Assumption \ref{assum56}, for all $1\leq i\leq N-1$ we have that  $\tilde E_i^{\pi}>0$ and $\mathbb{E}[\tilde E_i^{\pi}/\mathfrak{F}_{t_i}] =1$, which implies that the measure $\mathbb{Q}^{\pi}$ given by 
			\begin{align}\label{meas1}
				\frac{\mathrm{d}\mathbb{Q}^{\pi}}{\mathrm{d}\mathbb{P}}:= \tilde E_i^{\pi}(t),\quad \tilde E_i^{\pi}(t):= \prod_{0\leq j\leq i-1} \left(1 +\delta t_j\tilde \Pi_j^{\pi} \cdot {H}_j\right),
			\end{align}
			defines a probability measure. The equivalence between the newly introduced probability measure $\mathbb{Q}^{\pi}$ and the underlying one $\mathbb{P}$ for all $\pi$, is a consequence of the following proposition. This can be derived in our general setting (without any major difficulty, provided that \eqref{611} holds) in a manner similar to the approach in \cite[Proposition 2.11]{ChaRichou16} (see also \cite[Proposition 3.3.3]{Zhou}).
			\begin{prop}\label{propB7}
				Under Assumption \ref{assum53}, the process $M_t:= \sum_{t_{i+1}\leq t} \delta t_i\tilde \Pi_i^{\pi} \cdot {H}_i$, is a $\bmo$ martingale with respect to the filtration $\mathfrak{F}_{t_i}$. Moreover, the Dol\'eans-Dade exponential $\tilde \Upsilon_i^{\pi}$ satisfies the reverse H\"older inequality.			
			\end{prop}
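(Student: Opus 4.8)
The plan is to establish the two assertions in order: first that $M$ is a discrete-time $\bmo$ martingale whose norm is bounded uniformly in the partition $\pi$ (and in the truncation level $n$), and then to upgrade this to the reverse H\"older inequality for the associated Dol\'eans--Dade exponential by a discrete Kazamaki-type criterion, following \cite[Proposition 2.11]{ChaRichou16} and \cite[Proposition 3.3.3]{Zhou}. For the martingale property I would first observe that each coefficient $\tilde\Pi_j^{\pi}$ is $\mathfrak{F}_{t_j}$-measurable: by \eqref{linBTZ1} and \eqref{linBTZ3} the quantities $Z_j^{\pi}=\mathbb{E}[Y_{j+1}^{\pi}H_j/\mathfrak{F}_{t_j}]$, $\tilde Z_j=\mathbb{E}[\tilde Y_{j+1}H_j/\mathfrak{F}_{t_j}]$ and $\mathbb{E}[\tilde Y_{j+1}/\mathfrak{F}_{t_j}]$ are all $\mathfrak{F}_{t_j}$-measurable. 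Since $H_j$ is $\mathfrak{F}_{t_{j+1}}$-measurable with $\mathbb{E}[H_j/\mathfrak{F}_{t_j}]=0$, each increment $\delta t_j\tilde\Pi_j^{\pi}\cdot H_j$ is a martingale difference, so $M$ is a martingale for $(\mathfrak{F}_{t_i})_i$.

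For the $\bmo$ bound I would compute the conditional quadratic variation. Using $\mathbb{E}[H_jH_j^{\bold T}/\mathfrak{F}_{t_j}]=(c_j/\delta t_j)I_{d\times d}$ and the measurability above,
\[ \mathbb{E}\Big[(\delta t_j\tilde\Pi_j^{\pi}\cdot H_j)^2\big/\mathfrak{F}_{t_j}\Big]=c_j\,\delta t_j\,|\tilde\Pi_j^{\pi}|^2, \]
so that for every $i$,
\[ \mathbb{E}\Big[\sum_{j=i}^{N-1}(\delta t_j\tilde\Pi_j^{\pi}\cdot H_j)^2\big/\mathfrak{F}_{t_i}\Big]=\mathbb{E}\Big[\sum_{j=i}^{N-1}c_j\,\delta t_j\,|\tilde\Pi_j^{\pi}|^2\big/\mathfrak{F}_{t_i}\Big]. \]
Freezing the $y$-argument in $\tilde\Pi_j^{\pi}$ and applying the quadratic-structure estimate of Assumption \ref{assum53} yields $|\tilde\Pi_j^{\pi}|\leq \Lambda_z(1+\ell(0)(|\tilde Z_j|+|Z_j^{\pi}|))$, hence $|\tilde\Pi_j^{\pi}|^2\leq C(1+|\tilde Z_j|^2+|Z_j^{\pi}|^2)$. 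Combining $c_j\leq C_2$, $\sum_j\delta t_j\leq T$, the conditional bound \eqref{611} on $\tilde Z$, and the conditional $\bmo$-type bound of Lemma \ref{BTZbmo} on $Z^{\pi}$, the right-hand side is dominated by a constant independent of $i$, $\pi$ and $n$. This is exactly the discrete $\bmo$ estimate for $M$, with a partition- and truncation-independent norm.

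To pass from the $\bmo$ bound to the reverse H\"older inequality, I would use the cruder, genuinely Lipschitz bound $|\tilde\Pi_j^{\pi}|\leq \ell(0)\Lambda_z(n)$ from Assumption \ref{assum56}(i) together with the smallness condition Assumption \ref{assum56}(ii), which gives $\delta t_j|\tilde\Pi_j^{\pi}\cdot H_j|\leq \ell(0)\Lambda_z(n)\sup_i\delta t_i|H_i|\leq 1-\epsilon$; hence $1+\delta t_j\tilde\Pi_j^{\pi}\cdot H_j\geq \epsilon>0$, so that $\tilde E_i^{\pi}(t)$ is a strictly positive density with $\mathbb{E}[\tilde E_i^{\pi}/\mathfrak{F}_{t_i}]=1$ and $\tilde\Upsilon_i^{\pi}$ is a positive exponential. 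The uniform $\bmo$ bound then feeds into a discrete analogue of Kazamaki's criterion (see \cite{Kazamaki} and the cited propositions) to produce some $p>1$ and a finite constant $C$, both independent of $\pi$, such that the density of $\mathbb{Q}^{\pi}$ satisfies $R_p(\mathbb{P})$; equivalently, $\tilde\Upsilon_i^{\pi}$ satisfies the reverse H\"older inequality.

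The main obstacle, and the reason the quadratic structure of Assumption \ref{assum53} is invoked rather than the plain $\ell(0)\Lambda_z(n)$-Lipschitz bound, is to keep the reverse H\"older exponent $p$ and constant $C$ uniform in both $\pi$ and the truncation level $n$. The naive Lipschitz bound controls the jumps (and hence positivity) but only yields an $n$-dependent $\bmo$ norm of order $\ell(0)^2\Lambda_z(n)^2C_2T$, which degenerates as $n\to\infty$; the uniform integrability of $\tilde\Upsilon^{\pi}$ needed downstream (for the stability estimate and ultimately for Theorem \ref{Main63}) forces one to control $\sum_j\delta t_j|\tilde\Pi_j^{\pi}|^2$ through the $\bmo$-type bounds \eqref{611} and Lemma \ref{BTZbmo}, which are themselves $n$-independent. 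Reconciling the jump control (via Assumption \ref{assum56}) with the $n$-uniform $\bmo$ bound (via Assumption \ref{assum53}) is the delicate point, but, as the reference results indicate, it goes through without further difficulty.
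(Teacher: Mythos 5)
Your proposal is correct and follows exactly the route the paper intends: the paper itself offers no self-contained proof, deferring to \cite[Proposition 2.11]{ChaRichou16} and \cite[Proposition 3.3.3]{Zhou} "provided that \eqref{611} holds," and your argument fills in precisely those details --- the martingale property from the $\mathfrak{F}_{t_j}$-measurability of $\tilde\Pi_j^{\pi}$ and $\mathbb{E}[H_j/\mathfrak{F}_{t_j}]=0$, the $n$- and $\pi$-uniform $\bmo$ bound from the quadratic-structure estimate $|\tilde\Pi_j^{\pi}|\leq\Lambda_z(1+\ell(0)(|\tilde Z_j|+|Z_j^{\pi}|))$ combined with \eqref{611} and Lemma \ref{BTZbmo}, and the reverse H\"older inequality from the jump bound of Assumption \ref{assum56}(ii) fed into the discrete Kazamaki criterion of the cited works. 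Your closing observation, that the crude Lipschitz bound $\ell(0)\Lambda_z(n)$ controls positivity of the factors while the quadratic bound secures $n$-uniformity of the $\bmo$ norm, is exactly the point that makes the citation legitimate in this setting.
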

			
			As a by-product of the aforementioned proposition, along with equation \eqref{eq6.3}, we can derive a more manageable estimate of the difference between the backward components of the scheme \eqref{linBTZ1} and its perturbed version \eqref{linBTZ3}. This estimate will be described in detail below. A key step in obtaining this representation involves identifying a process $J_j^{\epsilon}$ such that 
			\begin{align*}
				\frac{1+\delta t_j \tilde\Gamma_j^{\pi} + \delta t_j \tilde\Pi_j^{\pi}\cdot H_j}{1+ \delta t_j \tilde\Pi_j^{\pi}\cdot H_j}= 1+ \frac{\delta t_j \tilde\Gamma_j^{\pi}}{1+ \delta t_j \tilde\Pi_j^{\pi}\cdot H_j} < J_j^{\epsilon}.
			\end{align*}
			The superscript $\epsilon$ in $J_j^{\epsilon}$ is kept here, to emphasize the dependence of the process on the constant $\epsilon$  which appears in Assumption \ref{assum53}. 
			An interesting discussion on how to construct the desired  process $J_j^{\epsilon}$ can be found in \cite[Page 67]{Zhou}, and it is readily seen that the same reasoning applies in our setting. Therefore, we choose the process $J_j^{\epsilon}$ such that $0<J^{\epsilon}_j< (1+\Lambda_y \delta t_j)^{\frac{1}{\epsilon}}$. Hence 
			\begin{align*}
				|\tilde \delta Y_i^{\pi}| &\leq \mathbb{E}\Big[\Upsilon_i|\tilde\delta Y_{N}| + \sum_{k=i}^{N-1}|\zeta_k^{Y}| \frac{\Upsilon_i}{\Upsilon_k}\Big/\mathfrak{F}_{t_i}\Big]\\
				&\leq \mathbb{E}\Big[\tilde E_i^{\pi}\prod_{j=i}^{N-1}J_j^{\epsilon}|\tilde\delta Y_{N}| + \sum_{k=i}^{N-1}|\zeta_k^{Y}| \frac{\tilde E_i^{\pi}}{\tilde E_k^{\pi}}\prod_{j=i}^{k-1}J_j^{\epsilon}\Big/\mathfrak{F}_{t_i}\Big]\\
				&\leq \exp\big(\frac{\Lambda_y h}{\epsilon}\big) \mathbb{E}\Big[\tilde E_i^{\pi}|\tilde\delta Y_{N}| + \sum_{k=i}^{N-1}|\zeta_k^{Y}| \frac{\tilde E_i^{\pi}}{\tilde E_k^{\pi}}\Big/\mathfrak{F}_{t_i}\Big].
			\end{align*}
			Using the fact that $\mathbb{E}[\tilde E_k^{\pi}/\mathfrak{F}_{t_k}] =1$ for all $i\leq k$, we finally obtain that
			\begin{align*}
				|\tilde \delta Y_i^{\pi}| 	&\leq \exp\big(\frac{\Lambda_y h}{\epsilon}\big) \mathbb{E}\Big[\tilde E_i^{\pi}\Big(|\tilde\delta Y_{N}| + \sum_{k=i}^{N-1}|\zeta_k^{Y}|\Big) \Big/\mathfrak{F}_{t_i}\Big].
			\end{align*}
			Thanks to \eqref{meas1} and applying the reverse H\"older's inequality, we deduce the following 
			\begin{lemm}\label{lemm68}
				Under Assumption \ref{assum56}, there exist constants $C_{\epsilon}>0$ and $q^{*}$ independent of $N$ such that for all 
				\begin{align}
					|\tilde \delta Y_i^{\pi}| \leq C_{\epsilon} \Big(\mathbb{E}\Big[|\tilde \delta Y_{N}^{\pi}|^{q^{*}}/\mathfrak{F}_{t_i}\Big]^{\frac{1}{q^{*}}} + \mathbb{E}\Big[\Big(\sum_{j=i}^{N-1}|\zeta_j^{Y}|\Big)^{q^{*}}\Big/\mathfrak{F}_{t_i}\Big]^{\frac{1}{q^{*}}}\Big),
				\end{align}
				where $q^{*}$ is the conjugate arising from the reverse H\"older inequality of Proposition \ref{propB7} and the constant $\epsilon > 0$.
			\end{lemm}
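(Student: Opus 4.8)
The plan is to take as starting point the conditional bound already obtained just above the statement, namely
\[
|\tilde \delta Y_i^{\pi}| \leq \exp\Big(\frac{\Lambda_y h}{\epsilon}\Big)\, \mathbb{E}\Big[\tilde E_i^{\pi}\Big(|\tilde\delta Y_{N}^{\pi}| + \sum_{k=i}^{N-1}|\zeta_k^{Y}|\Big)\Big/\mathfrak{F}_{t_i}\Big],
\]
and to separate the Radon--Nikodym factor $\tilde E_i^{\pi}$ from the remaining terms by a conditional H\"older inequality. By \eqref{meas1} the process $\tilde E_i^{\pi}$ is precisely the discrete Dol\'eans--Dade exponential of the martingale $M=\sum_{t_{i+1}\le \cdot}\delta t_j\tilde\Pi_j^{\pi}\cdot H_j$ restricted to $[t_i,T]$, so that $\mathbb{E}[\tilde E_i^{\pi}/\mathfrak{F}_{t_i}]=1$, and Proposition \ref{propB7} supplies an exponent $p^{*}>1$ together with a constant $C$, both independent of $N$ and of $i$, for which the reverse H\"older estimate $\mathbb{E}[(\tilde E_i^{\pi})^{p^{*}}/\mathfrak{F}_{t_i}]\leq C(\mathbb{E}[\tilde E_i^{\pi}/\mathfrak{F}_{t_i}])^{p^{*}}=C$ holds.

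I would then fix $q^{*}$ as the conjugate of $p^{*}$, i.e. $1/p^{*}+1/q^{*}=1$, write $G:=|\tilde\delta Y_N^{\pi}|+\sum_{k=i}^{N-1}|\zeta_k^{Y}|$, and apply the conditional H\"older inequality:
\begin{align*}
\mathbb{E}\big[\tilde E_i^{\pi}\,G\big/\mathfrak{F}_{t_i}\big] \leq \mathbb{E}\big[(\tilde E_i^{\pi})^{p^{*}}/\mathfrak{F}_{t_i}\big]^{1/p^{*}}\,\mathbb{E}\big[G^{q^{*}}/\mathfrak{F}_{t_i}\big]^{1/q^{*}}.
\end{align*}
The first factor is at most $C^{1/p^{*}}$ by the reverse H\"older bound, while the prefactor $\exp(\Lambda_y h/\epsilon)\le\exp(\Lambda_y T/\epsilon)$ is uniformly bounded since $h\le T$; collecting these constants into a single $C_{\epsilon}$ gives $|\tilde\delta Y_i^{\pi}|\leq C_{\epsilon}\,\mathbb{E}[G^{q^{*}}/\mathfrak{F}_{t_i}]^{1/q^{*}}$. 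A final use of the triangle inequality for the conditional $L^{q^{*}}$-norm splits $G$ into its two summands and yields exactly the asserted estimate, with $q^{*}$ the conjugate arising from Proposition \ref{propB7}.

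The main obstacle is to guarantee that the reverse H\"older constant, and hence both $C_{\epsilon}$ and the exponent $q^{*}$, are genuinely independent of $N$ and of the index $i$. This rests on the uniform-in-$\pi$ $\bmo$-bound of $M$ established in Proposition \ref{propB7}, together with the strict positivity of the factors $1+\delta t_j\tilde\Pi_j^{\pi}\cdot H_j$, which is ensured by the smallness condition in Assumption \ref{assum56}(ii); once these are in force, the discrete analogue of the Kazamaki reverse-H\"older theory applies uniformly across the partition and produces the exponent appearing in the statement. Everything else is the routine H\"older and triangle-inequality bookkeeping sketched above.
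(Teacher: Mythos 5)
Your proposal is correct and follows essentially the same route as the paper: the paper's own argument consists of the displayed chain of inequalities preceding the lemma (the representation from Lemma \ref{Lemm6.4}, the bound via $J_j^{\epsilon}$, and the collapse using $\mathbb{E}[\tilde E_k^{\pi}/\mathfrak{F}_{t_k}]=1$), followed by exactly the step you spell out, namely the conditional H\"older inequality combined with the reverse H\"older bound on $\tilde E_i^{\pi}$ from Proposition \ref{propB7}, with $q^{*}$ the conjugate exponent. Your filling-in of the final H\"older/Minkowski bookkeeping and your remark on the uniformity in $N$ and $i$ of the reverse H\"older constant match the paper's intent precisely.
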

			We conclude this section with another important result regarding the stability of the component $Z$. We clarify the impact of the perturbation term $\zeta_i^{Y}$ on the difference between the control process components of the schemes \eqref{linBTZ1} and \eqref{linBTZ3}, respectively. 
			\begin{lemm}\label{lemm69}
				Let Assumption \ref{assum53} be in force. Then we obtain that
				\begin{align}
					\mathbb{E}\Big[\sum_{i=0}^{N-1}\delta t_i|\tilde \delta Z_i^{\pi}|^2\Big] \leq C\Big( \mathbb{E}|\tilde\delta Y_N^{\pi}|^2 + \mathbb{E} \big[\sup_{0\leq i\leq N-1}|\tilde \delta Y_i^{\pi}|^4\big]^{\frac{1}{2}} + \mathbb{E}\Big[\sum_{i=0}^{N}\tilde \delta Y_i^{\pi}(\zeta_i^{Y})\Big]\Big).
				\end{align}
			\end{lemm}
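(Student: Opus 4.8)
The plan is to run a discrete energy (It\^o-isometry) argument on the difference $(\tilde\delta Y^\pi,\tilde\delta Z^\pi)$ of the two schemes \eqref{linBTZ1} and \eqref{linBTZ3}, mirroring the continuous-time $L^2$-regularity estimate for $Z$, and then to absorb the quadratic part of the driver into the left-hand side using the $\bmo$ bounds already at our disposal. First I would exploit the defining moment conditions $\mathbb{E}[H_i/\mathfrak{F}_{t_i}]=0$ and $\mathbb{E}[H_i(H_i)^{\bold T}/\mathfrak{F}_{t_i}]=\frac{c_i}{\delta t_i}I_{d\times d}$ together with $\tilde\delta Z_i^\pi=\mathbb{E}[\tilde\delta Y_{i+1}^\pi H_i/\mathfrak{F}_{t_i}]$. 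Setting $R_i:=\tilde\delta Y_{i+1}^\pi-\mathbb{E}[\tilde\delta Y_{i+1}^\pi/\mathfrak{F}_{t_i}]-\frac{\delta t_i}{c_i}(\tilde\delta Z_i^\pi)^{\bold T}H_i$, a direct computation gives $\mathbb{E}[R_i/\mathfrak{F}_{t_i}]=0$ and $\mathbb{E}[R_i(H_i)^{\bold T}/\mathfrak{F}_{t_i}]=0$, so that the conditional variance decomposition yields the key inequality
\begin{align*}
\mathbb{E}\big[|\tilde\delta Y_{i+1}^\pi|^2/\mathfrak{F}_{t_i}\big]\;\ge\;\big|\mathbb{E}[\tilde\delta Y_{i+1}^\pi/\mathfrak{F}_{t_i}]\big|^2+\frac{\delta t_i}{c_i}|\tilde\delta Z_i^\pi|^2 .
\end{align*}

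Next I would subtract the $Y$-lines of \eqref{linBTZ1} and \eqref{linBTZ3}, noting that the inner driver arguments are $\mathfrak{F}_{t_i}$-measurable, to obtain $\mathbb{E}[\tilde\delta Y_{i+1}^\pi/\mathfrak{F}_{t_i}]=\tilde\delta Y_i^\pi-\delta t_i\,\Delta_iG-\zeta_i^Y$ with $\Delta_iG:=G_i(\mathbb{E}[\tilde Y_{i+1}/\mathfrak{F}_{t_i}],\tilde Z_i)-G_i(\mathbb{E}[Y_{i+1}^\pi/\mathfrak{F}_{t_i}],Z_i^\pi)$. Substituting this into the inequality above, taking expectations, using $|\tilde\delta Y_i^\pi-\delta t_i\Delta_iG-\zeta_i^Y|^2\ge|\tilde\delta Y_i^\pi|^2-2\tilde\delta Y_i^\pi(\delta t_i\Delta_iG+\zeta_i^Y)$, and summing over $i$ (the $|\tilde\delta Y|^2$ terms telescope, leaving the boundary contribution $\mathbb{E}|\tilde\delta Y_N^\pi|^2$ as an upper bound) gives, since $c_i<C_2$,
\begin{align*}
\frac{1}{C_2}\sum_{i=0}^{N-1}\delta t_i\,\mathbb{E}|\tilde\delta Z_i^\pi|^2\;\le\;\mathbb{E}|\tilde\delta Y_N^\pi|^2+2\sum_{i=0}^{N-1}\delta t_i\,\mathbb{E}\big[\tilde\delta Y_i^\pi\,\Delta_iG\big]+2\sum_{i=0}^{N}\mathbb{E}\big[\tilde\delta Y_i^\pi\,\zeta_i^Y\big].
\end{align*}
The last sum already has the desired form, so the whole problem reduces to controlling the driver term.

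For that term I would invoke Assumption \ref{assum53}, which bounds $|\Delta_iG|$ by $\Lambda_y(1+|\tilde\delta Z_i^\pi|^{\alpha_0})|\mathbb{E}[\tilde\delta Y_{i+1}^\pi/\mathfrak{F}_{t_i}]|+\Lambda_z(1+\ell(|\mathbb{E}[\tilde\delta Y_{i+1}^\pi/\mathfrak{F}_{t_i}]|)(|\tilde Z_i|+|Z_i^\pi|))|\tilde\delta Z_i^\pi|$. Using the uniform $L^\infty$ bound on the $Y$-components (Lemma \ref{BTZbound}), which forces $\ell(\cdot)\le C$ by its local boundedness, and Young's inequality with a small parameter (with exponents $2/\alpha_0,\,2/(2-\alpha_0)$ for the $\alpha_0<2$ power), I would peel off a part $\frac{1}{2C_2}\delta t_i|\tilde\delta Z_i^\pi|^2$ to be reabsorbed on the left, leaving remainders of the form $C\delta t_i|\tilde\delta Y_i^\pi|^2$ and $C\delta t_i|\tilde\delta Y_i^\pi|^2(|\tilde Z_i|+|Z_i^\pi|)^2$. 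The first sums to at most $T\sup_i|\tilde\delta Y_i^\pi|^2$, while for the second Cauchy--Schwarz gives
\begin{align*}
\mathbb{E}\Big[\sum_{i}\delta t_i|\tilde\delta Y_i^\pi|^2(|\tilde Z_i|+|Z_i^\pi|)^2\Big]\le\mathbb{E}\big[\sup_i|\tilde\delta Y_i^\pi|^4\big]^{\frac12}\,\mathbb{E}\Big[\Big(\sum_i\delta t_i(|\tilde Z_i|+|Z_i^\pi|)^2\Big)^2\Big]^{\frac12},
\end{align*}
and the final factor is a constant by the discrete energy inequality applied to the $\bmo$ bounds of Lemma \ref{BTZbmo} and \eqref{611}. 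Collecting the pieces and using $\mathbb{E}[\sup_i|\tilde\delta Y_i^\pi|^2]\le\mathbb{E}[\sup_i|\tilde\delta Y_i^\pi|^4]^{1/2}$ delivers the claim.

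The hard part will be precisely this last step: the quadratic term $\Lambda_z\ell(\cdot)(|\tilde Z_i|+|Z_i^\pi|)|\tilde\delta Z_i^\pi|$ couples the control increment $\tilde\delta Z_i^\pi$, which must be absorbed on the left, with the a priori unbounded magnitudes $|\tilde Z_i|,|Z_i^\pi|$. Closing the estimate therefore hinges on upgrading the uniform conditional $\bmo$ bounds into finite \emph{second} moments of $\sum_i\delta t_i|Z_i|^2$ via the energy inequality, and on the uniform $L^\infty$ control of the $Y$-components so that $\ell$ is only ever evaluated at bounded arguments; balancing the Young parameter so that the reabsorbed coefficient stays below $\frac{1}{2C_2}$ while keeping these moment constants under control is the delicate point.
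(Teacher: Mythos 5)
Your proposal follows essentially the same route as the paper's proof: the orthogonality relations $\mathbb{E}[H_i/\mathfrak{F}_{t_i}]=0$, $\mathbb{E}[H_i(H_i)^{\bold T}/\mathfrak{F}_{t_i}]=\frac{c_i}{\delta t_i}I_{d\times d}$ isolate the term $\frac{\delta t_i}{c_i}|\tilde \delta Z_i^{\pi}|^2$ in a discrete energy identity, the $|\tilde \delta Y_i^{\pi}|^2$ contributions telescope, the driver contribution is absorbed via Young's inequality, and the quadratic remainder is closed by Cauchy--Schwarz against $\mathbb{E}[\sup_i|\tilde \delta Y_i^{\pi}|^4]^{1/2}$ together with the second-moment (energy-inequality) bound on $\sum_i\delta t_i(|\tilde Z_i|+|Z_i^{\pi}|)^2$ supplied by Lemma \ref{BTZbmo} and \eqref{611}. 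Your conditional-variance decomposition with the remainder $R_i$ is exactly the paper's representation via the auxiliary variables $\nu_i,\tilde\nu_i$ in \eqref{eq 6.2} and \eqref{eq 6.16}, just phrased as an orthogonal projection.

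The one substantive deviation is that you bound the driver increment $\Delta_iG$ directly from Assumption \ref{assum53}, whereas the paper first linearizes, writing $\Delta_iG=\tilde{\Gamma}_i\,\mathbb{E}[\tilde\delta Y_{i+1}^{\pi}/\mathfrak{F}_{t_i}]+\tilde{\Pi}_i\cdot\tilde\delta Z_i^{\pi}$ with the difference quotients of Lemma \ref{Lemm6.4}. This is not cosmetic. In the linearized form the $y$-quotient $\tilde{\Gamma}_i$ compares two evaluations of $G_i$ with the \emph{same} $z$-argument, so $|\tilde{\Gamma}_i|\leq\Lambda_y$ and no factor $|\tilde\delta Z_i^{\pi}|^{\alpha_0}$ survives; and the $z$-quotient $\tilde{\Pi}_i$ compares evaluations with the \emph{same} $y$-argument, so only $\ell(0)$ appears. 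Your direct bound instead carries the factor $\Lambda_y(1+|\tilde\delta Z_i^{\pi}|^{\alpha_0})|\mathbb{E}[\tilde\delta Y_{i+1}^{\pi}/\mathfrak{F}_{t_i}]|$ and evaluates $\ell$ at $|\mathbb{E}[\tilde\delta Y_{i+1}^{\pi}/\mathfrak{F}_{t_i}]|$. Your Young step with exponents $2/\alpha_0$ and $2/(2-\alpha_0)$ then leaves a remainder of order $\delta t_i\big(|\tilde\delta Y_i^{\pi}||\mathbb{E}[\tilde\delta Y_{i+1}^{\pi}/\mathfrak{F}_{t_i}]|\big)^{2/(2-\alpha_0)}$, not $C\delta t_i|\tilde\delta Y_i^{\pi}|^2$ as you claim: the resulting exponent $4/(2-\alpha_0)$ lies strictly between $2$ and $4$, so reducing this to the stated $\mathbb{E}[\sup_i|\tilde\delta Y_i^{\pi}|^4]^{1/2}$ requires the almost-sure uniform bound on \emph{both} $Y$-components (Lemma \ref{BTZbound} for the unperturbed scheme and, in the application, \eqref{77} for $\tilde Y_i=Y_{t_i}^n$) --- the same bound you invoke only to control $\ell$. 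With that bound cited explicitly at this step your argument closes and gives the lemma; the paper's linearization simply sidesteps both issues.
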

			The proof of the above lemma essentially follows the arguments in \cite[Proposition 2.12]{ChaRichou16} (see also \cite[Proposition 3.3.4]{Zhou}). For the sake of completeness, we will briefly reproduce it in the Appendix \ref{secauxres1} section below.

			\subsubsection{Time-discretization of FBSDEs with singular drifts coefficient} 
			In this subsection, we treat the numerical aspect of the FBSDE \eqref{2.1}-\eqref{2.2}.
			
			The following result pertains to the error between the BSDE \eqref{2.2} and the approximated solution \eqref{TruncY}. It can be viewed as an extension of \cite[Theorem 6]{ImkRhOliv24} to the case where the drift of the forward equation is merely bounded and Dini-continuous, with a path-dependent terminal value. We will not reproduce its proof here, as it can be carried out in a manner similar to that in \cite[Theorem 6]{ImkRhOliv24}, without any significant difficulties. In fact, one of the pivotal estimate for the proof of the theorem is given by \eqref{boundZ0}.
			\begin{thm}\label{thmconve1}
				Let  Assumption \ref{assum5.1} be in force. Let $(X,Y,Z)$ be the solution to \eqref{2.1}-\eqref{2.2} and $(X,Y^n,Z^n)$ be the solution of \eqref{2.1}-\eqref{TruncY}, $n\ge 1$. Then, for any $p > 1$ and $\kappa \geq 1$, there exists a positive finite constant $C(p,\kappa)$ such that for any $\nu\geq 1$ and any $n\in \mathbb{N}$, we have
				\begin{align*}
					\mathbb{E}\Big[ \sup_{t\in [0,T]} |Y_t^n -Y_t|^{2p} + \Big( \int_{0}^{T} |Z^n_s -Z_s|^2 \mathrm{d}s\Big)^p  \Big] \leq  {\frac{C(p,\kappa)}{n^q}}\wedge \exp(-C n^2).
			\end{align*}
		\end{thm}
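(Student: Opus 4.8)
The plan is to follow the strategy of \cite[Theorem 6]{ImkRhOliv24}, exploiting that $(Y,Z)$ and $(Y^n,Z^n)$ share the same bounded terminal value $\Phi(X)$ and are uniformly bounded in $\mathcal{S}^{\infty}\times\mathcal{H}_{\bmo}$ thanks to \eqref{77} and \eqref{boundZ0}. First I would set $\delta Y:=Y-Y^n$ and $\delta Z:=Z-Z^n$, so that $\delta Y_T=0$ and
\[
\delta Y_t=\int_t^T\big(g(s,X_s,Y_s,Z_s)-g_n(s,X_s,Y_s^n,Z_s^n)\big)\mathrm{d}s-\int_t^T\delta Z_s\,\mathrm{d}W_s.
\]
I would then split the driver difference into a linearisable part and a genuine truncation error,
\[
g(s,X_s,Y_s,Z_s)-g_n(s,X_s,Y_s^n,Z_s^n)=\underbrace{g_n(s,X_s,Y_s,Z_s)-g_n(s,X_s,Y_s^n,Z_s^n)}_{\text{linear part}}+\underbrace{g(s,X_s,Y_s,Z_s)-g_n(s,X_s,Y_s,Z_s)}_{=:\,\mathcal{R}^n_s}.
\]

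Second, I would linearise and change measure. The linear part can be written $\gamma_s\delta Y_s+\beta_s\cdot\delta Z_s$ using the Lipschitz bounds on $g_n$ recorded just after \eqref{truncdriver}; here $\beta_s$ generates a $\bmo$ martingale (because $Z$ and $Z^n$ are uniformly $\bmo$ by \eqref{77}), while $\exp(\int\gamma)$ lies in every $\mathcal{S}^p$ exactly as in Remark \ref{rem3.1}. Passing to the equivalent measure $\mathbb{Q}$ under which $\beta*W$ is absorbed (Girsanov together with the reverse H\"older inequality of Remark \ref{rem3.1}), I obtain the representation
\[
|\delta Y_t|\le\mathbb{E}^{\mathbb{Q}}\Big[\int_t^T e_se_t^{-1}|\mathcal{R}^n_s|\,\mathrm{d}s\,\Big/\mathfrak{F}_t\Big].
\]

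Third, I would bound the truncation error $\mathcal{R}^n_s$. Since $\|Y\|_{\infty}<\infty$ and $\|Y^n\|_{\infty}$ is uniformly bounded by \eqref{77}, for $n\ge n_0$ the $y$-truncation is inactive, so $\mathcal{R}^n_s$ is supported on $\{|Z_s|>n\}$ with $|\mathcal{R}^n_s|\le C(1+|Z_s|^2)\mathbf{1}_{\{|Z_s|>n\}}$ by the quadratic growth (iii) of Assumption \ref{assum5.1}. Applying the reverse H\"older inequality to return to $\mathbb{P}$ and Cauchy--Schwarz to peel off the indicator produces a high moment of $\sup_{t}|Z_t|$ (finite, and uniformly in $n$, by \eqref{boundZ0}) multiplied by the tail $\mathbb{P}(\sup_{t}|Z_t|>n)$. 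Chebyshev's inequality combined with \eqref{boundZ0} gives $\mathbb{P}(\sup_t|Z_t|>n)\le C_q n^{-q}$ for every $q$, which after taking $\sup_t$ and $2p$-th moments yields the $C(p,\kappa)n^{-q}$ term; the exponential alternative $\exp(-Cn^2)$ follows from the sub-Gaussian concentration of $\sup_t|Z_t|$ read off from the Malliavin representation $Z_t=(\nabla Y_t)^{\bold{T}}(\nabla X_t)^{-1}$ of Proposition \ref{Maldiff}, together with the exponential integrability of $\int_0^T|Z_s|^2\mathrm{d}s$ granted by the $\bmo$/John--Nirenberg machinery of Section \ref{section1}.

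Finally, for the control component I would invoke the standard $\bmo$ energy (stability) estimate for quadratic BSDEs, namely $\mathbb{E}(\int_0^T|\delta Z_s|^2\mathrm{d}s)^p\le C\big[\mathbb{E}|\delta Y_T|^{2p}+\mathbb{E}(\int_0^T|\mathcal{R}^n_s|^2\mathrm{d}s)^p\big]$ in the spirit of \cite[Corollary 3.9]{ImkRhOliv24}, and control its right-hand side by the very same truncation estimates. The main obstacle I anticipate is upgrading the polynomial decay to the exponential rate $\exp(-Cn^2)$: this demands genuine Gaussian (rather than merely $\bmo$) concentration of $\sup_t|Z_t|$, and one must verify that all the reverse-H\"older and Girsanov constants entering the change of measure are \emph{uniform} in the truncation level $n$, which hinges precisely on the $n$-independence of $\|Z*W\|_{\bmo}$ and $\|Z^n*W\|_{\bmo}$ guaranteed by \eqref{77}.
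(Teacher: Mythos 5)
Your proposal is correct and takes essentially the same route as the paper: the paper in fact gives no proof at all, deferring to \cite[Theorem 6]{ImkRhOliv24} and noting only that the pivotal estimate is \eqref{boundZ0}, and your reconstruction (linearisation of $g_n$, Girsanov/reverse-H\"older change of measure, truncation error supported on $\{|Z_s|>n\}$, Chebyshev via the $L^p$-bound on $\sup_t|Z_t|$) is precisely that argument carried out with the same two inputs \eqref{77} and \eqref{boundZ0}. Your closing caveat is also well placed: the $\exp(-Cn^2)$ alternative genuinely requires sub-Gaussian concentration of $\sup_t|Z_t|$, which neither \eqref{boundZ0} nor the paper's one-line justification supplies, so on this point your proposal is no less rigorous than the paper itself.
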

		
		{\bf The perturbed explicit BTZ scheme:}
		Let us observe that the truncated solution $(Y^n,Z^n)$ to the BSDE \eqref{TruncY} can be expressed as a perturbed BTZ scheme \eqref{linBTZ3} as follows: set $\tilde{Y}_i:= Y_{t_i}^n$
		\begin{align}\label{linBTZ2}
			\begin{cases}
				\tilde Y_i^n = \mathbb{E}\Big[ 	\tilde Y_{i+1}^n + g_n(t_i,\mathcal{X}_i^{\pi},\mathbb{E}[	\tilde Y_{i+1}^n/\mathfrak{F}_{t_{i}}],\tilde Z_{i}^n)\delta t_i\big/ \mathfrak{F}_{t_{i}}  \Big] +\zeta_i^{Y},\\
				\tilde Z_{i}^n= \mathbb{E}[\tilde Y_{i+1}^nH_i^R\big/\mathfrak{F}_{t_{i}}],\\
				\tilde Y_N^n= \xi^{\pi}, \,\,\, \tilde Z_N^n= 0,
			\end{cases}
		\end{align}
		where 
		\begin{align}\label{eq712}  \zeta_i^{Y} = \mathbb{E}\Big[\int_{t_i}^{t_{i+1}} g_n(s,X_s,Y_s^n,Z_s^n)- g_n(t_i,\mathcal{X}_i^{\pi},\mathbb{E}[	\tilde Y_{i+1}^n/\mathfrak{F}_{t_{i}}],\tilde Z_{i}^n)\mathrm{d}s\big/ \mathfrak{F}_{t_{i}}  \Big].
		\end{align}
	The following result plays a key role in the proof of the main error estimates of the BTZ scheme, as it allows the application of the result derived earlier in their proof 
	\begin{lemm}\label{lemm74}
		The perturbed scheme $(\tilde{Y}_i,\tilde{Z}_i)$ solution to \eqref{linBTZ2} satisfies for all $0\leq i \leq N-1$
		\begin{align*}
			\mathbb{E}\Big[\sum_{j=i}^{N-1}|\tilde{Z}_i^n|^2\delta t_j\big/\mathfrak{F}_{t_{i}}\Big] \leq c.
		\end{align*}
	\end{lemm}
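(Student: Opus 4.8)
The plan is to exploit the fact that the perturbed scheme \eqref{linBTZ2} is built on the \emph{exact} sampled values $\tilde Y_i = Y_{t_i}^n$ of the truncated BSDE \eqref{TruncY}, so that its control $\tilde Z_i^n = \mathbb{E}[\tilde Y_{i+1}^n H_i^R/\mathfrak{F}_{t_i}]$ can be compared directly with the genuine control process $Z^n$, for which the uniform bound \eqref{77} is available; this makes the statement the perturbed-scheme analogue of Lemma \ref{BTZbmo}. First I would record the two elementary properties of the clipped weights defined in \eqref{eq79}: since each $H_i^R$ depends only on $\Delta W_{t_i}$ and the clipping is an odd function of a symmetric Gaussian, one has $\mathbb{E}[H_i^R/\mathfrak{F}_{t_i}]=0$ and $\mathbb{E}[|H_i^R|^2/\mathfrak{F}_{t_i}] = d\,c_R/\delta t_i$ for a constant $c_R\in(0,1]$ depending only on $R$. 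In particular $H_i^R$ is centered and independent of $\mathfrak{F}_{t_i}$.

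Because $H_i^R$ is centered, I can replace $\tilde Y_{i+1}^n$ by its conditional fluctuation, writing $\tilde Z_i^n = \mathbb{E}[(Y_{t_{i+1}}^n - \mathbb{E}[Y_{t_{i+1}}^n/\mathfrak{F}_{t_i}])H_i^R/\mathfrak{F}_{t_i}]$. Conditional Cauchy--Schwarz together with the second-moment bound on $H_i^R$ then yields
\[
\delta t_i|\tilde Z_i^n|^2 \leq C\,\mathbb{E}\big[|Y_{t_{i+1}}^n - \mathbb{E}[Y_{t_{i+1}}^n/\mathfrak{F}_{t_i}]|^2/\mathfrak{F}_{t_i}\big].
\]
To estimate the conditional variance on the right I would use \eqref{TruncY} on $[t_i,t_{i+1}]$ to get $Y_{t_{i+1}}^n - \mathbb{E}[Y_{t_{i+1}}^n/\mathfrak{F}_{t_i}] = \int_{t_i}^{t_{i+1}}Z_s^n\mathrm{d}W_s - \big(\int_{t_i}^{t_{i+1}}g_n\,\mathrm{d}s - \mathbb{E}[\int_{t_i}^{t_{i+1}}g_n\,\mathrm{d}s/\mathfrak{F}_{t_i}]\big)$, and then It\^o's isometry for the martingale part and Cauchy--Schwarz for the centered drift to obtain
\[
\delta t_i|\tilde Z_i^n|^2 \leq C\,\mathbb{E}\Big[\int_{t_i}^{t_{i+1}}|Z_s^n|^2\mathrm{d}s/\mathfrak{F}_{t_i}\Big] + C\,\delta t_i\,\mathbb{E}\Big[\int_{t_i}^{t_{i+1}}|g_n(s,X_s,Y_s^n,Z_s^n)|^2\mathrm{d}s/\mathfrak{F}_{t_i}\Big].
\]

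Summing over $j=i,\dots,N-1$ and applying the tower property under $\mathfrak{F}_{t_i}$ reduces the claim to controlling two terms. The first, $\mathbb{E}[\int_{t_i}^T|Z_s^n|^2\mathrm{d}s/\mathfrak{F}_{t_i}]$, is bounded by $\|Z^n*W\|_{\bmo}^2$, hence by a constant independent of $\pi$ and $n$ thanks to \eqref{77}. The second carries the factor $h=\sup_j\delta t_j$; using the quadratic growth of $g_n$ (uniform in $n$, from Assumption \ref{assum5.1}(iii)), the uniform bound $\|Y^n\|_\infty\le C$ and the local boundedness of $\ell$, one has $|g_n|^2\le C(1+|Z_s^n|^2+|Z_s^n|^4)$, so it remains to bound $h\,\mathbb{E}[\int_{t_i}^T|Z_s^n|^4\mathrm{d}s/\mathfrak{F}_{t_i}]$ uniformly. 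I expect this to be the main obstacle, since it requires uniform higher-moment integrability of $Z^n$. I would extract it by writing $\int_{t_i}^T|Z_s^n|^4\mathrm{d}s\le \sup_s|Z_s^n|^2\int_{t_i}^T|Z_s^n|^2\mathrm{d}s$ and combining the uniform $\sup$-moment estimate of Remark \ref{remext1} / \eqref{boundZ0} with the energy (John--Nirenberg) inequality for the $\bmo$ martingale $Z^n*W$ (see \cite{Kazamaki}), which gives $\mathbb{E}[(\int_{t_i}^T|Z_s^n|^2\mathrm{d}s)^2/\mathfrak{F}_{t_i}]\le C\|Z^n*W\|_{\bmo}^4$. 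As this quantity is uniformly bounded and multiplied by $h\le T$, the second term is harmless, and collecting the two contributions gives the asserted uniform bound $\mathbb{E}[\sum_{j=i}^{N-1}|\tilde Z_j^n|^2\delta t_j/\mathfrak{F}_{t_i}]\le c$ with $c$ independent of $\pi$ and $n$.
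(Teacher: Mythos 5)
Your reduction---centering the clipped weights $H_i^R$, conditional Cauchy--Schwarz against $\mathbb{E}[|H_i^R|^2/\mathfrak{F}_{t_i}]\le d/\delta t_i$, and the BSDE decomposition of $Y^n_{t_{i+1}}-\mathbb{E}[Y^n_{t_{i+1}}/\mathfrak{F}_{t_i}]$---is sound and close in spirit to the paper's proof, which routes the same computation through the intermediate processes $\bar{\bar{Z}}^n$ and $\hat{Z}^n$. The gap is in your treatment of the drift term. By applying Cauchy--Schwarz in time, $\bigl|\int_{t_j}^{t_{j+1}}g_n\,\mathrm{d}s\bigr|^2\le \delta t_j\int_{t_j}^{t_{j+1}}|g_n|^2\,\mathrm{d}s$, you are forced to bound $h\,\mathbb{E}\bigl[\int_{t_i}^T|g_n|^2\,\mathrm{d}s/\mathfrak{F}_{t_i}\bigr]$, and since $|g_n|^2\le C(1+|Z^n_s|^2+|Z^n_s|^4)$ this requires an \emph{almost sure}, uniform-in-$n$ bound on the conditional quantity $\mathbb{E}\bigl[\int_{t_i}^T|Z^n_s|^4\,\mathrm{d}s/\mathfrak{F}_{t_i}\bigr]$. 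Your proposed tool for the factor $\sup_s|Z^n_s|^2$ is \eqref{boundZ0}, but that is an \emph{unconditional} moment estimate $\mathbb{E}[\sup_t|Z^n_t|^p]\le C$: it controls the mean of $\mathbb{E}[\sup_s|Z^n_s|^4/\mathfrak{F}_{t_i}]$, not its essential supremum, so it cannot deliver the a.s. conditional bound that the lemma asserts---and that bound is exactly what is needed downstream, to verify \eqref{611} and invoke Proposition \ref{propB7}. Indeed, the paper's own proof stresses that no almost sure bound of the type $|Z^n_t|\le C(1+\sup_t|X_t|)$ is available in this setting, which is precisely the kind of control your step would implicitly require.

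The repair is to avoid the time Cauchy--Schwarz altogether: after summing, use $\sum_{j\ge i}\bigl|\int_{t_j}^{t_{j+1}}g_n\,\mathrm{d}s\bigr|^2\le\bigl(\int_{t_i}^T|g_n|\,\mathrm{d}s\bigr)^2$ together with the quadratic growth $|g_n|\le C(1+|Z^n_s|^2)$, so that the drift contribution becomes $CT^2+C\,\mathbb{E}\bigl[\bigl(\int_{t_i}^T|Z^n_s|^2\,\mathrm{d}s\bigr)^2/\mathfrak{F}_{t_i}\bigr]$. This quantity \emph{is} bounded a.s. and uniformly in $n$, either by the energy inequality for the $\bmo$ martingale $Z^n*W$ (which you correctly invoke for exactly this object), or, as the paper does in \eqref{716}, by applying the It\^o--Krylov formula to the Lyapunov function of Lemma \ref{lyapunov} and squaring. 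With that single change your argument closes, and the remaining ingredients---It\^o isometry for the martingale part and the uniform $\bmo$ bound \eqref{77}---are used identically in both proofs.
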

	\begin{proof}
		See Appendix \ref{secauxres1}.
	\end{proof}
	
	{\begin{remark}\label{rem75}
			We observe that the processes $\hat{Z}^n$ and $\bar{\bar{Z}}^n$, given respectively by \eqref{eq715} and \eqref{eq715}, are also $\bmo\text{-martingales}$ under the underlying probability $\mathbb{P}$. In addition, their $\bmo\text{-norms}$ depend on $\sup_{n\in\mathbb{N}}\|Z^n*W\|_{\bmo}$.
	\end{remark}}
	
	\begin{remark}\label{rem76} We carefully underline that, under Assumption \ref{assum5.1}, the scheme \eqref{linBTZ2} satisfies Assumption \ref{assum56}. Moreover, there exists a constant $\epsilon \in (0,1)$ that does not depend on $N,n$ and $R$, such that 
		\begin{align}
			h\Lambda_y + \sup_{0\leq i\leq N-1}\delta t_i| H_i^R|\ell(0)\Lambda_z \leq 1-\epsilon.
		\end{align}
	\end{remark}
	Thanks to Lemmas \ref{BTZbound} and \ref{BTZbmo}, we deduce that 
	\begin{cor} Under Assumption \ref{assum5.1} and for $N$ large enough, the following holds
		\begin{align*}
			\sup_{0\leq i\leq N}\Big(|\mathcal{Y}^{\pi}_i| + \mathbb{E}\Big[\sum_{k=i}^{N-1}|\mathcal{Z}_k^{\pi}|^2\delta t_k/\mathfrak{F}_{t_i}\Big] \Big) \leq C.
		\end{align*}
	\end{cor}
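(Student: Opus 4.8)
The plan is to read the explicit scheme \eqref{BTZ} as a special case of the centered BTZ scheme \eqref{linBTZ1} and then to invoke Lemmas~\ref{BTZbound} and \ref{BTZbmo} verbatim. Concretely, I would set $G_i(y,z):=g_n(t_i,\mathcal{X}_i^{\pi},y,z)$, take the terminal value $\xi:=\Phi(\mathcal{X}^{\pi})$, and choose the weights $H_i:=H_i^R$ from \eqref{eq79}. The first task is to check that $(H_i^R)_{0\le i<N}$ satisfies the structural conditions on the random vectors in \eqref{linBTZ1}. Each $H_i^R$ is $\mathfrak{F}_{t_{i+1}}$-measurable by construction; writing $G:=\Delta W_{t_i}^k/\sqrt{\delta t_i}\sim\mathcal{N}(0,1)$, its $k$-th component equals $\tfrac{1}{\sqrt{\delta t_i}}\big(R\wedge G\vee(-R)\big)$, whose law is symmetric, so $\mathbb{E}[H_i^R/\mathfrak{F}_{t_i}]=0$. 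Since the $d$ components are independent and identically distributed, one gets $\mathbb{E}[H_i^R (H_i^R)^{\top}/\mathfrak{F}_{t_i}]=\tfrac{c_i}{\delta t_i}I_{d\times d}$ with $c_i=\mathbb{E}\big[(R\wedge G\vee(-R))^2\big]=:c(R)$ depending on $R$ alone; hence $0<C_1\le c(R)<1=:C_2$ uniformly in $i$ and $\pi$, exactly as required.

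Next I would verify that $G_i$ fulfils Assumption~\ref{assum53} (which contains \ref{assum52} and \ref{assum56}). By Remark~\ref{rem76}, under Assumption~\ref{assum5.1} the scheme already obeys Assumption~\ref{assum56}, with an $\epsilon\in(0,1)$ independent of $N,n,R$ such that $h\Lambda_y+\sup_{0\le i<N}\delta t_i|H_i^R|\ell(0)\Lambda_z\le 1-\epsilon$; this is precisely where the hypothesis ``$N$ large enough'' is used, since $\delta t_i|H_i^R|\le\sqrt{d}\,R\,\sqrt{\delta t_i}\le\sqrt{d}\,R\,\sqrt{h}\to0$ as $N\to\infty$. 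The remaining requirements are immediate: $\|\xi\|_\infty\le\|\Phi\|_\infty<\infty$ by Assumption~(A3), while $\sup_i|G_i(0,0)|=\sup_i|g_n(t_i,\mathcal{X}_i^{\pi},0,0)|\le\Lambda_0$ and the special quadratic Lipschitz bound of Assumption~\ref{assum53}, with constants $\Lambda_y,\Lambda_z$ independent of $n$, follow from the growth and the $1$-Lipschitz truncation construction of $g_n$ recorded after \eqref{truncdriver}.

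With these verifications, Lemma~\ref{BTZbound} applies directly and yields $\sup_{0\le i\le N}|\mathcal{Y}_i^{\pi}|\le(\|\Phi\|_\infty+\Lambda_0 T)\exp(1+\Lambda_y T)$, a bound uniform in $i,n,N$ and $\pi$. In the same way, Lemma~\ref{BTZbmo} applied to $(\mathcal{Y}^{\pi},\mathcal{Z}^{\pi})$ produces a constant $C>0$, independent of $\pi$ (and of $n,R$ through the uniform control $\sup_n\|Z^n*W\|_{\bmo}<\infty$ from \eqref{77} and Remark~\ref{rem75}), such that $\mathbb{E}\big[\sum_{k=i}^{N-1}\delta t_k|\mathcal{Z}_k^{\pi}|^2/\mathfrak{F}_{t_i}\big]\le C$ for every $i$. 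Summing the two estimates and taking the supremum over $0\le i\le N$ gives the asserted bound.

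The delicate point --- and the only real work beyond the two lemmas --- is the uniformity bookkeeping: one must ensure that $c(R)$, the constants $\Lambda_0,\Lambda_y,\Lambda_z$, and the reverse-H\"older exponent hidden in Lemma~\ref{BTZbmo} do not degenerate as $n\to\infty$, $N\to\infty$ or as the partition $\pi$ is refined, so that the final $C$ is genuinely uniform. In particular, I would stress that the cruder, $n$-dependent Lipschitz constant $\Lambda_z(n)$ intervenes only in Assumption~\ref{assum56}(ii), where it is tamed by enlarging $N$, whereas the $\bmo$ estimate of Lemma~\ref{BTZbmo} rests solely on the $n$-independent special quadratic structure of $g_n$; this separation is what keeps the constant $C$ free of $n$.
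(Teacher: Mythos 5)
Your proposal is correct and follows essentially the same route as the paper: the paper's proof of this corollary is precisely the invocation of Lemmas \ref{BTZbound} and \ref{BTZbmo} applied to the explicit scheme \eqref{BTZ} viewed as an instance of \eqref{linBTZ1}, with the hypothesis verification (in particular the role of ``$N$ large enough'' via Remark \ref{rem76}) handled exactly as you describe. Your explicit checks of the moment structure of $H_i^R$ and of the $n$-independence of the constants in Assumption \ref{assum53} simply spell out what the paper leaves implicit; the only slight inaccuracy is the parenthetical attributing the $n$-uniformity of Lemma \ref{BTZbmo}'s constant to \eqref{77} and Remark \ref{rem75}, but your final paragraph gives the correct reason (the $\bmo$ bound rests on the $n$-independent quadratic structure, not on $\Lambda_z(n)$), so nothing is lost.
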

	We are now ready to prove Theorem \ref{Main63} which follows from the following propositions 
	\begin{prop}\label{Prop64} Let assumptions of Theorem \ref{Main63} be in force. Then, there exists $q^{*}>1$ such that for all $p\geq 1$
		\begin{align*}
			\mathbb{E}\Big[\sup_{0\leq i\leq N}|Y_{t_i}-\mathcal{Y}_{i}^{\pi}|^{2p}\Big]\leq \mathbb{E}\Big[|\Phi(X)-\Phi(\mathcal{X}^{\pi})|^{2pq^{*}}\Big]^{\frac{1}{q^{*}}}+ C_p\sup_{0\leq i\leq N-1}\mathbb{E}[|X_{t_i}-\mathcal{X}_i^{\pi}|^{4pq^{*}}]^{\frac{1}{2q^{*}}}+ C h^p.
		\end{align*}
	\end{prop}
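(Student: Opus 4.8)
The plan is to pivot through the truncated solution $Y^n$ of \eqref{TruncY} and split $Y_{t_i}-\mathcal{Y}_i^{\pi}=(Y_{t_i}-Y^n_{t_i})+(Y^n_{t_i}-\mathcal{Y}_i^{\pi})$. The first difference is handled directly by Theorem \ref{thmconve1}: choosing the truncation level $n=n(N)$ large enough (the bound there decays faster than any power of $h$), the term $\mathbb{E}[\sup_i|Y_{t_i}-Y^n_{t_i}|^{2p}]$ is absorbed into $Ch^p$. For the second difference I would read $Y^n$ at the grid points as the perturbed scheme \eqref{linBTZ2}, i.e. $\tilde Y_i=Y^n_{t_i}$, so that $Y^n_{t_i}-\mathcal{Y}_i^{\pi}=\tilde\delta Y_i^{\pi}$, with terminal value $\tilde\delta Y_N^{\pi}=\Phi(X)-\Phi(\mathcal{X}^{\pi})$ and one-step perturbations $\zeta_i^{Y}$ given by \eqref{eq712}. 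Remark \ref{rem76} and Lemma \ref{lemm74} guarantee that Assumptions \ref{assum56}--\ref{assum53} and \eqref{611} hold for \eqref{linBTZ2}, so the stability estimate of Lemma \ref{lemm68} applies.

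Applying Lemma \ref{lemm68}, taking the supremum over $i$, raising to the power $2p$ and taking expectation, I would split the bound into a terminal contribution and a perturbation contribution. The terminal martingale $i\mapsto\mathbb{E}[|\tilde\delta Y_N^{\pi}|^{q^*}/\mathfrak{F}_{t_i}]$ is treated with Doob's maximal inequality followed by conditional Jensen and H\"older, which produces exactly $\mathbb{E}[|\Phi(X)-\Phi(\mathcal{X}^{\pi})|^{2pq^*}]^{1/q^*}$. It then remains to prove $\mathbb{E}\big[\big(\sum_{j=0}^{N-1}|\zeta_j^{Y}|\big)^{2p}\big]\le Ch^p+C_p\sup_i\mathbb{E}[|X_{t_i}-\mathcal{X}_i^{\pi}|^{4pq^*}]^{1/2q^*}$, again using Doob to pass from the conditional partial sums $\sum_{j\ge i}$ to the full sum $\sum_{j\ge 0}$.

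To estimate $\zeta_j^{Y}$ I would insert intermediate arguments into the driver difference in \eqref{eq712} and invoke Assumption \ref{assum5.1} componentwise (recalling that $g_n$ inherits the growth/Lipschitz constants of $g$). The time-increment gives $\Lambda_t|s-t_j|^{1/2}\le\Lambda_t h^{1/2}$, whose summed $2p$-th power is $O(h^p)$. The $x$-increment is controlled through Assumption \ref{assum5.1}(i) by $\Lambda_x(1+|Y^n_s|+\ell(|Y^n_s|)|Z^n_s|^{\alpha_0})|X_s-\mathcal{X}_j^{\pi}|$; splitting $|X_s-\mathcal{X}_j^{\pi}|\le|X_s-X_{t_j}|+|X_{t_j}-\mathcal{X}_j^{\pi}|$ and applying H\"older separates the coefficient (bounded in every $L^p$ uniformly in $n$ by \eqref{77} and Remark \ref{remext1}, together with the uniform bound on $Y^n$) from the spatial error; the first part yields $O(h^{1/2})$ by the boundedness of $b$ and the standard moment estimates of Theorem \ref{th 2.5}, while the second yields precisely $\sup_i\mathbb{E}[|X_{t_i}-\mathcal{X}_i^{\pi}|^{4pq^*}]^{1/2q^*}$, the exponent $q^*$ being the conjugate produced by the reverse H\"older inequality of the change of measure. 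The $y$-increment reduces, via the $y$-Lipschitz part of Assumption \ref{assum5.1}(ii) with equal $z$-arguments, to $\Lambda_y|Y^n_s-\mathbb{E}[Y^n_{t_{j+1}}/\mathfrak{F}_{t_j}]|$, which is $O(h^{1/2})$ by the path regularity of $Y^n$ (Theorem \ref{thm1.1}) and hence contributes $O(h^p)$.

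The genuine difficulty is the $z$-increment $\Lambda_z(1+\ell(0)(|Z^n_s|+|\tilde Z_j^n|))|Z^n_s-\tilde Z_j^n|$, where $\tilde Z_j^n=\mathbb{E}[Y^n_{t_{j+1}}H_j^R/\mathfrak{F}_{t_j}]$ is only a BMO-type, not uniformly bounded, approximation of $Z^n$. Here I would pass to the measure $\mathbb{Q}$ (respectively $\mathbb{Q}^N$) under which the linearised dynamics is tractable, use the reverse H\"older inequality (Remark \ref{rem3.1}, Proposition \ref{propB7}) to detach the BMO factor $1+\ell(0)(|Z^n_s|+|\tilde Z_j^n|)$ at the cost of the conjugate exponent $q^*$, and then invoke the refined conditional $L^2$-time regularity of the control of Proposition \ref{cor 1.3} to bound $\sum_j\int_{t_j}^{t_{j+1}}|Z^n_s-\hat Z^n_{t_j}|^2\mathrm{d}s$ by $O(h)$, after first absorbing the discrepancy between $\tilde Z_j^n$ and the $L^2$-projection $\hat Z^n_{t_j}$ caused by the truncation $H_j^R$. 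This contributes $O(h^p)$ and is precisely where the BMO machinery and Proposition \ref{cor 1.3} are indispensable. Collecting the four contributions together with the terminal term, and recalling that all constants are uniform in $n$, yields the claimed estimate.
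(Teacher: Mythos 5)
Your proposal is correct and follows essentially the same route as the paper: pivot through the truncated solution $Y^n$ via Theorem \ref{thmconve1}, view $Y^n$ at grid points as the perturbed scheme \eqref{linBTZ2}, apply the stability Lemma \ref{lemm68} with Doob/Jensen/H\"older, and estimate the perturbations $\zeta_i^Y$ term by term (time, $x$, $y$, $z$ increments) using the path regularity of $Y^n$, the moment bound \eqref{boundZ0}, the BMO/reverse H\"older machinery and Proposition \ref{cor 1.3}. The only cosmetic difference is that the paper makes the $z$-increment chain explicit through the two intermediate processes $\hat Z^n$ and $\bar{\bar Z}^n$ (estimates \eqref{721}--\eqref{724}), whereas you compress these into a single ``discrepancy'' step, which is the same argument written more tersely.
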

	
	\begin{prop}\label{Prop77}
		Under the conditions of Theorem \ref{Main63} there exists $q^{*}>1$ such that 
		\begin{align*}
			\mathbb{E}\Big[\int_{0}^{T}|Z_{s}-\mathcal{Z}^{\pi}(s)|^2\mathrm{d}s\Big]
			\leq C\Big( \mathbb{E}\Big[|\Phi(X)-\Phi(\mathcal{X}^{\pi})|^{8q^{*}}\Big]^{\frac{1}{8q^{*}}}h^{\frac{1}{2}}+  \sup_{0\leq j\leq N-1}\mathbb{E}\Big[|X_{t_j}-\mathcal{X}_j^{\pi}|^{16q^{*}}\Big]^{\frac{1}{16q^{*}}}h^{\frac{1}{2}} +  h \Big).
		\end{align*}
	\end{prop}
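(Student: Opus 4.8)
The plan is to estimate the control error through a three--term decomposition and to invoke, successively, the truncation estimate, the $L^2$--path regularity, and the stability of the centered scheme. Since both $\mathcal{Z}^{\pi}$ and $Z^{n}$ are built from the Lipschitz driver $g_n$, I would first pass from $Z$ to the truncated control $Z^n$: by the triangle inequality and Young's inequality, $\mathbb{E}\int_0^T|Z_s-\mathcal{Z}^{\pi}(s)|^2\mathrm{d}s$ is controlled by $\mathbb{E}\int_0^T|Z_s-Z^n_s|^2\mathrm{d}s$ plus $\mathbb{E}\int_0^T|Z^n_s-\mathcal{Z}^{\pi}(s)|^2\mathrm{d}s$. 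The first summand is handled by Theorem \ref{thmconve1}: choosing the truncation level $n$ as a suitable power of $N$ renders it $o(h)$, hence negligible against the target rate. For the second, on each cell $s\in[t_j,t_{j+1})$ I write $Z^n_s-\mathcal{Z}^{\pi}_j=(Z^n_s-\hat Z^n_{t_j})+(\hat Z^n_{t_j}-\mathcal{Z}^{\pi}_j)$, with $\hat Z^n_{t_j}=\tfrac{1}{\delta t_j}\mathbb{E}[\int_{t_j}^{t_{j+1}}Z^n_s\,\mathrm{d}s/\mathfrak{F}_{t_j}]$; because $\hat Z^n_{t_j}$ is the conditional average of $Z^n$ and $\hat Z^n_{t_j}-\mathcal{Z}^{\pi}_j$ is $\mathfrak{F}_{t_j}$--measurable, the cross term vanishes after applying $\mathbb{E}\int_{t_j}^{t_{j+1}}\cdot\,\mathrm{d}s$, leaving two clean pieces.

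The path-regularity piece $\sum_j\mathbb{E}\int_{t_j}^{t_{j+1}}|Z^n_s-\hat Z^n_{t_j}|^2\mathrm{d}s$ is bounded, by the minimizing property of the conditional average, by $\sum_j\mathbb{E}\int_{t_j}^{t_{j+1}}|Z^n_s-Z^n_{t_j}|^2\mathrm{d}s$, and the latter is at most $Ch$ by Theorem \ref{thm1.1} applied to the truncated FBSDE (the case $p=1$), the constant being uniform in $n$ thanks to \eqref{77}.

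The heart of the argument is the scheme-stability piece $\sum_j\delta t_j\,\mathbb{E}|\hat Z^n_{t_j}-\mathcal{Z}^{\pi}_j|^2$. Here I would identify $Y^n$ at the grid points with the perturbed scheme \eqref{linBTZ2}, whose control $\tilde Z^n_j$ differs from $\hat Z^n_{t_j}$ only through the $H^R$--truncation and a driver remainder (made negligible by taking $R$ large, using Lemma \ref{lemm74} to secure \eqref{611}), and then apply the stability estimate of Lemma \ref{lemm69} to $\tilde Z^n_j-\mathcal{Z}^{\pi}_j$. This bounds the piece by the terminal error $\mathbb{E}|\tilde\delta Y^{\pi}_N|^2=\mathbb{E}|\Phi(X)-\Phi(\mathcal{X}^{\pi})|^2$, the term $\mathbb{E}[\sup_i|\tilde\delta Y^{\pi}_i|^4]^{1/2}$, and the coupling term $\mathbb{E}[\sum_i\tilde\delta Y^{\pi}_i\,\zeta^Y_i]$. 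The first two are controlled by Proposition \ref{Prop64} (taken with $p=1$ and $p=2$), while for the coupling term I would use Cauchy--Schwarz to separate $\mathbb{E}[\sup_i|\tilde\delta Y^{\pi}_i|^2]^{1/2}$, again estimated by Proposition \ref{Prop64}, from $\mathbb{E}[(\sum_i|\zeta^Y_i|)^2]^{1/2}$.

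The main obstacle is the control of the perturbation $\zeta^Y_i$ of \eqref{eq712} together with the ensuing H\"older bookkeeping. Splitting $\zeta^Y_i$ via Assumption \ref{assum5.1}, the $\tfrac{1}{2}$--H\"older-in-time bound and the $L^2$--path regularity of $(Y^n,Z^n)$ produce a genuinely temporal contribution of size $h^{1/2}$ (since $\sum_i(\delta t_i)^{3/2}\le Th^{1/2}$), while the spatial Lipschitz-type bound produces the forward error $|X_{t_i}-\mathcal{X}^{\pi}_i|$. Because $g$ is only stochastically Lipschitz in $y$ and quadratic in $z$, each such estimate must be carried out under the measure $\mathbb{Q}$ of Remark \ref{rem3.1} and combined with the reverse-H\"older inequality of Proposition \ref{propB7}, so that the growth factors $(1+\ell(|Y^n|)|Z^n|)$ are absorbed by the BMO integrability \eqref{77}. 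It is exactly this cascade of H\"older and reverse-H\"older steps, with conjugate exponent $q^{*}$, that forces the high moments $8q^{*}$ and $16q^{*}$ and, multiplied by the temporal factor $h^{1/2}$, yields the two weighted terms in the statement; the residual purely temporal contributions collect into the additive $Ch$. Assembling the truncation, path-regularity and stability pieces then completes the proof.
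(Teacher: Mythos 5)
Your architecture is essentially the paper's: truncation via Theorem \ref{thmconve1}, the conditional-average/path-regularity piece via Theorem \ref{thm1.1}, and the scheme-stability piece via Lemma \ref{lemm69} followed by Proposition \ref{Prop64}. However, there is a genuine gap at the crux of the argument, namely the coupling term $\mathbb{E}\big[\sum_i \tilde\delta Y_i^{\pi}\zeta_i^{Y}\big]$. You propose to bound it by Cauchy--Schwarz as $\mathbb{E}[\sup_i|\tilde\delta Y_i^{\pi}|^2]^{1/2}\,\mathbb{E}[(\sum_i|\zeta_i^{Y}|)^2]^{1/2}$, but this fails to deliver the rate $h^{1/2}$ for the component $\zeta_i^{Y,\hat z}$, the one controlled by $\int_{t_i}^{t_{i+1}}\big(1+\ell(|Y^n_s|)(|Z^n_s|+|\hat Z^n_s|)\big)|Z^n_s-\hat Z^n_s|\,\mathrm{d}s$. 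Indeed, since the weight $1+\ell(|Y^n|)(|Z^n|+|\hat Z^n|)$ is unbounded (there is no pointwise bound on $Z^n$ in this setting, only BMO/moment bounds), the best one can do with unconditional H\"older is
\begin{align*}
\mathbb{E}\Big[\Big(\sum_i|\zeta_i^{Y,\hat z}|\Big)^2\Big]
\leq \Big\|\int_0^T\big(1+\ell(|Y^n|)(|Z^n|+|\hat Z^n|)\big)^2\mathrm{d}s\Big\|_{L^{q}}\,
\Big\|\int_0^T|Z^n_s-\hat Z^n_s|^2\mathrm{d}s\Big\|_{L^{q'}},
\end{align*}
and while Theorem \ref{thm1.1} gives $\|\int_0^T|Z^n-\hat Z^n|^2\mathrm{d}s\|_{L^{q'}}\leq C h^{1/q'}$ for every $q'>1$, the limit $q'\downarrow 1$ is not attainable (the conjugate BMO factor blows up), so this route yields only $h^{1/2-\epsilon}$, short of the stated endpoint $h^{1/2}$. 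This is exactly why the paper does \emph{not} apply Cauchy--Schwarz to $\zeta^{Y,\hat z}$: it keeps the pairing $\mathbb{E}\big|\sum_j(\tilde Y_{t_j}^n-\mathcal{Y}_j^{\pi})\zeta_j^{Y,\hat z}\big|$, conditions on $\mathfrak{F}_{t_i}$, uses conditional Cauchy--Schwarz together with the conditional BMO bound for the weight, and then invokes Proposition \ref{cor 1.3} --- the uniform-in-$i$ conditional path-regularity estimate under the equivalent measure, which is one of the stated main results precisely designed for this step --- followed by Doob's maximal inequality. Your proposal never invokes Proposition \ref{cor 1.3}, and without it the endpoint rate in the statement is out of reach.

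A secondary issue: you treat the passage from $\hat Z^n_{t_j}$ to the scheme control $\tilde Z^n_j$ as a remainder ``made negligible by taking $R$ large.'' This is not a legitimate move: $R$ is a fixed parameter of the scheme \eqref{BTZ} whose error is being quantified, and moreover the stability framework itself (Assumption \ref{assum56}(ii), Remark \ref{rem76}) requires $\sup_i \delta t_i|H_i^R|\,\ell(0)\Lambda_z\leq 1-\epsilon$, which caps how large $R$ may be relative to $h$. The paper instead inserts the intermediate process $\bar{\bar Z}^n_j=\mathbb{E}\big[Y^n_{t_{j+1}}\tfrac{W_{t_{j+1}}-W_{t_j}}{\delta t_j}\big/\mathfrak{F}_{t_j}\big]$ and bounds both differences explicitly: $|\hat Z^n_j-\bar{\bar Z}^n_j|$ is the driver remainder $\big|\mathbb{E}\big[\int_{t_j}^{t_{j+1}}g_n\,\mathrm{d}s\,\tfrac{\Delta W_{t_j}}{\delta t_j}\big/\mathfrak{F}_{t_j}\big]\big|$, estimated via the quadratic growth of $g_n$, the uniform bound on $Y^n$ and the moment bounds \eqref{boundZ0} on $Z^n$ (see \eqref{721}--\eqref{722}), while $|\bar{\bar Z}^n_j-\tilde Z^n_j|$ isolates the $H^R$-versus-Brownian-increment discrepancy and is controlled by conditional moment computations (see \eqref{724}). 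Both are fixed-$R$, quantitative estimates; no limit in $R$ is taken.
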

	
	\subsubsection{Proof of Proposition \ref{Prop64} and Proposition \ref{Prop77}}\label{profbtz}
	\begin{proof}[Proof of Proposition \ref{Prop64}]
		We first notice that 
		\begin{align*}
			\mathbb{E}\Big[\sup_{0\leq i\leq N}|Y_{t_i}-\mathcal{Y}_{i}^{\pi}|^{2p}\Big]\leq C(p)\Big( \mathbb{E}\Big[\sup_{0\leq i\leq N}|Y_{t_i}-{Y}_{t_i}^{n}|^{2p}\Big] + \mathbb{E}\Big[\sup_{0\leq i\leq N}|{Y}_{t_i}^{n}-\mathcal{Y}_{i}^{\pi}|^{2p}\Big]\Big).
		\end{align*}
		Thanks to Theorem \ref{thmconve1} and keep in mind that $\alpha\in(0,1/2)$ and $h = n^{-\alpha}$, with $h$ small enough, we  deduce that 
		\[ \mathbb{E}\Big[\sup_{0\leq i\leq N}|Y_{t_i}-{Y}_{t_i}^{n}|^{2p}\Big] \leq C_{\alpha,p}h^p.\]
		Let us now  turn to the control between the truncated solution $Y^n$ and $\mathcal{Y}^{\pi}$. We first notice that the perturbed term $\zeta_i^{Y}$ given in \eqref{eq712}, can be rewritten as:
		\begin{align}\label{eq716}
			\zeta_i^Y: = \mathbb{E}\Big[\zeta_i^{Y,t}+\zeta_i^{Y,x}+\zeta_i^{Y,y}+\zeta_i^{Y,y,\mathbb{E}}+\zeta_i^{Y,\hat{z}}+\zeta_i^{Y,\bar{\bar{z}}}+\zeta_i^{Y,\tilde{z}}/\mathfrak{F}_{t_i}\Big],
		\end{align}
		where 
		\begin{align*}
			\zeta_i^{Y,t}:=& \int_{t_i}^{t_{i+1}}g_n(s,X_s,Y_s^n,Z_s^n)-g_n(t_i,X_s,Y_s^n,Z_s^n)\mathrm{d}s,\\
			\zeta_i^{Y,x}:=& \int_{t_i}^{t_{i+1}}g_n(t_i,X_s,Y_s^n,Z_s^n)-g_n(t_i,\mathcal{X}_i^{\pi},Y_s^n,Z_s^n)\mathrm{d}s,\\
			\zeta_i^{Y,y}:=& \int_{t_i}^{t_{i+1}}g_n(t_i,\mathcal{X}_i^{\pi},Y_s^n,Z_s^n)-g_n(t_i,\mathcal{X}_i^{\pi},Y_{t_i}^n,Z_s^n)\mathrm{d}s,\\
			\zeta_i^{Y,y,\mathbb{E}}:=& \int_{t_i}^{t_{i+1}}g_n(t_i,\mathcal{X}_i^{\pi},Y_{t_i}^n,Z_s^n)-g_n(t_i,\mathcal{X}_i^{\pi},\mathbb{E}[\tilde Y_{i+1}/\mathfrak{F}_{t_i}],Z_s^n)\mathrm{d}s,\\
			\zeta_i^{Y,\hat{z}}:=& \int_{t_i}^{t_{i+1}}g_n(t_i,\mathcal{X}_i^{\pi},\mathbb{E}[\tilde Y_{i+1}/\mathfrak{F}_{t_i}],Z_s^n)-g_n(t_i,\mathcal{X}_i^{\pi},\mathbb{E}[\tilde Y_{i+1}/\mathfrak{F}_{t_i}],\hat Z_i^n)\mathrm{d}s,\\
			\zeta_i^{Y,\bar{\bar{z}}}:=& \int_{t_i}^{t_{i+1}}g_n(t_i,\mathcal{X}_i^{\pi},\mathbb{E}[\tilde Y_{i+1}/\mathfrak{F}_{t_i}],\hat Z_i^n)-g_n(t_i,\mathcal{X}_i^{\pi},\mathbb{E}[\tilde Y_{i+1}/\mathfrak{F}_{t_i}],\bar{\bar{Z}}_i^n)\mathrm{d}s,\\
			\zeta_i^{Y,\tilde{z}}:=& \int_{t_i}^{t_{i+1}}g_n(t_i,\mathcal{X}_i^{\pi},\mathbb{E}[\tilde Y_{i+1}/\mathfrak{F}_{t_i}],\bar{\bar{Z}}_i^n)-g_n(t_i,\mathcal{X}_i^{\pi},\mathbb{E}[\tilde Y_{i+1}/\mathfrak{F}_{t_i}],\tilde{Z}_i^n)\mathrm{d}s.
		\end{align*}
		Applying Lemma \ref{lemm68} and \cite[Remark 2.5]{ChaRichou16} to the terms $\zeta_i^{Y,x},\zeta_i^{Y,y},\zeta_i^{Y,\bar{\bar{z}}}$ and $\zeta_i^{Y,\tilde{z}}$, we deduce that
		\begin{align*}
			&|Y_{t_i}^n-\mathcal{Y}_i^{\pi}|\\
			&\leq C \mathbb{E}\left[|Y_{t_N}^n-\mathcal{Y}_N^{\pi}|^{q^{*}}/\mathfrak{F}_{t_i}\right]^{\frac{1}{q^{*}}} + C\mathbb{E}\Big[\Big(\sum_{j=i}^{N-1}\zeta_j^{Y,x}\Big)^{q^{*}}/\mathfrak{F}_{t_i}\Big]^{\frac{1}{q^{*}}} + C\mathbb{E}\Big[\Big(\sum_{j=i}^{N-1}\zeta_j^{Y,y}\Big)^{q^{*}}/\mathfrak{F}_{t_i}\Big]^{\frac{1}{q^{*}}}\\
			& \quad +C\mathbb{E}\Big[\Big(\sum_{j=i}^{N-1}\zeta_j^{Y,\bar{\bar{z}}}\Big)^{q^{*}}/\mathfrak{F}_{t_i}\Big]^{\frac{1}{q^{*}}}+ C\mathbb{E}\Big[\Big(\sum_{j=i}^{N-1}\zeta_j^{Y,\tilde{z}}\Big)^{q^{*}}/\mathfrak{F}_{t_i}\Big]^{\frac{1}{q^{*}}} + C\mathbb{E}^{\mathbb{Q}^{\pi}}\Big[\sum_{j=i}^{N-1}\big|\zeta_j^{Y,t}\big|/\mathfrak{F}_{t_i}\Big]\\
			&\quad + C\mathbb{E}^{\mathbb{Q}^{\pi}}\Big[\sum_{j=i}^{N-1}\big|\zeta_j^{Y,\hat{z}}\big|/\mathfrak{F}_{t_i}\Big] + C\mathbb{E}^{\mathbb{Q}^{\pi}}\Big[\sum_{j=i}^{N-1}\big|\zeta_j^{Y,y,\mathbb{E}}\big|/\mathfrak{F}_{t_i}\Big].
		\end{align*}
		Taking the $2p$-th power, the supremum on both sides, and applying the Jensen and Doob's maximal inequalities, we obtain for all $p\geq 2$
		\begin{align*}
			&\mathbb{E}\Big[\sup_{0\leq i\leq N}|Y_{t_i}^n-\mathcal{Y}_i^{\pi}|^{2p}\Big]\\
			&\leq C_p \mathbb{E}\left[|Y_{t_N}^n-\mathcal{Y}_N^{\pi}|^{2pq^{*}}\right]^{\frac{1}{q^{*}}} + C_p\mathbb{E}\Big[\Big(\sum_{j=0}^{N-1}\big|\zeta_j^{Y,x}\big|\Big)^{2pq^{*}}\Big]^{\frac{1}{q^{*}}} + C_p\mathbb{E}\Big[\Big(\sum_{j=0}^{N-1}\big|\zeta_j^{Y,y}\big|\Big)^{2pq^{*}}\Big]^{\frac{1}{q^{*}}}\\
			& \quad +C_p\mathbb{E}\Big[\Big(\sum_{j=0}^{N-1}\big|\zeta_j^{Y,\bar{\bar{z}}}\big|\Big)^{2pq^{*}}\Big]^{\frac{1}{q^{*}}}+ C_p\mathbb{E}\Big[\Big(\sum_{j=0}^{N-1}\big|\zeta_j^{Y,\tilde{z}}\big|\Big)^{2pq^{*}}\Big]^{\frac{1}{q^{*}}} + C_p\mathbb{E}\Big[\sup_{0\leq i\leq N}\mathbb{E}^{\mathbb{Q}^{\pi}}\Big[\sum_{j=i}^{N-1}\big|\zeta_j^{Y,t}\big|/\mathfrak{F}_{t_i}\Big]^{2p}\Big]\\
			&\quad + C_p\mathbb{E}\Big[\sup_{0\leq i\leq N}\mathbb{E}^{\mathbb{Q}^{\pi}}\Big[\sum_{j=i}^{N-1}\big|\zeta_j^{Y,\hat{z}}\big|/\mathfrak{F}_{t_i}\Big]^{2p}\Big] + C_p\mathbb{E}\Big[\sup_{0\leq i\leq N}\mathbb{E}^{\mathbb{Q}^{\pi}}\Big[\sum_{j=i}^{N-1}\big|\zeta_j^{Y,y,\mathbb{E}}\big|/\mathfrak{F}_{t_i}\Big]^{2p}\Big].
		\end{align*}
		Let us analyse separately the different terms that appear on the right side of the above inequality. By definition, the first term corresponds to the error analysis of the terminal value. To estimate the second term, we primarily use the fact that the generator is stochastic Lipschitz continuous in $x$ and apply H\"older's inequality to deduce that
		\begin{align*}
			\mathbb{E}\Big(\sum_{j=0}^{N-1}\big|\zeta_j^{Y,x}\big|\Big)^{2pq^{*}}&\leq C_p\mathbb{E}\Big[\Big(\sum_{j=0}^{N-1}\int_{t_j}^{t_{j+1}}(1+\ell(|Y_s^n|)|Z_s^n|^{\alpha})|X_s-\mathcal{X}_j^{\pi}|\mathrm{d}s\Big)^{2pq^{*}}\Big]\\
			&\leq C_p \mathbb{E}\Big[\Big(\sum_{j=0}^{N-1}\sup_{t_j\leq s\leq t_{j+1}}(1+\ell(|Y_s^n|)|Z_s^n|^{\alpha})|X_s-\mathcal{X}_j^{\pi}|\delta t_j\Big)^{2pq^{*}}\Big]\\
			&\leq C_p \mathbb{E}\Big[\sum_{j=0}^{N-1}\sup_{t_j\leq s\leq t_{j+1}}(1+\ell(|Y_s^n|)|Z_s^n|^{\alpha})^{2pq^{*}}|X_s-\mathcal{X}_j^{\pi}|^{2pq^{*}}\delta t_j\Big]\Big(\sum_{j=0}^{N-1}\delta t_j\Big)^{2pq^{*}-1}\\
			&\leq C_p T^{2pq^{*}} \sup_{0\leq j \leq N-1}\mathbb{E}\Big[\sup_{t_j\leq s\leq t_{j+1}}(1+\ell(|Y_s^n|)|Z_s^n|^{\alpha})^{2pq^{*}}|X_s-\mathcal{X}_j^{\pi}|^{2pq^{*}}\Big]\\
			&\leq C_p T^{2pq^{*}} \sup_{0\leq j \leq N-1}\mathbb{E}\Big[\sup_{t_j\leq s\leq t_{j+1}}(1+\ell(|Y_s^n|)|Z_s^n|^{\alpha})^{4pq^{*}}\Big]^{\frac{1}{2}}\mathbb{E}\Big[|X_s-\mathcal{X}_j^{\pi}|^{4pq^{*}}\Big]^{\frac{1}{2}}.
		\end{align*}
		From \eqref{boundZ0} and the uniform bound of $Y^n$ we deduce that
		\begin{align*}
			\mathbb{E}\Big(\sum_{j=0}^{N-1}\big|\zeta_j^{Y,x}\big|\Big)^{2pq^{*}}&\leq C(p,q^{*})\Big(\sup_{0\leq j \leq N-1}\mathbb{E}\Big[\sup_{t_j\leq s\leq t_{j+1}}|X_s-{X}_{t_j}|^{4pq^{*}}\Big]^{\frac{1}{2}} + \sup_{0\leq j \leq N-1}\mathbb{E}\Big[|X_{t_j}-\mathcal{X}_j^{\pi}|^{4pq^{*}}\Big]^{\frac{1}{2}}\Big).
		\end{align*}
		Under our assumptions, we observe that
		\begin{align*}
			&\sup_{0\leq j \leq N-1}\mathbb{E}\Big[\sup_{t_j\leq s\leq t_{j+1}}|X_s-{X}_{t_j}|^{2pq^{*}}\Big]\\ &\leq C_{pq^{*}} \sup_{0\leq j \leq N-1}\Big( \mathbb{E} \sup_{t_j\leq s\leq t_{j+1}} \Big|\int_{t_j}^{s}b(v,X_v)\mathrm{d}v\Big|^{4pq^{*}} + \mathbb{E} \sup_{t_j\leq s\leq t_{j+1}} |W_s-W_{t_j}|^{4pq^{*}} \Big)\\
			&\leq C_{pq^{*}} h^{2pq^{*}}.
		\end{align*}
		Therefore, there exists a constant $C_{pq^{*}}$ not depending on $h$, such that
		\begin{align*}
			\mathbb{E}\Big[\Big(\sum_{j=0}^{N-1}\big|\zeta_j^{Y,x}\big|\Big)^{2pq^{*}}\Big]^{\frac{1}{q^{*}}} \leq C_{pq^{*}} h^p + \sup_{0\leq j \leq N-1}\mathbb{E}\Big[|X_{t_j}-\mathcal{X}_j^{\pi}|^{4pq^{*}}\Big]^{\frac{1}{2q^{*}}}.
		\end{align*}
		Similarly, from the growth of $g_n$ and applying H\"older's inequality, we deduce that
		\begin{align*}
			\mathbb{E}\Big[\Big(\sum_{j=0}^{N-1}\big|\zeta_j^{Y,y}\big|\Big)^{2pq^{*}}\Big]^{\frac{1}{q^{*}}} &\leq C_p\mathbb{E}\Big[\Big(\sum_{j=0}^{N-1}\int_{t_j}^{t_{j+1}}|Y_s^n-{Y}_{t_j}^{n}|\mathrm{d}s\Big)^{2pq^{*}}\Big]^{\frac{1}{q^{*}}} \\
			&\leq C_p\mathbb{E}\Big[\sum_{j=0}^{N-1}\sup_{t_j\leq s\leq t_{j+1}}|Y_s^n-{Y}_{t_j}^{n}|^{2pq^{*}}\delta t_j\Big]^{\frac{1}{q^{*}}}\Big(\sum_{j=0}^{N-1}\delta t_j\Big)^{\frac{2pq^{*}-1}{q^{*}}}\\
			&\leq C_{pq^{*}} T^{2p}\Big(\sup_{0\leq j \leq N-1}\mathbb{E}\Big[\sup_{t_j\leq s\leq t_{j+1}}|Y_s^n-{Y}_{t_j}^{n}|^{2pq^{*}}\Big] \Big)^{\frac{1}{q^{*}}} \\
			&\leq C_{pq^{*}} h^{p},
		\end{align*}
		where, we used Theorem \ref{thm1.1} to obtain the last inequality.
		From our assumptions on the driver, we obtain that 
		\begin{align}
			&\mathbb{E}\Big[\Big(\sum_{j=0}^{N-1}\big|\zeta_j^{Y,\bar{\bar{z}}}\big|\Big)^{2pq^{*}}\Big]^{\frac{1}{q^{*}}}\notag\\ 
			\leq & C\mathbb{E}\Big[\Big(\sum_{j=0}^{N-1}\int_{t_j}^{t_{j+1}}\big(1+\ell(0)(|\hat{Z}^n(s)|+|\bar{\bar{Z}}^n(s)|)\big)|\hat{Z}^n(s)-\bar{\bar{Z}}^n(s)|\mathrm{d}s\Big)^{2pq^{*}}\Big]^{\frac{1}{q^{*}}}\notag	\\
			\leq & C\mathbb{E}\Big[\Big(\sum_{j=0}^{N-1}\int_{t_j}^{t_{j+1}}\big(1+\ell(0)(|\hat{Z}^n(s)|+|\bar{\bar{Z}}^n(s)|)\big)^2\mathrm{d}s\Big)^{2pq^{*}}\Big]^{\frac{1}{2q^{*}}}\notag\\ &\times\mathbb{E}\Big[\Big(\sum_{j=0}^{N-1}\int_{t_j}^{t_{j+1}}|\hat{Z}^n(s)-\bar{\bar{Z}}^n(s)|^2\mathrm{d}s\Big)^{2pq^{*}}\Big]^{\frac{1}{2q^{*}}},\label{eq717}
		\end{align}
		where we used the H\"older's inequality and recall that $\hat{Z}^n(s)$ and $\bar{\bar{Z}}^n(s)$ are respectively given by \eqref{eq714} and \eqref{eq715}. 
		We first observe that, by performing some elementary computations and then applying the H\"older's inequality, we obtain 
		\begin{align}\label{721}
			|\bar{\bar{Z}}^n(s) -\hat{Z}^n(s)| \leq & \mathbb{E}\Big[\Big| \int_{t_j}^{t_{j+1}}g_n(s,X_s,Y_s^n,Z_s^n)\mathrm{d}s\frac{W_{t_{j+1}}-W_{t_j}}{\delta t_j}\Big|\big/\mathfrak{F}_{t_j}\Big]\notag\\
			\leq & \mathbb{E}\Big[ \Big|\int_{t_j}^{t_{j+1}}g_n(s,X_s,Y_s^n,Z_s^n)\mathrm{d}s\Big|^2/\mathfrak{F}_{t_j}\Big]^{\frac{1}{2}}\mathbb{E}\Big[ \Big|\frac{W_{t_{j+1}}-W_{t_j}}{\delta t_j}\Big|^2\big/\mathfrak{F}_{t_j}\Big]^{\frac{1}{2}}\notag\\  
			\leq & \frac{C}{\sqrt{\delta t_j}}\mathbb{E}\Big[ \Big(\int_{t_j}^{t_{j+1}}1+|Z_s^n|^2\mathrm{d}s\Big)^2/\mathfrak{F}_{t_j}\Big]^{\frac{1}{2}}\notag\\
			\leq & C\sqrt{\delta t_j}+ \frac{C}{\sqrt{\delta t_j}}\mathbb{E}\Big[\Big(\int_{t_j}^{t_{j+1}}|Z_s^n|^2\mathrm{d}s\Big)^2\big/\mathfrak{F}_{t_j}\Big]^{\frac{1}{2}},
		\end{align}
		where, we used the growth of $g_n$, the bound of $Y^n$ and applied the conditional BDG inequality. Hence, squaring the above inequality, we deduce that
		\begin{align*}
			|\bar{\bar{Z}}^n(s) -\hat{Z}^n(s)|^2\leq C{\delta t_j} +\frac{C}{{\delta t_j}}\mathbb{E}\Big[\Big(\int_{t_j}^{t_{j+1}}|Z_s^n|^2\mathrm{d}s\Big)^2\big/\mathfrak{F}_{t_j}\Big].
		\end{align*}
		Hence, by applying successively the H\"older and Jensen inequalities, we obtain 
		\begin{align*}
			&\mathbb{E}\Big(\sum_{j=0}^{N-1}\int_{t_j}^{t_{j+1}}|\hat{Z}^n(s)-\bar{\bar{Z}}^n(s)|^2\mathrm{d}s\Big)^{2pq^{*}}\\
			&\leq C(p,q^{*},T)h^{2pq^{*}}+ C(p,q^{*})\mathbb{E}\Big[\sum_{j=0}^{N-1}\frac{1}{\delta t_j^{2pq^{*}}}\Big(\mathbb{E}\Big[ \Big(\int_{t_j}^{t_{j+1}}|Z_s^n|^2\mathrm{d}s\Big)^{4pq^{*}}\big/\mathfrak{F}_{t_j}\Big]\Big)\delta t_j\Big]\Big(\sum_{j=0}^{N-1}\delta t_j\Big)^{2pq^{*}-1}.
		\end{align*}
		This leads to
		\begin{align*}
			&\mathbb{E}\Big[\sum_{j=0}^{N-1}\int_{t_j}^{t_{j+1}}|\hat{Z}^n(s)-\bar{\bar{Z}}^n(s)|^2\mathrm{d}s\Big]^{2pq^{*}}\\
			&\leq C(p,q^{*},T)h^{2pq^{*}}+ C(p,q^{*})T^{2pq^{*}-1}\sum_{j=0}^{N-1}\frac{\delta t_j}{\delta t_j^{2pq^{*}}}\mathbb{E}\Big[\Big(\int_{t_j}^{t_{j+1}}|Z_s^n|^2\mathrm{d}s\Big)^{4pq^{*}}\Big] \\
			&\leq C(p,q^{*},T)h^{2pq^{*}}
			+ C(p,q^{*})T^{2pq^{*}-1}\sum_{j=0}^{N-1}\frac{\delta t_j }{\delta t_j^{2pq^{*}}}\mathbb{E}\Big[\sup_{t_j\leq s\leq t_{j+1}}|Z_s^n|^{4pq^{*}}(\delta t_j)^{4pq^{*}}\Big].
		\end{align*}
		From \eqref{boundZ0}, there exists a constant $C(p,q^{*},T)>0$ that does not depend on $N$, such that
		\begin{align}\label{722}
			\mathbb{E}\Big[\Big(\sum_{j=0}^{N-1}\int_{t_j}^{t_{j+1}}|\hat{Z}^n(s)-\bar{\bar{Z}}^n(s)|^2\mathrm{d}s\Big)^{2pq^{*}}\Big]^{\frac{1}{2q^{*}}}\leq C(p,q^{*},T)h^{p}.
		\end{align}
		On the other hand, we have seen that the processes $\bar{\bar{Z}}^n_j$ and $\hat{Z}^n_j$ belong to $\mathcal{H}_{BMO}$ (see Remark \ref{rem75}). Therefore, uniformly in $n$, we obtain
		\begin{align*}
			&\mathbb{E}\Big[\Big(\sum_{j=0}^{N-1}\int_{t_j}^{t_{j+1}}\big(1+\ell(0)(|\hat{Z}^n(s)|+|\bar{\bar{Z}}^n(s)|)\big)^2\mathrm{d}s\Big)^{2pq^{*}}\Big]\\
			&\leq C(p,q^{*},T) \|(1+\ell(0)(|\hat{Z}^n(s)|+|\bar{\bar{Z}}^n(s)|)\big)\|^{4pq^{*}}_{\mathcal{H}_{BMO}}.
		\end{align*}
		
		Similarly, one can also obtain that 
		\begin{align}
			\mathbb{E}\Big[\Big(\sum_{j=0}^{N-1}\big|\zeta_j^{Y,{\tilde{z}}}\big|\Big)^{2pq^{*}}\Big]^{\frac{1}{q^{*}}} 
			\leq C(p,q^{*},T)h^p.
		\end{align}
		In fact, from the growth of $g_n$ and by applying H\"older's inequality, we deduce that
		\begin{align*}
			&\mathbb{E}\Big[\Big(\sum_{j=0}^{N-1}\big|\zeta_j^{Y,\tilde{z}}\big|\Big)^{2pq^{*}}\Big]^{\frac{1}{q^{*}}}\notag\\ &\leq C\mathbb{E}\Big[\Big(\sum_{j=0}^{N-1}\int_{t_j}^{t_{j+1}}\big(1+\ell(0)(|\tilde{Z}^n(s)|+|\bar{\bar{Z}}^n(s)|)\big)|\tilde{Z}^n(s)-\bar{\bar{Z}}^n(s)|\mathrm{d}s\Big)^{2pq^{*}}\Big]^{\frac{1}{q^{*}}}\notag	\\
			&\leq C(p,q^{*},T) \|(1+\ell(0)(|\hat{Z}^n(s)|+|\bar{\bar{Z}}^n(s)|)\big)\|^{4pq^{*}}_{\mathcal{H}_{BMO}} \mathbb{E}\Big[\Big(\sum_{j=0}^{N-1}\int_{t_j}^{t_{j+1}}|\tilde{Z}^n(s)-\bar{\bar{Z}}^n(s)|^2\mathrm{d}s\Big)^{2pq^{*}}\Big]^{\frac{1}{2q^{*}}},
		\end{align*}
		where, $\tilde{Z}^n(s):=\tilde{Z}^n_j$ for all $s\in [t_j,t_{j+1}]$, $\tilde{Z}_j^n$ is given by $\eqref{linBTZ2}$ and belongs to $\mathcal{H}_{BMO}$, thanks to Lemma \ref{lemm74}. 
		On the other hand, from the definitions of the processes $\tilde{Z}^n$ and $\bar{\bar{Z}}^n$, respectively, we obtain that 
		\begin{align*}
			&|\tilde{Z}_j^n(s)-\bar{\bar{Z}}_j^n(s)|^2\\ =& \mathbb{E}\Big[(Y^n_{t_{j+1}}-Y^n_{t_j})\Big(H_i^R-\frac{W_{t_{j+1}}-W_{t_j}}{\delta t_j}\Big)\big/\mathfrak{F}_{t_j}\Big]^2\\
			\leq & \mathbb{E}\Big[|Y^n_{t_{j+1}}-Y^n_{t_j}|^2/\mathfrak{F}_{t_j}\Big]\mathbb{E}\Big[\Big(H_i^R-\frac{W_{t_{j+1}}-W_{t_j}}{\delta t_j}\Big)^2\big/\mathfrak{F}_{t_j}\Big]\\
			\leq & C \mathbb{E}\Big[ \Big( \int_{t_j}^{t_{j+1}}|g_n(s,X_s,Y_s^n,Z_s^n)|\mathrm{d}s\Big)^2+\int_{t_j}^{t_{j+1}}|Z_s^n|^2\mathrm{d}s \big/\mathfrak{F}_{t_j}\Big] \mathbb{E}\Big[\Big(H_i^R-\frac{W_{t_{j+1}}-W_{t_j}}{\delta t_j}\Big)^2\big/\mathfrak{F}_{t_j}\Big] \\
			\leq & C \mathbb{E}\Big[ (\delta t_j)^2+\Big( \int_{t_j}^{t_{j+1}}|Z_s^n|^2\mathrm{d}s\Big)^2+\int_{t_j}^{t_{j+1}}|Z_s^n|^2\mathrm{d}s \big/\mathfrak{F}_{t_j}\Big] \mathbb{E}\Big[\Big(H_i^R-\frac{W_{t_{j+1}}-W_{t_j}}{\delta t_j}\Big)^2\big/\mathfrak{F}_{t_j}\Big],
		\end{align*}
		where, we applied the H\"older's inequality,  and used the growth of $g_n$ and the uniform boundedness of $Y^n$ to derive the above inequalities.
		Using the definition of  the coefficients $H^R_i$ given by \eqref{eq79}, we observe that
		\begin{align*}
			\mathbb{E}\Big[\Big(H_j^R-\frac{W_{t_{j+1}}-W_{t_j}}{\delta t_j}\Big)^2\big/\mathfrak{F}_{t_j}\Big] \leq \frac{R^2}{\delta t_j}.
		\end{align*}
		Therefore, there exists a constant $C>0$ independent of $n$ such that
		\begin{align*}
			|\tilde{Z}_j^n(s)-\bar{\bar{Z}}_j^n(s)|^2 &\leq C \mathbb{E}\Big[ \delta t_j+\frac{1}{\delta t_j}\Big( \int_{t_j}^{t_{j+1}}|Z_s^n|^2\mathrm{d}s\Big)^2+ \frac{1}{\delta t_j}\int_{t_j}^{t_{j+1}}|Z_s^n|^2\mathrm{d}s \big/\mathfrak{F}_{t_j}\Big].
		\end{align*}
		Hence, applying the Jensen and H\"older's inequalities, we obtain
		\begin{align*}
			&\mathbb{E}\Big[\Big(\sum_{j=0}^{N-1}\int_{t_j}^{t_{j+1}}|\tilde{Z}^n(s)-\bar{\bar{Z}}^n(s)|^2\mathrm{d}s\Big)^{2pq^{*}}\Big]\\
			\leq &C  \sum_{j=0}^{N-1}\delta t_j\Big[ (\delta t_j)^{2pq^{*}}+\frac{1}{(\delta t_j)^{2pq^{*}}}\mathbb{E}\Big( \int_{t_j}^{t_{j+1}}|Z_s^n|^2\mathrm{d}s\Big)^{4pq^{*}}+ \frac{1}{(\delta t_j)^{2pq^{*}}}\mathbb{E}\Big(\int_{t_j}^{t_{j+1}}|Z_s^n|^2\mathrm{d}s\Big)^{2pq^{*}}\Big]\Big(\sum_{j=0}^{N-1}\delta t_j\Big)^{2pq^{*}-1}.
		\end{align*}
		Thanks to \eqref{boundZ0}, we deduce that 
		\begin{align}\label{724}
			\mathbb{E}\Big[\Big(\sum_{j=0}^{N-1}\int_{t_j}^{t_{j+1}}|\tilde{Z}^n(s)-\bar{\bar{Z}}^n(s)|^2\mathrm{d}s\Big)^{2pq^{*}}\Big]
			\leq C T^{2pq^{*}}\big(h+ h^{2pq^{*}}\big).
		\end{align}	
		The following bound is a consequence the regularity of the generator on its time variable
		\begin{align}
			\mathbb{E}\Big[\sup_{0\leq i\leq N}\mathbb{E}^{\mathbb{Q}^{\pi}}\Big[\sum_{j=i}^{N-1}\big|\zeta_j^{Y,t}\big|/\mathfrak{F}_{t_i}\Big]^{2p}\Big]\leq  \sup_{0\leq i\leq N}\Big(\sum_{j=i}^{N-1}\Lambda_t(\delta t_j)^{3/2}\Big)^{2p}\leq C(p,T)h^{p}.
		\end{align}
		Let us now turn to establishing the bounds for the remaining two terms. Using the local Lipschitz assumption of the driver $g$, we first observe that
		\begin{align*}
			&\mathbb{E}^{\mathbb{Q}^{\pi}}\Big[\sum_{j=i}^{N-1}\big|\zeta_j^{Y,\hat{z}}\big|/\mathfrak{F}_{t_i}\Big] \\
			&\leq 	\mathbb{E}^{\mathbb{Q}^{\pi}}\Big[\sum_{j=i}^{N-1} \int_{t_j}^{t_{j+1}}(1+\ell(0)(|\hat{Z}^n(s)|+|Z_s^n|))|\hat{Z}^n(s)-{Z}^n_s|\mathrm{d}s\big/\mathfrak{F}_{t_i}\Big]\\
			&\leq C\|(1+\ell(0)(|\hat{Z}^n(s)|+|Z_s^n|))\|_{\mathcal{H}_{BMO}}\mathbb{E}^{\mathbb{Q}^{\pi}}\Big[\sum_{j=i}^{N-1} \int_{t_j}^{t_{j+1}}|\hat{Z}^n(s)-{Z}^n_s|^2\mathrm{d}s\big/\mathfrak{F}_{t_i}\Big]^{\frac{1}{2}}.
		\end{align*}
		By applying Corollary \ref{cor 1.3}, we deduce that 
		\begin{align*}
			\mathbb{E}\Big[\sup_{0\leq i\leq N}\mathbb{E}^{\mathbb{Q}^{\pi}}\Big[\sum_{j=i}^{N-1}\big|\zeta_j^{Y,\hat{z}}\big|/\mathfrak{F}_{t_i}\Big]^{2p}\Big] &\leq C \mathbb{E}\Big[\sup_{0\leq i\leq N}\mathbb{E}^{\mathbb{Q}^{\pi}}\Big[\sum_{j=i}^{N-1} \int_{t_j}^{t_{j+1}}|\hat{Z}^n(s)-{Z}^n_s|^2\mathrm{d}s\big/\mathfrak{F}_{t_i}\Big]^{p}\Big]\\
			&\leq C h^p.	
		\end{align*}
		Finally, using the growth of $g_n$ once more, we obtain that
		\begin{align*}
			\mathbb{E}^{\mathbb{Q}^{\pi}}\Big[\sum_{j=i}^{N-1}\big|\zeta_j^{Y,y,\mathbb{E}}\big|\big/\mathfrak{F}_{t_i}\Big] &\leq \Lambda_y 	\mathbb{E}^{\mathbb{Q}^{\pi}} \Big[\sum_{j=i}^{N-1} \int_{t_j}^{t_{j+1}} |Y_{t_j}- \mathbb{E}[\tilde{Y}_{j+1}/\mathfrak{F}_{t_j}]| \mathrm{d}s\big/\mathfrak{F}_{t_i}\Big]\\
			&\leq \Lambda_y \mathbb{E}^{\mathbb{Q}^{\pi}} \Big[\sum_{j=i}^{N-1}\delta t_j \mathbb{E}\Big[\int_{t_j}^{t_{j+1}} |g_n(s,X_s,Y_s^n,Z_s^n)|\mathrm{d}s/\mathfrak{F}_{t_j}\Big]	 \big/\mathfrak{F}_{t_i}\Big]\\
			&\leq C \mathbb{E}^{\mathbb{Q}^{\pi}} \Big[ \int_{t_i}^{T} (1+\ell(|Y_s^n|)|Z_s^n|^2)\mathrm{d}s \big/\mathfrak{F}_{t_i}\Big]h\\
			&\leq C(\epsilon,\|Z^n*W\|_{\bmo}) h.
		\end{align*}
		Hence
		\begin{align*}
			\mathbb{E}\Big[\sup_{0\leq i\leq N}\mathbb{E}^{\mathbb{Q}^{\pi}}\Big[\sum_{j=i}^{N-1}\big|\zeta_j^{Y,y,\mathbb{E}}\big|\big/\mathfrak{F}_{t_i}\Big]^{2p}\Big] \leq C(\epsilon,p,\|Z^n*W\|_{\bmo}) h^{2p}.
		\end{align*}

	\end{proof}
	
	\begin{proof}[Proof of Proposition \ref{Prop77}]
		We first observe that 
		\begin{align*}
			|Z_{s}-\mathcal{Z}^{\pi}(s)| \leq |Z_{s}-Z_s^n|+|Z_s^n-\hat{Z}_s^n|+ |\hat{Z}_s^n-\bar{\bar{Z}}^n(s)|+ |\bar{\bar{Z}}^n(s)-\tilde{Z}^n(s)| + |\tilde{Z}^n(s)-\mathcal{Z}^{\pi}(s)|.
		\end{align*}
		Hence, after performing the necessary computations, we obtain that
		\begin{align*}
			\mathbb{E}\Big[\sum_{j=0}^{N-1}\int_{t_j}^{t_{j+1}}|Z_{s}-\mathcal{Z}^{\pi}(s)|^2\mathrm{d}s\Big] \leq C \left( II_1 + II_2 + II_3 +  II_4 + II_5\right),
		\end{align*}
		where, each term $(II_l)_{1\leq l\leq  5}$ can be deduced easily. It follows from Theorem \ref{thmconve1} that $II_1\leq C(T){h}$. Observe that
		\begin{align*}
			|Z_s^n-\hat{Z}_s^n|^2 
			&\leq 2  |Z_s^n-{Z}_{t_i}^n|^2+ 2 \Big|{Z}_{t_i}^n-\frac{1}{\delta t_i}\mathbb{E}\Big[\int_{t_i}^{t_{i+1}}{Z}_v^n\mathrm{d}v/\mathfrak{F}_{t_i}\Big]\Big|^2\\
			&\leq  2  |Z_s^n-{Z}_{t_i}^n|^2 + \frac{2}{\delta t_i}\mathbb{E}\Big[\int_{t_i}^{t_{i+1}}|{Z}_v^n-{Z}_{t_i}^n|^2\mathrm{d}v/\mathfrak{F}_{t_i}\Big].
		\end{align*}
		Thanks to  Theorem \ref{thm1.1}, we deduce that
		$$II_2 \leq 4 \mathbb{E}\Big[\sum_{j=0}^{N-1}\int_{t_j}^{t_{j+1}}|Z_s^n-\hat{Z}_s^n|^2\mathrm{d}s\Big] \leq C h.$$
		Similar to \eqref{722}, there exists a constant $C>0$, such that $II_3 \leq C h$.	Moreover, from \eqref{724}, we also obtain that $II_4 \leq C(1+h)$.
		Let us turn now to the bound for $II_5$, whose proof mainly relies on the application of	Lemma \ref{lemm69}. Indeed, recall that the expression for $\zeta_j^{Y}$ is given by \eqref{eq716}, and applying the result from Proposition \ref{Prop64}, we obtain
		\begin{align*}
			&II_5 = \mathbb{E}\Big[\sum_{j=0}^{N-1}\delta t_j|\tilde{Z}^n(s)-\mathcal{Z}^{\pi}(s)|^2\Big]\\
			&\leq C \mathbb{E}|\Phi(X)-\Phi(\mathcal{X}^{\pi})|^2 + \mathbb{E}\Big[\sup_{0\leq j\leq N-1}|\tilde{Y}_{t_j}^n-\mathcal{Y}_j^{\pi}|^4\Big]^{\frac{1}{2}}+ \mathbb{E}\Big[\sum_{j=0}^{N-1}(\tilde{Y}_{t_j}^n-\mathcal{Y}_j^{\pi})\zeta_j^{Y}\Big]\\
			&\leq \mathbb{E}\Big[|\Phi(X)-\Phi(\mathcal{X}^{\pi})|^{4q^{*}}\Big]^{\frac{1}{2q^{*}}}+ C_p\sup_{0\leq j\leq N-1}\mathbb{E}[|X_{t_j}-\mathcal{X}_j^{\pi}|^{8q^{*}}]^{\frac{1}{4q^{*}}}+ C h+ \mathbb{E}\Big[\sum_{j=0}^{N-1}(\tilde{Y}_{t_j}^n-\mathcal{Y}_j^{\pi})\zeta_j^{Y}\Big],
		\end{align*}
		The last term on right side of the above inequality can be estimated as follows 
		\begin{align*}
			&\mathbb{E}\Big[\sum_{j=0}^{N-1}(\tilde{Y}_{t_j}^n-\mathcal{Y}_j^{\pi})\zeta_j^{Y}\Big] \\&\leq 
			\mathbb{E}\Big[\Big|\sum_{j=0}^{N-1}|(\tilde{Y}_{t_j}^n-\mathcal{Y}_j^{\pi})\mathbb{E}\Big[\zeta_j^{Y,t}+\zeta_j^{Y,x}+\zeta_j^{Y,y}+\zeta_j^{Y,y,\mathbb{E}}+\zeta_i^{Y,\hat{z}}+\zeta_i^{Y,\bar{\bar{z}}}+\zeta_i^{Y,\tilde{z}}/\mathfrak{F}_{t_i}\Big]\Big|\Big]\\
			&\leq C \mathbb{E}\Big[\sup_{j} |\tilde{Y}_{t_j}^n-\mathcal{Y}_j^{\pi}|^2\Big]^{\frac{1}{2}}\mathbb{E}\Big[\Big(\sum_{j=0}^{N-1}\big|\zeta_j^{Y,t}+\zeta_j^{Y,x}+\zeta_j^{Y,y}+\zeta_j^{Y,y,\mathbb{E}}+\zeta_j^{Y,\tilde{z}}+\zeta_j^{Y,\bar{\bar{z}}}\big|\Big)^2\Big]^{\frac{1}{2}}\\
			&\quad+ \mathbb{E}\Big| \sum_{j=0}^{N-1}(\tilde{Y}_{t_j}^n-\mathcal{Y}_j^{\pi})\zeta_j^{Y,\hat{z}}\Big|,
		\end{align*}
		where we used the H\"older's inequality to obtain the last inequality.  We notice that, the second term on the right-hand side of the latter inequality can be controlled in the same manner as done, for instance, in the proof of the previous Proposition. It can be shown that all such terms are bounded by $Ch^{\frac{1}{2}}$, where the constant $C>0$ does not depend on $n$. We now turn to deriving the estimate of the third one. Indeed, using the growth of the driver $g$ and the local boundedness of the function $\ell$, we obtain the existence of a constant $C>0$ such that
		\begin{align*}
			&\mathbb{E}\Big[\Big| \sum_{j=0}^{N-1}(\tilde{Y}_{t_j}^n-\mathcal{Y}_j^{\pi})\zeta_j^{Y,\hat{z}}\Big|\Big]\\
			& \leq C \mathbb{E}\Big[ \sum_{j=0}^{N-1}\int_{t_j}^{t_{j+1}}\big|\tilde{Y}^n(s)-\mathcal{Y}^{\pi}(s)\big|\big(1+\ell(|Y_s^n|)(|{Z}_s^n|+|\hat{Z}_s^n|)\big)|{Z}_s^n-\hat{Z}_s^n|\mathrm{d}s\Big]\\
			&\leq C \mathbb{E}\Big[\sup_{0\leq i\leq N-1}\mathbb{E}\Big[\int_{t_i}^T|\tilde{Y}^n(s)-\mathcal{Y}^{\pi}(s)|\big(1+\ell(|Y_s^n|)(|{Z}_s^n|+|\hat{Z}_s^n|)\big)|{Z}_s^n-\hat{Z}_s^n|\mathrm{d}s\big/\mathfrak{F}_{t_i}\Big]\Big].
		\end{align*}
		We recall that the processes ${Z}_s^n$ and $\hat{Z}_s^n$ are both elements of $\mathcal{H}_{BMO}$ and applying the H\"older's inequality, we deduce that 
		\begin{align*}
			&\mathbb{E}\Big[\Big| \sum_{j=0}^{N-1}(\tilde{Y}_{t_j}^n-\mathcal{Y}_j^{\pi})\zeta_j^{Y,\hat{z}}\Big|\Big]\\
			&\leq C \mathbb{E}\Big[\sup_{0\leq i\leq N-1}\mathbb{E}\big[\sup_{0\leq j\leq N-1}|\tilde{Y}_j^n-\mathcal{Y}_j^{\pi}|^4/\mathfrak{F}_{t_i}\big]^{\frac{1}{4}}\mathbb{E}\left[\int_{t_i}^T\big|{Z}_s^n-\hat{Z}_s^n|^2\mathrm{d}s\big/\mathfrak{F}_{t_i}\right]^{\frac{1}{2}}\Big]\\
			&\leq C \mathbb{E}\Big[\sup_{0\leq i\leq N-1}\mathbb{E}\big[\sup_{0\leq j\leq N-1}|\tilde{Y}_j^n-\mathcal{Y}_j^{\pi}|^4/\mathfrak{F}_{t_i}\big]^{\frac{1}{2}}\Big]^{\frac{1}{2}}\mathbb{E}\Big[\sup_{0\leq i\leq N-1}\mathbb{E}\Big[\int_{t_i}^T\big|{Z}_s^n-\hat{Z}_s^n|^2\mathrm{d}s\big/\mathfrak{F}_{t_i}\Big]\Big]^{\frac{1}{2}}\\
			&\leq C \mathbb{E}\Big[\sup_{0\leq i\leq N-1}\mathbb{E}\big[\sup_{0\leq j\leq N-1}|\tilde{Y}_j^n-\mathcal{Y}_j^{\pi}|^4/\mathfrak{F}_{t_i}\big]^{2}\Big]^{\frac{1}{8}} h^{\frac{1}{2}},
		\end{align*}
		where we used Corollary \ref{cor 1.3} to deduce the last inequality. Therefore, applying the Doob's maximal inequality, we obtain  
		\begin{align*}
			\mathbb{E}\Big[\Big| \sum_{j=0}^{N-1}(\tilde{Y}_{t_j}^n-\mathcal{Y}_j^{\pi})\zeta_j^{Y,\hat{z}}\Big|\Big] \leq 4C \mathbb{E}\left[\sup_{0\leq j\leq N-1}|\tilde{Y}_j^n-\mathcal{Y}_j^{\pi}|^8\right]^{\frac{1}{8}} h^{\frac{1}{2}}.
		\end{align*}
	\end{proof}
	
	\section{Conclusion and some perspectives} \label{secconcl}
	\subsubsection*{Summary of findings:} In this paper, we establish for the first time the rate of convergence for the time discretization of BSDEs coupled with non smooth-type forward SDEs. By incorporating more complex behaviors and relaxing traditional regularity assumptions, our work significantly enhances the understanding of forward-backward SDEs (FBSDEs) in cases where the coefficients of the forward process demonstrates irregularities. This advancement fills a crucial gap in the literature, providing a more rigorous framework for analyzing and approximating FBSDEs under challenging conditions. These findings hold particular importance for potential applications that demand precise numerical approximations, such as those in finance, physics, and stochastic control, where the interplay between backward and forward processes is complex and existing methods often prove inadequate. Our approach not only broadens the scope of FBSDE models but also lays the groundwork for future research into more efficient and accurate computational techniques in systems driven by non-smooth dynamics.
	
	\subsubsection*{Numerical scheme-- An open problem:}
	However, the BTZ scheme exposed in this work remains very theoretical since it assumes an accurate computations of the conditional expectations involved in the model. For this reason it would have been interesting to develop a fully implementable numerical scheme by following the works \cite{DelarueMenozzi06,ChaRichou16,Zhou} in our context. The main ingredient consists to use the quantization method which provides an algorithm to compute expectation appearing in \eqref{linBTZ1} by establishing a grid associated with probability based on the distribution of the random variables $(\Delta W_i)$. Precisely, these random variables are approximated by a sequence of centered random variables $(\Delta \hat W_i =\sqrt{\delta t_i} G_M(\frac{\Delta W_i}{\sqrt{\delta t_i}}))$ with discrete support, and $G_M$ stands for the projection operator on the optimal quantization grid for the standard Gaussian distribution with $M$ points in the support and the following bound holds for all $p\geq 1$
	\begin{align}
		\mathbb{E}\left[ |\Delta W_{t_i} - \Delta \hat W_{t_i}|^p  \right]^{{1}/{p}} \leq C_p \sqrt{h_i}M^{-1/d}.
	\end{align}
	Henceforth, this necessitates the development of a numerical scheme for the forward equation \eqref{2.1} using the Markovian quantization method, particularly when the drift term lies outside the standard Lipschitz continuous framework. To the best of our knowledge, no such result currently exists, as it requires a more advanced stability analysis result of the Euler-Maruyama scheme for SDEs with non-smooth coefficients than the one established for instance in the aforementioned Corollary \ref{Cor210} (or equivalently in \cite{GaLing23}, for integrable drift coefficients).
	
	In fact, let us consider the following tho version of the Euler scheme approximating the equation \eqref{2.1} under the assumption (A1)
	\begin{align*}
		\bold{X}_{i+1} &= \bold{X}_{i} + b(t_i,\bold{X}_{i})h_i +\Delta W_{t_i}\\
		\bold{Y}_{i+1} &= \bold{Y}_{i} + b(t_i,\bold{Y}_{i})h_i +\Delta W_{t_i} +\xi_i,
	\end{align*}
	where $\xi_i$ stands for a family of random variables such that for all $i$ we have $\xi_i \in L^{2p}(\Omega)$. The second equation can be viewed as a perturbation of the first one. Standard computations lead to the following for all $p\geq 2$
	\begin{align*}
		\mathbb{E}[\sup_{0\leq k\leq i}|\bold{X}_{k}-\bold{Y}_{k}|^{p}] \leq C(p)|\bold{X}_{0}-\bold{Y}_{0}| + C(p)\mathbb{E}\Bigg[ \sup_{0\leq k\leq i}\Big|\sum_{j=0}^{i-1}\Big(b(t_j,\bold{X}_{j})-b(t_j,\bold{Y}_{j})\Big)h_j\Big|^p\Bigg] + C(p) \mathbb{E} \Big|\sum_{j=0}^{i-1} \xi_j\Big|^p.
	\end{align*} 
	Since the drift $b$ here is not Lipschitz continuous, this prevents us to apply the a Gronwall-type inequality as it was done for instance in \cite{ChaRichou16}, \cite{Zhou} and \cite{Zhang041}. Therefore, by just using the boundedness of $b$ we deduce the following bound 
	\begin{align*}
		\mathbb{E}[\sup_{0\leq k\leq i}|\bold{X}_{k}-\bold{Y}_{k}|^{p}]\leq C(p,T)\Bigg(1+ \mathbb{E} \Big|\sum_{j=0}^{i-1} \xi_j\Big|^p\Bigg)
	\end{align*}

	\begin{lemm}
		Let $\Phi$ be constructible functional with construction $\varphi$ and let $\pi: 0=t_0<t_t\cdots < t_N =T$ be a partition with $\delta t_i =t_{i+1}-t_i$ and $h=\sup_i|\delta t_i| \leq KN^{-1}$ for some $K>0$. We define the following 
	\end{lemm}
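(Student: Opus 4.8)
The plan is to control $\mathbb{E}\big[|\Phi(X)-\Phi(\mathcal{X}^{\pi})|^{2p}\big]$ for a constructible functional $\Phi$ by combining the Lipschitz structure furnished by its construction $\varphi$ with the strong $L^{2p}$ convergence rate of the Euler--Maruyama scheme established in Theorem \ref{thm210}. Since the construction $\varphi$ inherits the $L^{1}$- or $L^{\infty}$-Lipschitz property of $\Phi$ (cf. \eqref{Lips1}--\eqref{Lips2}), the first step is to reduce the estimate to one of these two regimes by majorising $\Phi(X)-\Phi(\mathcal{X}^{\pi})$ pathwise, thereby transferring the whole problem onto the discrepancy process $X-\mathcal{X}^{\pi}$ between the true diffusion and its discrete approximation.

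In the $L^{1}$-Lipschitz case the argument is the gentler of the two. Bound \eqref{Lips2} gives $|\Phi(X)-\Phi(\mathcal{X}^{\pi})|\le \Lambda_{\Phi}\int_0^T|X_t-\mathcal{X}^{\pi}_t|\,\mathrm{d}t$; raising to the power $2p$ and applying Jensen's inequality moves the power inside the time integral, after which I would integrate the pointwise bound $\sup_{t}\mathbb{E}|X_t-\mathcal{X}^{\pi}(t)|^{2p}\le C h^{(1-\gamma)p}$ supplied by Theorem \ref{thm210}. A separate care is needed for the gap between the sampled process $\mathcal{X}^{\pi}_t$, which is piecewise constant on the grid, and the continuous Euler interpolation $\mathcal{X}^{\pi}(t)$: exactly as in the proof of Corollary \ref{cor111}, this difference is controlled by the oscillation of Brownian motion on each subinterval and contributes only a harmless $\big(h\log(1/h)\big)^{p}$ term, which is absorbed into the leading rate.

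The $L^{\infty}$-Lipschitz case is where the genuine difficulty lies and where the novelty of the present work is exploited. Here \eqref{Lips1} forces the supremum \emph{inside} the expectation, so that one must bound $\mathbb{E}\big[\sup_{0\le t\le T}|X_t-\mathcal{X}^{\pi}(t)|^{2p}\big]$ rather than the weaker $\sup_{t}\mathbb{E}[\,\cdot\,]$ that suffices in \cite{DaGe20} (compare \eqref{1.6}). This stronger, uniform-in-time estimate is precisely the content of Theorem \ref{thm210}, whose proof rests on recasting the quadrature error as a $\bmo$-process and invoking the John--Nirenberg inequality, i.e. the bound \eqref{bound213}. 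I would therefore apply Theorem \ref{thm210} directly, together with the sampling-versus-interpolation correction above, to reach $\mathbb{E}\big[\sup_{t}|X_t-\mathcal{X}^{\pi}(t)|^{2p}\big]\le C h^{(1-\gamma)p}$, and hence the claimed control of $\mathbb{E}\big[|\Phi(X)-\Phi(\mathcal{X}^{\pi})|^{2p}\big]$.

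The main obstacle is exactly this $L^{\infty}$ regime: without the $\bmo$/John--Nirenberg upgrade of the Euler error one only obtains second-order accuracy in $L^{2}(\Omega)$ and cannot move the supremum inside the expectation; the whole estimate hinges on Theorem \ref{thm210} delivering the uniform $L^{2p}$ rate for every $p\ge 1$. Once this is secured, constructibility of $\Phi$ guarantees that the Lipschitz constant of $\varphi$ is the only functional-dependent quantity entering the constant $C$, so the bound is uniform over the partition and feeds directly into hypothesis \eqref{1.5} of Theorem \ref{Main63} and Corollary \ref{cor111}.
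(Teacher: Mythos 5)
The statement you were asked to prove is not actually a theorem: in the paper it is an unfinished fragment. The lemma introduces a ``constructible functional with construction $\varphi$'' (a notion defined nowhere in the paper), fixes a partition with $h\leq KN^{-1}$, announces ``We define the following'', and then stops --- no claim is made, and no proof is given anywhere. There is therefore no proof in the paper to compare your proposal against; all one can do is judge your reconstruction on its own terms. On those terms it is sensible and built entirely from the paper's own tools: guessing that the missing conclusion is a bound of the form $\mathbb{E}\big[|\Phi(X)-\Phi(\mathcal{X}^{\pi})|^{2p}\big]\leq Ch^{(1-\gamma)p}$ is the natural choice, since that is exactly what hypothesis \eqref{1.5} of Theorem \ref{Main63} and Corollary \ref{cor111} require; the $L^{1}$ regime via \eqref{Lips2} and Jensen, the $L^{\infty}$ regime via \eqref{Lips1} together with the uniform-in-time rate of Theorem \ref{thm210} (resting on the $\bmo$/John--Nirenberg bound \eqref{bound213}), and the piecewise-constant-versus-interpolated correction contributing $(h\log(1/h))^{p}$, is precisely the chain of arguments the paper itself runs in the proof of Corollary \ref{cor111}. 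Your identification of the $L^{\infty}$ case as the genuinely hard one, inaccessible by the $\sup_{t}\mathbb{E}$ estimate \eqref{1.6} of \cite{DaGe20}, also matches the paper's own discussion.

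One caveat marks the boundary of what your sketch proves. You assert that constructibility guarantees the Lipschitz constant of $\varphi$ is the only functional-dependent quantity entering $C$. If ``construction'' is meant in the sense of Lemma \ref{5lem22} and Remark \ref{remark24} --- i.e.\ $\Phi$ is approximated by discrete functionals $\phi^{\pi}$ --- then the error $|\phi^{\pi}(\mathbf{x}(t_0),\dots,\mathbf{x}(t_N))-\Phi(\mathbf{x})|$ is only known to vanish as $|\pi|\to 0$, with no rate: the paper gives no quantitative version of Lemma \ref{5lem22}(ii). So any variant of the lemma whose conclusion involves $\varphi(\mathcal{X}^{\pi}_0,\dots,\mathcal{X}^{\pi}_N)$ rather than $\Phi(\mathcal{X}^{\pi})$ itself would not follow from your argument without an additional hypothesis quantifying that approximation. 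Since the fragment's true intent cannot be determined, this cannot be held against you, but your bound is established only when the Lipschitz property \eqref{Lips1} or \eqref{Lips2} of $\Phi$ itself is used pathwise, not when the estimate must pass through the construction $\varphi$.
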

	
	\appendix
	\section{Proof of auxiliary results }\label{secauxres1}
	
	\begin{proof}[Proof of Lemma \ref{Lemm6.4}]
		The first assertion follows by induction by noticing that $\delta Z_i =\mathbb{E}[\delta Y_{i+1} H_i/\mathfrak{F}_{t_i}]$ and by the following standard linearization 
		\begin{align*}
			\delta Y_i &= \mathbb{E}\Big[ \delta Y_{i+1} + \delta t_i \Gamma_i \mathbb{E}[\delta Y_{i+1}/\mathfrak{F}_{t_i}]  + \delta t_i \Pi_i \cdot \delta Z_i + \delta t_i \delta G_i / \mathfrak{F}_{t_i} \Big]\\
			&= \mathbb{E}\left[(1+\delta t_i\Gamma_i + \delta t_i \Pi_i\cdot{H}_i)\delta Y_{i+1}/\mathfrak{F}_{t_i}\right] + \mathbb{E}\left[\delta t_i \delta G_i / \mathfrak{F}_{t_i}\right]. 
		\end{align*}
		On the other hand, the previous representation leads to the desired comparison principle. Specifically, observing that $|\Gamma_i| \leq \Lambda_y$ and $|\Pi_i| \leq 2\Lambda_z(n)\ell(0)$, where $\ell$ is a strictly positive function, we deduce from the assumptions of the lemma that, for all $0\leq i \leq N-1$ the following holds
		\begin{align*}
			\delta Y_i = \mathbb{E}\Big[\Upsilon_i\delta Y_{t_N} + \sum_{k=i}^{N-1}\delta t_k\delta G_k \frac{\Upsilon_i}{\Upsilon_k}\Big/\mathfrak{F}_{t_i}\Big]\geq 0.
		\end{align*}
		This concludes the proof.
	\end{proof}
	\begin{proof}[Proof of Lemma \ref{BTZbound}]
		We will compare the solution to the BTZ scheme with the solution $\hat Y_i$, which has terminal value $\|\xi\|_{\infty}$ and drivers $F_i$ given by 
		$F_i(y,z) = \Lambda_0 + \Lambda_y|y|$. SBy induction, we deduce that $\hat Y_i$ is deterministic and the control process given by $\hat Z_i = \mathbb{E}[\hat Y_{i+1} H_i/\mathfrak{F}_{t_i}]= 0$. From the assumption, we observe that 
		\[ G_i(\hat Y_i,\hat Z_i) \leq \Lambda_0 + \Lambda_y|\hat Y_i|= F_i(\hat Y_i,0)= F_i(\hat Y_i,\hat Z_i).\]
		This implies that $Y_i \leq \hat Y_i$ based on the comparison principle mentioned above. On the other hand, the following holds 
		\begin{align*}
			\hat Y_i 
			&\leq  \Big(1+ \Lambda_y\delta t_i \Big)\hat Y_{i+1} + \Lambda_0 \delta t_i\\
			&\leq \prod_{i=0}^{N-1}  \Big(1+ \Lambda_y\delta t_i \Big)\|\xi\|_{\infty} + \sum_{i=0}^{N-1}\Big\{\prod_{j=0}^{i-1}\Big(1+ \Lambda_y\delta t_j \Big)\Big\}\Lambda_0 \delta t_i\\
			&\leq (\|\xi\|_{\infty} + \Lambda_0 T)\exp\left(1+\Lambda_yT\right).
		\end{align*}
		On the other hand, by using a similar argument as above one can obtain that 
		\[\hat Y_i \geq -(\|\xi\|_{\infty} + \Lambda_0 T)\exp\left(1+\Lambda_yT\right).\]
	\end{proof}
	\begin{proof}[Proof of Lemma \ref{BTZbmo}]
		
		We will use a technique similar to that employed in \cite{ChaRichou16} and \cite{Zhou}. Let $\mathcal{M}$ denote the bound of the BTZ scheme $Y_i^{\pi}$ obtained in Lemma \ref{BTZbound}. Define $(Y_i^{(l),\pi}, Z_i^{(l),\pi})_{1 \leq l \leq \kappa}$ as the BTZ scheme associated with the terminal value $\xi^{(l)} = \frac{\xi}{\kappa}$ and the following generator.
		
		\begin{align}
			\hat G_i^{(l)}(y,z)&= \frac{G_i(0,0)}{\kappa} + G_i\Big(y+\sum_{m=1}^{l-1}\mathbb{E}[ Y_{i+1}^{(m),\pi}/\mathfrak{F}_{t_i}],z+\sum_{m=1}^{l-1} Z_{i}^{(m),\pi}\Big)\notag\\
			&\quad-G_i\Big(\sum_{m=1}^{l-1}\mathbb{E}[ Y_{i+1}^{(m),\pi}/\mathfrak{F}_{t_i}],\sum_{m=1}^{l-1} Z_{i}^{(m),\pi}\Big),
		\end{align}
		with the convention that $\sum_{i=1}^0=0$ and $\kappa\in \mathbb{N}$ not depending on $N$. From uniqueness and linearity of the BTZ scheme, we obtain that
		\begin{align*}
			Y_i^{\pi} = \sum_{l=1}^{\kappa}Y_i^{(l),\pi} \quad \text{ and }\quad Z_i^{\pi} = \sum_{l=1}^{\kappa} Z_i^{(l),\pi}.
		\end{align*}
		Moreover, for all $y,y',z,z'$ we have that
		\begin{align*}
			&|\hat G_i^{(l)}(y,z)-\hat G_i^{(l)}(y',z')|\\ &= \Big|G_i\Big(y+\sum_{m=1}^{l-1}\mathbb{E}[ Y_{i+1}^{(m),\pi}/\mathfrak{F}_{t_i}],z+\sum_{m=1}^{l-1} Z_{i}^{(m),\pi}\Big)-G_i\Big(y'+\sum_{m=1}^{l-1}\mathbb{E}[ Y_{i+1}^{(m),\pi}/\mathfrak{F}_{t_i}],z'+\sum_{m=1}^{l-1} Z_{i}^{(m),\pi}\Big)\Big|\\
			&\leq \Lambda_y(1+|z-z'|^{\alpha_0})|y-y'|+ \Lambda_z(n)\ell(|y-y'|)|z-z'|.	
		\end{align*}
		This immediately implies that for all $l\in\{1,\cdots,\kappa\}$ the family of drivers $(\hat G_i^{(l)})_i$ satisfies Assumption \ref{assum56}.  Therefore, applying Lemma \ref{BTZbound} yields that
		\begin{align}\label{bound66}
			\sup_{0\leq i\leq N}|Y_i^{(l),\pi}| \leq\Big(\|\xi^{(l)}\|_{\infty} + \frac{|G_i(0,0)|}{\kappa}\Big)\exp\left(1+\Lambda_yT\right) = \frac{\mathcal{M}}{\kappa}.
		\end{align}
		It is then sufficient to establish that 
		\begin{align}\label{519}
			\mathbb{E}\Big[\sum_{j=i}^{N-1}{\delta t_j}| Z_{j}^{(l),\pi}|^2\Big/\mathfrak{F}_{t_i}\Big] \leq C
		\end{align}
		for all $l\in \{1,\cdots,\kappa\}$ with $C$ a constant independent of $N$. We observe that, the BTZ solution $(Y_i^{(l),\pi}, Z_i^{(l),\pi})_{1\leq l\leq \kappa}$ can be rewritten as
		\begin{align}\label{eqbtz1}
			Y_{i+1}^{(l),\pi}=  Y_{i}^{(l),\pi}- G_i^{(l),\pi}\big(\mathbb{E}[ Y_{i+1}^{(l),\pi}/\mathfrak{F}_{t_i}], Z_{i}^{(l),\pi}\big)\delta t_i + \nu_i,
		\end{align}
		where
		\begin{align*}
			\nu_i = \mathit{B}_i -\frac{\delta t_i}{c_i} Z_{i}^{(l),\pi}\cdot ({H}_i)^{T}, \quad
			\mathit{B}_i =  Y_{i+1}^{(l),\pi} -\mathbb{E}[ Y_{i+1}^{(l),\pi}/\mathfrak{F}_{t_i}].
		\end{align*}
		Thus squaring both sides of \eqref{eqbtz1} and using the definition of $\nu_i$, we obtain that 	
		\begin{align}\label{eq514}
			\frac{1}{2}\frac{\delta t_i}{C_2}|Z_{i}^{(l),\pi}|^2 &\leq \mathbb{E}[|Y_{i+1}^{(l),\pi}|^2/\mathfrak{F}_{t_i}] - | Y_{i}^{(l),\pi}|^2 + 2 Y_{i}^{(l),\pi}\hat G_i^{(l)}\left(\mathbb{E}[ Y_{i+1}^{(l),\pi}/\mathfrak{F}_{t_i}], Z_{i}^{(l),\pi}\right)\delta t_i.
		\end{align}
		Using primarily the growth properties of $G_i$, we obtain for all $l\in\{1,\cdots,\kappa\}$ 
		\begin{align*}
			&|\hat G_i^{(l)}(y,z)| \notag\\
			&\leq \frac{\Lambda_0}{\kappa} +\Lambda_z+  \Lambda_y(1+|z|^{\alpha_0})|y|+ \Lambda_z(1+\ell(|y|))|z|^2+ \Lambda_z\ell(|y|)(l-1)\sum_{m=1}^{l-1}|Z_{i}^{(m),\pi}|^2.
		\end{align*}
		In particular, using the bound \eqref{bound66} and the local boundedness of the function $\ell$, we deduce the existence of a constant $C(\|\ell\circ|Y^{(l),\pi}|\|_{\infty})$ that does not depend on $N$, such that
		\begin{align}\label{eq515}
			&|\hat G_i^{(l)}\left(\mathbb{E}[ Y_{i+1}^{(l),\pi}/\mathfrak{F}_{t_i}], Z_{i}^{(l),\pi}\right)| \notag\\
			&\leq \frac{\Lambda_0 +\Lambda_y\mathcal{M}}{\kappa} +\Lambda_z+  \frac{\Lambda_y\mathcal{M}}{\kappa}|Z_{i}^{(l),\pi}|^{\alpha_0}+ \Lambda_z(1+\|\ell\circ|Y^{(l),\pi}|\|_{\infty})|Z_{i}^{(l),\pi}|^2\notag\\
			&\quad+ C(\|\ell\circ|Y^{(l),\pi}|\|_{\infty})(l-1)\sum_{m=1}^{l-1}|Z_{i}^{(m),\pi}|^2\notag\\
			&\leq \frac{\Lambda_0 +\Lambda_y\mathcal{M}}{\kappa} +\Lambda_z+  C_{\alpha_0}\Big(\frac{\Lambda_y\mathcal{M}}{\kappa}\Big)^{\frac{2}{2-\alpha_0}} + \Lambda_z(1+\frac{\alpha_0}{2}+\|\ell\circ|Y^{(l),\pi}|\|_{\infty})|Z_{i}^{(l),\pi}|^2\notag\\
			&\quad+ C(\|\ell\circ|Y^{(l),\pi}|\|_{\infty})(l-1)\sum_{m=1}^{l-1}|Z_{i}^{(m),\pi}|^2.
		\end{align}
		Plugging \eqref{eq515} into \eqref{eq514}, using an induction argument on $i$ and taking the conditional expectation with respect to $\mathfrak{F}_{t_i}$, we deduce that
		\begin{align*}
			&\Big(	\frac{1}{2C_2}- 2\frac{\mathcal{M}}{\kappa}\Lambda_z(1+\frac{\alpha_0}{2}+\|\ell\circ|Y^{(l),\pi}|\|_{\infty})\Big)	\mathbb{E}\Big[\sum_{j=i}^{N-1}\delta t_j| Z_{j}^{(l),\pi}|^2/\mathfrak{F}_{t_i}\Big]\\
			&\leq  2\frac{\mathcal{M}}{\kappa}\Big(\Big(\frac{\Lambda_0}{\kappa} + \Lambda_z\Big)T 
			+C(\|\ell\circ|Y^{(l),\pi}|\|_{\infty})(l-1)\sum_{j=i}^{N-1}\mathbb{E}\Big[\sum_{m=1}^{l-1}\delta t_j|Z_{j}^{(m),\pi}|^2/\mathfrak{F}_{t_i}\Big]\Big) \\
			& \quad+2\frac{\mathcal{M}^2}{\kappa^2}\Big(1+2\Lambda_y T\Big) + 2\frac{\mathcal{M}}{\kappa} C_{\alpha_0}\Big(\frac{\Lambda_y\mathcal{M}}{\kappa}\Big)^{\frac{2}{2-\alpha_0}}T.
		\end{align*}
		Hence, an appropriate choice of the parameter $\kappa$ ensures the existence of a constant $C$ independent of $N$ such that	
		\begin{align*}
			\mathbb{E}\Big[\sum_{j=i}^{N-1}\delta t_j| Z_{j}^{(l),\pi}|^2/\mathfrak{F}_{t_i}\Big]
			\leq C \Big(1+\sum_{j=i}^{N-1}\mathbb{E}\Big[\sum_{m=1}^{l-1}\delta t_j|Z_{j}^{(m),\pi}|^2/\mathfrak{F}_{t_i}\Big]\Big) .
		\end{align*}
		Therefore \eqref{519} follows from an induction principle on $l$. This concludes the proof.	
	\end{proof}
	\begin{proof}[Proof of Lemma \ref{lemm69}]
		We first observe that the perturbed scheme \eqref{linBTZ3} can be rewritten as follows
		\begin{align}\label{eq 6.16}
			\tilde{Y}_i = \tilde{Y}_{i+1} +\delta t_iG_i(\mathbb{E}[\tilde{Y}_{i+1}/\mathfrak{F}_{t_i}],\tilde{Z}_i) +\zeta_i^{Y} - \frac{\delta t_i}{c_i}(\tilde{Z}_i)^{\bold T}\cdot H_i - \tilde\nu_i,
		\end{align}
		where $\tilde\nu_i$ is an $\mathfrak{F}_{t_{i+1}}\text{-measurable}$ random variable, such that $\mathbb{E}[\tilde{\nu}_i/\mathfrak{F}_{t_i}]=\mathbb{E}[\tilde{\nu}_iH_i/\mathfrak{F}_{t_i}]=0$ and $\mathbb{E}[(\tilde{\nu}_i)^2/\mathfrak{F}_{t_i}]<\infty.$
		Hence, together with \eqref{eq 6.2}, we obtain that
		\begin{align*}
			\tilde \delta Y_i^{\pi} &= \tilde \delta Y_{i+1}^{\pi} + \left(G_i(\mathbb{E}[\tilde{Y}_{i+1}/\mathfrak{F}_{t_i}],\tilde{Z}_i) - G_i(\mathbb{E}[{Y}^{\pi}_{i+1}/\mathfrak{F}_{t_i}],{Z}_i^{\pi})\right) + \zeta_i^{Y} - \frac{\delta t_i}{c_i}(\tilde \delta{Z}_i^{\pi})\cdot H_i - \tilde \delta \nu_i\\
			&=\tilde \delta Y_{i+1}^{\pi} + \delta t_i\tilde{\Gamma}_i\mathbb{E}[\tilde\delta {Y}_{i+1}^{\pi}/\mathfrak{F}_{t_i}] + \delta t_i \tilde{\Pi}_i \tilde \delta {Z}_i^{\pi} + \zeta_i^{Y} - \frac{\delta t_i}{c_i}(\tilde \delta{Z}_i^{\pi})\cdot H_i - \tilde \delta \nu_i\\
			&= \tilde \delta Y_{i+1}^{\pi} + \bold{A}_i - \frac{\delta t_i}{c_i}(\tilde \delta{Z}_i^{\pi})\cdot H_i - \tilde \delta \nu_i,
		\end{align*}
		where $\tilde \delta \nu_i = \nu_i - \tilde \nu_i$. Using the fact that 
		\begin{align*}
			(\tilde \delta Y_{i+1}^{\pi})^2 &= (\tilde \delta Y_{i}^{\pi})^2 + 2\tilde \delta Y_{i}^{\pi}(\tilde \delta Y_{i+1}^{\pi}-\tilde \delta Y_{i}^{\pi})+ (\tilde \delta Y_{i+1}^{\pi}-\tilde \delta Y_{i}^{\pi})^2\\
			&= (\tilde \delta Y_{i}^{\pi})^2 + 2\tilde \delta Y_{i}^{\pi}\Big(\frac{\delta t_i}{c_i}(\tilde \delta{Z}_i^{\pi})\cdot H_i + \tilde \delta \nu_i-\bold{A}_i\Big)+ \Big(\frac{\delta t_i}{c_i}(\tilde \delta{Z}_i^{\pi})\cdot H_i + \tilde \delta \nu_i-\bold{A}_i\Big)^2.
		\end{align*}
		Then by taking the conditional expectation, and from the assumptions of the Lemma, we obtain that
		\begin{align*}
			\mathbb{E}\left[(\tilde \delta Y_{i+1}^{\pi})^2/\mathfrak{F}_{t_i}\right]\geq (\tilde \delta Y_{i}^{\pi})^2 -2\delta t_i \tilde \delta Y_{i}^{\pi} \left(\tilde{\Gamma}_i\mathbb{E}[\tilde\delta {Y}_{i+1}^{\pi}/\mathfrak{F}_{t_i}]+ \tilde{\Pi}_i \tilde \delta {Z}_i^{\pi} \right) -2 \tilde \delta Y_{i}^{\pi} (\zeta_i^{Y}) + \frac{\delta t_i}{c_i}|\tilde \delta{Z}_i^{\pi}|^2.
		\end{align*}
		Hence, by rearranging the terms above, applying Young's inequality, summing over $i$  and taking the expectation, we deduce the existence of a constant $C$ that does not depend on $N$ such that
		\begin{align*}
			\mathbb{E}\Big[\sum_{i=0}^{N-1}(\tilde \delta Y_{i}^{\pi})^2 \delta t_i\Big] &\leq C \Big(\mathbb{E}[(\tilde \delta Y_{N}^{\pi})^2-(\tilde \delta Y_{0}^{\pi})^2] +\mathbb{E} [\sup_{0\leq i\leq N-1}|\tilde \delta Y_{i}^{\pi}|^2] + \mathbb{E} \sum_{i=0}^{N-1}\tilde \delta Y_{i}^{\pi} (\zeta_i^{Y})\Big)\\
			&\quad +  \mathbb{E}\Big[\sum_{i=0}^{N-1}\delta t_i(\tilde \delta Y_{i}^{\pi})^2 |\tilde{\Pi}_i|^2\Big]
		\end{align*} 
		To conclude, we first recall that $|\tilde\Pi_i| \leq \Lambda_z(1+\ell(0)(|\tilde{Z}_i|+|Z_i^{\pi}|)) \in \mathcal{H}_{BMO}$. Hence from the BDG inequality for discrete martingale, we deduce that 
		\begin{align*}
			\mathbb{E}\Big[\sum_{i=0}^{N-1}\delta t_i(\tilde \delta Y_{i}^{\pi})^2 |\tilde{\Pi}_i|^2\big] &\leq \mathbb{E}\Big[\Big(\sum_{i=0}^{N-1}\delta t_i |\tilde{\Pi}_i|^2\Big)^2\Big]^{\frac{1}{2}}\mathbb{E}[\sup_{0\leq i\leq N-1}|\tilde \delta Y_{i}^{\pi}|^4]^{\frac{1}{2}}\\
			&\leq C \mathbb{E}[\sup_{0\leq i\leq N-1}|\tilde \delta Y_{i}^{\pi}|^4]^{\frac{1}{2}}.
		\end{align*}
		The proof is completed.
	\end{proof}

	\begin{proof}[Proof of Lemma \ref{lemm74}]
		Let us notice that 
		\begin{align*}
			\mathbb{E}\Big[\sum_{j=i}^{N-1}|\tilde{Z}_i^n|^2\delta t_j\big/\mathfrak{F}_{t_{i}}\Big] &\leq 2 \mathbb{E}\Big[\sum_{j=i}^{N-1}|\tilde{Z}_j^n-\bar{\bar{Z}}_j^n|^2\delta t_j\big/\mathfrak{F}_{t_{i}}\Big] +2 \mathbb{E}\Big[\sum_{j=i}^{N-1}|\bar{\bar{Z}}_j^n|^2\delta t_j\big/\mathfrak{F}_{t_{i}}\Big]\\
			&= 2(J_1 + J_2),
		\end{align*}
		with 
		\begin{align}\label{eq714}
			\bar{\bar{Z}}_j^n:= \bar{\bar{Z}}^n(t) \quad \forall t\in[t_j,t_{j+1}],\quad	 \bar{\bar{Z}}_j^n:= \mathbb{E}\Big[Y_{t_{j+1}}^n\frac{W_{t_{j+1}}-W_{t_j}}{\delta t_j}/\mathfrak{F}_{t_{j}}\Big].
		\end{align}
		We start with the computation of $J_1$. We observe that 
		\begin{align*}
			|\tilde{Z}_j-\bar{\bar{Z}}_j^n|^2\delta t_j = \mathbb{E}\Big[(Y^n_{t_{j+1}}-Y^n_{t_j})\Big(H_i^R-\frac{W_{t_{j+1}}-W_{t_j}}{\delta t_j}\Big)/\mathfrak{F}_{t_j}\Big]^2\delta t_j.
		\end{align*}
		Therefore, using, for instance, the definition of the coefficients $H^R_i$ given in \eqref{eq79}, we obtain the existence of a constant $C>0$ such that
		\begin{align*}
			\mathbb{E}\Big[\sum_{j=i}^{N-1}|\tilde{Z}_j-\bar{\bar{Z}}_j^n|^2\delta t_j/\mathfrak{F}_{t_{i}}\Big] &\leq C \mathbb{E}\Big[\sum_{j=i}^{N-1} |Y^n_{t_{j+1}}-Y^n_{t_j}|^2/\mathfrak{F}_{t_i}\Big]\\
			&\leq C \mathbb{E}\Big[\sum_{j=i}^{N-1} \Big| \int_{t_j}^{t_{j+1}}g_n(s,X_s,Y_s^n,Z_s^n)\mathrm{d}s-\int_{t_j}^{t_{j+1}}Z_s^n\mathrm{d}W_s\Big|^2 \Big/\mathfrak{F}_{t_i}  \Big]\\
			&\leq C \mathbb{E}\Big[ \Big( \int_{t_i}^{T}|g_n(s,X_s,Y_s^n,Z_s^n)|\mathrm{d}s\Big)^2+\int_{t_i}^{T}|Z_s^n|^2\mathrm{d}s \Big/\mathfrak{F}_{t_i}  \Big].
		\end{align*}
		Here, we used the conditional BDG inequality to obtain the last inequality. The second term on the right-hand side can easily be controlled by the $\mathcal{H}_{BMO}\text{-norm}$ of the process $Z^n$, uniformly in $n$.
		Meanwhile, a careful inspection of the first term is crucial, as we do not have an almost sure bound for the process $Z^n_t$ of the form $\sup_n|Z_t^n| \leq C(1+\sup_t|X_t|)$ as seen in \cite{ChaRichou16} or \cite{Zhou}, for instance.
		Using the growth of $g_n$, the uniform bound on the process $Y^n$ and the local boundedness of the function $\ell$, we observe that 
		\begin{align*}
			\mathbb{E}\Big[ \Big( \int_{t_i}^{T}|g_n(s,X_s,Y_s^n,Z_s^n)|\mathrm{d}s\Big)^2 \Big/\mathfrak{F}_{t_i} \Big]\leq C T^2 +C\mathbb{E}\Big[ \Big( \int_{t_i}^{T}|Z_s^n|^2\mathrm{d}s\Big)^2 \Big/\mathfrak{F}_{t_i} \Big].
		\end{align*} 
		Let $h$ be the Lyapunov function given in Lemma \ref{lyapunov}. Applying the It\^o's formula, we obtain 
		\begin{align}
			\mathrm{d}h(Y_s^n)= \Big(\frac{1}{2}h''(Y_s^n)|Z_s^n|^2-h'(Y_s^n)g_n(s,X_s,Y_s^n,Z_s^n) \Big)\mathrm{d}s +\mathrm{d}M_s^n,
		\end{align}
		where $\mathrm{d}M_s^n$ is a stochastic integral.
		Using the property of the Lyapunov function, we deduce that
		\begin{align*}
			\int_{t_i}^{T} |Z_s^n|^2\mathrm{d}s &\leq h(Y_{T}^n)- h(Y_{t_i}^n)+ kT + \int_{t_i}^{T} \mathrm{d}M_s^n.
		\end{align*}
		Squaring and taking the conditional expectation in the above inequality, we deduce that
		\begin{align*}
			\sup_{n\in\mathbb{N}} \mathbb{E}\Big[\Bigg(\int_{t_i}^{T} |Z_s^n|^2\mathrm{d}s\Bigg)^2/\mathfrak{F}_{t_i}\Big] &\leq C \mathbb{E}\Big[|h(Y_{T}^n)- h(Y_{t_i}^n)|^2+ k^2T^2 + \int_{t_i}^{T}|Z_s^n|^2\mathrm{d}s\big/\mathfrak{F}_{t_i}\Big]\\
			&\leq 2C\|Dh\|_{L^{\infty}}^2\mathbb{E}\Big[|Y_T^n-Y_{t_i}^n|^2/\mathfrak{F}_{t_i}\Big] + Ck^2T^2 + C\|Z^n*W\|_{\bmo}\\
			&\leq C(T)(1+\|Z^n*W\|_{\bmo}),
		\end{align*}
		where, the last inequality follows from the uniform boundedness on the process $Y^n$(see \eqref{77}). We have in particular 
		\begin{align}\label{716}
			\mathbb{E}\Big[ \Big( \int_{t_i}^{T}|g_n(s,X_s,Y_s^n,Z_s^n)|\mathrm{d}s\Big)^2 \Big/\mathfrak{F}_{t_i} \Big]\leq C(T)(1+\|Z^n*W\|_{\bmo}).
		\end{align} 
		Furthermore, there exists a constant $C$ that does not depend on $n$, such that 
		\begin{align*}
			\mathbb{E}\Big[\sum_{j=i}^{N-1}|\tilde{Z}_j-\bar{\bar{Z}}_j^n|^2\delta t_j/\mathfrak{F}_{t_{i}}\Big] \leq  C(1 + \|Z*W\|_{\bmo}).
		\end{align*}
		We now turn to the control of $J_2$. We observe that 
		\begin{align*}
			\mathbb{E}\Big[\sum_{j=i}^{N-1}|\bar{\bar{Z}}_j^n|^2\delta t_j/\mathfrak{F}_{t_{i}}\Big] &\leq 2\mathbb{E}\Big[\sum_{j=i}^{N-1}|\hat{Z}_j^n-\bar{\bar{Z}}_j^n|^2\delta t_j/\mathfrak{F}_{t_{i}}\Big] +2 \mathbb{E}\Big[\sum_{j=i}^{N-1}|\hat{Z}_j|^2\delta t_j/\mathfrak{F}_{t_{i}}\Big],
		\end{align*}
		where 
		\begin{align}\label{eq715}
			\hat{Z}_j^n:= \hat{Z}^n(t) \quad \forall t\in[t_j,t_{j+1}],\quad	\hat{Z}_j^n = \frac{1}{\delta t_j}\mathbb{E}\Big[\int_{t_j}^{t_{j+1}}Z_s^n\mathrm{d}s\Big/\mathfrak{F}_{t_j}\Big].
		\end{align}
		Similarly to \cite[Lemma 2.1]{ChaRichou16}, by using Jensen's inequality and the tower property, we obtain that
		\begin{align}
			\mathbb{E}\Big[\sum_{j=i}^{N-1}|\hat{Z}_j|^2\delta t_j/\mathfrak{F}_{t_{i}}\Big]\leq  C(1+\sup_n\|Z^n*W\|_{\bmo}).
		\end{align}
		On the other hand, from the definitions of the processes $\hat{Z}^n$ and $\bar{\bar{Z}}^n$, respectively, we obtain that 
		\begin{align*}
			\mathbb{E}\Big[\sum_{j=i}^{N-1}|\hat{Z}_j^n-\bar{\bar{Z}}_j^n|^2\delta t_j/\mathfrak{F}_{t_{i}}\Big]&= \mathbb{E}\Big[\sum_{j=i}^{N-1}\Big|\mathbb{E}\Big[\int_{t_j}^{t{_{j+1}}}g_n(s,X_s.Y_s^n,Z_s^n)\mathrm{d}s\frac{W_{t_{j+1}}-W_{t_j}}{\delta t_j}/\mathfrak{F}_{t_j}\Big]\Big|^2\delta t_j\Big/\mathfrak{F}_{t_{i}}\Big]\\
			&\leq \mathbb{E}\Big[\Big(\int_{t_i}^{T}|g_n(s,X_s.Y_s^n,Z_s^n)|\mathrm{d}s\Big)^2\Big/\mathfrak{F}_{t_{i}}\Big],
		\end{align*}
		where, the inequality follows from the the Cauchy inequality and the tower property. Therefore, from \eqref{716}, there exists a constant independent of $n$ such that
		\begin{align*}
			\mathbb{E}\Big[\sum_{j=i}^{N-1}|\hat{Z}_j^n-\bar{\bar{Z}}_j^n|^2\delta t_j/\mathfrak{F}_{t_{i}}\Big]\leq C(1+\sup_n\|Z^n*W\|_{\bmo}).
		\end{align*} 
		This concludes the proof.
	\end{proof}
	
	\bibliographystyle{plain}
	\bibliography{Biblio1,bibliography,Biblio}

\begin{thebibliography}{10}

\bibitem{Babi23}
A.~Babi, M.~Dieye, and O.~Menoukeu-Pamen.
\newblock Strong convergence of the euler-maruyama approximation for sdes with
  unbounded drift.
\newblock {\em Stochastic Analysis and Applications}, 41(3):545--563, 2023.

\bibitem{Bahlali19}
K.~Bahlali.
\newblock Solving {U}nbounded {Q}uadratic {BSDE}s by a {D}omination method.
\newblock arXiv:1903.11325v1, 03 2019.

\bibitem{Ouknine}
K.~Bahlali, M.~Eddahbi, and Y.~Ouknine.
\newblock Quadratic {BSDE} with {$L^2$}-terminal data: {K}rylov's estimate and
  {I}t\^{o}-{K}rylov's formula and existence results.
\newblock {\em Annals of Probability}, 45:2377--2397, 2017.

\bibitem{BMBPD17}
D.~Banos, T.~Meyer-Brandis, F.~Proske, and S.~Duedahl.
\newblock Computing deltas without derivatives.
\newblock {\em Finance Stoch.}, 21(2):509--549, 2017.

\bibitem{BT04}
B.Bouchard and N.~Touzi.
\newblock Discrete-time approximation and {M}onte-{C}arlo simulation of
  backward stochastic differential equations.
\newblock {\em Stoch. Proc. Appl.}, 111:818--838, 2004.

\bibitem{BouleauHirsch89}
N.~Bouleau and F.~Hirsch.
\newblock On the derivability with respect to the initial data of solution of a
  stochastic differential equation with lipschitz coefficients.
\newblock In {\em S\'eminaire de {T}h\'eorie du {P}otentiel}, number~9 in
  Lecture Notes in Mathematics, pages 39--57. Springer Berlin, 1989.

\bibitem{ButDaGe21}
O.~Butkovsky, K.~Dareiotis, and M.~Gerencs{\'e}r.
\newblock {Approximation of SDEs: a stochastic sewing approach}.
\newblock {\em Probability Theory and Related Fields}, 181(none):1432--2064,
  2021.

\bibitem{ChaRichou16}
J-F. Chassagneux and A.~Richou.
\newblock Numerical {S}imulation for {Q}uadratic {BSDE}s.
\newblock {\em The Annals of Applied Probability}, 26:262--304, 2016.

\bibitem{Chessarietal23}
J.~Chessari, R.~Kawai, Y.~Shinozaki, and T.~Yamada.
\newblock Numerical methods for backward stochastic differential equations: {A}
  survey.
\newblock {\em Probability Surveys}, 20:486--567, 2023.

\bibitem{CIKHN04}
J.~M. Corcuera, P.~Imkeller, A.~Kohatsu-Higa, and D.~Nualart.
\newblock Additional utility of insiders with imperfect dynamical information.
\newblock {\em Finance and Stochastics}, 8:437--450, 2004.

\bibitem{DaGe20}
K.~Dareiotis and M.~Gerencs{\'e}r.
\newblock {On the regularisation of the noise for the Euler-Maruyama scheme
  with irregular drift}.
\newblock {\em Electronic Journal of Probability}, 25(none):1 -- 18, 2020.

\bibitem{Delarue}
F.~Delarue.
\newblock On the existence and uniqueness of solutions to {FBSDEs} in
  non-degenerate case.
\newblock {\em Stochastic Process. Appl.}, 99:209--286, 2002.

\bibitem{Delarue2}
F.~Delarue and G.~Guatteri.
\newblock Weak existence and uniqueness for forward-backward sdes.
\newblock {\em Stochastic Process. Appl.}, 116:1712--1742, 2006.

\bibitem{DelarueMenozzi06}
F.~Delarue and S.~Menozzi.
\newblock {A forward-backward stochastic algorithm for quasi-linear PDEs}.
\newblock {\em The Annals of Applied Probability}, 16(1):140 -- 184, 2006.

\bibitem{FlanGubiPrio10}
F.~Flandoli, M.~Gubinelli, and E.~Priola.
\newblock Well posedness of the transport equation by stochastic perturbation.
\newblock {\em Invent math}, 180:1--53, 2009.

\bibitem{GaLing23}
L.~Galeati and C.~Ling.
\newblock {Stability estimates for singular SDEs and applications}.
\newblock {\em Electronic Journal of Probability}, 28(none):1 -- 31, 2023.

\bibitem{3Garxiv}
C.~Geiss, S.~Geiss, and E.~Gobet.
\newblock Generalized fractional smoothness and $\boldsymbol{L}_p$-variation of
  {BSDE}s with non-lipschitz terminal condition.
\newblock arXiv:1103.0371.

\bibitem{3G12}
C.~Geiss, S.~Geiss, and E.~Gobet.
\newblock Generalized fractional smoothness and $\boldsymbol{L}_p$-variation of
  {BSDE}s with non-lipschitz terminal condition.
\newblock {\em Stoch. Proc. Appl.}, 22(122):2078 -- 2116, 2012.

\bibitem{GeissYlinen}
S.~Geiss and J.~Ylinen.
\newblock {\em Decoupling on the {Wiener} {S}pace, {R}elated {B}esov {S}paces,
  and {A}pplications to {BSDE}s}, volume 272 of {\em Memoirs of the {A}merican
  {M}athematical {S}ociety}.
\newblock AMS, 2021.

\bibitem{GobetTurk16}
E.~Gobet and P.~Turkedjiev.
\newblock {Approximation of backward stochastic differential equations using
  Malliavin weights and least-squares regression}.
\newblock {\em Bernoulli}, 22(1):530 -- 562, 2016.

\bibitem{ImkRhOliv24}
P.~Imkeller, R.~Likibi Pellat, and O.~Menoukeu-Pamen.
\newblock {Differentiability of quadratic forward-backward SDEs with rough
  drift}.
\newblock {\em The Annals of Applied Probability}, 34(5):4758--4798, 2024.

\bibitem{ImkRhossOliv}
P.~Imkeller, R.~Likibi Pellat, and O.~Menoukeu Pamen.
\newblock Differentiability of quadratic forward-backward {SDEs} with rough
  drift.
\newblock arXiv:2210.05622v1, 10.

\bibitem{ImkDos}
P.~Imkeller and G.~Dos Reis.
\newblock Path regularity and explicit convergence rate for {BSDE} with truncated
  quadratic growth.
\newblock {\em Stoch. Process. Appl.}, 120:348--379, 2010.

\bibitem{JourMeno24}
B.~Jourdain and S.~Menozzi.
\newblock {Convergence rate of the Euler-Maruyama scheme applied to diffusion
  processes with $L^q-L^{\rho}$ drift coefficient and additive noise}.
\newblock {\em The Annals of Applied Probability}, 34(1B):1663 -- 1697, 2024.

\bibitem{WeiLvWang}
J.Wei, G.~Lv, and W.~Wang.
\newblock Stochastic transport equation with bounded and dini continuous drift.
\newblock {\em J.Diff. Eq.}, 323, 2022.

\bibitem{el97}
N.~El Karoui, S.~Peng, and M.C.Quenez.
\newblock Backward stochastic differential equations in finance.
\newblock {\em Mathematical finance}, 7(1):1--71, 1997.

\bibitem{Kazamaki}
N.~Kazamaki.
\newblock {\em Continuous exponential martingales and {BMO}}, volume 1579 of
  {\em Lecture Notes in Mathematics}.
\newblock Springer-Verlag, 1994.

\bibitem{MR1214374}
Peter~E. Kloeden and Eckhard Platen.
\newblock {\em Numerical solution of stochastic differential equations},
  volume~23 of {\em Applications of Mathematics (New York)}.
\newblock Springer-Verlag, Berlin, 1992.

\bibitem{Kobylansky}
M.~Kobylanski.
\newblock Backward stochastic differential equations and partial differential
  equations with quadratic growth.
\newblock {\em Annals of Probability}, 2:558--602, 2000.

\bibitem{Kunita82}
H.~Kunita.
\newblock {\em Stochastic {D}ifferential {E}quations and {S}tochastic {F}lows
  of {D}iffeomorphisms}.
\newblock Springer, Berlin, 1984.

\bibitem{Kun90}
H.~Kunita.
\newblock {\em Stochastic {F}lows and {S}tochastic {D}ifferential {E}quations}.
\newblock Cambridge University Press, 1990.

\bibitem{Le22}
K.~L\^e.
\newblock Quantitative {John-Nirenberg Inequality} for stochastic processes
  with bounded mean oscillation.
\newblock arXiv:2210.15736v2, 11.

\bibitem{LeoSzo18}
G.~Leobacher and M.~Sz\"olgyenyi.
\newblock {{Convergence of the Euler-Maruyama method for multidimensional SDEs
  with discontinuous drift and degenerate diffusion coefficient}}.
\newblock {\em Numerische Mathematik}, 138(2):219--239, 06 2018.

\bibitem{MPY94}
J.~Ma, P.~Protter, and J.~Yong.
\newblock Solving forward-backward stochastic differential equations
  explicitly: a four step scheme.
\newblock {\em Probab. Theory Related Fields}, 98:339--359, 1994.

\bibitem{MaZhang01}
J.~Ma and J.~Zhang.
\newblock Path regularity for solutions of backward stochastic differential
  equations.
\newblock {\em Probab. Theory Relat. Fields}, 122:163--190, 2001.

\bibitem{MaZhang02}
J.~Ma and J.~Zhang.
\newblock Representation {T}heorems for {B}ackward {S}tochastic {D}ifferential
  {E}quations.
\newblock {\em The Annals of Applied Probability}, 12:1390--1418, 2001.

\bibitem{MR71666}
Maruyama.
\newblock Continuous {M}arkov processes and stochastic equations.
\newblock {\em Rend. Circ. Mat. Palermo (2)}, 4:48--90, 1955.

\bibitem{MMNPZ13}
O.~Menoukeu-Pamen, T.~Meyer-Brandis, T.~Nilssen, F.~Proske, and T.~Zhang.
\newblock A variational approach to the construction and malliavin
  differentiability of strong solutions of {SDE}'s.
\newblock {\em Math. Ann.}, 357(2):761--799, 2013.

\bibitem{PaTa17}
O.~{Menoukeu-Pamen} and D.~Taguchi.
\newblock Strong rate of convergence for the euler-maruyama approximation of
  {SDE}s with {H}\"older continuous drift coefficient.
\newblock {\em Stochastic Processes and their Applications}, 127(8):2542--2559,
  2017.

\bibitem{MR1335454}
G.~N. Milstein.
\newblock {\em Numerical integration of stochastic differential equations},
  volume 313 of {\em Mathematics and its Applications}.
\newblock Kluwer Academic Publishers Group, Dordrecht, 1995.
\newblock Translated and revised from the 1988 Russian original.

\bibitem{NilssenProske}
S.~E.~A. Mohammed, T.~Nilssen, and F.~Proske.
\newblock Sobolev differentiable stochastic flows for sde's with singular
  coefficients: Applications to the stochastic transport equation.
\newblock {\em Annals of Probability}, 43(3):1535--1576, 2015.

\bibitem{Nua06}
D.~Nualart.
\newblock {\em The {M}alliavin Calculus and Related Topics}.
\newblock Springer Berlin, 2nd edition edition, 2006.

\bibitem{PaSa18}
G.~Pag\`es and A.~Sagna.
\newblock Improved error bounds for quantization based numerical schemes for
  {BSDE} and nonlinear filtering.
\newblock {\em Stochastic Processes and their Applications}, 128(3):847--883,
  2018.

\bibitem{PP90}
E.~Pardoux and S.~Peng.
\newblock Adapted solution of a backward stochastic differential equation.
\newblock {\em Systems Control Letter}, 14:55--61, 1990.

\bibitem{ParPe92}
E.~Pardoux and S.~Peng.
\newblock Backward stochastic differential equations and quasilinear parabolic
  partial differential equations.
\newblock In B.~Rozuvskii and R.~Sowers, editors, {\em Stochastic partial
  differential equations and their applications}, volume 176, pages 200--217.
  Springer, Berlin; New York, 1992.

\bibitem{Rhoss23}
R.~Likibi Pellat.
\newblock {\em Differentiability and path regularity for a class of quadratic
  forward-backward stochastic Differential Equations with rough coefficients}.
\newblock PhD thesis, University of Ghana, 2023.

\bibitem{RhOliv24}
R.~Likibi Pellat and O.~Menoukeu Pamen.
\newblock {D}ensity {A}nalysis for coupled {F}orward-{B}ackward {SDE}s with
  {N}on-{L}ipschitz {D}rifts and {A}pplications.
\newblock {\em Stoch. Processes and their Applications}, 2024.

\bibitem{RhOlivOuk}
R.~Likibi Pellat, O.~Menoukeu Pamen, and Y.~Ouknine.
\newblock A class of quadratic forward-backward stochastic differential
  equations.
\newblock {\em Journal of Mathematical Analysis and Applications}, 2022.

\bibitem{Reza}
F.~Rezakhanlou.
\newblock Regular flows for diffusions with rough drifts.
\newblock arXiv:1405.5856v1, 10.

\bibitem{Richou}
A.~Richou.
\newblock Numerical simulation for bsdes with drivers of quadratic growth.
\newblock {\em The Annals of Applied Probability}, 21:1933--1964, 2011.

\bibitem{Tian23}
D.~Tian.
\newblock Pricing {P}rinciple via {T}sallis {R}elative {E}ntropy in
  {I}ncomplete {M}arkets.
\newblock {\em SIAM Journal on Financial Mathematics}, 14(1):250--278, 2023.

\bibitem{XingZitko18}
H.~Xing and G.~\'Zitkovi\'c.
\newblock A {C}lass of {G}lobally {S}olvable {M}arkovian {Q}uadratic {BSDE} and
  {A}pplications.
\newblock {\em Annals of Proba.}, 46(1):491--550, 2018.

\bibitem{Zhang041}
J.~Zhang.
\newblock A numerical scheme for {BSDEs}.
\newblock {\em Ann. Appl. Probab.}, 14:459--488, 2004.

\bibitem{Zhou}
M.~Zhou.
\newblock {\em Contributions to the {N}umerics of {Q}uadratic {B}ackward {SDE}s
  and {P}robability on {T}urbulence}.
\newblock PhD thesis, University of Oxford, 2021.

\end{thebibliography}
\end{document}